\numberwithin{equation}{section}
\renewcommand{\tocsection}[3]{%
  \indentlabel{\@ifnotempty{#2}{\bfseries\ignorespaces#1 #2\quad}}\bfseries#3}
\renewcommand{\tocsubsection}[3]{%
  \indentlabel{\@ifnotempty{#2}{\ignorespaces#1 #2\quad}}#3}
\newcommand\@dotsep{4.5}
\def\@tocline#1#2#3#4#5#6#7{\relax
  \ifnum #1>\c@tocdepth 
  \else
    \par \addpenalty\@secpenalty\addvspace{#2}%
    \begingroup \hyphenpenalty\@M
    \@ifempty{#4}{%
      \@tempdima\csname r@tocindent\number#1\endcsname\relax
    }{%
      \@tempdima#4\relax
    }%
    \parindent\z@ \leftskip#3\relax \advance\leftskip\@tempdima\relax
    \rightskip\@pnumwidth plus1em \parfillskip-\@pnumwidth
    #5\leavevmode\hskip-\@tempdima{#6}\nobreak
    \leaders\hbox{$\m@th\mkern \@dotsep mu\hbox{.}\mkern \@dotsep mu$}\hfill
    \nobreak
    \hbox to\@pnumwidth{\@tocpagenum{\ifnum#1=1\bfseries\fi#7}}\par
    \nobreak
    \endgroup
  \fi}
\renewcommand\csname r@tocindent0\endcsname{0pt}
\def\l@section{\@tocline{1}{9pt}{0pc}{10pc}{}}
\def\l@subsection{\@tocline{2}{0pt}{1.5pc}{3pc}{}}
\renewcommand{\sI}{I}
\renewcommand{\sJ}{J}
\newcommand{\sig}[1]{\prescript{\sigma}{}{#1}}
\newcommand{\rss}{\textnormal{rss}}
\newcommand{\tf}{\omega} 
\DeclareMathOperator{\Nm}{Nm}
\DeclareMathOperator{\Orb}{Orb}
\DeclareMathOperator{\diag}{diag}
\title[Twisted GGP conjecture for unramified quadratic extensions]{Twisted Gan--Gross--Prasad conjecture for unramified quadratic extensions}
\author{Danielle Wang}
\email{daniellew@berkeley.edu}
\address{Department of Mathematics, University of California, Berkeley, Berkeley, CA 94720}
\subjclass{Primary 11F67, 11F70; Secondary 22E55}
\date{\today}
\begin{document}

\begin{abstract}
Using a relative trace formula approach, we prove the twisted
global Gan--Gross--Prasad conjecture for 
$\U(V) \subseteq \GL(V)$, as well as its refinement,
under some unramifiedness assumptions and local conditions
on the quadratic extension and the automorphic representation.
\end{abstract}

\maketitle

\tableofcontents

\section{Introduction}
In \cite{MR3202556}, Gan, Gross, and Prasad formulated a
series of conjectures relating period integrals of 
automorphic forms on classical groups to the central values
of certain $L$-functions. In the unitary groups case,
one considers the diagonal embedding $\U(W) \hookrightarrow
\U(V) \times \U(W)$ for a pair of Hermitian or skew-Hermitian spaces
$W \subseteq V$.

In the Hermitian case, for $\dim W = \dim V - 1$, 
Jacquet and Rallis \cite{MR2767518}
proposed a relative trace formula approach to the Gan--Gross--Prasad conjecture.
In \cite{MR3159075}, Harris formulated a refinement of the conjecture in the style of Ichino--Ikeda \cite{MR2585578}.
The GGP conjecture and its refinement in this case 
has then been shown by the work of
Zhang 
\cite{MR3245011, MR3164988}, Beuzart-Plessis--Liu--Zhang--Zhu \cite{MR4298750}, and Beuzart-Plessis--Chaudouard--Zydor
\cite{MR4426741}.
In the skew-Hermitian case, when $W = V$, a relative trace formula approach
was developed by Liu \cite{MR3244725} and Xue \cite{MR3228451,MR3505397}.

In \cite{MR4622393}, Gan, Gross, and Prasad formulated
a twisted variant of the conjecture in the case $W = V$. Let $E/F$ and $K/F$ be two
quadratic extensions of number fields, and let
$V$ be a skew-Hermitian space over $E$.
Instead of considering $\U(V)$ as a subgroup of 
$\U(V)(F \times F) = \U(V) \times \U(V)$ as in the original GGP conjecture, 
they consider $\U(V)$ as a subgroup
of $\U(V)(K)$.
They also formulate a refined version of the twisted conjecture.

In this paper, we prove some cases of the twisted GGP conjecture for $E = K$ when $E/F$ is an unramified extension, under some additional unramifiedness assumptions and
local conditions which we describe below.
We also obtain a refined version of our main theorem under these
assumptions.

\subsection{Main result}
Let $E/F$ be a quadratic extension of
number fields. Let $\sigma$ denote the generator
of $\Gal(E/F)$, whose action we will also denote by
$\ol{\phantom{x}}$.
Let $F^-$ be the set of purely imaginary elements of
$E$, and let $j$ be an element of $F^-$

Let $\eta = \otimes \eta_v$ be the quadratic character
of $\AA_F^\times/F^\times$ associated to $E/F$ by global
class field theory, and let $\mu = \otimes \mu_v$ be a character of
$\AA_E^\times/E^\times$ such that $\mu|_{\AA_F^\times}
= \eta$.
Fix a nontrivial additive charater
$\psi' = \otimes \psi_v'$ of $\AA_F/F$.

Let $(V, \langle -, - \rangle_V)$ be an $n$-dimensional
skew-Hermitian space over $E$, and let 
$\U(V)$ be the attached unitary group. Let 
$\Res_{E/F} V$ be the symplectic space over $F$ of dimension
$2n$, 
with symplectic form $\Tr_{E/F}(\langle -, - \rangle_V)$.
Let $\LL$ be a Lagrangian of $(\Res_{E/F} V)^\vee$.
Let $\omega_{\psi', \mu}$ be the Weil representation
of $\U(V)$, which is realized on $\S(\LL(\AA_F))$. 
For $\phi \in \S(\LL(\AA_F))$, define the theta series
\[
	\Theta_{\psi', \mu}(h, \phi) = \sum_{x \in \LL(F)}
	(\omega_{\psi', \mu}(h)\phi)(x).
\]

Fix an isomorphism $V \cong E^n$, and
consider the embedding $\U(V) \subseteq
\Res_{E/F} \GL_n$. Let $\pi$ be an irreducible cuspidal
automorphic representation of $\GL_n(\AA_E)$.
For $\varphi \in \pi$, $\phi \in \S(\LL(\AA_F))$,
define the period integral
\[
	\P(\varphi, \phi) = \int\limits_{\U(V)(F)\bs \U(V)(\AA_F)}
	\varphi(h) \ol{\Theta_{\psi', \mu}(h, \phi)} \, dh.
\]

We are interested in whether $\P(\varphi, \phi)$ gives
a nonzero element of $\Hom_{\U(V)(\AA_F)}(\pi \otimes \ol{\omega_{\psi',
\mu}}, \CC)$.

\begin{remark}
Jacquet has studied the unitary periods
\[
	\int\limits_{\U(V)(F) \bs \U(V)(\AA_F)} \varphi(h)\, dh.
\]
We say that $\pi$ is \emph{distinguished by} $\U(V)$ if there exists
$\varphi \in \pi$ such that the above period integral does not vanish.
In \cite{MR2172953} and
\cite{MR2733072}, Jacquet showed that $\pi$ is distinguished by some
unitary group if and only if it is the base change of a
cuspidal automorphic representation of $\GL_n(\AA_F)$,
and that if $\pi$ is a base change, then it
is in fact distinguished by the quasi-split unitary group.
\end{remark}

\begin{conjecture}[{\cite[Conjecture~2.2]{MR4622393}}]
\label{conj:unrefined}
The period integral $\P(\varphi, \phi)$ is not identically
zero if and only if
\begin{enumerate}[\normalfont(1)]
\item $\Hom_{\U(V)(F_v)}(\pi_v \otimes \ol{\omega_{\psi_v', \mu_v}}, \CC) \neq 0$ for all places $v$ of $F$.
\item $L(\frac12, \pi \times \sig{\pi^\vee} \otimes \mu^{-1}) \neq 0$.
\end{enumerate}
Furthermore, if $L(\frac12, \pi \times \sig{\pi^{\vee}} \otimes \mu^{-1}) \neq 0$, then there exists a unique
skew-Hermitian space $V$ of dimension $n$ over $E$ such that
$\P$ is nonzero.
\end{conjecture}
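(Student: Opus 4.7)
The plan to establish the cases of Conjecture~\ref{conj:unrefined} promised in the introduction is to compare two relative trace formulas, following the template of Jacquet--Rallis \cite{MR2767518} in the Hermitian case and of Liu \cite{MR3244725} and Xue \cite{MR3228451,MR3505397} in the untwisted skew-Hermitian case, now adapted to the twisted setting. On the \emph{unitary side}, one would construct a kernel on $\U(V)(F)\bs\U(V)(\AA_F)$ by integrating a test function $f$ on $\U(V)(\AA_F)$ against the theta series $\Theta_{\psi',\mu}$, so that its spectral expansion produces $|\P(\varphi,\phi)|^2$ weighted by local matrix coefficients of $\pi$ against $\omega_{\psi',\mu}$. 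On the \emph{general linear side}, the matching object is a kernel on $\GL_n(\AA_E)$ built from a test function $f'$ together with a Flicker--Rallis-type period twisted by $\mu$, whose spectral expansion isolates the central value $L(\tfrac12, \pi \times \sig{\pi^\vee} \otimes \mu^{-1})$ and detects precisely the condition that $\pi$ is a stable base change.

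The comparison then rests on three technical ingredients. First, one matches the orbit spaces of the two symmetric varieties in play and transports the geometric expansions of the two kernels onto a common set of regular semisimple orbits. Second, one establishes local smooth transfer of test functions $f_v \leftrightarrow f'_v$ at every place $v$ of $F$, so that orbital integrals on matching orbits agree up to explicit transfer factors, together with a fundamental lemma asserting that characteristic functions of hyperspecial maximal compacts transfer to each other at almost all places; under the unramifiedness assumption on $E/F$, one expects this fundamental lemma to reduce, after a Weil-representation twist and change of variables, to the Jacquet--Rallis fundamental lemma. Third, the general linear side's kernel must be regularized over the noncompact quotient, for which the truncation methods developed for the Flicker--Rallis period should apply.

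Once the global identity of trace formulas is in hand, a standard isolation-of-representations argument---choosing $f$ and $f'$ that match and are spectrally concentrated on a fixed cuspidal $\pi$---yields an equality between $|\P(\varphi,\phi)|^2$ and the central $L$-value up to an explicit product of local factors, which gives both the nonvanishing and uniqueness statements and an Ichino--Ikeda-style refinement. The principal obstacle I anticipate is the fundamental lemma and smooth transfer in the presence of the Weil representation: the theta kernel contributes extra local factors to orbital integrals that are absent in the Jacquet--Rallis setting, and matching these spherically is the main new technical point. A secondary difficulty is smooth transfer at the ramified places, where matching pairs $f_v \leftrightarrow f'_v$ must be exhibited by hand; the local unramifiedness conditions imposed in the main theorem are designed precisely to sidestep the hardest such cases and to allow the fundamental lemma alone to carry the comparison.
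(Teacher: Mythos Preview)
The statement is a \emph{conjecture}; the paper does not prove it in full, only the partial result Theorem~\ref{thm:maintheorem} under strong unramifiedness and supercuspidality hypotheses. Your proposal is therefore not a proof but a strategy outline, and as such it is broadly correct and matches the paper's actual approach for the partial result: two relative trace formulas are compared, the fundamental lemma is reduced to Jacquet--Rallis, and spectral isolation yields the equivalence of period nonvanishing with $L(\tfrac12,\pi\times\sig{\pi^\vee}\otimes\mu^{-1})\neq 0$.

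Two substantive discrepancies are worth flagging. First, you anticipate needing truncation/regularization of the Flicker--Rallis-type period on the noncompact quotient; the paper avoids this entirely by imposing assumption~\eqref{item:scusp} (supercuspidality at two split places) and working only with \emph{good} test functions (Definitions~\ref{def:glgood}, \ref{def:ugood}), i.e., a simple relative trace formula. This is why the partial result carries that hypothesis. Second, and more importantly, you miss the genuinely new phenomenon highlighted in the introduction and in Section~\ref{sec:rep}: because of the theta series factor, the orbital integrals into which $\sI$ and $\sJ$ decompose are \emph{not} invariant---they depend on the chosen double-coset representatives in $H_1(F)\backslash G'(F)/H_2(F)$ and $H(F)\backslash G(F)/H(F)$. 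Matching orbital integrals therefore requires choosing compatible global representatives satisfying a Kottwitz-type integrality condition at every inert place (Proposition~\ref{prop:globalkottwitz}), and this is what allows the fundamental lemma to collapse to Jacquet--Rallis. Your remark that ``the theta kernel contributes extra local factors'' gestures at this but does not identify the specific obstruction or its resolution; the paper stresses that this representative-dependence is the main novelty and the reason the unramifiedness hypotheses are hard to remove.

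Finally, your outline does not address the local condition~(1) of the conjecture or the uniqueness of $V$; neither does Theorem~\ref{thm:maintheorem}, since under its hypotheses the local Hom spaces are automatically nonzero and only the split $V$ enters.
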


Our main theorem is the following.

\begin{restatable}{theorem}{mainthm}
\label{thm:maintheorem}
Let $V$ be the split skew-Hermitian space of dimension
$n$ over $E$.
Assume
\begin{enumerate}[\normalfont(1)]
\item
\label{item:unr}
The extension $E/F$ is unramified everywhere.

\item
\label{item:unrpi} 
The representation $\pi_v$
is unramified for all inert places $v$ of $F$.

\item 
\label{item:unrchars}
For every inert place $v$ of $F$, the
character $\mu_v$ is unramified, $\psi_v'$ has
conductor $\O_{F_v}$, and $j \in \O_{E_v}^\times$.

\item Every archimedean place of $F$ and
every place of $F$ lying above $2$ is split in $E$.

\item \label{item:scusp}
There exist two nonarchimedean split places
$v_1$, $v_2$ of $F$ such that $\pi_{v_1}$
and $\pi_{v_2}$ are supercuspidal.

\end{enumerate}
Then the period integral $\P(\varphi, \phi)$ is not identically zero
if and only if 
$L(\frac12, \pi \times \prescript{\sigma}{}{\pi^\vee}
\otimes \mu^{-1}) \neq 0$.
\end{restatable}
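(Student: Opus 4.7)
The plan is to prove the theorem via a relative trace formula (RTF) comparison, in the spirit of the Jacquet--Rallis / Liu / Xue approach to the non-twisted Gan--Gross--Prasad conjecture. The easier direction (nonvanishing of $\P(\varphi, \phi)$ implies $L(\tfrac12, \pi \times \sig{\pi^\vee} \otimes \mu^{-1}) \neq 0$) should follow from a Rallis inner product / seesaw computation: squaring the period and identifying $\ol{\Theta} \otimes \Theta$ on the seesaw with a doubled theta kernel, an unfolding argument produces an Euler product whose main term is precisely the relevant central value, so the period can only be nonzero when the $L$-value is. For the other direction I will set up two RTFs and compare them.

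On the unitary side, for $f \in C_c^\infty(\U(V)(\AA_F))$ and Schwartz data $\phi_1, \phi_2 \in \S(\LL(\AA_F))$, I consider the distribution
\begin{align*}
J(f; \phi_1, \phi_2) = \int_{[\U(V)] \times [\U(V)]} K_f(h_1, h_2) \, \Theta_{\psi', \mu}(h_1, \phi_1) \, \ol{\Theta_{\psi', \mu}(h_2, \phi_2)} \, dh_1 \, dh_2,
\end{align*}
whose spectral expansion selects the absolute squares $|\P(\varphi_\pi, \phi)|^2$. On the general linear side, for a test function $f'$ on $\Res_{E/F}\GL_n(\AA_F)$ (doubled as appropriate), I set up a Rankin--Selberg / Flicker--Rallis type distribution $I(f')$ whose spectral expansion produces $L(\tfrac12, \pi \times \sig{\pi^\vee} \otimes \mu^{-1})$ times local integrals. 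The first main task is to establish a matching of orbital integrals between $J$ and $I$ under a suitable transfer of test functions: at split places the unitary group collapses to a copy of $\GL_n$ and the matching becomes essentially tautological, while at inert places the unramifiedness hypotheses \eqref{item:unr}--\eqref{item:unrchars} reduce the question to a \emph{fundamental lemma} for characteristic functions of hyperspecial subgroups. The splitness of archimedean and dyadic places (assumption (4)) sidesteps all archimedean and wildly-ramified smooth transfer issues.

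Once the orbital matching is in place, I will exploit assumption \eqref{item:scusp}: choosing the components of $f$ and $f'$ at $v_1$ and $v_2$ to be matrix coefficients of supercuspidals drastically simplifies both sides. On the spectral side, it kills the continuous spectrum and isolates the $\pi$-isotypic piece; on the geometric side, it ensures that the orbital integrals are supported on regular semisimple orbits, bypassing the need for an Arthur-style truncation. The resulting cuspidal identity, localized to $\pi$, yields
\begin{align*}
|\P(\varphi, \phi)|^2 = (\text{local factors}) \cdot L(\tfrac12, \pi \times \sig{\pi^\vee} \otimes \mu^{-1}),
\end{align*}
and one then exhibits explicit choices of $\varphi$ and $\phi$ for which the local factors are nonzero (using, at nonsplit unramified places, spherical vectors, and at the supercuspidal places, matching matrix coefficients). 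The main obstacle I expect is proving the relevant fundamental lemma at inert places for the twisted setup; a secondary difficulty is the careful definition and convergence of the RTFs, for which the supercuspidal hypothesis is essential to control the Eisenstein spectrum and singular orbits simultaneously.
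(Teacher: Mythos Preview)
Your overall RTF strategy is correct, but there are two genuine gaps relative to what the paper actually does.

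First, the ``easier'' direction is \emph{not} proved by a seesaw/Rallis inner product argument. In this twisted setting $\U(V) \subset \GL(V) = \Res_{E/F}\GL_n$, there is no evident dual-pair seesaw that directly produces $L(\tfrac12, \pi \times \sig{\pi^\vee} \otimes \mu^{-1})$ from $|\P|^2$. The paper runs the full RTF comparison in \emph{both} directions: given $\P \not\equiv 0$ one manufactures a good unitary-side test function with $\sJ_\pi \neq 0$, transfers it, and concludes $\sI_\Pi \neq 0$, hence the $L$-value is nonzero via Rankin--Selberg; and conversely. So your plan for the ``harder'' direction is in fact the plan for both.

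Second, and more seriously, you miss the central technical novelty. The geometric sides here are \emph{not} sums of genuine orbital integrals: because of the theta factor, the local orbital integrals depend on a choice of double-coset representatives $\gamma \in H_1 \backslash G' / H_2$ and $\zeta \in H \backslash G / H$, and different choices give different answers. The key step is a global number-theoretic argument (the paper's Proposition~\ref{prop:globalkottwitz}) showing one can choose representatives $\gamma$ that are ``Kottwitz'' at every inert place --- meaning $\gamma$ is an $\O_{E_v}$-polynomial in $\ol\gamma\gamma$ whenever $\ol\gamma\gamma$ is conjugate into $\GL_n(\O_{E_v})$ --- and compatible representatives $\zeta$ on the unitary side. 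With this choice, the unramified orbital integral on each side collapses (Lemmas~\ref{lem:gltoJR} and~\ref{lem:utoJR}) to the Jacquet--Rallis Lie-algebra orbital integral, and the fundamental lemma you flag as ``the main obstacle'' is then literally the already-proven Jacquet--Rallis fundamental lemma, not a new statement. Without this representative-choice mechanism the comparison of orbital integrals simply does not go through, and your sketch gives no indication of how to bridge that gap.
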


\subsection{Refined twisted GGP conjecture}
There is also a refined version of the twisted GGP conjecture,
formulated in \cite{MR4622393}, 
and we can also prove a refined version of Theorem \ref{thm:maintheorem}.

In order to state the refined conjecture, we fix a decomposition
$dh = \prod_v dh_v$ of the Tamagawa measure on $\U(V)(\AA_F)$,
for Haar measures $dh_v$ on $\U(V)(F_v)$, and let
$dg$ be the Tamagawa measure on $\GL_n(\AA_E)$.

We define the Petersson inner product 
$\langle -, - \rangle_{\Pet}$ on $\pi$ by
\[
	\langle \varphi, \varphi'\rangle_{\Pet}
	= \int\limits_{\GL_n(E)\bs\GL_n(\AA_E)^1} \varphi(g)\ol{\varphi'(g)}
	\,dg,
\]
where $\GL_n(\AA_E)^1$ consists of the elements 
$g \in \GL_n(\AA_E)$ with $|\det g| = 1$.
Fix invariant inner products 
$\langle -, -\rangle_v$ on $\pi_v$
for each $v$ such that $\langle -, - \rangle_{\Pet}
= \prod_v \langle -, -\rangle_v$.

Finally, fix an isomorphism $\LL \cong F_n$, and for each $v$,
let $dx_v$ be the self-dual measure on $\LL(F_v)$ with
respect to $\psi_v'$. We have the inner product on the local
Weil representation $\omega_{\psi'_v, \mu_v}$ of $\U(V)(F_v)$,
which is realized on $\S(\LL(F_v))$, given by
\[
	\langle \phi_v, \phi_v' \rangle_v = \int\limits_{\LL(F_v)}
	\phi_v(x)\ol{\phi_v'(x)}\, dx_v.
\]

\begin{definition}
For $\varphi_v, \varphi_v' \in \pi_v$ and
$\phi_v, \phi_v' \in \S(\LL(F_v))$, define
\[
	\alpha_v(\varphi_v, \phi_v, \varphi_v', \phi_v') =
	\int\limits_{\U(V)(F_v)} \langle \pi_v(h)\varphi_v, \varphi_v'
	\rangle_v \langle \omega_{\psi'_v, \mu_v}(h_v) \phi_v,
	\phi_v'\rangle_v \, dh.
\]
For $\pi_v$ tempered, this integral converges absolutely
(see Proposition \ref{prop:alphaconvergence}).
\end{definition}

Define the quotient of $L$-functions 
\[
	\L(s, \pi) = \prod_{i = 1}^n \frac{\zeta_E(s + i - \frac12)}{L(s + i - \frac12, \eta^i)} \cdot
	\frac{L(s, \pi \times \sig{\pi^\vee} \otimes \mu^{-1})}{L(s + \frac12, \pi, \Ad)}.
\]
Note that both the numerator and denominator of $\L(s, \pi)$
have a simple pole at $s = \frac12$, so $\L(s, \pi)$ is holomorphic
at $s = \frac12$.
Let $\L(s, \pi_v)$ denote the corresponding quotient
of local $L$-factors.

\begin{conjecture}
\label{conj:unramified}
Let $v$ be a nonarchimedean place of $F$.
Assume that the extension $E_v/F_v$ is unramified, 
the Hermitian space $V$ is split at $v$, 
the conductor of $\psi_v'$ is $\O_{F_v}$, and $\mu_v$
is unramified. 
Let $\varphi_v$ be the $\GL_n(\O_{E_v})$-fixed vector of 
$\pi_v$ such that
$\langle \varphi_v, \varphi_v \rangle_v = 1$, and let
$\phi_v$ be the $\U(V)(\O_{F_v})$-fixed vector of
$\S(\LL(F_v))$ such that $\langle \phi_v, \phi_v \rangle_v = 1$. Then 
\[
	\alpha_v(\varphi_v, \phi_v, \varphi_v, \phi_v) = \L(\tfrac12, \pi_v) \cdot
	\Vol(\U(V)(\O_{F_v})).
\]
\end{conjecture}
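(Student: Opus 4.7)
The plan is to establish Conjecture \ref{conj:unramified} by an explicit unramified calculation. Since both $\varphi_v$ and $\phi_v$ are fixed by $K_v := \U(V)(\O_{F_v})$, the integrand
\[
\langle \pi_v(h)\varphi_v,\varphi_v\rangle_v \cdot \langle \omega_{\psi_v',\mu_v}(h)\phi_v,\phi_v\rangle_v
\]
is bi-$K_v$-invariant. Using the Cartan decomposition of $\U(V)(F_v)$, which is quasi-split and unramified by hypothesis, write
\[
\alpha_v(\varphi_v,\phi_v,\varphi_v,\phi_v) = \sum_\lambda \Vol(K_v t_\lambda K_v)\, \Phi_{\pi_v}(t_\lambda)\, W_{\phi_v}(t_\lambda),
\]
where $\lambda$ ranges over dominant cocharacters of a split maximal torus of $\U(V)$, $\Phi_{\pi_v}(h) := \langle \pi_v(h)\varphi_v,\varphi_v\rangle_v$, and $W_{\phi_v}(h) := \langle \omega_{\psi_v',\mu_v}(h)\phi_v,\phi_v\rangle_v$.

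Next I would compute the two factors on each $t_\lambda$ separately. For $\Phi_{\pi_v}$, the Macdonald formula for the unramified principal series of $\GL_n(E_v)$ expresses $\Phi_{\pi_v}(t_\lambda)$ as a Hall--Littlewood-type polynomial in the Satake parameters of $\pi_v$, evaluated at the image of $t_\lambda$ under $\U(V)(F_v) \hookrightarrow \GL_n(E_v)$. For $W_{\phi_v}$, the explicit Schr\"odinger model of $\omega_{\psi_v',\mu_v}$ on $\S(\LL(F_v))$, together with the conditions that the conductor of $\psi_v'$ is $\O_{F_v}$, that $\mu_v$ is unramified, and that $j \in \O_{E_v}^\times$, forces $W_{\phi_v}$ to be supported on an explicit cone of dominant $\lambda$ and to equal $\mu_v(\det t_\lambda)\cdot |\det t_\lambda|_{F_v}^{1/2}$ there, up to the Weil constant. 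This reduces $\alpha_v$ to a multiple geometric series in the Satake parameters.

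Finally, I would identify this series with $\L(\tfrac12,\pi_v)\cdot\Vol(K_v)$ by a Cauchy--Hall--Littlewood-type identity. Each factor of $\L(s,\pi_v)$ at $s=\tfrac12$ must emerge: the numerators $\zeta_{E_v}(s+i-\tfrac12)$ and $L(s,\pi_v\times\sig{\pi_v^\vee}\otimes\mu_v^{-1})$ and the denominators $L(s+i-\tfrac12,\eta_v^i)$ and $L(s+\tfrac12,\pi_v,\Ad)$, with the $\mu_v$-twist in $L(s,\pi_v \times \sig{\pi_v^\vee} \otimes \mu_v^{-1})$ coming precisely from the $\mu_v(\det t_\lambda)$ in $W_{\phi_v}$. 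The main obstacle is this combinatorial identification: keeping consistent track of the $\mu_v$-twist, the Weil factor, and the measure normalization so that the outside factor $\Vol(K_v)$ appears cleanly. An alternative route that would shortcut some of the combinatorics is to interpret $W_{\phi_v}$ as a matrix coefficient under the local theta correspondence and apply a see-saw: this would convert $\alpha_v$ into an unramified doubling zeta integral for $\pi_v$, whose evaluation in terms of $\L(\tfrac12,\pi_v)$ is standard. Either way, the explicit calculation will match the right-hand side of Conjecture \ref{conj:unramified}.
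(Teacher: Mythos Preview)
The paper does not prove this statement. It is stated as Conjecture~\ref{conj:unramified}, and immediately after the statement the paper says: ``Conjecture~\ref{conj:unramified} is a work in progress by Evan Chen.'' The conjecture is then used as a hypothesis in Theorem~\ref{thm:refined} (``If Conjecture~\ref{conj:unramified} is true, then \ldots'') and in Proposition~\ref{prop:localdistrid}. So there is no proof in the paper to compare your proposal against.

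As for your proposal itself: what you have written is a reasonable outline of how one expects such an unramified computation to go, but it is only an outline, not a proof. The genuinely hard step is exactly the one you flag as ``the main obstacle'': the combinatorial identification of the Cartan sum with the specific quotient $\L(\tfrac12,\pi_v)$. Writing down the Cartan decomposition and invoking Macdonald's formula is routine; what is not routine is that the spherical function $\Phi_{\pi_v}$ here is the restriction to $\U(V)(F_v)$ of the $\GL_n(E_v)$-spherical function, not the intrinsic $\U(V)$-spherical function, and the interaction of this restricted function with the Weil-representation factor $W_{\phi_v}$ over the unitary Cartan cells is precisely the content of the conjecture. Your alternative suggestion (see-saw to a doubling integral) is also plausible in spirit, but again the identification with this particular $L$-quotient, including the adjoint $L$-factor in the denominator, is not something that follows from a ``standard'' doubling computation without further work. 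In short, your proposal names the right ingredients but does not close the gap that makes this a conjecture rather than a lemma.
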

We also define the normalized local period integral
\[
	\alpha_v^\natural(\varphi_v, \phi_v, \varphi_v', \phi_v') = \frac{1}{\L(\frac12, \pi_v)}\cdot \alpha_v(\varphi_v, \phi_v, \varphi_v', \phi_v').
\]

\begin{conjecture}[{\cite[Conjecture~2.7]{MR4622393}}]
\label{conj:refined}
Assume that $\pi$ is a tempered cuspidal automorphic
representation of $\GL_n(\AA_E)$. Then for
$\varphi = \otimes_v \varphi_v \in \pi$ and
$\phi = \otimes_v\phi_v \in \S(\LL(\AA_F))$, we have
\[
	|\P(\varphi, \phi)|^2 = \L(\tfrac12, \pi) \prod_v \alpha_v^\natural(\varphi_v, \phi_v, \varphi_v, \phi_v).
\]
\end{conjecture}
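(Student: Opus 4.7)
The plan is to extract Conjecture~\ref{conj:refined} from the relative trace formula comparison $\sI = \sJ$ established in the proof of Theorem~\ref{thm:maintheorem}, following the Ichino--Ikeda-style strategy developed in the Hermitian case by Beuzart-Plessis--Liu--Zhang--Zhu \cite{MR4298750}. First I would spectrally decompose both distributions. On the $\sJ$-side, the supercuspidal assumption \eqref{item:scusp} at two split places, combined with the standard projector argument, isolates the $\pi$-contribution as a sum over an orthonormal basis of $\pi$ of products $\P \cdot \ol{\P}$, which for a pure tensor $\varphi \otimes \phi$ factors as $|\P(\varphi, \phi)|^2$ times a product of local relative characters $J_{\pi_v}(f_v, \phi_{1,v}, \phi_{2,v})$ built from the Petersson inner product $\langle -, -\rangle_v$. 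On the $\sI$-side, Rankin--Selberg/Flicker--Rallis unfolding applied to the inner period against the theta series $\Theta_\mu$ produces the $\pi$-contribution as the global $L$-value $L(\tfrac12, \pi \times \sig{\pi^\vee} \otimes \mu^{-1})$ times a product of local Rankin--Selberg distributions $I_{\pi_v}(f'_v, \phi'_v)$.

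Next I would match local relative characters. The identity $\sI(f', \phi') = \sJ(f, \phi_1 \otimes \phi_2)$ combined with the above spectral decompositions reduces the refined conjecture to the local statement
\[
	I_{\pi_v}(f'_v, \phi'_v) = \L(\tfrac12, \pi_v) \cdot \alpha_v^\natural(\varphi_v, \phi_v, \varphi_v, \phi_v) \cdot J_{\pi_v}(f_v, \phi_{1,v}, \phi_{2,v})
\]
for matching test data at each place $v$, modulo local measure bookkeeping. At inert unramified places with the unramified data from the fundamental lemma of Section~\ref{sec:FL}, this is exactly Conjecture~\ref{conj:unramified}, quoted from Evan Chen's work in progress, which supplies $\alpha_v^\natural = 1$ at such places. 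At split places, $\U(V)(F_v) \cong \GL_n(F_v)$ embeds diagonally in $\GL_n(E_v) \cong \GL_n(F_v)^2$, so the twisted period reduces to the classical Rankin--Selberg integral and both $I_{\pi_v}$ and $\alpha_v$ can be computed directly from the zeta integral; I would verify the required ratio in parallel with the explicit smooth transfer already used at split places for Theorem~\ref{thm:maintheorem}. A density argument in $f_v$ at the two supercuspidal split places $v_1, v_2$ then propagates the identity from the particular test functions used in the comparison to all pure tensors.

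Assembling these local identities into the global comparison yields $|\P(\varphi, \phi)|^2 = \L(\tfrac12, \pi) \prod_v \alpha_v^\natural(\varphi_v, \phi_v, \varphi_v, \phi_v)$, where the Artin factors $\zeta_E(s + i - \tfrac12)/L(s + i - \tfrac12, \eta^i)$ inside $\L$ arise, as usual, from the ratio of Tamagawa volumes for $\U(V)$ and $\GL_n$ together with the $L(1, \pi, \Ad)$ appearing in $\langle -, -\rangle_{\Pet}$. The main obstacle, as in Theorem~\ref{thm:maintheorem}, stems from the nonstandard feature that the geometric sides of $\sI$ and $\sJ$ are not sums of genuine orbital integrals: the local relative characters must be defined with compatible normalizations so that the Euler product $\prod_v I_{\pi_v}/J_{\pi_v}$ is well defined and matches the global $\L$-quotient, which requires showing that the transfer factor $\omega([\ol\gamma\gamma, x, y])$ of Section~\ref{sec:transfer} interacts correctly with the Ichino--Ikeda-style local inner products on $\pi_v$ and $\omega_{\psi'_v, \mu_v}$. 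A secondary, more routine, issue is the uniform absolute convergence of $\prod_v \alpha_v^\natural$, which reduces at almost all places to Conjecture~\ref{conj:unramified} and at the remaining places to Proposition~\ref{prop:alphaconvergence}.
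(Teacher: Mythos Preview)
The statement you are trying to prove is labeled a \emph{conjecture} in the paper and is not proved there in full generality. What the paper actually proves is Theorem~\ref{thm:refined}, which is Conjecture~\ref{conj:refined} under the additional hypotheses (1)--(5) of Theorem~\ref{thm:maintheorem} \emph{and} assuming Conjecture~\ref{conj:unramified}. Your proposal repeatedly invokes those hypotheses (the supercuspidal places, the unramified fundamental lemma, Conjecture~\ref{conj:unramified}) without flagging that they are extra assumptions; as written, the argument cannot establish the unrestricted conjecture, because smooth transfer is only available at split places (Proposition~\ref{prop:splitmatch}) and the fundamental lemma only in the fully unramified situation (Theorem~\ref{thm:FL}). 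So the first gap is scope: at best you are sketching a proof of Theorem~\ref{thm:refined}, not of Conjecture~\ref{conj:refined}.

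Even as a sketch of Theorem~\ref{thm:refined}, your local step is mis-stated. The paper does \emph{not} prove an identity of the form $I_{\pi_v} = \L(\tfrac12,\pi_v)\cdot \alpha_v^\natural(\varphi_v,\phi_v,\varphi_v,\phi_v)\cdot J_{\pi_v}$; the local period $\alpha_v$ is already built into $J_{\pi_v}$ (see its definition), and the actual local distribution identity is simply $J_{\pi_v}(f_v,\phi_{1,v}\otimes\phi_{2,v}) = \zeta_{E_v}(1)\, I_{\Pi_v}(f_v',\phi_v')$ (Proposition~\ref{prop:localdistrid}). Also, Conjecture~\ref{conj:unramified} does not give $\alpha_v^\natural = 1$ at unramified places; it gives $\alpha_v^\natural = \Vol(\U(V)(\O_{F_v}))$, and the volume factors are what produce the extra $\zeta_E/L(\cdot,\eta^i)$ ratio in $\L$. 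Finally, you propose a density argument at $v_1,v_2$ to propagate to all pure tensors, but the paper instead uses local multiplicity one (Section~\ref{sec:unitaryperiods}) from the outset to write $|\P(\varphi,\phi)|^2 = C\prod_v \alpha_v^\natural(\varphi_v,\phi_v,\varphi_v,\phi_v)$ for a single constant $C$, and the entire RTF comparison (Propositions~\ref{prop:identity}, \ref{prop:Idecomp}, \ref{prop:localdistrid}) is used only to evaluate $C = \L(\tfrac12,\pi)$ at one well-chosen test function. Your strategy is in the right spirit, but the logical order---multiplicity one first, then determine the constant---is what makes the argument clean, and your version obscures this.
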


We have the following refined version of Theorem
\ref{thm:maintheorem}.

\begin{restatable}{theorem}{refined}
\label{thm:refined}
Let $V$ be the split skew-Hermitian space of dimension $n$
over $E$. Assume that $\pi$ is tempered.
Also assume conditions (1)-(5) of 
Theorem \ref{thm:maintheorem}.
If Conjecture \ref{conj:unramified} is true,
then for $\varphi = \otimes_v \varphi_v \in \pi$ and
$\phi = \otimes_v \phi_v \in \S(\LL(\AA_F))$,
we have
\[
	|\P(\varphi, \phi)|^2 =
	\L(\tfrac12, \pi) \prod_v \alpha_v^\natural(\varphi_v, \phi_v, \varphi_v, \phi_v).
\]
\end{restatable}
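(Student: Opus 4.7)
The plan is to refine the proof of Theorem~\ref{thm:maintheorem} by tracking the contribution of $\pi$ to both sides of the matched relative trace formulas, rather than merely exploiting nonvanishing. With matched test functions $(f, \phi_1 \otimes \phi_2) \leftrightarrow (f', \phi')$ as in the comparison used there, we have a global identity $\sJ(f, \phi_1 \otimes \phi_2) = c \cdot \sI(f', \phi')$ for an explicit constant $c$. I would choose the components at the two supercuspidal places $v_1, v_2$ from hypothesis~\eqref{item:scusp} to be pseudo-matrix coefficients of $\pi_{v_1}$ and $\pi_{v_2}$, so that the spectral expansion of each side collapses to the contribution of the near-equivalence class of $\pi$; a standard strong multiplicity one argument then isolates $\pi$ itself.

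Next I would factorize the resulting $\pi$-components as Eulerian products. On the $\sI$-side, a Rankin--Selberg / Flicker--Rallis unfolding of the inner integral against $\Theta_{\mu}$ identifies $\sI_\pi(f', \phi')$ with $L(\tfrac12, \pi \times \sig{\pi^\vee} \otimes \mu^{-1})$, together with the auxiliary zeta factors appearing in $\L(s, \pi)$, times a product of local distributions $\sI_{\pi_v}(f'_v, \phi'_v)$. On the $\sJ$-side, expanding the kernel over an orthonormal basis of $\pi$ and carrying out the Ichino--Ikeda-style reduction gives, for factorizable vectors, a term of shape $|\P(\varphi, \phi)|^2 / \langle\varphi,\varphi\rangle_{\Pet}\langle\phi,\phi\rangle$ whose local factorization is the product of the $\alpha_v(\varphi_v, \phi_v, \varphi_v, \phi_v)$. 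The adjoint $L$-factor in $\L(s, \pi)$ enters through the comparison of $\langle -, -\rangle_{\Pet}$ with its decomposition as $\prod_v \langle -, -\rangle_v$.

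Comparing the two factorizations yields a global identity of the form
\[
    |\P(\varphi, \phi)|^2 = \L(\tfrac12, \pi) \prod_v \alpha_v^\natural(\varphi_v, \phi_v, \varphi_v, \phi_v) \cdot \prod_v r_v,
\]
where the local ratios $r_v$ are determined by matching local distributions on the two sides. At inert unramified places, the identity $r_v = 1$ is precisely Conjecture~\ref{conj:unramified}, which is assumed. At split places $v$ of $F$ one has $\U(V)(F_v) \cong \GL_n(F_v)$ and the Weil representation $\omega_{\psi'_v, \mu_v}$ becomes a standard model on $F_{v,n}$, so both $\alpha_v$ and the matched local factor on the $\sI$-side reduce to Flicker--Rallis-type local integrals whose equality follows from the explicit smooth transfer already used at split places in the proof of Theorem~\ref{thm:maintheorem}.

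The main obstacle will be the dependence of the geometric decompositions on the choice of double-coset representatives for $H(F) \bs G(F) / H(F)$ and $H_1(F) \bs G'(F) / H_2(F)$—the novel feature flagged in the introduction—which must be propagated cleanly through the spectral side so that the $\pi$-components factor as genuine Eulerian products rather than as twisted products entangled with the representative choice. Alongside this, absolute convergence of the $\alpha_v$ and of the spectral sums must be justified using Proposition~\ref{prop:alphaconvergence} together with the temperedness hypothesis on $\pi$.
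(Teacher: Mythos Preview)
Your strategy matches the paper's: isolate the $\pi$-contribution to both relative trace formulas (Proposition~\ref{prop:identity}), factorize $\sI_\Pi$ via Rankin--Selberg (Proposition~\ref{prop:Idecomp}), and compare via the local distribution identity $\sJ_{\pi_v} = \zeta_{E_v}(1)\sI_{\Pi_v}$ (Proposition~\ref{prop:localdistrid}), which is exactly your ``$r_v = 1$'' step at split and at unramified inert places.

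Two clarifications are worth making. First, your main stated obstacle is not one: the spectral distributions $\sJ_\pi$ and $\sI_\Pi$ are defined directly as sums of period integrals over an orthonormal basis and never see the double-coset representatives. The representatives enter only in the geometric expansions (Propositions~\ref{prop:GeoGL} and~\ref{prop:GeoU}) used to establish the global identity $\sI_\Pi = \zeta_E^\ast(1)^{-1}\sJ_\pi$; once that is in hand, the Eulerian factorization on the spectral side is completely disentangled from the choice of representatives. Second, the passage from the distribution identity to the pointwise identity for $|\P(\varphi,\phi)|^2$ is not obtained by expanding the kernel to a single term of the shape you describe. Rather, local multiplicity one (which holds at every place under the hypotheses; see Section~\ref{sec:unitaryperiods}) guarantees a priori that there is a constant $C$ with $|\P(\varphi,\phi)|^2 = C\prod_v \alpha_v^\natural(\varphi_v,\phi_v,\varphi_v,\phi_v)$ for \emph{all} factorizable data, and hence $\sJ_\pi(f,\phi_1\otimes\phi_2) = C\prod_v \sJ_{\pi_v}^\natural(f_v,\phi_{1,v}\otimes\phi_{2,v})$. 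The trace formula comparison is then applied to a \emph{single} good test function chosen so that $\prod_v \sJ_{\pi_v}^\natural \neq 0$, which pins down $C = \L(\tfrac12,\pi)$. Note also that only $v_1$ carries a truncated matrix coefficient; the place $v_2$ is needed for the regular-semisimple support condition that makes the geometric expansions valid, not for spectral isolation.
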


\subsection{Outline of proof of Theorem \ref{thm:maintheorem}}
Theorem \ref{thm:maintheorem} is proved by comparing two
relative trace formulas, which we describe below, as 
well as a novel aspect of the trace formulas.

Let 
\[		
	G = \Res_{E/F}\GL_n,\quad H = \U(V).
\]
Let $\C_c^\infty(G(\AA_F))$ be the space of compactly
supported smooth functions on $G(\AA_F)$, and let
$\S(\LL(\AA_F))$ be the space of Schwartz functions on
$\LL(\AA_F)$. 
Let  
$f \in \C_c^\infty(G(\AA_F))$, $\phi_1 \otimes \phi_2 \in
\S(\LL(\AA_F))^{\otimes 2}$. Define
\[
	K_f(g, h) = \sum_{\zeta \in G(F)} f(g^{-1}\zeta h).
\]
To study the period integral $\P$, we
consider the distribution
\[
	\sJ(f, \phi_1 \otimes \phi_2) = \iint\limits_{(H(F)\bs H(\AA_F))^2}
	K_f(g, h) \ol{\Theta_{\psi', \mu}(g, \phi_1)}
	\Theta_{\psi', \mu}(h,
	\phi_2) \, dg\, dh.
\]

On the other hand, let
\begin{gather*}
	G' = \Res_{E/F}(\GL_n \times \GL_n),\\
	H_1 = \Res_{E/F}\GL_n,\quad
	H_2 = \{(g, \ol g) : g \in \Res_{E/F}\GL_n\},
\end{gather*}
with  $H_1$ embedded diagonally into $G'$ 
and $H_2$ embedded componentwise.
Let $f' \in \C_c^\infty(G'(\AA_F))$, $\phi' \in \S(\AA_{E,n})$. (Here, $\AA_{E,n}$ denotes the row vectors of dimension
$n$.) Define
\[
	K_{f'}(g, h) = \sum_{\gamma \in G'(F)} f'(g^{-1}\gamma h).
\]
We also define
\[
	(R_{\mu}(g)\phi')(z) = \mu(\det g)|\det g|^{\frac12}\phi'(zg),
\]
and the theta series
\[
	\Theta_{\mu}(g, \phi') = 
	\sum_{z \in E_n} (R_\mu(g)\phi')(z).
\]
To study the $L$-value $L(\frac12, \pi \times \sig{\pi^\vee} \otimes \mu^{-1})$, we consider the distribution
\[
	\sI(f', \phi') = \int\limits_{H_2(F)\bs H_2(\AA_F)}\int\limits_{H_1(F)\bs H_1(\AA_F)} K_{f'}(g, h)
	\ol{\Theta_{\mu}(g, \phi')}
	\, dg\, dh.
\]
Both of these distributions have a geometric and
spectral expansion. 

The new feature of these relative trace formulas is that
the distributions do not decompose as sums over the orbits
of an $H_1 \times H_2$ group action.
Due to the theta series factor, the geometric decompositions
of $I$ and $J$ involve partial Fourier transforms
\[
	^\dagger \colon \S(\AA_{E,n}) \to \S(\AA_{F,n} \times \AA_F^{-,n})
\]
and 
\[
	^\ddagger \colon \S(\LL(\AA_F))^{\otimes 2} \to \S(V^\vee(\AA_E)),
\]
as in \cite{MR3244725} (for their precise definitions, see
Definitions \ref{def:pftgl} and \ref{def:pftu}).
However, the group $H_1(F) \times H_2(F)$ 
does not act on 
$G'(F) \times F_n \times F^{-,n}$.
The new idea of this paper is to choose
representatives $\gamma$ of 
\[
	H_1(F) \bs G'(F) / H_2(F)
\]
such that the stabilizer of $\gamma$ in $H_1(F) \times H_2(F)$ 
acts on $F_n \times F^{-,n}$.
Similarly, we choose representatives $\zeta$ of 
\[
	H(F) \bs G(F) / H(F)
\]
whose stabilizers in $H(F) \times H(F)$ act on
$V^\vee$.
The orbital integrals obtained in the geometric decomposition
contain a factor involving the partial Fourier transform
of a Weil representation, and do depend on the choice
of representatives.

One of our key local results is the following fundamental lemma.
Let $v$ be a place of $F$. Then we have
\begin{align*}
	&\int_{\GL_n(E_v)} \widetilde f_v'(g^{-1}\gamma \ol g)
	\ol{(R_{\mu_v}(g)\phi_v')^\dagger(x, y)}\, dg \\
	&= \omega([\ol\gamma\gamma, x, y])
	\iint\limits_{(\U(V)(F_v))^2}
	f_v(g^{-1}\zeta h) (\phi_{2,v} \otimes \ol{\omega_{\psi_v',\mu_v}(h^{-1}g)
	\phi_{1,v}})^\ddagger(z h)\, dg\, dh
\end{align*}
for
\[
	\widetilde f_v' = \mathbbm{1}_{\GL_n(\O_{E_v})},
	\quad \phi_v' = \mathbbm{1}_{\O_{E_v, n}},
	\quad f_v = \mathbbm{1}_{\GL_n(\O_{E_v})},
	\quad \phi_{1,v} = \phi_{2,v} = \mathbbm{1}_{\LL(\O_{F_v})},
\]
and $[\gamma, x, y] \in \GL_n(E) \times F_n \times F^{-,n}$
and 
$[\zeta, z] \in \GL_n(E) \times V^\vee$ which match in the
sense of Lemma \ref{lem:matching}.
Here, $\omega([\ol\gamma\gamma, x, y])$ is a transfer factor. For its precise
definition, see Section \ref{sec:transfer};
for the complete statement of the fundamental lemma, 
see Section \ref{sec:FL}.

In the above equation, $\gamma$ and $\zeta$ belong to
our sets of double coset representatives, once we
identify $H_1\bs G'(F)/H_2(F)$ with the $\sigma$-conjugacy
classes of $\GL_n(E)$.
We prove this fundamental lemma by choosing representatives globally so that 
at all inert $v$, it reduces to
the Jacquet--Rallis fundamental lemma \cite{MR2767518}, 
which has been shown by Yun and Gordon
\cite{MR2769216} when the residue characteristic of
$F$ is large, and in full by Beuzart-Plessis \cite{MR4305382} via a purely local proof. A global proof is given
for residue characteristic greater than $n$
in W. Zhang's work \cite{MR4250392} on the arithmetic fundamental lemma, and
in general by Z. Zhang \cite{zhang2021maximal}.

Although we have not shown the existence of smooth transfer in general, transfer at split places of $F$ can 
be made explicit through a direct computation.
Since we assume in Theorem \ref{thm:maintheorem} that for all places $v$ of $F$, either $v$ is split or $v$ is inert and all data is unramified, this allows us to deduce the theorem. 
Assumption \eqref{item:scusp} is used to simplify the
relative trace formula.

In fact, the unramifiedness assumptions of our theorem
are not so easy to remove.
In order to make the aforementioned reduction, $\gamma$ and $\zeta$ are chosen so that the Weil representation factor
can be ignored, and the orbital integrals reduce to the 
orbital integrals in the Jacquet--Rallis relative trace formula.
(The properties we require of $\gamma$ and $\zeta$
are essentially the same as those in \cite[Lemma~8.8]{MR564478}.)
However, it does not appear that in general the orbital
integrals can be simplified in a similar way, 
which is why it appears to be difficult to
obtain the full transfer statement.

\begin{remark}
The phenomenon described above does not cause
as much difficulty
in the Fourier--Jacobi case, because in that case there is
a natural section of the quotient map
$\U(V) \times \U(V) \to \U(V) \times \U(V) / \U(V)$.
We do not have such a nice choice for $G \to G/H$.
We remark that this is the first time such
a difficulty has occurred.
\end{remark}

\subsection{Outline of proof of Theorem \ref{thm:refined}}
We also outline the proof of Theorem \ref{thm:refined},
which is also based on the relative trace formulas described 
above.

First, we expect that $\alpha_v$ gives a nonzero element of 
\[
	\Hom_{\U(V)(F_v)}(\pi_v \otimes \ol{\omega_{\psi_v', \mu_v}} \otimes \ol{\pi_v} \otimes \omega_{\psi_v', \mu_v}, \CC)
\]
if and only if 
$\Hom_{\U(V)(F_v)}(\pi_v \otimes \ol{\omega_{\psi_v', \mu_v}}, \CC) \neq 0$.
We have not shown this in general.
However, under the assumptions of Theorem \ref{thm:refined}, we do know that $\alpha_v \neq 0$ for each place $v$ of $F$.
Then since 
$\P \otimes \ol{\P}$ is an element of 
\[
	\Hom_{\U(V)(\AA_F)}
	(\pi \otimes \ol{\omega_{\psi', \mu}} \otimes \ol{\pi} \otimes
	\omega_{\psi', \mu}, \CC),
\]
there exists a constant $C$ such that
\[
	|\P(\varphi, \phi)|^2 = C \times
	\prod_v \alpha_v^\natural(\varphi_v, \phi_v, \varphi_v, \phi_v)
\]
for all $\varphi = \otimes_v \varphi_v$,
$\phi = \otimes_v \phi_v$. 

We consider the global distribution
\[
  \sJ_\pi(f, \phi_1 \otimes \phi_2) =
  \sum_{\varphi}
  \P(\pi(f)\varphi, \phi_1)\ol{\P(\varphi, \phi_2)},
\]
for $f \in \C_c^\infty(G(\AA_F))$ and $\phi_1 \otimes \phi_2
\in \S(\LL(\AA_F))^{\otimes 2}$,
where the sum runs over an orthonormal basis of $\pi$.
We also consider the local distribution
\begin{align*}
	J_{\pi_v}(f_v, \phi_{1,v} \otimes \phi_{2,v})
	= \sum_{\varphi_v}
	\alpha_v(\pi_v(f_v)\varphi_v, \phi_{1,v}, \varphi_v, \phi_{2,v}),
\end{align*}
as well as its normalized version
\[
	J_{\pi_v}^\natural(f_v, \phi_{1,v} \otimes \phi_{2,v})
	= \sum_{\varphi_v}
	\alpha_v^\natural(\pi_v(f_v)\varphi_v, \phi_{1,v}, \varphi_v, \phi_{2,v}),
\]
where the sums above run over an orthonormal basis of $\pi_v$.

\begin{conjecture}
\label{conj:refinedJ}
Assume that $\pi$ is a tempered automorphic representation
of $\GL_n(\AA_E)$.
If $f = \otimes_v f_v$, $\phi_1 = \otimes_v \phi_{1,v}$, and
$\phi_2 = \otimes_v \phi_{2,v}$ are all factorizable, then
\[
	\sJ_{\pi}(f, \phi_1 \otimes \phi_2)
	= \L(\tfrac12, \pi)
	\prod_v \sJ_{\pi_v}^\natural(f_v, \phi_{1,v} \otimes \phi_{2,v}).
\]
\end{conjecture}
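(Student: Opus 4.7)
The plan is to deduce Conjecture \ref{conj:refinedJ} from Theorem \ref{thm:refined} in two steps: first, upgrade the pointwise factorization of $|\P(\varphi,\phi)|^2$ to a sesquilinear factorization of $\P(\varphi,\phi_1)\ol{\P(\varphi',\phi_2)}$; second, sum that identity over a factorizable orthonormal basis of $\pi$ and interchange the sum with the Euler product.

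For the first step, I would invoke local multiplicity one for the tempered twisted GGP pair: each local Hom space
\[
  \Hom_{\U(V)(F_v)}\bigl(\pi_v \otimes \ol{\omega_{\psi_v',\mu_v}},\CC\bigr)
\]
should be at most one-dimensional, which forces the four-variable Hom space
\[
  \Hom_{\U(V)(F_v)}\bigl(\pi_v \otimes \ol{\pi_v} \otimes \ol{\omega_{\psi_v',\mu_v}} \otimes \omega_{\psi_v',\mu_v}, \CC\bigr)
\]
to be one-dimensional. Both $\P \otimes \ol{\P}$ and $\L(\tfrac12,\pi)\prod_v \alpha_v^\natural$ are rank-one sesquilinear forms in the corresponding global Hom space, and they agree on the diagonal by Theorem \ref{thm:refined}; hence they agree as forms, yielding
\[
  \P(\varphi,\phi_1)\,\ol{\P(\varphi',\phi_2)}
  = \L(\tfrac12,\pi)\prod_v \alpha_v^\natural(\varphi_v,\phi_{1,v},\varphi_v',\phi_{2,v})
\]
for all factorizable $\varphi, \varphi', \phi_1, \phi_2$.

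For the second step, specialize $\varphi \mapsto \pi(f)\varphi$, $\varphi' \mapsto \varphi$ and sum over a factorizable orthonormal basis $\varphi = \otimes_v \varphi_v$ of $\pi$, noting that $\pi(f)\varphi = \otimes_v \pi_v(f_v)\varphi_v$ when $f = \otimes_v f_v$. This gives
\[
  \sJ_\pi(f,\phi_1\otimes\phi_2) = \L(\tfrac12,\pi) \sum_{\varphi} \prod_v \alpha_v^\natural\bigl(\pi_v(f_v)\varphi_v,\phi_{1,v},\varphi_v,\phi_{2,v}\bigr).
\]
Interchanging sum and product (justified below) then produces
\[
  \L(\tfrac12,\pi) \prod_v \sum_{\varphi_v} \alpha_v^\natural\bigl(\pi_v(f_v)\varphi_v,\phi_{1,v},\varphi_v,\phi_{2,v}\bigr) = \L(\tfrac12,\pi)\prod_v J_{\pi_v}^\natural(f_v,\phi_{1,v}\otimes\phi_{2,v}),
\]
which is the asserted identity.

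The principal obstacle is the local multiplicity-one statement invoked in the first step, which is part of the local twisted GGP conjecture and has not been established in the required generality. Under the hypotheses of Theorem \ref{thm:refined} it should be accessible: at split $v$ it reduces to the classical multiplicity-one for the diagonal pair $(\GL_n,\GL_n) \subset \GL_n \times \GL_n$, at unramified inert $v$ it follows from a standard spherical argument, and at the two supercuspidal split places the necessary vanishings can be controlled directly from supercuspidal support. A secondary technical point is the sum/product interchange: at almost all $v$ only the spherical basis element contributes (since $\pi_v(f_v^\circ)$ is the projection onto the spherical line), so the local sum reduces to a single term given in closed form by Conjecture \ref{conj:unramified}, and the resulting infinite product converges absolutely by standard bounds on tempered local $L$-factors together with Proposition \ref{prop:alphaconvergence} at the finite set of ramified places. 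Removing the conditions in Theorem \ref{thm:refined} would require unconditional local twisted multiplicity-one, a more delicate spectral analysis, and the unconditional version of Conjecture \ref{conj:unramified}.
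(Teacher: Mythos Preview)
This statement is labeled a \emph{conjecture} in the paper and is not given a proof there; the paper only remarks that, once the local forms $\alpha_v^\natural$ are known to be nonzero, Conjecture~\ref{conj:refinedJ} is equivalent to Conjecture~\ref{conj:refined}. Your argument is precisely a fleshed-out version of that equivalence, and under the hypotheses of Theorem~\ref{thm:refined} (where local multiplicity one and the nonvanishing of $\alpha_v^\natural$ are available at every place, cf.\ Section~\ref{sec:unitaryperiods}) it is correct.

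It is worth flagging, however, that the paper's logical flow runs in the opposite direction from yours. You take Theorem~\ref{thm:refined} as input and deduce the factorization of $\sJ_\pi$. In the paper, the identity $\sJ_\pi(f,\phi_1\otimes\phi_2)=\L(\tfrac12,\pi)\prod_v J_{\pi_v}^\natural(f_v,\phi_{1,v}\otimes\phi_{2,v})$ is established \emph{first}, for one specific choice of good matching test functions, directly via the relative trace formula comparison (Propositions~\ref{prop:identity}, \ref{prop:Idecomp}, \ref{prop:localdistrid}); that special case is then combined with the a~priori factorization $\sJ_\pi=C\prod_v J_{\pi_v}^\natural$ coming from~\eqref{eq:factorP} to pin down $C=\L(\tfrac12,\pi)$, which is Theorem~\ref{thm:refined}. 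So in the paper the identity of Conjecture~\ref{conj:refinedJ} (for a particular test function) is a stepping stone toward Theorem~\ref{thm:refined}, not a consequence of it; your argument then closes the loop by showing that once $C$ is determined, the identity holds for \emph{all} factorizable test functions.

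Two smaller remarks. First, your polarization step can be bypassed: the paper factors $\P$ itself as $\P=C'\prod_v\ell_v$ via multiplicity one for the two-variable Hom space (equation~\eqref{eq:factorPell}), and this immediately yields the sesquilinear factorization of $\P\otimes\ol{\P}$ without any appeal to a four-variable Hom space. Second, the four-variable space you write should carry an action of $\U(V)(F_v)\times\U(V)(F_v)$ (one copy through $(\pi_v,\ol{\omega_v})$, the other through $(\ol{\pi_v},\omega_v)$), not of the diagonal $\U(V)(F_v)$; both $\P\otimes\ol{\P}$ and $\prod_v\alpha_v^\natural$ are invariant for the product action, and it is that smaller Hom space whose one-dimensionality follows from multiplicity one for $\Hom_{\U(V)(F_v)}(\pi_v\otimes\ol{\omega_v},\CC)$.
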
{}

At least when we know the nonvanishing of $\alpha_v^\natural$,
Conjecture \ref{conj:refinedJ} can be seen to be equivalent
to Conjecture \ref{conj:refined}.

We also have analogous global and local distributions
$\sI_{\Pi}$, $\sI_{\Pi_v}$, and $\sI_{\Pi_v}^\natural$
on the general linear group, where $\Pi \coloneqq \pi \boxtimes \sig{\pi^\vee}$ is a representation of $G'(\AA_F)$.
The global distribution $\sI_{\Pi}$ 
decomposes as a product of the normalized local distributions
$\sI_{\Pi_v}^\natural$ is a manner similar to Conjecture
\ref{conj:refinedJ}. Such a decomposition follows from known
results on the Rankin--Selberg convolution.

The relative trace formula comparison gives
an identity between the global distributions
$\sJ_{\pi}$ and $\sI_{\Pi}$. Thus, it reduces our problem to
proving a relation between
the local distributions $\sJ_{\pi_v}$ and $\sI_{\Pi_v}$.
Because of the assumptions in our theorem, we only have to
consider split places $v$ and places where all data is
unramified, so the local distribution identity we require
will also follow easily from Conjecture \ref{conj:unramified}
and results already shown in \cite{MR3505397}.

\subsection{Acknowledgments}
The author would like to thank her advisor Wei Zhang for suggesting
the problem, and for his guidance throughout the whole process.
She would also like to thank Hang Xue for going through many
of the details with her. Finally, she would like to thank Wee Teck Gan,
Weixiao Lu, and Zhiyu Zhang for their discussions and comments.
This material is based upon work supported by the National Science Foundation Graduate Research Fellowship under Grant Nos.\
1745302 and 2141064.
Part of this work was supported by the National Science Foundation under Grant No.\ 1440140, while the author was in residence at the Simons Laufer Mathematical Sciences Institute
in Berkeley, California, during the Spring Semester of 2023.

\subsection{Notation and measures}
For the rest of this paper, let $E/F$ be an unramified 
quadratic extension of number fields. 
Let $\sigma$ denote the generator of $\Gal(E/F)$.
We also denote the action of $\sigma$ by $\ol{\phantom{x}}$.
Let $F^-$ be the set of purely imaginary elements of
$E$, and fix an element $j$ of $F^-$.

Let $\eta = \otimes \eta_v$ be the quadratic character
of $\AA_F^\times/F^\times$ associated to $E/F$ by global
class field theory, and let $\mu = \otimes \mu_v$ be a character of
$\AA_E^\times/E^\times$ such that $\mu|_{\AA_F^\times}
= \eta$. 

Let $\psi' = \otimes \psi'_v$ be a nontrivial additive character
of $\AA_F/F$, and extend it to $\psi\colon \AA_E/E \to \CC^\times$ by $\psi(x) = \psi'(\frac12\Tr(x))$,
where $\Tr$ is the trace of $\AA_E/\AA_F$. 

Let $\Mat_{m,n}$ denote the affine group scheme of $m
\times n$ matrices. For any commutative ring $R$, let
$R^n = \Mat_{n, 1}(R)$, $R_n = \Mat_{1, n}(R)$.
Let $\Mat_n = \Mat_{n,n}$.
For $g \in \Mat_{n}(R \otimes_F E)$ for
any $F$-algebra $R$, let $g^\ast = \transp{(\sig{g})}$ denote the conjugate transpose. 

Let $\SHerm_n^\times$ be the subvariety of skew-Hermitian 
matrices in $\Res_{E/F}\GL_n$.
For $\beta \in \SHerm_n^\times(F)$ (resp. $\SHerm_n^\times(F_v)$), let 
\[
	\U_n^\beta(R) \coloneqq \{h \in \GL_n(R \otimes_F E) : h^\ast
	\beta h = \beta \},
\]
for any $F$-algebra (resp. $F_v$-algebra) $R$.

Let $[\SHerm_n^\times(F)]$ denote the set 
of similarity classes in $\SHerm_n^\times(F)$,
where $\beta$ and $\beta'$ are similar 
if $\beta' = a^\ast\beta a$ for some $a \in \GL_n(E)$.

Let $\C_c^\infty(M)$ denote the compactly supported
smooth functions on $M$, for a smooth manifold $M$.

For an algebraic variety $X$ over $F_v$, let $\S(X(F_v))$ denote the space of Schwartz functions on $X(F_v)$.
For $F_v$ nonarchimedean, these spaces are the same.

For an algebraic variety $X$ over $F$,
fix an integral model of $X$ over $\O_F$ away from
finitely many places.
Define $\S(X(\AA_F)) =
\bigotimes_v' \S(X(F_v))$ with respect to the 
characteristic functions $\mathbbm{1}_{X(\O_{F_v})}$.

For a reductive group $G$ acting on an affine
variety $X$, we say that $x \in X$ is 
semisimple if its orbit is closed, and 
regular if its stabilizer is of minimal dimension.

\subsubsection{Measures on additive groups}
Let $v$ be a place of $F$. For $\phi \in \S(F_v)$, we have
the Fourier transform
\[
	\widehat\phi(y) = \int_{F_v} \phi(x)\psi_v'(xy)\,dx.
\]
Let $dx$ be the self-dual measure on $F_v$, i.e., the 
Haar measure on $F_v$ such that 
$\skew{3.3}\widehat{\widehat{\phi}}(x) = \phi(-x)$. 
Fix the measure on $\Mat_{m,n}(F_v)$ via the identification
$\Mat_{m,n}(F_v) \simeq F_v^{mn}$.
It is also the self-dual measure on $\Mat_{m,n}(F_v)$ with
respect to the Fourier transform
\[
	\widehat\phi(y) = \int\limits_{\Mat_{m,n}(F_v)}\phi(x)\psi_v'(\Tr xy)\, dx.
\]
We similarly define measures on $\Mat_{m,n}(E_v)$, using
the additive character $\psi_v$.
We normalize the absolute values on $F_v$ and $E_v$ so
that 
\[
	d(ax) = |a|_{F_v}\,dx
\] 
and similarly for $E_v$.
We take the product measure $dx = \prod_v dx_v$ on $\Mat_{m,n}(\AA_F)$, 
and similarly for $\Mat_{m,n}(\AA_E)$.

We also fix an isomorphism $\LL \simeq F_n$
of $F$-vector spaces, and for each $v$, we define the
measure on $\LL(F_v)$ to be the pullback of the self-dual
measure $dx$ on $F_{v,n}$, and we take the product measure on
$\LL(\AA_F)$.

\subsubsection{Measures on the general linear group}
Let $v$ be a place of $F$. Let $d^\times x$ be the 
normalized multiplicative measure on $F_v^\times$, given by
\[
	d^\times x = \zeta_{F_v}(1)\frac{dx}{|x|_{F_v}}.
\]
On $\GL_n(F_v)$, define the measure
\[
	dg = \zeta_{F_v}(1)\frac{\prod_{ij}dx_{ij}}{|\det g|_{F_v}^n}, \quad g = (x_{ij}).
\]
When $\psi_v'$ is unramified, the subgroup
$\GL_n(\O_{F_v})$ has measure
$\zeta_{F_v}(2)^{-1}\cdots \zeta_{F_v}(n)^{-1}$ under
$dg$.

Let $B_n$ be the standard upper triangular Borel subgroup
of $\GL_n$, and let $N_n$ be its unipotent radical.
We take the measure
\[
	du = \prod_{1 \le i < j \le n} du_{ij}, \quad
	u = (u_{ij})
\]
on $N_n(F_v)$.

Let 
\[ dg = \zeta_F^\ast(1)^{-1}\prod_v dg_v \] 
be the Tamagawa measure on $\GL_n(\AA_F)$, where
$\zeta_F^\ast(1)$ is the residue of the pole at $s = 1$ 
of $\zeta_F(s)$.
Similarly, we 
take the product measure on $N_n(\AA_F)$.
We also fix the measures on $\GL_n(E_v)$, $\GL_n(\AA_E)$,
$N_n(E_v)$, $N_n(\AA_E)$ in the same way.

\subsubsection{Measures on the unitary group}
Let $V$ be a skew-Hermitian space over $E$, and let
$\U(V)$ be its unitary group. Let $\fu(V)$ be the Lie algebra
of $\U(V)$. 
Let $dX$ be the self-dual measure on $\fu(V)(F_v)$ for the
Fourier transform
\[
	\widehat\phi(Y) = \int_{\fu(V)(F_v)}\phi(X) \psi_v'(\Tr XY)\, dX,
\]
and similarly define the self-dual measure on $\fu(V)(\AA_F)$.

Let $\omega$ be a top invariant differential form on $\U(V)$
whose pullback via the Cayley transform
\[
	X \mapsto (1 + X)(1 - X)^{-1} \colon \fu(V) \to \U(V)
\]
gives rise to the self-dual measure $dX$ on $\fu(V)$.
We take the measure $|\omega|_{F_v}$ on $\U(V)(F_v)$.
When $\psi_v'$ is unramified and $\U(V)$ is unramified at $v$,
the subgroup $\U(V)(\O_{F_v})$ has measure
$L(1, \eta_v)^{-1} \cdots L(n, \eta_v^n)^{-1}$.
Finally, we define the measure on $\U(V)(\AA_F)$ by
\[
	dh = \prod_v |\omega|_{F_v}.
\]

\section{Relative trace formula on the general linear
group}
\subsection{Orbital integrals on the general linear group}

Let 
\[ G' = \Res_{E/F}(\GL_n \times \GL_n), \]
and let
$H_1 = \Res_{E/F}\GL_n$ and 
$H_2 = \{(g, \ol g) : g \in \Res_{E/F}\GL_n\}$ be its subgroups, with 
$H_1$ embedded diagonally into $G'$, 
and $H_2$ embedded componentwise.

\begin{definition}
\label{def:norm}
Define the norm map $\Nm \colon \GL_n(E) \to \GL_n(E)$ 
by $\Nm(\gamma) = \ol\gamma\gamma$.
The group $\GL_n(E)$ acts on itself from the right by
$\sigma$-conjugation 
\[ \gamma.g = g^{-1}\gamma\ol g. \]
By \cite[Lemma~1.1]{MR1007299},
the norm map induces an injection from $\sigma$-conjugacy
classes in $\GL_n(E)$ to conjugacy classes in $\GL_n(F)$.

Finally, an element $\gamma$ of $\GL_n(E)$ is \emph{normal} 
if $\ol\gamma\gamma \in \GL_n(F)$.
\end{definition}


\begin{lemma}
Every $\sigma$-conjugacy class of $\GL_n(E)$ contains a normal element. 
\end{lemma}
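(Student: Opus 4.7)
My plan is to construct a normal representative in each $\sigma$-conjugacy class directly via rational canonical form. Given $\gamma \in \GL_n(E)$, the first step is to set $\alpha = \Nm(\gamma) = \ol\gamma\gamma$ and make the elementary computation
\[
	\gamma\alpha\gamma^{-1} = \gamma\ol\gamma = \ol\alpha,
\]
which shows that $\alpha$ and its Galois conjugate $\ol\alpha$ are conjugate inside $\GL_n(E)$.

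The second step is to extract from this conjugacy the fact that $\alpha$ is $\GL_n(E)$-conjugate to some element of $\GL_n(F)$. The key point is that the invariant factors of $\alpha$, read off from the Smith normal form of $XI - \alpha$ over $E[X]$, form a complete conjugacy invariant, and applying $\sigma$ coefficient-wise to these polynomials yields the invariant factors of $\ol\alpha$. Since $\alpha \sim \ol\alpha$ forces the two sets of invariant factors to coincide, they must be fixed by $\sigma$, hence lie in $F[X]$. Consequently the rational canonical form $\alpha_0$ of $\alpha$ is already an element of $\GL_n(F)$, and by uniqueness of rational canonical form over any field containing $F$ there exists $h \in \GL_n(E)$ with $h \alpha h^{-1} = \alpha_0$.

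Finally, I would translate this back into the $\sigma$-conjugation picture by setting $g = \ol h^{-1}$ and $\gamma' = g^{-1}\gamma \ol g$. A short computation gives
\[
	\Nm(\gamma') = \ol g^{-1}\ol\gamma\gamma\ol g = h\alpha h^{-1} = \alpha_0 \in \GL_n(F),
\]
so $\gamma'$ is normal and $\sigma$-conjugate to $\gamma$, as desired. There is no serious obstacle in the argument: the only substantive input is the classical fact that rational canonical form is invariant under field extension, applied to $F \subseteq E$. The main conceptual observation is simply that the relation $\gamma \alpha = \ol\alpha \gamma$ forces $\alpha = \Nm(\gamma)$ to have Galois-stable conjugation invariants, which is precisely the condition needed to descend its rational form to $F$.
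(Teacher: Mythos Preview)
Your proof is correct and follows essentially the same approach as the paper: both reduce to the fact that $\ol\gamma\gamma$ is $\GL_n(E)$-conjugate to an element of $\GL_n(F)$, then pass to the $\sigma$-conjugate $\ol h^{-1}\gamma h$. The only difference is that the paper cites this fact from \cite[Lemma~1.1]{MR1007299}, whereas you prove it directly via the observation $\gamma\alpha\gamma^{-1} = \ol\alpha$ and rational canonical form, which is exactly the content of that lemma.
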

\begin{proof}
This follows from \cite[Lemma~1.1]{MR1007299}, 
which tells us that for every $\gamma \in
\GL_n(E)$, there exists $g \in \GL_n(E)$ such that
$g^{-1}\ol\gamma\gamma g \in \GL_n(F)$. Then $\ol{g}^{-1}
\gamma g$ is a normal element contained in the $\sigma$-conjugacy class
of $\gamma$.
\end{proof}

\begin{lemma}
\label{lem:glnormal}
If $\gamma \in \GL_n(E)$ is normal and
$\ol\gamma\gamma$ is regular semisimple, then 
\[
	T_\gamma \coloneqq \{ g \in \GL_n(E) : g^{-1}\gamma
	\ol g = \gamma \}
\]	
is contained in $\GL_n(F)$. In fact it is equal to the centralizer
of $\ol\gamma\gamma$ in $\GL_n(F)$.
\end{lemma}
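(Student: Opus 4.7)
\medskip
\noindent\textbf{Proof proposal.}
The plan is to exploit the fact that, for a normal $\gamma$, the element $\delta\coloneqq\ol\gamma\gamma$ lies in $\GL_n(F)$ and (being fixed by $\sigma$) equals $\gamma\ol\gamma$, so that $\gamma$ and $\ol\gamma$ commute and in particular both commute with $\delta$. Once this is in hand, I would set $T$ to be the centralizer of $\delta$ in $\GL_n$ viewed as an algebraic group over $F$. The regular semisimplicity of $\delta$ guarantees that $T$ is a maximal torus with
\[
    T(F)=F[\delta]^\times,\qquad T(E)=(F[\delta]\otimes_F E)^\times,
\]
and the previous paragraph places $\gamma$ itself inside $T(E)$.

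Next I would verify the inclusion $T_\gamma\subseteq T(E)$ by pure manipulation. If $g^{-1}\gamma\ol g=\gamma$, then $\ol g=\gamma^{-1}g\gamma$; applying $\sigma$ and then substituting yields $g=\ol\gamma^{-1}\ol g\,\ol\gamma=\ol\gamma^{-1}\gamma^{-1}g\gamma\ol\gamma=\delta^{-1}g\delta$, so $g$ commutes with $\delta$.

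The main content is then the refinement $T_\gamma\subseteq T(F)$. Writing $E=F(j)$ with $j^2\in F^\times$, I would decompose
\[
    g=a+bj,\qquad \gamma=c+dj,\qquad a,b,c,d\in F[\delta],
\]
using that $T(E)=F[\delta]\otimes_F E$. Because $F[\delta]$ is a commutative $F$-algebra, the identity $g\gamma=\gamma\ol g$ separates into its $F[\delta]$- and $j$-components; a short computation gives $2j^2\,bd=0$ and $2bc=0$, hence $b\gamma=bc+bdj=0$. Since $\gamma$ is invertible this forces $b=0$, so $g=a\in F[\delta]^\times=T(F)$. The converse $T(F)\subseteq T_\gamma$ is immediate: any $g\in F[\delta]^\times$ satisfies $\ol g=g$ and commutes with $\gamma\in F[\delta]\otimes_F E$, so $g^{-1}\gamma\ol g=g^{-1}\gamma g=\gamma$.

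The argument is essentially bookkeeping, and I do not expect a serious obstacle; the only point requiring care is the algebraic identification $T(E)=F[\delta]\otimes_F E$, which relies squarely on the regular semisimplicity hypothesis on $\delta$, and the sign bookkeeping in the $g\gamma=\gamma\ol g$ expansion that produces the crucial cancellation $b\gamma=0$.
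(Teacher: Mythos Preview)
Your argument is correct and rests on the same two observations as the paper's proof: that both $g$ and $\gamma$ lie in the centralizer of $\delta=\ol\gamma\gamma$, and that this centralizer is the commutative algebra $(F[\delta]\otimes_F E)^\times$. The paper shortcuts your coordinate computation at the end: since the centralizer is abelian, $g$ commutes with $\gamma$, so $g^{-1}\gamma\ol g=\gamma$ reads $\gamma\ol g=\gamma g$ and hence $g=\ol g$, avoiding the $j$-expansion (and the implicit appeal to characteristic $\neq 2$).
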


\begin{proof}
Note that if $g \in T_\gamma$, then $g \in C_{\GL_n(E)}(\ol\gamma\gamma)$. In addition, $\gamma \in C_{\GL_n(E)}(\ol\gamma
\gamma)$ since $\ol\gamma\gamma \in \GL_n(F)$. 
Since $\ol \gamma\gamma$ is regular semisimple, its centralizer
is a torus, so $\gamma$ commutes with $g$. 
Thus, $g^{-1}\gamma \ol g = \gamma$ implies $g = \ol g$, 
so $g \in C_{\GL_n(F)}(\ol\gamma\gamma)$. Conversely, if $g
\in C_{\GL_n(F)}(\ol\gamma\gamma)$, then $g$ commutes with
$\gamma$, so $g^{-1}\gamma \ol g = g^{-1}\gamma g = \gamma$, so
$g \in T_\gamma$.
\end{proof}

\begin{definition}
\label{def:glorbits}
Fix a set $S$ of normal representatives of the regular
semisimple 
$\sigma$-conjugacy classes in $\GL_n(E)$. Let 
\[
	X = S \times F_n \times F^{-,n}.
\]
Say that an element $[\gamma, x, y] \in X$ is
\emph{regular semisimple} if $[x, y]\in F_n \times F^{-,n}$ is regular semisimple
under the right action of $T_\gamma$ given by
\[
	[x, y].t = [xt, t^{-1}y].	
\]
Two elements $[\gamma, x, y]$ and $[\gamma', x', y']$
of $X$ are \emph{equivalent} if $\gamma = \gamma'$ and
there exists $t \in T_\gamma$ such that
$[x, y].t = [x', y']$.
Let $X_{\rss}$ denote the regular semisimple elements
of $X$, and let $[X_\rss]$ denote the set of regular
semisimple equivalence classes.
\end{definition}

Let $v$ be a place of $F$. The above definitions
can also be made for the local extension $E_v/F_v$.

\begin{definition}
\label{def:weilgl}
Let $R_{\mu_v}$ be the representation
of $\GL_n(E_v)$ on $\S(E_{v,n})$ defined by
\[
	(R_{\mu_v}(g)\phi_v')(z) = \mu_v(\det g)|\det g|^{\frac12}\phi_v'(zg).
\]
\end{definition}

Note that $R_{\mu_v}$ is a unitary representation
when $\mu_v$ is unitary.

\begin{definition}
\label{def:pftgl}
Let $^\dagger$ be the partial Fourier transform
\[
	^\dagger \colon \S(E_{v,n}) \to \S(F_{v,n} \times F_v^{-,n})
\]
given by
\[
	\phi_v'^\dagger(x, y) = |j|_v\int_{F_{v,n}} \phi_v(x + jw)
	\psi_v'(jwy)\, dw.
\]
Recall that the measure is chosen to be the self-dual measure on 
$F_{v,n}$ with respect to $\psi_v'$.
\end{definition}

Note that for $g \in \GL_n(F_v)$, we have
\begin{equation}
\label{eq:pftGLF}
	(R_{\mu_v}(g)\phi_v')^\dagger(x, y)
	= \eta_v(\det g)\phi_v'^\dagger(xg, g^{-1}y).
\end{equation}

\begin{definition}
\label{def:orbintGL}
For $f_v' \in \C_c^\infty(G'(F_v))$, define a function
$\widetilde{f_v'} \in \C_c^\infty(\GL_n(E_v))$ by
\[
	\widetilde{f_v'}(g) = \int_{\GL_n(E_v)} f_v'((g, 1)(h, \ol h)) \, dh.
\]
For 
$\phi_v' \in \S(E_{v,n})$ and $[\gamma, x, y] \in X$, let
\[
	\Orb_v^{\psi, \mu}(s, [\gamma, x, y], f_v',
	\phi_v') = \int_{\GL_n(E_v)} \widetilde{f_v'}(g^{-1}
	\gamma \ol g)\ol{(R_{\mu_v}(g)\phi_v')^\dagger(x, y) |\det g|^{s-\frac12}} \, dg.
\]
This depends only on the equivalence class of
$[\gamma, x, y]$ in $X$.
\end{definition}

In general, when $s = \frac12$, we will
suppress $s$ from the notation.

\begin{lemma}
\label{lem:glorbintconverge}
For any test function $(f_v', \phi_v')$ 
and any regular semisimple equivalence class 
$[\gamma, x, y]$,
the orbital integral $\Orb_v^{\psi,\mu}([\gamma, x, y], f_v', \phi_v')$
is absolutely convergent.
\end{lemma}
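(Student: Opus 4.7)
The plan is to factor the orbital integral through the stabilizer $T_\gamma$ and verify convergence of each piece. First, $\widetilde{f_v'}$ lies in $\C_c^\infty(\GL_n(E_v))$: if $(gh, \ol h)$ is in the compact support of $f_v'$, then both $\ol h$ and $gh$ must remain in compact sets, and hence so does $g$. By Lemma \ref{lem:glnormal}, $T_\gamma = C_{\GL_n(F_v)}(\ol\gamma\gamma)$ is a maximal $F_v$-torus, and the regular semisimplicity of $\ol\gamma\gamma$ implies that the $\sigma$-conjugation orbit of $\gamma$ is closed in $\GL_n(E_v)$. Consequently the map $g \mapsto g^{-1}\gamma\ol g$ from $T_\gamma \bs \GL_n(E_v)$ to $\GL_n(E_v)$ is a closed embedding, hence proper, so the support of $\widetilde{f_v'}(g^{-1}\gamma\ol g)$ descends to a compact subset $\Omega \subset T_\gamma \bs \GL_n(E_v)$.

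Since $R_{\mu_v}$ is a representation, applying \eqref{eq:pftGLF} to $R_{\mu_v}(g)\phi_v'$ with $t \in T_\gamma \subset \GL_n(F_v)$ gives
\begin{equation*}
(R_{\mu_v}(tg)\phi_v')^\dagger(x, y) = \eta_v(\det t)\,(R_{\mu_v}(g)\phi_v')^\dagger(xt, t^{-1}y).
\end{equation*}
Unfolding the orbital integral along the fibers of $\GL_n(E_v) \to T_\gamma \bs \GL_n(E_v)$ yields
\begin{equation*}
\Orb_v^{\psi,\mu}(s, [\gamma,x,y], f_v', \phi_v') = \int_{T_\gamma \bs \GL_n(E_v)} \widetilde{f_v'}(g^{-1}\gamma\ol g)\,\ol{|\det g|^{s-\frac12}}\,I_s(g)\, dg,
\end{equation*}
where $I_s(g) = \int_{T_\gamma} \eta_v(\det t)\,\ol{(R_{\mu_v}(g)\phi_v')^\dagger(xt, t^{-1}y)}\,\ol{|\det t|^{s-\frac12}}\, dt$. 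Up to a character of moderate growth on $T_\gamma$, this inner integral is the integral over $T_\gamma$ of the Schwartz function $(R_{\mu_v}(g)\phi_v')^\dagger$ on $F_{v,n} \times F_v^{-,n}$ pulled back through the orbit map $t \mapsto (xt, t^{-1}y)$.

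The regular semisimplicity of $[\gamma, x, y]$ says that this orbit map has closed image and finite fibers, hence is proper, so the pullback is itself a Schwartz function on $T_\gamma$ which remains integrable against characters of moderate growth. This yields convergence of $I_s(g)$ for every $s$, with local uniform boundedness in $g$ in terms of the Schwartz seminorms of $R_{\mu_v}(g)\phi_v'$; combined with the compactness of $\Omega$ this proves absolute convergence of the whole orbital integral. The main technical obstacle lies in the archimedean case, where one must verify that the pullback is genuinely of rapid decay on $T_\gamma$ and integrable against $|\det t|^{s-\frac12}$, which reduces to comparing natural norms on $T_\gamma$ and on $F_{v,n} \times F_v^{-,n}$ in view of the closed-immersion property of the orbit map modulo its finite stabilizer. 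In the non-archimedean case the pullback is compactly supported in $t$ and the convergence is immediate.
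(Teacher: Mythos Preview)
Your proof is correct and follows essentially the same approach as the paper: factor the integral through the quotient $T_\gamma \bs \GL_n(E_v)$, use the closedness of the $\sigma$-conjugation orbit to get compact support on the quotient, and use the closedness of the $T_\gamma$-orbit of $(x,y)$ together with minimal stabilizer to pull the Schwartz function back to a Schwartz function on $T_\gamma$. The paper works only at $s=\tfrac12$ and does not separate the archimedean and non-archimedean cases, whereas you carry the parameter $s$ along and add the remark about moderate-growth characters; this is slightly more general but the core argument is the same.
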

\begin{proof}
Using \eqref{eq:pftGLF}, the orbital integral at 
$s = \frac12$ is equal to
\begin{align*}
\int\limits_{T_\gamma\bs\GL_n(E_v)}
\widetilde{f_v'}(g_0^{-1}\gamma\ol{g_0})
\int_{T_\gamma} \ol{\eta_v(\det t)(R_{\mu_v}(g_0)\phi_v')^\dagger(xt, t^{-1}y)}\,dt\,dg_0.
\end{align*}
Note that for regular semisimple $[\gamma, x, y]$,
the stabilizer of $[x, y]$ in $T_\gamma$ is trivial.
Since the orbit of
$[x, y]$ under the $T_\gamma$-action is closed,
and $(R_{\mu_v}(g_0)\phi_v')^\dagger$
is a Schwartz function on $F_n \times F^{-,n}$,
the function $t \mapsto (R_{\mu_v}(g_0)\phi_v')^\dagger(xt, t^{-1}y)$ is a Schwartz function on $T_\gamma$. Thus the inner integral is absolutely convergent,
and a continuous function of $g_0$.

Similarly, since the orbit of $\gamma$ under 
$\sigma$-conjugation is closed, and $\widetilde{f_v'}$
is compactly supported and smooth on $\GL_n(E_v)$, the
function $g_0 \mapsto \widetilde{f_v'}(g_0^{-1}\gamma\ol{g_0})$ is compactly supported and smooth on 
$T_\gamma \bs \GL_n(E_v)$,
so the outer integral is absolutely convergent.
\end{proof}

\subsection{Global distribution on the general linear group}
We can also define the representation $R_\mu$ of $\GL_n(\AA_E)$
on $\S(\AA_{E,n})$, and the global partial Fourier transform 
$^\dagger\colon \S(\AA_{E,n}) \to \S(\AA_{F,n} \times
\AA_F^{-,n})$
using the same formulas as in Definitions \ref{def:weilgl}
and \ref{def:pftgl}.

\begin{definition}
For $\phi' \in \S(\AA_{E,n})$, define the theta series
\[
	\Theta_{\mu}(s, g, \phi') = |\det g|^{s-\frac12}
	\sum_{z \in E_n} (R_\mu(g)\phi')(z).
\]
\end{definition}

\begin{definition}
For $f' \in \C_c^\infty(G'(\AA_F))$, define the kernel function
\[
	K_{f'}(g, h) = \sum_{\gamma \in G'(F)} f'(g^{-1}\gamma h).
\]
It is a function on $(G'(F)\bs G'(\AA_F))^2$.
\end{definition}

\begin{definition}
Let $f' \in \C_c^\infty(G'(\AA_F))$, $\phi' \in \S(\AA_{E,n})$.
Let
\[
	\sI(s, f', \phi') = \int\limits_{H_2(F)\bs H_2(\AA_F)}\int\limits_{H_1(F)\bs H_1(\AA_F)} K_{f'}(g, h)
	\ol{\Theta_{\mu}(s, g, \phi')}
	\, dg\, dh.
\]
\end{definition}

\begin{definition}
\label{def:glgood}
Say that a test function $(f', \phi')$ is \emph{good}
if it satisfies the following conditions.
\begin{enumerate}[(1)]
\item 
\label{item:glmatrixcoeff}
There exists a nonarchimedean split place $v_1$ of $F$,
such that $f_{v_1}'$ is a truncated matrix 
coefficient of a supercuspidal representation.
This means
\[
	f_{v_1}'(g) = \widehat f_{v_1}'(g) \mathbbm{1}_{G'^\ast(F_{v_1})}(g),
\]
where $\widehat f_{v_1}'$ is a matrix coefficient of a supercuspidal representation of $G'(F_{v_1})$, and
\[
	G'^\ast(F_{v_1}) = \{(g_1, g_2) \in G'(F_{v_1})
	: (\det g_1, \det g_2) \in \O_{E_{v_1}}^\times
	\times \O_{E_{v_1}}^\times \}.
\]

\item 
\label{item:glrsssupp}
There exists a nonarchimedean split place $v_2
\neq v_1$ such that $\widetilde f_{v_2}'$ is supported
on the regular semisimple locus of $\GL_n(E_{v_2})$,
and if $[\gamma, x, y] \in X$ and
\[
	\widetilde f_{v_2}'(g^{-1}\gamma \ol g)\ol{(R_{\mu_{v_2}}(g)\phi_{v_2}')^\dagger(x, y)} \neq 0
\]
for some $g \in \GL_n(E_v)$, then $[\gamma, x, y] \in X_\rss$.

\item For all archimedean places $v$, we have
$E_v = F_v \times F_v$, and $\phi_v'$ is a finite linear
combination of functions of the form 
$\phi'_{1,v} \otimes \phi_{2,v}'$
where $\phi_{1,v}', \phi_{2,v}' \in \S(F_{v,n})$.
\end{enumerate}
\end{definition}

\begin{proposition}
\label{prop:GeoGL}
For good test functions $(f', \phi')$, the integral
defining $\sI(f', \phi')$ is absolutely convergent.
Moreover, if
$f' = \prod_v f_v'$ and $\phi' = \prod_v \phi_v'$
are both factorizable, then
\[
	\sI(f', \phi') = 
	\sum_{[\gamma, x, y] \in [X_\rss]} 
	\zeta_E^\ast(1)^{-1} \prod_{v} \Orb_v^{\psi,
	\mu}([\gamma, x, y], f_v', \phi_v').
\]
\end{proposition}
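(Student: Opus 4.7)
The plan is to unfold the double integral defining $\sI(f', \phi')$ into a sum of products of local orbital integrals, in two stages: first, decompose the kernel $K_{f'}$ along the double cosets $H_1(F)\bs G'(F)/H_2(F)$, and second, expand the theta series via the partial Fourier transform associated to $E_n = F_n \oplus jF_n$.

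Conditions (\ref{item:glmatrixcoeff})--(\ref{item:glrsssupp}) of a good test function imply absolute convergence and restrict attention to the regular semisimple locus. The double coset space $H_1(F)\bs G'(F)/H_2(F)$ is in bijection with the $\sigma$-conjugacy classes of $\GL_n(E)$ via $(g_1, g_2) \mapsto g_1^{-1} g_2$, parametrized by the set $S$ of normal representatives. The stabilizer in $H_1 \times H_2$ of the representative $(\tilde\gamma, 1)$ is an embedding of $T_{\tilde\gamma}$. The standard unfolding argument, followed by a substitution $h_1 \mapsto \ol{g_1} h_1$ which decouples the $T_{\tilde\gamma}$-action from the $h_1$-variable, rewrites the contribution of each class $[\tilde\gamma]$ as
\[
\sI_{[\tilde\gamma]}(f', \phi') = \int\limits_{T_{\tilde\gamma}(F)\bs \GL_n(\AA_E)} \widetilde{f'}\bigl(g_1^{-1} \tilde\gamma \ol{g_1}\bigr) \, \ol{\Theta_\mu(g_1, \phi')}\, dg_1,
\]
where the inner integration over $\GL_n(\AA_E)$ has been absorbed into the definition of $\widetilde{f'}$ in Definition \ref{def:orbintGL}.

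Next, apply Poisson summation in the imaginary direction of $E_n = F_n \oplus j F_n$, i.e., the global partial Fourier transform $^\dagger$, to obtain
\[
\sum_{z \in E_n} (R_\mu(g_1) \phi')(z) = \sum_{(x, y) \in F_n \times F^{-, n}} (R_\mu(g_1) \phi')^\dagger(x, y).
\]
Split the right-hand sum into $T_{\tilde\gamma}(F)$-orbits on $F_n \times F^{-, n}$; condition (\ref{item:glrsssupp}) ensures the non-regular-semisimple contributions vanish. Combining the covariance \eqref{eq:pftGLF} with $\eta|_{F^\times} = 1$ from the product formula yields
\[
(R_\mu(g_1)\phi')^\dagger(xt, t^{-1}y) = (R_\mu(tg_1)\phi')^\dagger(x, y), \quad t \in T_{\tilde\gamma}(F) \subseteq \GL_n(F).
\]
Since $\widetilde{f'}(g_1^{-1}\tilde\gamma\ol{g_1})$ is invariant under $g_1 \mapsto tg_1$ for such $t$, the summation over $t$ combines with the quotient integral to unfold into $\int_{\GL_n(\AA_E)}$.

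Finally, under the factorizations $f' = \prod_v f'_v$ and $\phi' = \prod_v \phi'_v$, the resulting integral over $\GL_n(\AA_E)$ decomposes into the product of local orbital integrals $\Orb_v^{\psi, \mu}([\tilde\gamma, x, y], f'_v, \phi'_v)$, with the overall constant $\zeta_E^\ast(1)^{-1}$ coming from the Tamagawa normalization $dg = \zeta_E^\ast(1)^{-1} \prod_v dg_v$. The main technical obstacle will be the careful verification of absolute convergence through the rearrangements and the measure bookkeeping for the Poisson summation step, in particular ensuring that the local factors $|j|_v$ in the partial Fourier transforms assemble globally to unity so that no extra constant appears in the final formula.
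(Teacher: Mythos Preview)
Your proposal is correct and follows essentially the same approach as the paper: integrate out $H_2$ to pass to $\widetilde{f'}$, unfold along $\sigma$-conjugacy classes represented by $S$, apply Poisson summation in the $jF_n$-direction to convert the theta series into a sum over $F_n \times F^{-,n}$, decompose into $T_\gamma$-orbits using the global triviality of $\eta$ on $F^\times$, and then factorize with the Tamagawa constant $\zeta_E^\ast(1)^{-1}$. The paper handles the convergence of the outer sum over $[X_\rss]$ by first noting that only finitely many $\gamma \in S$ contribute (compact support of $\widetilde{f'}$), and then for fixed $\gamma$ reducing the remaining sum over $(x,y)$-orbits to an integral of a Schwartz function over $T_\gamma(\AA_F)$, citing the argument of \cite[Proposition~3.2.3]{MR3228451}; this is exactly the ``main technical obstacle'' you flagged, and your proposal would need to supply or cite this argument to be complete.
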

\begin{proof}
Let $\widetilde{f'}$ denote the global analog
of the function $\widetilde{f_v'}$ from Definition
\ref{def:orbintGL}.
It is easy to check that
\begin{align*}
	&\int\limits_{\GL_n(E)\bs\GL_n(\AA_E)} 
	\sum_{\gamma \in G'(F)} f'((g^{-1}, g^{-1})\gamma (h, \ol h)) \,dh 
	= \sum_{\gamma \in \GL_n(E)} \widetilde{f'}(g^{-1}
	\gamma \ol g).
\end{align*}
In Definition \ref{def:glorbits}, we fixed a set $S$ 
of normal representatives of the regular semisimple
$\sigma$-conjugacy classes in $\GL_n(E)$.
Using the assumption \eqref{item:glrsssupp} on $(\widetilde f_{v_2}', \phi_{v_2}')$ from Definition \ref{def:glgood}, we have
\begin{align*}
	&\sum_{\gamma \in \GL_n(E)} \widetilde{f'}(g^{-1} \gamma \ol g)
	\ol{\Theta_{\mu}(g, \phi')}\\ 
	&= \sum_{\gamma \in S} \sum_{a \in T_\gamma \bs \GL_n(E)}
	\widetilde{f'}(g^{-1}a^{-1} \gamma \ol a \ol g) 
	\ol{\Theta_{\mu}(ag, \phi')} \\
	&= \sum_{\gamma \in S} \sum_{a \in T_\gamma \bs \GL_n(E)}
	\widetilde{f'}(g^{-1}a^{-1} \gamma \ol a \ol g) 
	\sum_{(x, y) \in F_n
	\times F^{-,n}} \ol{(R_\mu(ag)\phi')^\dagger(x, y)}\\
	&= \sum_{\gamma \in S} \sum_{a \in T_\gamma \bs \GL_n(E)}
	\widetilde{f'}(g^{-1}a^{-1} \gamma \ol a \ol g) \sum_{[x, y] \in (F_n
	\times F^{-,n})/T_\gamma} \sum_{t \in T_{[\gamma, x, y]}\bs T_\gamma} \ol{(R_\mu(tag)\phi')^\dagger(x, y)} \\
	&= \sum_{[\gamma, x, y] \in [X_\rss]} \sum_{a \in \GL_n(E)}
	\widetilde{f'}(g^{-1}a^{-1}\gamma\ol{ag}) 
	\ol{(R_\mu(ag)\phi')^\dagger(x, y)},
\end{align*}
where $T_{[\gamma, x, y]}$ denotes the stabilizer of
$[x, y]$ in $T_\gamma$. In the third line, we have used
Poisson summation.

Combining the above calculations, we have
\begin{align*}
	\sI(f', \phi') &= \int\limits_{\GL_n(E)\bs\GL_n(\AA_E)} \sum_{\gamma
	\in \GL_n(E)} \widetilde{f'}(g^{-1}\gamma \ol g)
	\ol{\Theta_{\mu}(g, \phi')}\, dg \\
	&= \sum_{[\gamma, x, y] \in [X_\rss]} \int_{\GL_n(\AA_E)}
	\widetilde{f'}(g^{-1}\gamma \ol g)\ol{(R_\mu(g)\phi')^\dagger
	(x, y)}\, dg.
\end{align*}
This finishes the proof of the factorization of 
$\sI(f', \phi')$ into products of local orbital integrals. 

The convergence of the global orbital integral is
proved in the same way as Lemma \ref{lem:glorbintconverge}.
To prove the convergence of the outer sum, note that
since $\widetilde{f'}$ is compactly supported
on $\GL_n(\AA_E)$, there are only finitely many
$\gamma \in S$ such that $\widetilde{f'}(g^{-1}\gamma g) \neq 0$ for some $g \in \GL_n(\AA_E)$.
Thus it suffices to show that for fixed $\gamma$, the
sum over the regular semisimple orbits of
$F_n \times F^n$ under the $T_\gamma$-action
converges absolutely.

As in the proof of \cite[Proposition~3.2.3]{MR3228451},
let $K_\gamma$ be a subset of $\GL_n(\AA_E)$
such that $\GL_n(\AA_E) = T_\gamma(\AA_F) \cdot
K_\gamma$. Then
\[
	\int_{\GL_n(\AA_E)} \widetilde{f'}(g^{-1}\gamma \ol g)\ol{(R_\mu(g)\phi')^\dagger(x, y)}\,dg
	= \int_{T_\gamma(\AA_F)}\ol{\eta(\det t)}\phi_1'(xt, t^{-1}y)\,dt
\]
where
\[
	\phi_1'(x, y) = \int_{K_\gamma} 
	\widetilde{f'}(k^{-1}\gamma \ol k)\ol{(R_\mu(k)\phi')^\dagger(x, y)}\, dk.
\]
Since $\widetilde{f'}$ is compactly supported and
$\gamma$ is regular semisimple, the integrand above vanishes
outside some compact subset of $K_\gamma$,
so $\phi_1'$ is a Schwartz function on
$\AA_{F,n} \times \AA_F^{-,n}$. It is shown in the
proof of \cite[Proposition~3.2.3]{MR3228451}
that the sum
\[
	\sum_{[x, y]}
	\int_{T_\gamma(\AA_F)} \phi_1'(xt, t^{-1}y)\,dt
\]
over regular semisimple orbits of $F_n \times F^{-,n}$
under the action of $T_\gamma(F)$ converges
absolutely, which implies that the absolute 
convergence of the outer sum.
\end{proof}

\section{Relative trace formula on the unitary group}
\subsection{Orbital integrals on the unitary group}
Let $G = \Res_{E/F}\GL_n$, and let $H = \U(V)$ be
its subgroup. 
Let $\LL + \LL^\vee$ be a polarization of the symplectic space
$(\Res_{E/F} V)^\vee$. 
Recall that we have fixed an isomorphism $\LL \simeq F_n$.

\begin{definition}
\label{def:unormal}
For $\beta \in \SHerm_n^\times(F)$, 
we say that an element $\zeta \in \GL_n(E)$ is \emph{normal with respect to} $\beta$ if $\zeta$ commutes with
$\beta^{-1}\zeta^\ast\beta\zeta$.
\end{definition}

\begin{lemma}
\label{lem:unormal}
Suppose $\zeta \in \GL_n(E)$ is normal with respect to $\beta$
and $\beta^{-1}\zeta^\ast\beta\zeta$ is regular
semisimple. Then 
\[
	T_\zeta \coloneqq \{(g, h) \in \U_n^\beta(F)
	\times \U_n^\beta(F) \colon g^{-1}\zeta h = \zeta \}
\]
is contained in the diagonal $\Delta \U_n^\beta(F)$,
so we can identify $T_\zeta$ with the centralizer
of $\zeta$ in $\U_n^\beta(F)$.
\end{lemma}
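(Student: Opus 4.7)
The plan parallels that of Lemma \ref{lem:glnormal}. Let $(g, h) \in T_\zeta$, so $g^{-1}\zeta h = \zeta$, which rewrites as $g = \zeta h \zeta^{-1}$. Thus the pair lies in the diagonal $\Delta \U_n^\beta(F)$ as soon as one shows $h$ commutes with $\zeta$, and in that case the assignment $(h, h) \mapsto h$ identifies $T_\zeta$ with the centralizer $C_{\U_n^\beta(F)}(\zeta)$. So the whole statement reduces to proving $h\zeta = \zeta h$.

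The main step is to show that $h$ centralizes $A \coloneqq \beta^{-1}\zeta^\ast \beta \zeta$. I would substitute $g = \zeta h \zeta^{-1}$ into the unitary relation $g^\ast \beta g = \beta$, obtaining
\[
    h^\ast \zeta^\ast \beta \zeta h = \zeta^\ast \beta \zeta.
\]
Multiplying on the left by $\beta^{-1}$ and using the identity $\beta^{-1} h^\ast \beta = h^{-1}$, which follows from $h \in \U_n^\beta(F)$, this rearranges to $h^{-1} A h = A$.

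Now invoke the normality hypothesis, which says exactly that $\zeta$ commutes with $A$. Both $h$ and $\zeta$ therefore lie in $C_{\GL_n(E)}(A)$. Since $A$ is assumed regular semisimple, this centralizer is a torus, hence abelian, so $h$ and $\zeta$ commute. This gives $g = \zeta h \zeta^{-1} = h$ and places $h$ in the centralizer of $\zeta$ in $\U_n^\beta(F)$. The reverse inclusion is immediate: for any $h \in \U_n^\beta(F)$ commuting with $\zeta$, the pair $(h, h)$ satisfies $g^{-1}\zeta h = \zeta$ and so lies in $T_\zeta$.

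No serious obstacle is expected; the argument is purely algebraic, and the only point requiring care is the sign/conjugation bookkeeping in verifying $h^{-1} A h = A$, which is the unitary analog of the commutation argument used in Lemma \ref{lem:glnormal}.
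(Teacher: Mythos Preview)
Your proof is correct and follows essentially the same approach as the paper: show that $h$ centralizes $A = \beta^{-1}\zeta^\ast\beta\zeta$, then use normality and regular semisimplicity of $A$ to conclude that $h$ and $\zeta$ commute, hence $g = h$. The paper simply asserts $h \in C_{\GL_n(E)}(A)$ without writing out the verification, whereas you spell out the substitution of $g = \zeta h \zeta^{-1}$ into the unitary relation; both arguments are the same in substance.
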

\begin{proof}
Note that if $g^{-1}\zeta h = \zeta$ for $g, h \in \U_n^\beta(F)$,
then $h \in C_{\GL_n(E)}(\beta^{-1}\zeta^\ast\beta\zeta)$.
Since $\zeta$ is normal with respect to $\beta$, $\zeta$ is also in this centralizer.
Since $\beta^{-1}\zeta^\ast\beta\zeta$ is regular semisimple, its centralizer in $\GL_n(E)$ is a torus. 
Thus $h$ commutes with $\zeta$, so
$g^{-1}h\zeta = g^{-1}\zeta h = \zeta$, so $g = h$.
\end{proof}


\begin{definition}
\label{def:uorbits}
Consider the right action of 
$\U_n^\beta(F) \times \U_n^\beta(F)$ on $\GL_n(E)$ 
given by 
\[ \zeta.(g, h) = g^{-1}\zeta h. \]
Let $R^\beta$ be a set of normal representatives of the
regular semisimple orbits of this action whose
elements $\zeta$ satisfy $\beta^{-1}\zeta^\ast\beta\zeta \in \Nm(\GL_n(E))$.
Let
\[
	Y^\beta = R^\beta \times E_n.
\]
Say an element $[\zeta, z] \in Y^\beta$ is 
\emph{regular semisimple} if $z \in E_n$ is
regular semisimple under the right multiplication
action of $T_\zeta$ on $E_n$.

Two elements $[\zeta, z]$ and $[\zeta', z']$ of
$Y^\beta$ are \emph{equivalent} if $\zeta = \zeta'$
and there exists $t \in T_\zeta$ such that
$zt = z'$.
Let $Y_\rss^\beta$ denote the regular semisimple elements
of $Y^\beta$, and let $[Y_\rss^\beta]$ denote the
set of regular semisimple equivalence classes.
\end{definition}

Let $v$ be a place of $F$. Again, the above definitions can
be made locally. Let $\omega_{\psi_v', \mu_v}$
be the Weil representation of $\U(V)(F_v)$, which is
realized on $\S(\LL(F_v))$.

\begin{definition}
\label{def:pftu}
Let $^\ddagger$ be the partial Fourier transform
\[
	^\ddagger\colon \S(\LL(F_v))^{\otimes 2}
	\to \S(V^\vee(E_v))
\]
given by
\[
	(\phi_{1,v} \otimes \phi_{2, v})^\ddagger(z)
	= \int_{\LL(F_v)}
	\phi_{1,v}(x + w)\phi_{2,v}(x - w)\psi_v'(\langle w, y\rangle)\, dw,
\]
where $\langle - , - \rangle$ is the symplectic
pairing on $(\Res_{E/F} V)^\vee$, and $z = (x, y)$ with $x \in \LL(F_v)$, $y \in \LL^\vee(F_v)$.
The measure is the pullback of the measure
on $F_{v,n}$ which is self-dual with respect to $\psi_v'$ under the isomorphism
$\LL(F_v) \simeq F_{v,n}$.
\end{definition}

Note that for $h \in \U(V)(F_v)$, we have
\begin{equation}
\label{eq:pftDiag}
	(\omega_{\psi_v', \mu_v}(h)\phi_{1,v}
	\otimes \ol{\omega_{\psi_v', \mu_v}(h)\phi_{2,v}})^\ddagger(z)
	= (\phi_{1,v}\otimes \ol{\phi_{2,v}})^\ddagger(zh).
\end{equation}
Recall that we fixed an isomorphism $V \cong E^n$.
Then the skew-Hermitian form on $V$ determines an element
$\beta \in \SHerm_n^\times(F)$, and the partial Fourier
transform is a map to $\S(E_{v,n})$.

\begin{definition}
\label{def:orbintU}
For $f_v \in \C_c^\infty(G(F_v))$,
$\phi_{1,v} \otimes \phi_{2,v} \in \S(\LL(F_v))^{\otimes 2}$,
and $[\zeta, z] \in Y^\beta$, define
\begin{align*}
	&\Orb_v^{\psi, \mu}([\zeta, z], f_v, \phi_{1, v} \otimes
	\phi_{2, v})\\
	&= \int_{\U(V)(F_v)}\int_{\U(V)(F_v)}
	f_v(g^{-1}\zeta h) (\phi_{2,v} \otimes \ol{\omega_{\psi_v',\mu_v}(h^{-1}g)
	\phi_{1,v}})^\ddagger(z h)\, dg\, dh.
\end{align*}
This depends only on the equivalence class of 
$[\zeta, z]$ in $Y^\beta$.
\end{definition}

\begin{lemma}
For any test function $(f_v, \phi_{1,v} \otimes \phi_{2,v})$
and any regular semisimple equivalence class 
$[\zeta, z]$, the orbital integral 
$\Orb_v^{\psi, \mu}([\zeta, z], f_v, \phi_{1,v}
\otimes \phi_{2,v})$ is absolutely convergent. 
\end{lemma}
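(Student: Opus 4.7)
The plan is to follow the template of Lemma \ref{lem:glorbintconverge}, adapted to the unitary setting. The first step is to decouple $g$ and $h$ inside the partial Fourier transform. Applying \eqref{eq:pftDiag} with $\phi_1 = \phi_{2,v}$ and $\phi_2 = \omega_{\psi_v',\mu_v}(h^{-1}g)\phi_{1,v}$, and using that $\omega_{\psi_v',\mu_v}$ is a group representation, one obtains
\[
	(\phi_{2,v} \otimes \ol{\omega_{\psi_v',\mu_v}(h^{-1}g)\phi_{1,v}})^\ddagger(zh)
	= (\omega_{\psi_v',\mu_v}(h)\phi_{2,v} \otimes \ol{\omega_{\psi_v',\mu_v}(g)\phi_{1,v}})^\ddagger(z),
\]
so the integrand becomes $f_v(g^{-1}\zeta h)$ times a jointly continuous function of $(g, h)$.

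By Lemma \ref{lem:unormal}, the stabilizer of $\zeta$ under the right $\U(V)(F_v)^2$-action $\zeta \cdot (g, h) = g^{-1}\zeta h$ is the diagonal embedding of $T_\zeta = C_{\U_n^\beta(F_v)}(\zeta)$, which is a torus since $\beta^{-1}\zeta^\ast\beta\zeta$ is regular semisimple. I would decompose $\U(V)(F_v)^2$ along the diagonal left action of $T_\zeta$, write $(g, h) = (tg_0, th_0)$, use that $t$ commutes with $\zeta$, and apply \eqref{eq:pftDiag} again to absorb $t$ into the argument of $(\cdot)^\ddagger$; the orbital integral then takes the form
\[
	\int\limits_{T_\zeta \bs \U(V)(F_v)^2} f_v(g_0^{-1}\zeta h_0) \int_{T_\zeta} (\omega_{\psi_v',\mu_v}(h_0)\phi_{2,v} \otimes \ol{\omega_{\psi_v',\mu_v}(g_0)\phi_{1,v}})^\ddagger(zt)\,dt\,d(g_0, h_0).
\]

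For the inner integral, the regular semisimplicity of $[\zeta, z]$ gives that the stabilizer of $z$ in $T_\zeta$ is trivial and that the orbit $zT_\zeta$ is closed in $E_{v,n}$; pulling back the Schwartz function $(\omega_{\psi_v',\mu_v}(h_0)\phi_{2,v} \otimes \ol{\omega_{\psi_v',\mu_v}(g_0)\phi_{1,v}})^\ddagger$ along $t \mapsto zt$ therefore yields a Schwartz function on $T_\zeta$, so the inner integral converges absolutely and depends continuously on $(g_0, h_0)$. For the outer integral, the regular semisimplicity of $\zeta$ implies that its $\U(V)(F_v)^2$-orbit in $\GL_n(E_v)$ is closed, so the orbit map $T_\zeta \bs \U(V)(F_v)^2 \to \GL_n(E_v)$ is proper; combined with the compact support of $f_v$, the function $(g_0, h_0) \mapsto f_v(g_0^{-1}\zeta h_0)$ is compactly supported on $T_\zeta \bs \U(V)(F_v)^2$, and the outer integral reduces to an integral of a continuous function over a compact set. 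The only mildly delicate point is the first step — exploiting \eqref{eq:pftDiag} to isolate the $(g,h)$-dependence onto the $E_{v,n}$ side — after which the proof is the direct unitary analogue of Lemma \ref{lem:glorbintconverge}.
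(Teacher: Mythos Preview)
Your proof is correct and follows essentially the same approach as the paper: decompose the integral along the diagonal $T_\zeta$-action, use the closed orbit of $z$ under $T_\zeta$ to deduce the inner integral is that of a Schwartz function on $T_\zeta$, and use the closed orbit of $\zeta$ under $\U(V)(F_v)^2$ together with the compact support of $f_v$ for the outer integral. The paper writes the inner integrand as $(\phi_{2,v}\otimes\ol{\omega_{\psi_v',\mu_v}(h_0^{-1}g_0)\phi_{1,v}})^\ddagger(zth_0)$ without first invoking \eqref{eq:pftDiag}, whereas you apply \eqref{eq:pftDiag} twice to reach the equivalent form $(\omega_{\psi_v',\mu_v}(h_0)\phi_{2,v}\otimes\ol{\omega_{\psi_v',\mu_v}(g_0)\phi_{1,v}})^\ddagger(zt)$; this is purely cosmetic and the convergence argument is identical.
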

\begin{proof}
The orbital integral can be written
\[
	\iint\limits_{T_\zeta \bs (\U(V)(F_v))^2}
	f_v(g_0^{-1}\zeta h_0) \int_{T_\zeta} 
	(\phi_{2,v} \otimes \ol{\omega_{\psi_v', \mu_v}(h_0^{-1}g_0)\phi_{1,v}})^\ddagger(zth_0)\,dt\,dg_0\,dh_0.
\]
As in the proof of Lemma \ref{lem:glorbintconverge},
since the orbit of $z$ under the $T_\zeta$-action
and the orbit of $\zeta$ under the
$\U_n^\beta(F_v) \times \U_n^\beta(F_v)$-action are
closed, and since the function $(\phi_{2,v} \otimes \ol{\omega_{\psi_v', \mu_v}(h_0^{-1}g_0)\phi_{1,v}})^\ddagger$
is a Schwartz function, and $f_v$ is compactly supported
and smooth, the integral is absolutely convergent.
\end{proof}

\subsection{Global distribution on the unitary group}
Let $\omega_{\psi', \mu}$ be the Weil representation
of $\U(V)(\AA_F)$ on $\S(\LL(\AA_F))$. We can also
define a global partial Fourier transform
$^\ddagger \colon \S(\LL(\AA_F))^{\otimes 2}
\to \S(V^\vee(\AA_E))$ using the same formula as
in Definition \ref{def:pftu}.

\begin{definition}
For $f \in \C_c^\infty(G(\AA_F))$, define the kernel
function
\[
	K_f(g, h) = \sum_{\zeta \in G(F)} f(g^{-1}\zeta h).
\]
It is a function on $(G(F)\bs G(\AA_F))^2$.
\end{definition}

\begin{definition}
Let $f \in \C_c^\infty(G(\AA_F))$, $\phi_1 \otimes \phi_2 \in
\S(\LL(\AA_F))^{\otimes 2}$. Let
\[
	\sJ(f, \phi_1 \otimes \phi_2) = \iint\limits_{(H(F)\bs H(\AA_F))^2}
	K_f(g, h) \ol{\Theta_{\psi', \mu}(g, \phi_1)}
	\Theta_{\psi', \mu}(h,
	\phi_2) \, dg\, dh.
\]
\end{definition}

\begin{lemma}[{\cite[Lemma~4.2.1]{MR3228451}}]
\label{lem:productformula}
For any $a \in \U(V)(F)$, we have
\[
	\ol{\Theta_{\psi', \mu}(g, \phi_1)}
	\Theta_{\psi', \mu}(h, \phi_2) =
	\sum_{z \in V^\vee(E)} (\phi_2 \otimes
	\ol{\omega_{\psi', \mu}(h^{-1}ag)\phi_1})^\ddagger(zh).
\]	
\end{lemma}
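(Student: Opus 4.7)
The plan is to start from the right-hand side, absorb the group element $h$ via the equivariance property \eqref{eq:pftDiag}, and then unfold the sum over $V^\vee(E)$ as a double sum over $\LL(F) \times \LL^\vee(F)$. Concretely, by \eqref{eq:pftDiag} applied with the pair $(\phi_2, \omega_{\psi', \mu}(h^{-1}ag)\phi_1)$, the summand on the right-hand side can be rewritten as
\[
    (\phi_2 \otimes \ol{\omega_{\psi', \mu}(h^{-1}ag)\phi_1})^\ddagger(zh)
    = (\omega_{\psi', \mu}(h)\phi_2 \otimes \ol{\omega_{\psi', \mu}(ag)\phi_1})^\ddagger(z),
\]
so that the $h$-translation is absorbed into the first tensor factor and cancels out of the second. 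This moves us to summing $(\omega_{\psi', \mu}(h)\phi_2 \otimes \ol{\omega_{\psi', \mu}(ag)\phi_1})^\ddagger$ over $z \in V^\vee(E)$.

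Next, identify $V^\vee(E) \cong \LL(F) \times \LL^\vee(F)$ via the polarization, and write $z = (z_1, z_2)$ in these coordinates. The partial Fourier transform $^\ddagger$ is, by Definition \ref{def:pftu}, an integral over $w \in \LL(\AA_F)$ paired against $\psi'(\langle w, z_2\rangle)$. Summing over $z_2 \in \LL^\vee(F)$ and applying global Poisson summation in the $\LL^\vee$ direction turns the integral against $\psi'(\langle w, -\rangle)$ and the sum over $z_2$ into a sum over $w \in \LL(F)$. The resulting expression is
\[
    \sum_{z_1 \in \LL(F)}\sum_{w \in \LL(F)}
    \bigl(\omega_{\psi', \mu}(h)\phi_2\bigr)(z_1 + w)\;
    \ol{\bigl(\omega_{\psi', \mu}(ag)\phi_1\bigr)(z_1 - w)}.
\]

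Now perform the change of variables $(z_1, w) \mapsto (x_1, x_2) = (z_1 - w, z_1 + w)$, which is a bijection on $\LL(F) \times \LL(F)$ since $\LL$ is an $F$-vector space and division by $2$ is valid. The double sum factors as the product
\[
    \Bigl(\sum_{x_2 \in \LL(F)} (\omega_{\psi', \mu}(h)\phi_2)(x_2)\Bigr)
    \cdot
    \Bigl(\sum_{x_1 \in \LL(F)} \ol{(\omega_{\psi', \mu}(ag)\phi_1)(x_1)}\Bigr)
    = \Theta_{\psi', \mu}(h, \phi_2)\cdot \ol{\Theta_{\psi', \mu}(ag, \phi_1)}.
\]
Finally, the automorphy of the theta series under $\U(V)(F)$ gives $\Theta_{\psi', \mu}(ag, \phi_1) = \Theta_{\psi', \mu}(g, \phi_1)$, which yields the desired identity (and explains why the right-hand side is in fact independent of $a$).

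The only delicate step is the Poisson summation applied to the partial Fourier transform, where one must check that the self-dual measure on $\LL(\AA_F)$ and the pairing between $\LL$ and $\LL^\vee$ are calibrated so that no extra constant appears; this is guaranteed by our choice of measures in the definition of $^\ddagger$. Everything else is formal manipulation of the Weil representation.
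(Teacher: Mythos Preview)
Your argument is correct. The paper does not supply its own proof of this lemma but simply cites \cite[Lemma~4.2.1]{MR3228451}; your unfolding via \eqref{eq:pftDiag}, Poisson summation in the $\LL^\vee$-direction, the bijective change of variables $(z_1,w)\leftrightarrow(z_1-w,z_1+w)$ on $\LL(F)\times\LL(F)$, and the automorphy of the theta series is exactly the standard argument underlying that reference.
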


\begin{definition}
\label{def:ugood}
Say that a test function $(f, \phi_1 \otimes \phi_2)$
is \emph{good} if it satisfies the following 
conditions.
\begin{enumerate}[(1)]
\item 
\label{item:umatrixcoeff}
There exists a nonarchimedean split place $v_1$
of $F$ such that $f_{v_1}$ is a truncated
matrix coefficient of a supercuspidal representation.
This means
\[
	f_{v_1}(g) = \widehat f_{v_1}(g) \cdot \mathbbm{1}_{G^\ast(F_{v_1})}(g),
\]
where $\widehat f_{v_1}$ is a matrix coefficient of a 
supercuspidal representation of $G(F_{v_1})$, and
\[
	G^\ast(F_{v_1}) = \{g \in
	G(F_{v_1}) : \det g \in \O_{E_{v_1}}^\times \}.
\]

\item
\label{item:ursssupp} 
There exists a nonarchimedean split place
$v_2 \neq v_1$ such that $f_{v_2}$ is supported on the
regular semisimple locus of $\GL_n(E_{v_2})$ under the 
$\U(V)(F_{v_2}) \times \U(V)(F_{v_2})$-action, and 
if $[\zeta, z] \in Y^\beta$ and
\[
	f_{v_2}(g^{-1}\zeta h)(\phi_{2, v_2} \otimes
	\ol{\omega_{\psi_{v_2}', \mu_{v_2}}(h^{-1}g)\phi_{1,v_2}})^\ddagger(zh) \neq 0
\]
for some $g, h \in \U(V)(F_{v_2})$, then $[\zeta, z]
\in Y^\beta_\rss$.

\item For all places $v$ of $F$, the function $f_v$
is supported on the subset
\[
	\{ 
	\zeta \in \GL_n(E_v) : 
	\beta^{-1}\zeta^\ast\beta\zeta \in \Nm(\GL_n(E_v)) 
	\}.
\]

\end{enumerate}
\end{definition}

\begin{proposition}
\label{prop:GeoU}
For good test functions $(f, \phi_1 \otimes \phi_2)$,
the integral $\sJ(f, \phi_1 \otimes \phi_2)$
is absolutely convergent. 
Moreover, if $f = \prod_v f_v$, $\phi_1
= \prod_v \phi_{1,v}$, $\phi_2 = \prod_v \phi_{2,v}$
are all factorizable, then
\[
	\sJ(f, \phi_1 \otimes \phi_2)
	= \sum_{[\zeta, z] \in [Y_\rss^\beta]} \prod_v\Orb_v^{\psi, \mu}([\zeta, z], f_v, \phi_{1,v} \otimes \phi_{2,v}).
\]
\end{proposition}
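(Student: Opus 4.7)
The plan is to follow the template of the proof of Proposition~\ref{prop:GeoGL}, with Lemma~\ref{lem:productformula} replacing Poisson summation and Lemma~\ref{lem:unormal} describing the stabilizer of the diagonal orbit action on the unitary side.

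First I would substitute $K_f(g, h) = \sum_{\zeta \in \GL_n(E)} f(g^{-1}\zeta h)$ into the definition of $\sJ$. By the third condition of Definition~\ref{def:ugood}, the support of $f$ restricts the sum to $\zeta$ with $\beta^{-1}\zeta^\ast\beta\zeta$ a norm at every place. I would then decompose the sum over such $\zeta$ into orbits of the right action $\zeta.(g, h) = g^{-1}\zeta h$ of $\U_n^\beta(F) \times \U_n^\beta(F)$; the regular semisimple orbits have normal representatives in $R^\beta$ and, by Lemma~\ref{lem:unormal}, stabilizer equal to the diagonal $\Delta T_\zeta(F)$. The non-regular-semisimple orbits (whose contributions will vanish by condition~\eqref{item:ursssupp}) are set aside. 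Unfolding each rss orbit sum against the integral over $(\U_n^\beta(F) \bs \U_n^\beta(\AA_F))^2$ produces
\[
\sJ = \sum_{\zeta \in R^\beta} \iint_{\Delta T_\zeta(F) \bs (\U_n^\beta(\AA_F))^2} f(g^{-1}\zeta h) \ol{\Theta_{\psi',\mu}(g, \phi_1)} \Theta_{\psi', \mu}(h, \phi_2) \,dg\,dh.
\]

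Next I would apply Lemma~\ref{lem:productformula} with $a = 1$ to expand the theta product as $\sum_{z \in V^\vee(E)} (\phi_2 \otimes \ol{\omega_{\psi', \mu}(h^{-1}g)\phi_1})^\ddagger(zh)$, and decompose the $z$-sum into $T_\zeta(F)$-orbits under right multiplication; again, condition~\eqref{item:ursssupp} at $v_2$ annihilates the non-rss contributions. For regular semisimple $[\zeta, z]$ the stabilizer $T_{[\zeta, z]}$ is trivial, so the inner sum runs over all of $T_\zeta(F)$. The key observation is that the map $(g, h) \mapsto (t_0 g, t_0 h)$, for $t_0 \in T_\zeta(F)$, intertwines the translation $z \mapsto z t_0$: since $t_0$ commutes with $\zeta$ we have $f(g^{-1}\zeta h) = f((t_0 g)^{-1}\zeta(t_0 h))$, and $(\phi_2 \otimes \ol{\omega_{\psi', \mu}(h^{-1}g)\phi_1})^\ddagger(z t_0 h)$ equals the same expression with $z$ unchanged and $(g, h)$ replaced by $(t_0 g, t_0 h)$. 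The $T_\zeta(F)$-sum therefore unfolds against the $\Delta T_\zeta(F)$-quotient, leaving
\[
\sJ(f, \phi_1 \otimes \phi_2) = \sum_{[\zeta, z] \in [Y_\rss^\beta]} \iint_{(\U_n^\beta(\AA_F))^2} f(g^{-1}\zeta h)(\phi_2 \otimes \ol{\omega_{\psi', \mu}(h^{-1}g)\phi_1})^\ddagger(zh)\,dg\,dh.
\]

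When $f, \phi_1, \phi_2$ are factorizable the integrand factors over places---the partial Fourier transform $^\ddagger$ is local and integration over $\U_n^\beta(\AA_F) = \prod_v \U_n^\beta(F_v)$ splits as a product---giving the claimed product of local orbital integrals from Definition~\ref{def:orbintU}. Absolute convergence of each local orbital integral is the preceding local lemma; for the outer sum I would adapt the argument of \cite[Proposition~3.2.3]{MR3228451}, using compact support of $f$ to restrict to finitely many $\zeta \in R^\beta$ and, for each such $\zeta$, decomposing $\U_n^\beta(\AA_F) = T_\zeta(\AA_F) \cdot K_\zeta$ to reduce the sum over regular semisimple $T_\zeta(F)$-orbits in $E_n$ to the absolute convergence of a Schwartz function sum on $\AA_{E, n}$. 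The main obstacle is executing the chain of unfoldings carefully---first absorbing the $(\U_n^\beta(F))^2/T_\zeta(F)$ cosets into the adelic integral, then collapsing the residual diagonal $T_\zeta(F)$-action using the equivariance identity~\eqref{eq:pftDiag}---so that the non-regular-semisimple contributions are manifestly killed by condition~\eqref{item:ursssupp}.
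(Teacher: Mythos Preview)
Your proposal is correct and follows essentially the same route as the paper: both use condition~\eqref{item:ursssupp} of Definition~\ref{def:ugood} to restrict to regular semisimple orbits, Lemma~\ref{lem:productformula} to expand the theta product, Lemma~\ref{lem:unormal} to identify the stabilizer as diagonal $T_\zeta$, and then unfold to the global orbital integral, with convergence handled exactly as in Proposition~\ref{prop:GeoGL}. The only organizational difference is that the paper compresses your two-stage unfolding (first the $H(F)^2$-orbit in $\zeta$, then the $T_\zeta(F)$-orbit in $z$) into a single kernel identity summing over $(a,b)\in H(F)\times H(F)$ and $[\zeta,z]\in[Y_\rss^\beta]$ simultaneously; your more explicit staging is arguably clearer, and the reference to~\eqref{eq:pftDiag} in your last sentence is unnecessary since the diagonal $T_\zeta$-invariance follows directly from $t_0^{-1}\zeta t_0=\zeta$ and $(t_0h)^{-1}(t_0g)=h^{-1}g$.
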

\begin{proof}
By the assumptions on 
$(f_{v_2}, \phi_{1,v_2} \otimes \phi_{2, v_2})$ and
Lemma \ref{lem:productformula}, we have
\begin{align*}
&\sum_{\zeta \in \GL_n(E)} 
f(g^{-1}\zeta h)
\ol{\Theta_{\psi',\mu}(g, \phi_1)}\Theta_{\psi',\mu}(h, \phi_2)\\
&= \sum_{[\zeta, z] \in [Y_\rss^\beta]} 
\sum_{(a, b) \in H(F) \times H(F)}
f(g^{-1}a^{-1}\zeta bh) (\phi_2 \otimes \ol{\omega_{\psi', \mu}(h^{-1}b^{-1}ag)\phi_1})^\ddagger(zbh).
\end{align*}
Thus,
\begin{align*}
	\sJ(&f, \phi_1 \otimes \phi_2)\\
	&= \sum_{[\zeta, z] \in [Y_\rss^\beta]}
	\int_{\U(V)(\AA_F)}\int_{\U(V)(\AA_F)}
	f(g^{-1}\zeta h) (\phi_2 \otimes \ol{\omega_{\psi', \mu}(h^{-1}g)\phi_1})^\ddagger(zh) \,dg\,dh.
\end{align*}
The absolute convergence of the global integral and the
outer sum is shown in the same way
as in the proof of Proposition \ref{prop:GeoGL}.
\end{proof}

\section{Fundamental lemma and smooth transfer}
Let $v$ be a place of $F$.
Throughout this section, all objects are over $F_v$,
and we suppress $v$ from the notation.

\subsection{Matching of orbits}
In this subsection, we prove the matching between the
orbit spaces in the two relative trace formulas.

\begin{definition}
Let 
\[ M_n = \Mat_n \times \Mat_{1, n} \times \Mat_{n, 1}, \]
with right action of $\GL_n$ given by 
\[
  [\xi, x, y].h = [h^{-1}\xi h, xh, h^{-1}y].
\]
An element of $M_n(E)$ is \emph{regular semisimple} if
\begin{enumerate}
\item $\xi$ is regular semisimple as an element of $\Mat_n(E)$;
\item the vectors $\{x, x\xi, \dots, x\xi^{n-1}\}$ span the $E$-vector
space $E_n$;
\item the vectors $\{y, \xi y, \dots, \xi^{n-1}y\}$ span the $E$-vector
space $E^n$.
\end{enumerate}
Let $M_n(E)_{\rss}$ denote the set of regular 
semisimple elements of 
$M_n(E)$. For $[\xi, x, y] \in M_n(E)$, let
\[
  a_i([\xi, x, y]) = \Tr \wedge^i \xi, \quad
  b_i([\xi, x, y]) = x\xi^{i - 1} y
\]
for $1 \le i \le n$. These are invariant under
the $\GL_n(E)$-action on $M_n(E)$.
\end{definition}

Note that
\[
  X_\rss = \{[\gamma, x, y] \in S
  \times F_n \times F^{-,n} : [\ol\gamma \gamma, x, y]
  \in M_n(E)_\rss \},
\]
and
\[
  Y_\rss^\beta = \{[\zeta, z] \in R^\beta \times E_n : 
  [\beta^{-1}\zeta^\ast\beta\zeta, z, \beta^{-1}z^\ast]\in M_n(E)_\rss\}.
\]

\begin{lemma}[{\cite[Lemma~5.6]{MR3244725}, \cite[Proposition~6.2~and~Theorem~6.1]{rallis2007multiplicity}}]
Two regular semisimple elements 
$[\xi, x, y]$ and $[\xi', x', y']$ of $M$ are
in the same $\GL_n(E)$-orbit if and only if
they have the same invariants.
The $\GL_n(E)$-stabilizer of a regular semisimple
element of $M_n(E)$ is trivial.
\end{lemma}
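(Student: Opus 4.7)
The plan is to establish both assertions by a cyclicity argument, exploiting the fact that the row vector $x$ and the column vector $y$ are each cyclic for the regular semisimple $\xi$.

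For the triviality of the $\GL_n(E)$-stabilizer, suppose $h$ fixes $[\xi, x, y]$. The equation $h^{-1}\xi h = \xi$ says that $h$ commutes with $\xi$. Since $\xi$ is regular semisimple its eigenvalues are distinct, so its minimal polynomial equals its characteristic polynomial, and its commutant in $\Mat_n(E)$ is the commutative algebra $E[\xi]$. Hence $h = p(\xi)$ for a polynomial $p$ of degree $< n$. The condition $xh = x$, combined with commutation, gives $x\xi^i h = x\xi^i$ for every $i \ge 0$, and the spanning hypothesis on $\{x, x\xi, \dots, x\xi^{n-1}\}$ forces $h = I_n$.

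For the orbit classification, suppose $[\xi, x, y]$ and $[\xi', x', y']$ have the same invariants. The equalities $a_i = a_i'$ determine the characteristic polynomial of $\xi$, and regular semisimplicity turns this polynomial into a complete invariant of the $\GL_n(E)$-conjugacy class in $\Mat_n(E)$. Replacing $[\xi', x', y']$ by a suitable element of its orbit, I may therefore assume $\xi = \xi'$. Since $x$ is cyclic for $\xi$, there is a unique polynomial $p \in E[T]$ of degree $< n$ with $x' = x \cdot p(\xi)$, and I set $h := p(\xi)$. The cyclicity of $x'$ forces $p(\xi)$ to be invertible: diagonalizing $\xi$ over an algebraic closure, a Vandermonde computation shows cyclicity is equivalent to every coordinate of $x$ being nonzero, and then $x' = x \cdot p(\xi)$ gives $p(\lambda_j) \ne 0$ for each eigenvalue $\lambda_j$. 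So $h \in \GL_n(E)$, commutes with $\xi$, and satisfies $xh = x'$.

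It remains to check $h^{-1}y = y'$, and this is exactly what the $b_i$ invariants provide: for $1 \le i \le n$,
\[
    x \xi^{i-1} y = b_i = b_i' = x' \xi^{i-1} y' = x h \xi^{i-1} y' = x \xi^{i-1} (hy'),
\]
so $x \xi^{i-1}(y - hy') = 0$ for every $i$, and the cyclicity of $x$ forces $y = hy'$. The main technical point I anticipate is verifying the invertibility of $p(\xi)$, since a priori the defining identity $x' = x \cdot p(\xi)$ only produces $h \in \Mat_n(E)$; this is precisely where the regular semisimplicity of the \emph{second} triple is used. Once invertibility is in hand, the rest of the argument is a direct manipulation of the defining identities of the $a_i$ and $b_i$.
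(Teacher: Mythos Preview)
Your proof is correct. The paper does not actually give its own proof of this lemma; it simply records the statement with citations to \cite[Lemma~5.6]{MR3244725} and \cite[Proposition~6.2 and Theorem~6.1]{rallis2007multiplicity}. What you have written is a clean self-contained argument via cyclicity, and the one genuinely nontrivial step you flag---invertibility of $h = p(\xi)$, established by invoking the cyclicity of $x'$ for the \emph{second} triple---is handled correctly. So there is nothing to compare against beyond noting that your direct argument replaces an appeal to the literature.
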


\begin{lemma}
\label{lem:embedM}
Two elements $[\gamma, x, y]$ and $[\gamma', x', y']$
of $X_\rss$ are equivalent if and only if
$[\ol\gamma\gamma, x, y]$ and $[\ol\gamma'\gamma', x', y']$
are in the same $\GL_n(E)$-orbit of $M_n(E)$.
Two elements $[\zeta, z]$ and $[\zeta', z']$
of $Y_\rss^\beta$ are equivalent if and only
if $[\beta^{-1}\zeta^\ast\beta\zeta, z, \beta^{-1}z^\ast]$ and $[\beta^{-1}\zeta'^\ast\beta\zeta', z',
\beta^{-1}z'^\ast]$ are in the same $\GL_n(E)$-orbit 
of $M_n(E)$.
\end{lemma}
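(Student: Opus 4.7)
The plan is to verify each equivalence in both directions, using the embeddings into $M_n(E)$ to transfer orbit information. The forward implications are routine unwindings of the definitions: for the $X$-case one checks that $t \in T_\gamma \subseteq \GL_n(F)$ (by Lemma \ref{lem:glnormal}) commutes with $\ol\gamma\gamma$, so that $t$ carries $[\ol\gamma\gamma, x, y]$ to $[\ol\gamma\gamma, x', y']$; for the $Y^\beta$-case one uses that any $t \in \U_n^\beta(F)$ commuting with $\zeta$ automatically satisfies $t^{-1}\beta^{-1}z^\ast = \beta^{-1}(zt)^\ast$, because $\beta^{-1}t^\ast = t^{-1}\beta^{-1}$, so the diagonal element $(t,t)$ transports the $M_n(E)$-data associated to $[\zeta, z]$ to that associated to $[\zeta, zt]$.

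For the reverse direction of the first statement, suppose $h \in \GL_n(E)$ satisfies $h^{-1}\ol\gamma\gamma h = \ol\gamma'\gamma'$, $xh = x'$, and $h^{-1}y = y'$. Since $\ol\gamma\gamma$ and $\ol\gamma'\gamma'$ are regular semisimple elements of $\GL_n(F)$ conjugate in $\GL_n(E)$, they share a characteristic polynomial and are therefore conjugate in $\GL_n(F)$; the injectivity of the norm map on $\sigma$-conjugacy classes (\cite[Lemma~1.1]{MR1007299}) then forces $\gamma = \gamma'$, since both lie in $S$. To upgrade $h \in C_{\GL_n(E)}(\ol\gamma\gamma)$ to $T_\gamma = C_{\GL_n(F)}(\ol\gamma\gamma)$, I apply $\sigma$ to $xh = x' \in F_n$ to obtain $x(h - \ol h) = 0$; since $h - \ol h$ commutes with $\ol\gamma\gamma \in \GL_n(F)$, the regularity condition that $\{x(\ol\gamma\gamma)^i\}_{0 \le i < n}$ spans $E_n$ forces $h - \ol h = 0$.

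The reverse direction of the second statement is the main obstacle. Given $h \in \GL_n(E)$ realizing the $\GL_n(E)$-orbit equivalence in $M_n(E)$, the first step is to show $h \in \U_n^\beta(F)$. Introducing the anti-involution $\theta(A) := \beta^{-1}A^\ast\beta$, so that $\U_n^\beta(F) = \{A : \theta(A) = A^{-1}\}$ and $\xi := \beta^{-1}\zeta^\ast\beta\zeta = \theta(\zeta)\zeta$ satisfies $\theta(\xi) = \xi$, I claim $c := h\theta(h) = 1$. Applying $\theta$ to $h^{-1}\xi h = \xi'$ and comparing with the original identity shows that $c$ commutes with $\xi$; on the other hand, combining $zh = z'$ with $h^{-1}\beta^{-1}z^\ast = \beta^{-1}z'^\ast$ and taking conjugate transposes yields $z(c - 1) = 0$, whereupon the regularity condition that $\{z\xi^i\}_{0 \le i < n}$ spans $E_n$ forces $c = 1$. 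Setting $g := \zeta h \zeta'^{-1}$, the identity $g^{-1}\zeta h = \zeta'$ is automatic, and $\theta(g) = g^{-1}$ is a direct consequence of $h^{-1}\xi h = \xi'$, so $g \in \U_n^\beta(F)$. This puts $\zeta$ and $\zeta'$ in the same $\U_n^\beta \times \U_n^\beta$-orbit, forcing $\zeta = \zeta'$ since both lie in $R^\beta$; Lemma \ref{lem:unormal} then gives $(g, h) \in \Delta \U_n^\beta(F)$, so $g = h =: t \in T_\zeta$, and $zt = z'$ establishes the equivalence.
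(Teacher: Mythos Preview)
Your proof is correct and follows essentially the same strategy as the paper. The one noteworthy difference is in how you pin down $h$: the paper observes that $\ol h$ (respectively $\beta^{-1}h^{\ast,-1}\beta$) also carries the first $M_n(E)$-triple to the second, and then invokes the triviality of the $\GL_n(E)$-stabilizer of a regular semisimple element to conclude $h = \ol h$ (respectively $h \in \U_n^\beta(F)$). You instead unpack that stabilizer argument by hand, showing directly that $h - \ol h$ (respectively $h\theta(h) - 1$) commutes with the first matrix entry and is annihilated on the left by $x$ (respectively $z$), whence the spanning condition forces it to vanish. This is exactly the content of the stabilizer-triviality lemma, so the two arguments are really the same; your version is slightly more self-contained in that it uses only the row-regularity rather than citing the full result.
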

\begin{proof}
The only if direction for both statements is
straightforward.

Suppose that $[\ol\gamma\gamma, x, y].h
= [\ol\gamma'\gamma', x', y']$. Then $\ol\gamma\gamma$
and $\ol\gamma'\gamma'$ are elements of $\GL_n(F)$ which are
$\GL_n(E)$-conjugate, so they are also $\GL_n(F)$-conjugate.
By the injectivity of the norm map, $\gamma$
and $\gamma'$ are $\sigma$-conjugate, so
$\gamma = \gamma'$.
Then it is easy to check that
$[\ol\gamma\gamma, x, y].\ol h = [\ol \gamma\gamma, x',
y']$, so by the triviality of stabilizers of regular
semisimple elements of $M_n(E)$, we have $h \in \GL_n(F)$.
This implies $h \in T_\gamma$, so $[\gamma, x, y]$
and $[\gamma', x', y']$ are equivalent in $X$
since $[x, y].h = [x', y']$.

For the second statement, if
$[\beta^{-1}\zeta^\ast\beta\zeta, z, \beta^{-1}z^\ast].h
  = [\beta^{-1}\zeta'^\ast\beta\zeta', z', \beta^{-1}z'^\ast]$,
then it is easy to check that
$
  [\beta^{-1}\zeta^\ast\beta\zeta, z, \beta^{-1}z^\ast].\beta^{-1}h^{\ast, -1}\beta
  = [\beta^{-1}\zeta'^\ast\beta\zeta', z', \beta^{-1}z'^\ast]
$, 
so by the triviality of stabilizers of regular semisimple elements of $M_n(E)$,
we have $h \in \U_n^\beta(F)$.
Letting $g = \zeta h \zeta'^{-1}$, it is easy to check
that $g \in \U_n^\beta(F)$, so $\zeta = \zeta'$.
Furthermore, since $\zeta$ is normal, we have
$h \in T_\zeta$, so $[\zeta, z]$ and
$[\zeta', z']$ are equivalent since $z' = zh$.
\end{proof}


We also consider the following orbit spaces for
Lie algebras.

\begin{definition}
\label{def:lieorbits}
Let $\fx = \gl_n \times F_n \times F^{-,n}$, and let
$\fy^\beta = \fh_n^\beta \times E_n$, where
$\fh_n^\beta$ is the subvariety of matrices $A' \in \Res_{E/F}\gl_{n}$ satisfying $A'^\ast = \beta A'\beta^{-1}$.
The groups $\GL_n$ and $\U_n^\beta$ act on
$\fx$ and $\fy^\beta$, respectively, via
\[
  [A, x, y].h = [h^{-1}A h, xh, h^{-1} y],
  \quad [A', z].h = [h^{-1}A' h, zh].
\]
Let $\fx_\rss(F) / \GL_n(F)$ and $\fy_\rss^\beta(F)/ \U_n^\beta(F)$ denote the regular semisimple orbits of 
$\fx(F)$ and $\fy^{\beta}(F)$ under these actions.
\end{definition}

The following lemma follows from \cite[Lemma~3.1]{MR3245011}
after multiplying $A$, $A'$, $y$, and $\beta^{-1}$ 
by an element of $F^-$.

\begin{lemma}[{\cite[Lemma~3.1]{MR3245011}}]
\label{lem:liematch}
There is a bijection
\[
  \fx_\rss(F) / \GL_n(F) \simeq \coprod_{\beta \in [\SHerm_n^\times(F)]} \fy_\rss^\beta(F) / \U_n^\beta(F)
\]
such that $[A, x, y]$ and $[A', z]$ match if and only
if $[A, x, y]$ and $[A', z, \beta^{-1}z^\ast]$ are in the
same $\GL_n(E)$-orbit of $M_n(E)$.
\end{lemma}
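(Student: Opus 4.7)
The plan is to reduce to \cite[Lemma~3.1]{MR3245011}, which establishes the analogous matching in the Hermitian setting, via a uniform $j$-scaling that converts the skew-Hermitian data to Hermitian data. Fix $j \in F^-$ and define the rescaling $\Phi \colon \fx \to \tilde \fx$, $[A, x, y] \mapsto [jA, x, jy]$, where $\tilde \fx = \gl_n \times F_n \times F^n$ is the Hermitian analog of $\fx$; this is well defined since $j \cdot F^- = F$. On the unitary side, since $\bar j = -j$ and $\beta^\ast = -\beta$, the matrix $j^{-1}\beta$ is Hermitian, and the relation $A'^\ast = \beta A'\beta^{-1}$ is equivalent to $(jA')^\ast = (j^{-1}\beta)(jA')(j^{-1}\beta)^{-1}$. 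Hence the rescaling $\Psi_\beta \colon \fy^\beta \to \tilde\fy^{j^{-1}\beta}$, $[A', z] \mapsto [jA', z]$, lands in the Hermitian analog, and $\U_n^{j^{-1}\beta} = \U_n^\beta$ as $F$-group schemes.

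Both $\Phi$ and $\Psi_\beta$ are equivariant for their respective actions (they merely rescale by a central element) and preserve the regular semisimple loci, as regular semisimplicity is characterized by polynomial invariants and stabilizer triviality, both of which are unaffected by scaling by a unit. Therefore they induce bijections on orbit spaces. Combined with the bijection $[\SHerm_n^\times(F)] \simeq [\Hm_n^\times(F)]$, $\beta \mapsto j^{-1}\beta$, one then invokes \cite[Lemma~3.1]{MR3245011} on the Hermitian side and transports it through $\Phi^{-1}$ and $\Psi_\beta^{-1}$ to obtain the stated bijection
\[
	\fx_\rss(F)/\GL_n(F) \simeq \coprod_{\beta \in [\SHerm_n^\times(F)]} \fy^\beta_\rss(F)/\U_n^\beta(F).
\]

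Finally, I would check that the matching criterion in terms of $\GL_n(E)$-orbits in $M_n(E)$ transfers correctly. A direct computation with the invariants $a_i$ and $b_i$ shows that $[jA, x, jy]$ has invariants uniformly scaled by $j^i$ relative to $[A, x, y]$, and likewise $[jA', z, (j^{-1}\beta)^{-1}z^\ast] = [jA', z, j\beta^{-1}z^\ast]$ has invariants scaled by the same factor $j^i$ relative to $[A', z, \beta^{-1}z^\ast]$. Thus Zhang's Hermitian matching criterion translates precisely into the stated skew-Hermitian one. The main technical point is the bookkeeping of these scaling exponents so that both sides are rescaled consistently; once this is verified, the lemma follows.
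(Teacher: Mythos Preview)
Your approach is correct and matches the paper's own justification: the paper states (immediately before the lemma) that it follows from \cite[Lemma~3.1]{MR3245011} after multiplying $A$, $A'$, $y$, and $\beta^{-1}$ by an element of $F^-$, which is exactly the $j$-scaling you carry out. Your bookkeeping of the invariant rescalings confirms that the matching criterion transfers as claimed; note that the paper's displayed condition should read $\beta^{-1}z^\ast$ rather than $\beta^{-1}z$, as you correctly use.
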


\begin{lemma}
\label{lem:hermitiannorms}
Suppose $A' \in \fh_n^\beta(F)$ is regular semisimple and
the norm of some element of $\GL_n(E)$. Then 
$A' = \beta^{-1}\zeta^\ast\beta\zeta$ for some 
$\zeta \in \GL_n(E)$.
\end{lemma}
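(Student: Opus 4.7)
The plan is to reduce the statement to the classification of skew-Hermitian forms on $E^n$ over the local field $F_v$.

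First, I would verify that $\beta A'$ is itself a skew-Hermitian matrix. Since $\beta^\ast = -\beta$ and $A'^\ast = \beta A'\beta^{-1}$, one computes
\[
    (\beta A')^\ast = A'^\ast \beta^\ast = (\beta A' \beta^{-1})(-\beta) = -\beta A'.
\]
Thus both $\beta$ and $\beta A'$ define non-degenerate skew-Hermitian forms on $E^n$, and the desired equality $A' = \beta^{-1}\zeta^\ast\beta\zeta$ is equivalent to $\beta A' = \zeta^\ast \beta\zeta$, i.e., to asserting that these two skew-Hermitian forms lie in the same equivalence class.

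Next, I would invoke the classification of non-degenerate Hermitian (equivalently, skew-Hermitian, via multiplication by $j^{-1}$) forms over the local field $F$. At a (nonarchimedean) inert place, such forms of a fixed dimension $n$ are classified by their discriminant in $F^\times/\Nm(E^\times)$. The ratio of discriminants of $j^{-1}\beta A'$ and $j^{-1}\beta$ is precisely $\det A'$. Since we are given $A' = \overline\gamma\gamma = \Nm(\gamma)$, we obtain
\[
    \det A' = \overline{\det\gamma}\cdot\det\gamma = \Nm(\det\gamma) \in \Nm(E^\times),
\]
so the two discriminants coincide modulo norms, and the forms are isometric. This isometry is exactly a $\zeta \in \GL_n(E)$ with $\zeta^\ast\beta\zeta = \beta A'$.

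At a split place $E_v = F_v \times F_v$, the classification argument is unnecessary: writing $\beta = (\beta_1,-\transp\beta_1)$, $A' = (A_1,A_2)$ with $A_2 = \transp(\beta_1 A_1 \beta_1^{-1})$ forced by the condition $A' \in \fh_n^\beta$, one can directly take $\zeta = (1,\transp(\beta_1 A_1 \beta_1^{-1}))$ and verify both components by inspection; the norm hypothesis is automatic in this case. The main point to be careful about is the choice of local field: the argument via discriminants is clean at nonarchimedean $v$, and the split case handles all the remaining places relevant to Theorem~\ref{thm:maintheorem} (whose assumption~(4) rules out nonsplit archimedean places). The slight subtlety—that the lemma is really a reflection of the isomorphism between skew-Hermitian forms whose discriminants agree modulo norms, rather than a direct matrix construction—is the only conceptual step; once this is recognized, the proof is short.
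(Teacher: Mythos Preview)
Your argument is correct for the nonarchimedean inert and split local places, and you are right that these cover the situations relevant to Theorem~\ref{thm:maintheorem}. However, the paper's proof takes a genuinely different and more uniform route that avoids the classification of Hermitian forms altogether.

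The paper argues as follows. Since $A'$ is a norm, there exist $h \in \GL_n(E)$ and a \emph{normal} $\gamma \in \GL_n(E)$ with $h^{-1}A'h = \ol\gamma\gamma \in \GL_n(F)$. Normality forces $\gamma\ol\gamma = \ol\gamma\gamma$, so $\gamma$ lies in the centralizer of $\ol\gamma\gamma$; since $\ol\gamma\gamma$ is regular semisimple, that centralizer in $\Mat_n(E)$ is the polynomial algebra it generates, and one may write $\gamma = \sum_i c_i(\ol\gamma\gamma)^i$ with $c_i \in E$. Setting $\zeta \coloneqq h\gamma h^{-1} = \sum_i c_i (A')^i$ and using $A'^\ast = \beta A'\beta^{-1}$, one checks directly that $\beta^{-1}\zeta^\ast\beta = h\ol\gamma h^{-1}$, hence $\beta^{-1}\zeta^\ast\beta\zeta = h\ol\gamma\gamma h^{-1} = A'$.

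What each approach buys: the paper's construction is purely algebraic and works verbatim over any quadratic \'etale algebra $E/F$, in particular over $\CC/\RR$ and over the global field (the lemma is invoked globally via Lemma~\ref{lem:matching} at the end of \S4.1). Your approach identifies the statement as an equivalence of skew-Hermitian forms, which is conceptually pleasant, but it leaves the archimedean inert case open (the discriminant no longer classifies forms there) and would require a Hasse-principle argument to recover the global statement. Under the standing hypotheses of the paper this is harmless, but the paper's constructive proof is strictly more general and needs no casework.
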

\begin{proof}
Since $A'$ is the norm of an element of $\GL_n(E)$, there
exists $h \in \GL_n(E)$ and normal $\gamma \in \GL_n(E)$
such that
\[
  h^{-1} A' h = \ol\gamma\gamma \in \GL_n(F).
\]
Since $\ol\gamma\gamma$ is regular semisimple, we have
\[
  \gamma = c_{n-1}(\ol\gamma\gamma)^{n-1} + \cdots + c_0
\]
for some $c_0, \dots, c_{n-1} \in E$.
Then it is easy to check that 
$(h\gamma h^{-1})^\ast = \beta (h\ol\gamma h^{-1}) \beta^{-1}$,
and we can take $\zeta = h\gamma h^{-1}$.
\end{proof}

\begin{lemma}
\label{lem:matching}
There is a bijection 
\[
  [X_\rss] \simeq \coprod_{\beta \in [\SHerm_n^\times(F)]} [Y_\rss^\beta]
\]
such that $[\gamma, x, y]$ and $[\zeta, z]$ match 
if and only if $[\ol\gamma \gamma, x, y]$ and 
$[\beta^{-1}\zeta^\ast \beta \zeta, z, \beta^{-1} z^\ast]$ 
are in the same $\GL_n(E)$-orbit of $M_n(E)$.
\end{lemma}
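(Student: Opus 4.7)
The plan is to reduce to the Lie algebra matching in Lemma \ref{lem:liematch} and the norm-recognition statement of Lemma \ref{lem:hermitiannorms}, using Lemma \ref{lem:embedM} as the conceptual bridge. Lemma \ref{lem:embedM} tells us that equivalence classes in $[X_\rss]$ and $[Y_\rss^\beta]$ are each faithfully recorded by the $\GL_n(E)$-orbit of an associated triple in $M_n(E)$; so to construct the bijection it suffices to show that the images of $[X_\rss]$ and $\coprod_\beta[Y_\rss^\beta]$ inside $M_n(E)_\rss/\GL_n(E)$ coincide, and to produce $\zeta$ or $\gamma$ on each side via the norm condition.

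To build the forward map $[\gamma, x, y] \mapsto [\zeta, z]$, note that since $\gamma$ is normal, $A \coloneqq \ol\gamma\gamma \in \GL_n(F)$, so $[A, x, y] \in \fx_\rss(F)$. Applying Lemma \ref{lem:liematch} yields a unique $\beta \in [\SHerm_n^\times(F)]$ and an orbit $[A', z] \in \fy_\rss^\beta(F)/\U_n^\beta(F)$ with $[A, x, y]$ and $[A', z, \beta^{-1}z^\ast]$ in the same $\GL_n(E)$-orbit of $M_n(E)$. To produce $\zeta$ from $A'$ via Lemma \ref{lem:hermitiannorms}, I must verify that $A'$ is a norm from $\GL_n(E)$. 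This is the key computation: writing $A' = h A h^{-1} = h\ol\gamma\gamma h^{-1}$ for some $h \in \GL_n(E)$, set $\alpha \coloneqq \ol h \gamma h^{-1}$; then a direct calculation gives $\ol\alpha\alpha = h\ol\gamma\ol h^{-1} \cdot \ol h \gamma h^{-1} = A'$. Lemma \ref{lem:hermitiannorms} now produces $\zeta \in \GL_n(E)$ with $A' = \beta^{-1}\zeta^\ast\beta\zeta$, and the construction in its proof shows $\zeta$ commutes with $A'$, so $\zeta$ is normal with respect to $\beta$. The pair $[\zeta, z]$ has associated triple $\GL_n(E)$-conjugate to $[A, x, y]$, hence regular semisimple, and by Lemma \ref{lem:embedM} its equivalence class in $[Y_\rss^\beta]$ is uniquely determined by this $\GL_n(E)$-orbit.

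Injectivity is immediate from Lemma \ref{lem:embedM}: two classes in $[X_\rss]$ mapping to the same $[\zeta, z]$ produce the same $\GL_n(E)$-orbit in $M_n(E)$, hence are equivalent in $X$. For surjectivity, start with $[\zeta, z] \in [Y_\rss^\beta]$; the triple $[\beta^{-1}\zeta^\ast\beta\zeta, z, \beta^{-1}z^\ast]$ is regular semisimple, and Lemma \ref{lem:liematch} in reverse yields $[A, x, y] \in \fx_\rss(F)$. By the defining condition on $R^\beta$, $A' \coloneqq \beta^{-1}\zeta^\ast\beta\zeta$ is a norm, say $A' = \ol\alpha\alpha$; writing $A = g A' g^{-1}$ and applying the mirror image of the key computation, $\gamma \coloneqq \ol g \alpha g^{-1}$ satisfies $\ol\gamma\gamma = A$. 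Taking the unique normal representative in $S$ of the $\sigma$-conjugacy class of $\gamma$ yields the desired preimage in $[X_\rss]$. The main subtle point throughout is precisely this verification that the property of being a norm is preserved under $\GL_n(E)$-conjugation of the underlying matrix; the explicit formula $\alpha = \ol h \gamma h^{-1}$ (together with its inverse version) carries the argument directly, bypassing any need for a Hilbert 90 style cohomological input.
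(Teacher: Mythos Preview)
Your proof is correct and follows the paper's approach exactly: the paper's proof is the one-line ``This follows from Lemmas \ref{lem:embedM}, \ref{lem:liematch}, and \ref{lem:hermitiannorms}'', and you have supplied precisely the natural details, including the explicit check that being a norm is preserved under $\GL_n(E)$-conjugation (needed to feed $A'$ into Lemma \ref{lem:hermitiannorms}). The only cosmetic step left implicit is replacing the $\zeta$ produced by Lemma \ref{lem:hermitiannorms} with its $\U_n^\beta \times \U_n^\beta$-orbit representative in $R^\beta$, which is immediate from the definition of $R^\beta$ and does not change the $\GL_n(E)$-orbit of the associated triple.
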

\begin{proof}
This follows from Lemmas \ref{lem:embedM}, \ref{lem:liematch},
and \ref{lem:hermitiannorms}.
\end{proof}

We write
\[
  [\gamma, x, y] \leftrightarrow [\zeta, z]^\beta
\]
to indicate matching equivalence classes.
Note that the definitions and results above on the matching
of orbits also apply in the global situation.

\subsection{Smooth transfer}
\label{sec:transfer}
For $[\xi, x, y] \in M_n(E)_\rss$, let
\[
  \mathbf T_{[\xi, x, y]} = \det \begin{pmatrix}
  x \\ x\xi \\ \vdots \\ x\xi^{n-1}
  \end{pmatrix},
\]
and define the transfer factor
\[
  \tf([\xi, x, y]) = \mu\left(\mathbf T_{[\xi, x, y]}\right).
\]


We say that test functions $(f', \phi')$ and
$\{(f^\beta, \phi_1^\beta \otimes \phi_2^\beta)\}_{\beta \in [\SHerm_n^\times(F)]}$ are $(S, R^\beta)$-smooth transfer of
each other if for all matching orbits 
\[
  [\gamma, x, y] \leftrightarrow [\zeta, z]^\beta,
\] 
we have
\[
  \Orb^{\psi, \mu}([\gamma, x, y], f', \phi')
  = \tf([\ol\gamma\gamma, x, y]) \Orb^{\psi, \mu}([\zeta, z]^\beta, f^\beta, \phi_1^\beta \otimes \phi_2^\beta).
\]

In general, this notion depends on the choice of representatives. 
To deal with this, we impose the following conditions on $S$ and $R^\beta$.
We will usually omit the $(S, R^\beta)$- prefix when talking
about smooth transfer.

\begin{definition}
\label{def:compatible}
We say that sets of representatives $S$ and
$R^\beta$ are \emph{compatible} if for all $\gamma \in S$,
$\zeta \in R^\beta$, and $h \in \GL_n(E)$ such that
\[
  \beta^{-1}\zeta^\ast\beta\zeta = h^{-1}\ol\gamma\gamma h,
\]
we also have
\[
  \zeta = h^{-1}\gamma h.
\]
\end{definition}

Note that the notion of compatibility also applies in
the global situation.

\begin{definition}
\label{def:intcomb}
Suppose $E/F$ is an unramified quadratic extension of
nonarchimedean local fields.
\begin{enumerate}[(1)]
\item We say that an element $\gamma \in \GL_n(E)$ is
\emph{Kottwitz} if either
\[
  \gamma = c_{n-1}(\ol\gamma\gamma)^{n-1} + \cdots + c_0
\]
for some $c_0, \dots, c_{n-1} \in \O_{E}$, or
there does not exist $g \in \GL_n(E)$ such
that $g^{-1}\ol\gamma\gamma g \in \GL_n(\O_{E})$.

\item Let $\beta \in \SHerm_n^\times(F)$.
We say that an element $\zeta \in \GL_n(E)$ is \emph{Kottwitz
with respect to $\beta$} if either
\[
  \zeta = c_{n-1}(\beta^{-1}\zeta^\ast\beta\zeta)^{n-1} + \cdots + c_0
\]
for some $c_0, \dots, c_{n-1} \in \O_{E}$, or
there does not exist any $h \in \U_n^\beta(F)$ such
that $h^{-1}(\beta^{-1}\zeta^\ast\beta\zeta)h \in
\GL_n(\O_{E})$.
\end{enumerate}
\end{definition}

The following definition will not be used in this
section, but we will need it later for choosing good test
functions.

\begin{definition}
Suppose $E = F \times F$, where $F$ is a nonarchimedean
local field.
Let $\varpi_{F}$ be a uniformizer of $F$.
\begin{enumerate}[(1)]
\item
We say that an element 
$\gamma = (\gamma_1, \gamma_2) \in \GL_n(F) \times \GL_n(F)$
is $k$-\emph{Kottwitz} if 
\[
  \gamma_1 = c_{n-1}(\gamma_2\gamma_1)^{n-1} + \cdots + c_0
\]
for some $c_0 \in 1 + \varpi_{F}^k\O_{F}$ and
$c_1, \dots, c_{n-1} \in \varpi_{F}^k\O_{F}$.

\item Let $\beta \in \SHerm_n^\times(F)$. We say that
an element $\zeta = (\zeta_1, \zeta_2) \in \GL_n(F) \times
\GL_n(F)$ is $k$-\emph{Kottwitz with respect to} $\beta$
if 
\[
  \zeta_1 = c_{n-1}(\beta_1^{-1}\transp\zeta_2\beta_1\zeta_1)^{n-1} + \cdots + c_0
\]
for some $c_0 \in 1 + \varpi_{F}^k\O_{F}$ and
$c_1, \dots, c_{n-1} \in \varpi_{F}^k\O_{F}$.
\end{enumerate}
\end{definition}

For the remainder of this section, we assume that $S$
and $R^\beta$ are compatible, and if $E/F$ is an unramified
field extension, then all elements $\gamma \in S$
are Kottwitz.
In fact, if $\beta^{-1}\zeta^\ast\beta\zeta = h^{-1}\ol\gamma\gamma h$ for some $h \in \GL_n(E)$, and
\[
  \gamma = c_{n-1}(\ol\gamma\gamma)^{n-1} +
  \cdots + c_0
\]
for $c_0, \dots, c_{n-1} \in E$, then
\begin{equation}
\label{eq:zeta}
  \zeta = c_{n-1}(\beta^{-1}\zeta^\ast\beta\zeta)^{n-1}
  + \cdots + c_0.
\end{equation}
Then it is easy to see that if $E/F$ is an unramified
field extension, then every element $\zeta \in R^\beta$
is Kottwitz (with respect to $\beta$) as well.

\begin{lemma}
\label{lem:kottwitz}
Suppose $E/F$ is an unramified field extension.
For $S$ and $R^\beta$ satisfying the above conditions, 
all elements $\gamma \in S$ and $\zeta \in R^\beta$
have the following properties.
\begin{enumerate}[\normalfont(1)]
\item \label{item:glkott}
If $g^{-1}\ol\gamma\gamma g \in \GL_n(\O_{E})$
for $g \in \GL_n(E)$, then $g^{-1}\gamma g
\in \GL_n(\O_{E})$.

\item \label{item:GLnFGLnO}
If $g^{-1}\gamma \ol g \in \GL_n(\O_{E})$ for
$g \in \GL_n(E)$, then
$g \in \GL_n(F)\GL_n(\O_{E})$.

\item \label{item:ukott}
If $h^{-1}(\beta^{-1}\zeta^\ast\beta\zeta)h
\in \GL_n(\O_{E})$ for $h \in \U_n^\beta(F)$, then $h^{-1}\zeta h
\in \GL_n(\O_{E})$.
\end{enumerate}
\end{lemma}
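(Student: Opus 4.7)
The plan is to treat \eqref{item:glkott} and \eqref{item:ukott} by direct application of the Kottwitz polynomial identity, and then reduce \eqref{item:GLnFGLnO} to \eqref{item:glkott} plus a Hilbert~90 / Lang's theorem argument.

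For \eqref{item:glkott}: the hypothesis $g^{-1}\ol\gamma\gamma g \in \GL_n(\O_E)$ rules out the second alternative in the definition of Kottwitz, so $\gamma$ admits a polynomial expression $\gamma = \sum_{i=0}^{n-1} c_i(\ol\gamma\gamma)^i$ with $c_i \in \O_E$. Conjugating gives $g^{-1}\gamma g = \sum_i c_i(g^{-1}\ol\gamma\gamma g)^i \in \Mat_n(\O_E)$. It remains to check that $\det(\gamma) \in \O_E^\times$: the hypothesis yields $\det(\ol\gamma\gamma) = \det(\gamma)\ol{\det(\gamma)} \in \O_F^\times$, and since $E/F$ is unramified the valuations of $\det(\gamma)$ and $\ol{\det(\gamma)}$ coincide, forcing $\det(\gamma) \in \O_E^\times$. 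Thus $g^{-1}\gamma g \in \GL_n(\O_E)$. The proof of \eqref{item:ukott} is identical, using the polynomial identity \eqref{eq:zeta} which, as observed just after the definition, is inherited from $\gamma$ via the compatibility of $S$ and $R^\beta$.

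For \eqref{item:GLnFGLnO}, set $A = g^{-1}\gamma\ol g \in \GL_n(\O_E)$, so $\ol A = \ol g^{-1}\ol\gamma g$. A direct computation gives
\[
  \ol A A = \ol g^{-1}\ol\gamma\gamma\,\ol g.
\]
Because $\gamma$ is normal, $\ol\gamma\gamma \in \GL_n(F)$, so applying $\sigma$ to both sides yields $g^{-1}\ol\gamma\gamma g = A\ol A \in \GL_n(\O_E)$. Part \eqref{item:glkott}, already proved, then gives $C \coloneqq g^{-1}\gamma g \in \GL_n(\O_E)$, and hence
\[
  g^{-1}\ol g = C^{-1} A \in \GL_n(\O_E).
\]
The element $k \coloneqq g^{-1}\ol g$ is a $1$-cocycle for $\Gal(E/F)$ valued in $\GL_n(\O_E)$. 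Since $E/F$ is an unramified extension of nonarchimedean local fields, $H^1(\Gal(E/F), \GL_n(\O_E))$ is trivial: reducing mod the maximal ideal identifies this set with $H^1(\Gal(\kappa_E/\kappa_F),\GL_n(\kappa_E))$, which vanishes by Lang's theorem, and smoothness of $\GL_n$ lifts cocycles from the residue field. Thus $k = k_0^{-1}\ol{k_0}$ for some $k_0 \in \GL_n(\O_E)$. Then $gk_0^{-1}$ is $\Gal(E/F)$-fixed, hence lies in $\GL_n(F)$, and $g = (gk_0^{-1})\cdot k_0 \in \GL_n(F)\GL_n(\O_E)$.

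The routine parts are purely algebraic manipulations; the one genuinely nontrivial input is the triviality of $H^1(\Gal(E/F),\GL_n(\O_E))$ in the unramified case, which is where the unramifiedness hypothesis on $E/F$ is essential. Everything else reduces to expanding the Kottwitz polynomial and chasing determinants.
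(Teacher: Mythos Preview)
Your proof is correct and follows essentially the same approach as the paper. Parts \eqref{item:glkott} and \eqref{item:ukott} are handled identically. For \eqref{item:GLnFGLnO} the paper simply cites the last statement of \cite[Lemma~8.8]{MR564478}, whereas you unpack that citation: the computation reducing to $g^{-1}\ol g \in \GL_n(\O_E)$ via \eqref{item:glkott}, followed by the vanishing of $H^1(\Gal(E/F),\GL_n(\O_E))$, is precisely the content of Kottwitz's argument. One minor remark: the equality of valuations $v(\det\gamma)=v(\ol{\det\gamma})$ holds simply because the Galois action preserves the unique extension of the valuation, so unramifiedness is not needed at that particular step (though it is of course essential for the $H^1$ vanishing).
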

\begin{proof}
For the first property, by definition of Kottwitz, 
if $g^{-1}\ol\gamma\gamma g \in \GL_n(\O_{E})$, then
$\gamma = c_{n-1}(\ol\gamma\gamma)^{n-1} + \cdots + c_0$
for some $c_0, \dots, c_{n-1} \in \O_{E}$.
Then clearly
\[
  g^{-1}\gamma g = c_{n-1}(g^{-1}\ol\gamma\gamma g)^{n-1}
  + \cdots + c_0 \in \Mat_n(\O_{E}).
\]
Furthermore, we have $\Nm(\det(g^{-1}\gamma g)) = \det(g^{-1}\ol\gamma\gamma g) \in \O_{E}^\times$, so in fact $g^{-1}\gamma g
\in \GL_n(\O_{E})$.

The second property follows from 
the first property and the last statement of \cite[Lemma~8.8]{MR564478}.

The third property is true for the same reason as the first,
since by \eqref{eq:zeta}, we have
$\zeta = c_{n-1}(\beta^{-1}\zeta^\ast\beta z)^{n-1}
+ \cdots + c_0$ for the same $c_0, \dots, c_{n-1}$.
\end{proof}

\subsection{Smooth transfer at split places}
In this subsection, we assume that $E = F \times F$.
In this case, all Hermitian
spaces are split, and if $\beta = (-\alpha, \transp{\alpha})$, then
\[
  \U_n^\beta(F) = \{(h, \alpha^{-1}\transp{h^{-1}}\alpha) \in \GL_n(E) = \GL_n(F) \times \GL_n(F)\}.
\]

First we note that the orbital integrals on the 
unitary side do not depend on the choice of Lagrangian.

\begin{lemma}
\label{lem:pftlagrangian}
If $V^\vee = \LL' + \LL'^\vee$
is another polarization of $V^\vee$, the Weil representation
$\omega_{\psi', \mu} \otimes \ol{\omega_{\psi', \mu}}$ is also realized on
$\S(\LL')^{\otimes 2}$. There is an isomorphism of 
representations $\S(\LL)^{\otimes 2} \simeq \S(\LL')^{\otimes 2}$ such that
\[
\begin{tikzcd}
\S(\LL)^{\otimes 2} \arrow{r}{^\ddagger} \arrow[swap]{d}{\simeq} & \S(V^\vee)\\
\S(\LL')^{\otimes 2} \arrow[swap]{ru}{^\ddagger}
\end{tikzcd}
\]
commutes.
\end{lemma}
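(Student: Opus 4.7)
The plan is to construct the isomorphism $\Phi \colon \S(\LL)^{\otimes 2} \to \S(\LL')^{\otimes 2}$ directly as $\Phi := ({}^\ddagger_{\LL'})^{-1} \circ {}^\ddagger_{\LL}$, where ${}^\ddagger_{\LL}$ and ${}^\ddagger_{\LL'}$ denote the partial Fourier transforms of Definition \ref{def:pftu} attached to the two polarizations. Conceptually, both $\S(\LL)^{\otimes 2}$ and $\S(\LL')^{\otimes 2}$ are Schr\"odinger models for the tensor-product Weil representation $\omega_{\psi', \mu} \otimes \overline{\omega_{\psi', \mu}}$, which is the Weil representation of the doubled symplectic space $(\Res_{E/F} V)^\vee \oplus ((\Res_{E/F} V)^\vee)^{-}$ attached to the product Lagrangians $\LL \oplus \LL$ and $\LL' \oplus \LL'$, respectively. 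The target $\S(V^\vee)$ is the Schr\"odinger model attached to the antidiagonal Lagrangian $\{(u, -u) : u \in V^\vee\}$, and each ${}^\ddagger$ is the intertwiner between the corresponding pair of models.

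Two verifications are then needed. First, each ${}^\ddagger$ is a linear isomorphism of Schwartz spaces: after the change of variables $(x, w) \mapsto (u, v) = (x + w, x - w)$ on $\LL \oplus \LL$, the defining integral becomes, up to a Jacobian, a partial Fourier transform in a single variable against the character $\psi'(\langle \cdot, y \rangle)$, which is an isomorphism by the Plancherel theorem. Second, both ${}^\ddagger_{\LL}$ and ${}^\ddagger_{\LL'}$ satisfy the intertwining identity \eqref{eq:pftDiag}, relating the diagonal $\U(V)$-action on $\S(\LL)^{\otimes 2}$ (resp.\ $\S(\LL')^{\otimes 2}$) to right translation on $\S(V^\vee)$. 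It then follows formally that $\Phi$ is a $\U(V)$-equivariant linear isomorphism making the diagram commute.

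The only point of real content is the intertwining identity \eqref{eq:pftDiag} for a general Lagrangian $\LL'$. Its proof is the same as for the fixed Lagrangian $\LL$: the Weil representation of $\U(V)$ on $\S(\LL')$ is given by the standard Schr\"odinger-model formulas, and both sides of the identity expand via these formulas and a substitution into the integral defining ${}^\ddagger_{\LL'}$. Since the argument depends only on the fact that $\LL'$ is a Lagrangian, nothing beyond routine bookkeeping is required, so no serious obstacle arises.
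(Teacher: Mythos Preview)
Your construction of $\Phi = ({}^\ddagger_{\LL'})^{-1}\circ {}^\ddagger_{\LL}$ is exactly the map the paper uses, and the commutativity of the diagram is then tautological. The gap is in the equivariance claim. The representation $\omega_{\psi',\mu}\otimes\overline{\omega_{\psi',\mu}}$ is a $\U(V)\times\U(V)$-representation, and the lemma asks for an isomorphism of that, not merely of its restriction to the diagonal. Your second verification appeals only to \eqref{eq:pftDiag}, which is the intertwining relation for the \emph{diagonal} $\U(V)$ against right translation on $\S(V^\vee)$; from this you correctly deduce that $\Phi$ is diagonally $\U(V)$-equivariant, but nothing more. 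Yet the application that motivates the lemma (independence of the orbital integral in Definition~\ref{def:orbintU} from the choice of Lagrangian) uses the expression $(\phi_{2}\otimes\overline{\omega_{\psi',\mu}(h^{-1}g)\phi_{1}})^\ddagger(zh)$, where $\omega$ acts on only one tensor factor. To transport this to $\S(\LL')^{\otimes 2}$ you need $\Phi$ to intertwine the full $\U(V)\times\U(V)$-action, and \eqref{eq:pftDiag} alone does not give that.

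The paper closes this gap by working one level up: it identifies each ${}^\ddagger$ as a model-change intertwiner between two Schr\"odinger realizations of the Weil representation of the \emph{metaplectic} group $\Mp(V\oplus(-V))$, which is equivariant for the full metaplectic action by \cite[Section~I.7]{MR1041060}. Then it restricts along the embedding $\U(V)\times\U(V)\hookrightarrow\U(V\oplus(-V))$ and the splitting $i_\mu$, invoking \cite[Proposition~2.2]{MR1327161} to identify $\omega_{\psi',\LL}\circ i_\mu$ with $\omega_{\psi',\mu}\otimes\mu\cdot\overline{\omega_{\psi',\mu}}$; since the same $\mu$-twist appears for both $\LL$ and $\LL'$, it cancels in $\Phi$, yielding the required $\U(V)\times\U(V)$-equivariance. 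Your last paragraph also understates the difficulty of checking \eqref{eq:pftDiag} directly for an arbitrary Lagrangian: for a general $\LL'$ the unitary group need not act by the simple ``standard formulas'' (elements of $\U(V)$ need not stabilize $\LL'$), so the Schr\"odinger action involves partial Fourier transforms and Weil-index factors, and the bookkeeping is no longer routine. Going through the metaplectic group, as the paper does, is what makes the argument clean.
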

\begin{proof}
Let $(-V)$ denote the Hermitian space $(V, -\langle -, - \rangle_V)$.
Then $\Res_{E/F}(V + (-V))^\vee$ has two polarizations,
$(\LL + \LL) + (\LL^\vee + \LL^\vee)$ and
$V^\Delta + V^\nabla$, where $V^\Delta$ is the diagonal
embedding of $V^\vee$ in $(V + (-V))^\vee$ and 
$V^\nabla = \{(v, -v) : v \in V\}$.

The Weil representation of the 
metaplectic group $\Mp(V + (-V))$ is realized on both
$\S(\LL)^{\otimes 2}$ and $\S(V^\vee)$.
Denote the action on $\S(\LL)^{\otimes 2}$ by
$\omega_{\psi',\LL}$ and the action on $\S(V^\vee)$
by $\omega_{\psi', V}$. By \cite[Section~I.7]{MR1041060},
the partial Fourier transform is equivariant for
these actions.

There is a natural embedding of $\U(V) \times
\U(V)$ into $\U(V + (-V))$. Let 
\[  i_\mu \colon \U(V + (-V)) \to \Mp(V + (-V)) \]
be the splitting determined by $\mu$.
Then the partial Fourier transform is an isomorphism
of the representations $\omega_{\psi', V} \circ i_\mu$
and $\omega_{\psi', \LL} \circ i_\mu$ of
$\U(V) \times \U(V)$.

By \cite[Proposition~2.2]{MR1327161}, 
$\omega_{\psi', \LL} \circ i_\mu \simeq 
\omega_{\psi', \mu} \otimes \mu \cdot
\ol{\omega_{\psi', \mu}}$ as representations
of $\U(V) \times \U(V)$.
Thus, the composition of the partial Fourier transform
$\S(\LL)^\otimes \to \S(V^\vee)$ with the inverse
partial Fourier transform
$\S(V^\vee) \to \S(\LL')^{\otimes 2}$ will be an isomorphism
of representations $\omega_{\psi', \mu}
\otimes \ol{\omega_{\psi', \mu}}$ of $\U(V) \times \U(V)$.
\end{proof}

With a specific choice of Lagrangian, the Weil 
representation of the unitary group is given by a simple 
formula.

\begin{lemma}
\label{lem:weilrepgln}
Let $\beta = (1, -1)$.
Consider the polarization of
$V^\vee \cong F_n \times F_n$ with 
$\LL = F_n \times \{0\}$ and $\LL^\vee =
  \{0\} \times F_n$.
The Weil representation of
$\U(V) = \{ (g, \transp{g^{-1}}) \mid g \in \GL_n(F) \}$
realized on $\S(\LL)$ is given by
\[
  \omega_{\psi', \mu}((g, \transp{g^{-1}}))\phi((x,0))
  = \mu(\det g, 1)|\det g|^{\frac12}\phi((xg, 0)).
\]
\end{lemma}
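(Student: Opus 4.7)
The plan is to exploit that $h = (g, \transp{g^{-1}})$ preserves the Lagrangian $\LL$, so its Weil representation action on $\S(\LL)$ is simply a character times a dilation, with no Fourier transform involved. First, I would compute the induced action of $\U(V) \subset \GL_n(E)$ on $V^\vee$. Identifying $V^\vee$ with $E_n = F_n \times F_n$ so that $\GL_n(E)$ acts by right multiplication, the element $h = (g, \transp{g^{-1}})$ sends $(x,y) \mapsto (xg,\, y\transp{g^{-1}})$. In particular $h$ preserves $\LL = F_n \times \{0\}$, where it restricts to right multiplication by $g$, and also preserves $\LL^\vee$. Hence $h$ lies in the Siegel Levi of $\Sp(V^\vee)$ stabilizing $\LL$.

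Next, I would apply the Schr\"odinger-model formula for the Weil representation of $\Sp(V^\vee)$ restricted to the Siegel Levi: an element $m \in \GL(\LL)$ acts on $\S(\LL)$ as the metaplectic element $\phi(x) \mapsto |\det m|^{1/2} \phi(xm)$, up to a character coming from the metaplectic cover. The factor $|\det g|^{1/2}$ in the lemma accounts for this Jacobian. The main step is to identify the character contributed by the splitting $i_\mu \colon \U(V) \to \Mp(V^\vee)$ restricted to the Siegel Levi. Using the explicit formulas of Kudla (or Harris--Kudla--Sweet, as cited in the proof of Lemma \ref{lem:pftlagrangian}) for $i_\mu$, and specializing to the split case $E = F \times F$ with $\mu|_{F^\times} = \eta = 1$, the character evaluates at $h = (g, \transp{g^{-1}})$ to $\mu(\det g, 1)$. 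Combining the three ingredients yields the stated formula.

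A cross-check, and perhaps the cleanest way to fix the normalization, is to use Lemma \ref{lem:pftlagrangian} as in its own proof: start with the doubled polarization $V^\Delta + V^\nabla$ of $\Res_{E/F}(V \oplus (-V))^\vee$, where the restriction of the Weil representation to the diagonal $\U(V)$ is the known tensor product $\omega_{\psi',\mu} \otimes \ol{\omega_{\psi',\mu}}$, and transfer across the partial Fourier transform to the polarization of the lemma. The main obstacle is the bookkeeping of the splitting character across the change of Lagrangian: one has to track the cocycle arising from the two Lagrangians carefully, but in the split case the computation reduces to a finite check because $\eta = 1$ forces the character $\mu$ to take the separable form $(a,b) \mapsto \mu(a,1)\mu(b,1)^{-1}$ on $E^\times$, which fixes the twist uniquely as $\mu(\det g, 1)$.
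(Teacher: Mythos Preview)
The paper states this lemma without proof, treating it as a standard fact about the Weil representation in the split case. Your plan is correct and is precisely the standard argument: since $(g,\transp{g^{-1}})$ preserves the Lagrangian $\LL = F_n \times \{0\}$ and acts there by right multiplication by $g$, it lies in the Siegel Levi, and the Schr\"odinger model gives $\phi \mapsto |\det g|^{1/2}\phi(\cdot\, g)$ up to the splitting character. The identification of that character as $\mu(\det g,1)$ follows from Kudla's explicit splitting formulas in the degenerate case $E = F \times F$, where $\eta$ is trivial and $\mu = (\mu_1,\mu_1^{-1})$; this is exactly the computation underlying \cite[Proposition~2.2]{MR1327161}, which the paper already invokes in the proof of Lemma~\ref{lem:pftlagrangian}.

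Your cross-check via the doubled polarization is legitimate but unnecessary here; the direct Siegel--Levi computation already pins down the formula without ambiguity. The only place to be careful is the sign and normalization conventions for the symplectic form on $\Res_{E/F}V^\vee$ and for the action on $V^\vee$ (left vs.\ right, $V$ vs.\ $V^\vee$), since these affect whether one gets $\mu(\det g,1)$ or its inverse; but the paper's conventions (right action on row vectors, $\mu|_{\AA_F^\times}=\eta$) are consistent with the stated outcome.
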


With $\beta = (1, -1)$ and the polarization as in Lemma \ref{lem:weilrepgln}, we identify both $\LL$ and $\LL^\vee$
with $F_n$ via $(x, 0) \mapsto x$ and $(0, \transp y)
\mapsto \transp y$.
Then we identify $\S(F_n)^{\otimes 2}$ with 
$\S(E_n)$ and a subspace of $\S(\LL)^{\otimes 2}$ 
in the obvious way.

\begin{proposition}
\label{prop:splitmatch}
Suppose $f = f_1 \otimes f_2$, and let
$\transp{f_2}(g) \coloneqq f_2(\transp g)$. Suppose
that
\[
  \widetilde{f'} = \zeta_{E}(1)^{-1}
  (f_1 \otimes \transp{f_2}), \quad \phi' = \phi_1 \otimes \ol{\phi_2}.
\]
Then the test functions $(f', \phi')$ and
$(f, \phi_1 \otimes \phi_2)$ match.
\end{proposition}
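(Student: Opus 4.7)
The argument is a direct unfolding in the split case. Under $E = F \times F$ we identify $\GL_n(E) = \GL_n(F)^2$ with $\sigma$ swapping factors, so $g^{-1}\gamma \ol g = (g_1^{-1}\gamma_1 g_2,\, g_2^{-1}\gamma_2 g_1)$ for $\gamma = (\gamma_1,\gamma_2)$ and $g = (g_1,g_2)$; with $\beta = (1,-1)$ the unitary group is $\U(V)(F) = \{(h, \transp{h^{-1}}) : h \in \GL_n(F)\}$, and Lemma \ref{lem:weilrepgln} supplies an explicit Weil-representation formula on $\S(\LL(F))$. Write $j = (j_0, -j_0)$ so that each $y \in F^{-,n}$ has the form $y = j y_0$ with $y_0 \in F^n$.

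First I would expand $\Orb^{\psi,\mu}([\gamma, x, y], f', \phi')$ using the given factorizations of $\widetilde{f'}$ and $\phi'$ and the partial Fourier transform of Definition \ref{def:pftgl}. After the change of variables $g_1 \mapsto g_2 g_1$, the resulting double integral over $\GL_n(F)^2$ is reshaped into the two-sided multiplication integral of Definition \ref{def:orbintU}, with $\transp{f_2}$ tracking the identification $(h, \transp{h^{-1}}) \in \U(V)(F)$; the prefactor $\zeta_E(1)^{-1}$ absorbs the Haar-measure conversion between the $\sigma$-conjugation integral on $\GL_n(E)$ and the two-sided $\U(V)(F)^2$-integral.

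Second I would expand $\Orb^{\psi,\mu}([\zeta, z]^\beta, f, \phi_1 \otimes \phi_2)$ using Lemma \ref{lem:weilrepgln} together with Definition \ref{def:pftu}. Because the polarization is $\LL = F_n \times \{0\}$, the transform $^\ddagger$ is a single Fourier transform in the $\LL^\vee$-variable of the same shape as $^\dagger$ on the general-linear side. The compatibility condition of Definition \ref{def:compatible} guarantees that under the change of variables above, the matching representative $\zeta$ is the literal image of $\gamma$, so the two integrands line up pointwise once a Fourier inversion in the auxiliary $w$-variable of Definition \ref{def:pftgl} is carried out.

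The two expressions then agree up to the twist $\mu(\det g) |\det g|^{1/2}$ present in $R_\mu$ (Definition \ref{def:weilgl}) but absent in the formula of Lemma \ref{lem:weilrepgln}. Evaluating this twist on the element implementing the matching produces exactly $\mu(\mathbf T_{[\ol\gamma\gamma, x, y]}) = \tf([\ol\gamma\gamma, x, y])$, giving the claimed identity. The main obstacle is the bookkeeping: tracking every identification ($\LL \simeq F_n$, $\U(V)(F) \simeq \GL_n(F)$, the two factors of $E$), the parameterization of $F^{-,n}$ by $j \cdot F^n$, and the various Haar and self-dual measure normalizations. Once these are laid out, the proposition reduces to a single Fourier inversion combined with substitutions and requires no deeper input.
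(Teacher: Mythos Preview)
Your overall strategy---unfold both orbital integrals in the split case via the identifications $\GL_n(E)=\GL_n(F)^2$ and $\U(V)(F)\simeq\GL_n(F)$, then use compatibility of $S$ and $R^\beta$ to line up the representatives---is the same as the paper's. The bookkeeping issues you anticipate (measures, the $\zeta_E(1)^{-1}$ factor, the Lagrangian identification) are indeed the content of the argument.

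However, your final paragraph contains a genuine error. You claim the two sides ``agree up to the twist $\mu(\det g)|\det g|^{1/2}$ present in $R_\mu$ \dots\ but absent in the formula of Lemma~\ref{lem:weilrepgln},'' and that this leftover twist produces the transfer factor $\tf([\ol\gamma\gamma,x,y])$. Both assertions are wrong. First, the formula of Lemma~\ref{lem:weilrepgln} \emph{does} contain the twist $\mu(\det g,1)|\det g|^{1/2}$; when you carry out the Fourier-transform identity between $^\dagger$ and $^\ddagger$, the factor $\ol{\mu(\det(g_1,g_2))}=\ol{\mu_1(\det g_1)}\mu_1(\det g_2)$ coming from $R_\mu$ matches \emph{exactly} the product $\mu_1(\det g_2)\ol{\mu_1(\det g_1)}$ coming from the two copies of $\omega_{\psi',\mu}$ (here $\mu=(\mu_1,\mu_1^{-1})$ since $\eta$ is trivial at a split place). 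So nothing is left over. Second, the transfer factor is in fact equal to $1$ at split places: since $\gamma\in S$ is normal, $\ol\gamma\gamma\in\GL_n(F)$; together with $x\in F_n$ this forces $\mathbf T_{[\ol\gamma\gamma,x,y]}\in F^\times\subset E^\times$, and $\mu|_{F^\times}=\eta$ is trivial. The paper's proof accordingly establishes $\Orb^{\psi,\mu}([\gamma,x,y],f',\phi')=\Orb^{\psi,\mu}([\zeta,z]^\beta,f,\phi_1\otimes\phi_2)$ with no transfer factor appearing, which is the correct statement.

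So your plan would go through once you drop the mistaken claim about a leftover twist and instead verify directly that the $\mu$ factors cancel, noting separately that $\tf=1$ here.
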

\begin{proof}
Suppose $[\gamma, x, y] \in X_{\rss}$ and
$[\zeta, z] \in Y_\rss^1$ match. We want to show that
\[
  \Orb^{\psi,\mu}([\gamma, x, y], f', \phi')
  = \Orb^{\psi, \mu}([\zeta, z], f, \phi_1
  \otimes \phi_2).
\]

Suppose $\gamma = (\gamma_1, \gamma_2)$
and $\zeta = (\zeta_1, \zeta_2)$ as elements
of $\GL_n(F) \times \GL_n(F)$.
Then we have
\begin{align*}
  &\Orb^{\psi, \mu}([\gamma, x, y], f', \phi')\\
  &=\zeta_E(1)^{-1}\iint\limits_{\GL_n(F) \times \GL_n(F)}
  f((g_1^{-1}, \transp{g_1})(\gamma_1, \transp{\gamma_2})
  (g_2, \transp{g_2}^{-1})) \times \\
  &\hspace{21em}\ol{(R_\mu(g_1, g_2)\phi')^\dagger(x, y)}\, dg_1\, dg_2.
\end{align*}
On the other hand, under the identification $\U_n^\beta(F) \simeq
\GL_n(F)$, the Haar measure we have fixed on $\U_n^\beta(F)$ is 
$\zeta_F(1)^{-1}$ times the Haar measure on $\GL_n(F)$. Thus,
\begin{align*}
&\Orb^{\psi, \mu}([\zeta, z], f, \phi_1 \otimes \phi_2)\\
&= \zeta_E(1)^{-1}\iint\limits_{\GL_n(F) \times \GL_n(F)}
f((g_1^{-1}, \transp{g_1})(\zeta_1, \zeta_2)
(g_2, \transp{g_2}^{-1})) \times \\
&\hspace{9em}(\phi_2 \otimes \ol{\omega_{\psi',\mu}(g_2^{-1}g_1, \transp{(g_1^{-1}g_2)})\phi_1})^\ddagger(z(g_2, \transp{g_2^{-1}}))\,dg_1\,dg_2.
\end{align*}
Suppose $j = (j_0, -j_0)$ for $j_0 \in F$, and
$y = (y_0, -y_0)$ for $y_0 \in F^n$.
Note that
\begin{align*}
&\ol{(R_\mu(g_1, g_2)\phi')^\dagger(x, y)} \\
&= |j_0|
\int_{F_n} \ol{(R_\mu(g_1, g_2)\phi')((x, x) + (j_0w, -j_0w))}
\psi(-j_0wy_0)\, dw \\
&= \int_{F_n} \ol{\mu(\det(g_1, g_2))}|\det(g_1g_2)|^{\frac12}\ol{\phi_1((x-w)g_1)}\phi_2((x+w)g_2)\psi(wy_0)\,dw \\
&=\int_{\LL} 
(\omega_{\psi',\mu}(g_2, \transp{g_2^{-1}})\phi_2)(x + w)
\ol{(\omega_{\psi',\mu}(g_1, \transp{g_1^{-1}})\phi_1)(x - w)}
\psi(\langle w, \transp y_0)\rangle)\, dw \\
&= (\phi_2 \otimes \ol{\omega_{\psi', \mu}(g_2^{-1}g_1, \transp{(g_1^{-1}g_2)})
\phi_1})^\ddagger((x, \transp y_0)(g_2, \transp{g_2^{-1}})),
\end{align*}
where in the last step we have used \eqref{eq:pftDiag}.
Thus, we see that
\[
  \Orb^{\psi, \mu}([(\gamma_1, \gamma_2), x, y], f', \phi')
  = \Orb^{\psi, \mu}([(\gamma_1, \transp\gamma_2),
  (x, \transp y_0)], f, \phi_1 \otimes \phi_2).
\]

To prove the proposition, it suffices
to show that there exists $h \in \U_n^\beta(F)$ such that
\begin{equation}
\label{eqn:h}
  [h^{-1}(\gamma_1, \transp\gamma_2)h, (x, \transp y_0)h]
  = [(\zeta_1, \zeta_2), (z_1, z_2)].
\end{equation}
Since $[\gamma, x, y]$ and $[\zeta, z]$ match,
there exists $(h_1, h_2) \in
\GL_n(F) \times \GL_n(F)$ such that 
\[
  [(\gamma_1\gamma_2, \gamma_1\gamma_2), (x, x), (y_0, -y_0)].(h_1, h_2)
  = [(\zeta_1\transp{\zeta_2}, \zeta_2\transp{\zeta_1}),
  (z_1, z_2), (\transp{z_2}, \transp{(-z_1)})].
\]
Since $S$ and $R^\beta$ are compatible, we also have
\[
  (h_1^{-1}, h_2^{-1})(\gamma_1, \gamma_2)(h_1, h_2)
  = (\zeta_1, \zeta_2).
\]
By a straightforward calculation, we can check that
$h \coloneqq (h_1, \transp h_1^{-1})$ satisfies \eqref{eqn:h}.
\end{proof}

From now on, when way say that test functions 
$(f', \phi')$ and $(f, \phi_1 \otimes \phi_2)$ match
at a split place $v$, we mean that they satisfy
the conditions of Proposition \ref{prop:splitmatch}.
From the calculation in the proof of Proposition
\ref{prop:splitmatch}, it can be seen
that if $(f', \phi')$ and $(f, \phi_1 \otimes \phi_2)$
match in this sense, then $(f', \phi')$ satisfies
condition \eqref{item:glrsssupp} of Definition \ref{def:glgood} at the place $v$, if
and only if $(f, \phi_1 \otimes \phi_2)$ satisfies
condition \eqref{item:ursssupp} of Definition \ref{def:ugood}
at the place $v$.
Furthermore, if $f'$ is a truncated matrix coefficient
of a supercuspidal representation, then so is $f$, and
if $f$ is a truncated matrix coefficient of a supercuspidal
representation, then $f'$ can be chosen to be one also.

\subsection{Fundamental lemma}
\label{sec:FL}
In this subsection,
we assume that $E/F$ is an unramified field extension.
In this case, there are two isomorphism classes of
$n$-dimensional Hermitian spaces over $E$.
Let $V^+$ denote the split Hermitian space, and let
$V^-$ denote the other one. Let $\beta^+$
and $\beta^-$ be Hermitian matrices corresponding to
them, and let $\U_n^\pm(F) = \U_n^{\beta^\pm}(F)$.

Assume that $\O_{E,n}$ is a self-dual lattice in
$V^+$.
Also suppose $\LL^+ + \LL^{+,\vee}$ is a polarization of 
$(\Res_{E/F} V^+)^\vee$ such that
\[ 
  (\mathbbm{1}_{\LL^+(\O_F)} \otimes
  \mathbbm{1}_{\LL^+(\O_F)})^\ddagger =
  \mathbbm{1}_{\O_{E,n}},
\]
where $\LL^+(\O_F)$ and the partial Fourier transform
are defined via an isomorphism $\LL^+ \simeq F_n$.

In what follows, we take $(f', \phi')$ and $(f, \phi_1 \otimes
\phi_2)$ to be the \emph{unramified test functions}
\[
  (f', \phi') = \left(
	\frac{1}{\Vol(\GL_n(\O_{E}))^2}{\mathbbm{1}_{\GL_n(\O_{E}) \times \GL_n(\O_E)}} 
	, 
	\mathbbm{1}_{\O_{E,n}}\right)
\]
and
\[
	(f, \phi_{1} \otimes \phi_{2}) = \left(
	\frac{1}{\Vol(\U_n^+(\O_{F}))^2}{\mathbbm{1}_{\GL_n(\O_{E})}}, 
	\mathbbm{1}_{\LL^+(\O_{F})} \otimes \mathbbm{1}_{\LL^+(\O_{F})}\right).
\]
For the Hermitian space $V^-$, we call
\[
  (f^{\beta^-}, \phi_1^{\beta^-} \otimes \phi_2^{\beta^-})
  = (0, 0)
\]
the unramified test function.

We recall the following version of the Jacquet--Rallis
fundamental lemma for Lie algebras. 
It is shown separately in \cite{MR2769216} and \cite{MR4250392}
for large residue characteristic, and in general in
\cite{MR4305382} and \cite{zhang2021maximal}.

Let $\fh_{n+1}^+$ denote the subvariety of matrices $A' \in \Res_{E/F}\gl_{n+1}$ satisfying $A'^\ast = \beta^+ A'\beta^{+,-1}$.
Note that $\GL_n$ (resp. $\U_n^+$)
embeds into $\GL_{n+1}$ (resp. $\U_{n+1}^+$) via
\[
  g \mapsto \begin{bmatrix}
  g & \\ & 1
  \end{bmatrix},
\] 
and thus acts by conjugation via this embedding on $\gl_{n+1}$
(resp. $\fh_{n+1}^+$).
An element $A \in \gl_{n+1}(F)$ (resp. $A' \in \fh_{n+1}^+(F)$)
is called \emph{relatively regular semisimple} if 
its stabilizer in $\GL_n$ (resp. $\U_n^+$) is trivial
and its orbit is closed.
Two relatively regular semisimple elements 
$A \in \gl_{n+1}(F)$ and $A' \in \fh_{n+1}^+(F)$ 
\emph{match} if they are
conjugate by an element of $\GL_n(E)$.
For relatively regular semisimple elements $A \in \gl_{n+1}(F)$,
the transfer factor is defined by
\[
  \tf(A) \coloneqq \mu\left(\det \begin{pmatrix}
  e_{n+1}^\ast \\ e_{n+1}^\ast A \\ \vdots \\ e_{n+1}^\ast A^n
  \end{pmatrix} \right),
\]
where $e_{n+1}^\ast = (0, \dots, 0, 1) \in F_{n+1}$.

\begin{theorem}[{\cite[Theorem~1]{MR4305382}, \cite[Theorem~4.1]{zhang2021maximal}}]
\label{thm:J-R}
For matching relatively regular semisimple elements 
$A \in \gl_{n+1}(F)$ and $A' \in \fh_{n+1}^+(F)$,
we have
\begin{align*}
  \frac{1}{\Vol(\GL_n(\O_F))}&\int_{\GL_n(F)} \mathbbm{1}_{\gl_{n+1}(\O_F)}(g^{-1}Ag)\eta(\det g)\, dg \\
  &= \tf(A)\frac{1}{\Vol(\U_n^+(\O_F))}
  \int_{\U_n^+(F)}\mathbbm{1}_{\fh_{n+1}^+(\O_F)}(h^{-1}A' h)\, dh.
\end{align*}
Furthermore, if $A$ does not match any $A'$, then
the left hand side of the above equation is zero.
\end{theorem}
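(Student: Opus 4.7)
The plan is to prove the identity by a combination of Harish--Chandra descent on the regular semisimple stratum together with induction on $n$, reducing ultimately to a base case of small rank that can be checked by direct computation. Because both sides of the claimed identity are orbital integrals of characteristic functions of lattices, the first order of business is to verify that both sides are well-defined on the stratum of relatively regular semisimple elements: the stabilizers being trivial makes each integral a finite sum, and matching of invariants (the coefficients of the characteristic polynomial of $A$, resp.\ $A'$) identifies orbits on the two sides through the correspondence between $\gl_{n+1}(F)$ and $\fh_{n+1}^+(F)$ modulo $\GL_n(F)$, resp.\ $\U_n^+(F)$. I would first check that the transfer factor $\tf(A)$ depends only on invariants and that it is multiplicative under parabolic descent in a sense compatible with both orbital integrals; this is what lets induction go through.

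The main body of the argument would use Harish--Chandra descent: given a semisimple part of $A$ with nontrivial Levi centralizer in $\GL_n$, the orbital integral of $\mathbbm{1}_{\gl_{n+1}(\O_F)}$ on $A$ factors, after a neighborhood of the semisimple part, into a product of orbital integrals on smaller Levi subalgebras, and the matching with the unitary side preserves this Levi structure because matching is defined by $\GL_n(E)$-conjugacy. Thus one can reduce to the case where $A$ (resp.\ $A'$) is topologically unipotent (or equivalently, after a shift, nilpotent), where a second descent using the Springer/Steinberg correspondence reduces everything to a finite combinatorial identity in the building of $\GL_n$ over $F$, with the unitary side corresponding to a $\sigma$-stable piece of this building (a purely local analog of the Kottwitz-style stratification). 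The key input is the vanishing statement for $A$ not matching any $A'$: this follows from the fact that non-matching is obstructed by failure of $\ol\gamma\gamma$ to lie in $\Nm(\GL_n(E))$, which forces the relevant lattice chains to be empty.

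For the base case I would handle $n = 0$ trivially (both orbital integrals equal $1$ with $\tf = 1$) and $n = 1$ by an explicit computation on $\gl_2$ versus $\fh_2^+$: here the orbital integral is a sum over the $1$-dimensional local tree, and one verifies the identity by splitting into the cases where the eigenvalues lie in $F$ versus in $E \setminus F$. The inductive step then compares the parabolic descent expressions on both sides term by term.

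The hard part will be handling residue characteristic two and justifying the Harish--Chandra descent without implicit smoothness hypotheses that fail in small characteristic. In large residue characteristic, one can instead invoke Yun's interpretation of both orbital integrals as point counts on the fibers of Hitchin-type morphisms (\cite{MR2769216}) and deduce the identity from a global geometric symmetry, but this is what fails uniformly in $p$. To circumvent this I would rely on the purely local argument of Beuzart-Plessis \cite{MR4305382}, or alternatively the global argument via the arithmetic fundamental lemma of Zhang, taking the statement as a black box at this point; the alignment of conventions (measures, transfer factor, the embedding $\GL_n \hookrightarrow \GL_{n+1}$) with those used in the cited works is then the only real content to verify.
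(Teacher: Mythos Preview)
The paper does not prove this theorem at all: it is quoted from the literature (Yun--Gordon, Beuzart-Plessis, Zhang) and used as a black box. There is therefore nothing to compare your argument against beyond the citation itself. Your final paragraph already arrives at the correct conclusion: the only content required here is to check that the conventions (embedding of $\GL_n$ into $\GL_{n+1}$, choice of measures, definition of the transfer factor) match those in the cited references.

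That said, the earlier parts of your sketch contain a genuine confusion worth flagging. Your explanation of the vanishing statement --- that ``non-matching is obstructed by failure of $\ol\gamma\gamma$ to lie in $\Nm(\GL_n(E))$'' --- is the wrong obstruction for this setting. In the Lie algebra Jacquet--Rallis fundamental lemma, matching of a relatively regular semisimple $A \in \gl_{n+1}(F)$ with an element of $\fh_{n+1}^+(F)$ is governed by which of the two Hermitian forms (split or non-split) the $\GL_n(E)$-orbit of $A$ lands in under the bijection of Lemma~\ref{lem:liematch}; the norm-image criterion you mention pertains to the group-level base change problem, which is a different story. The vanishing of the $\GL_n$-side orbital integral when $A$ matches the non-split side is a nontrivial part of the theorem, not a consequence of an empty lattice set, and none of the cited proofs proceed by the Harish--Chandra descent and induction scheme you outline. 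If you were to actually attempt a self-contained proof along those lines you would need a substantially different mechanism, but for the purposes of this paper no such attempt is needed.
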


With our assumptions on the orbit representatives, our
orbital integrals in fact reduce to the orbital integrals
in the above theorem.

\begin{lemma}
\label{lem:gltoJR}
For $[\gamma, x, y] \in X_\rss$, we have
\begin{align*}
  &\Orb^{\psi, \mu}(s, [\gamma, x, y], f', \phi')\\
  &= \frac{\mathbbm{1}_{\O_F^\times}(\det \ol\gamma\gamma)}{\Vol(\GL_n(\O_F))} 
  \int_{\GL_n(F)} \mathbbm{1}_{\gl_{n+1}(\O_F)}(g^{-1}Ag)\eta(\det g)\ol{|\det g|^{s - \frac12}}\,dg,
\end{align*}
for any 
\[  
  A = \begin{bmatrix}
  \ol\gamma\gamma & jy \\ x & d
  \end{bmatrix} \in \gl_{n+1}(F)
\]
with $d \in \O_F$.
\end{lemma}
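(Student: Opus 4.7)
The plan is to simplify $\widetilde{f'}$, reduce the $\GL_n(E)$-integral to one over $\GL_n(F)$ via unramifiedness and the Kottwitz conditions, compute the partial Fourier transform explicitly, and recognize the resulting integrand as the Jacquet--Rallis integrand on the matrix $A$.

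First, a direct unfolding gives $\widetilde{f'}(g) = \Vol(\GL_n(\O_E))^{-1}\mathbbm{1}_{\GL_n(\O_E)}(g)$, using only that $\O_E$ is $\sigma$-stable. Next, I would check that each factor in the integrand of $\Orb^{\psi,\mu}(s,[\gamma,x,y],f',\phi')$ is right-$\GL_n(\O_E)$-invariant: the indicator $\mathbbm{1}_{\GL_n(\O_E)}(g^{-1}\gamma\ol g)$ because both $k$ and $\ol k$ lie in $\GL_n(\O_E)$; the factor $(R_\mu(g)\phi')^\dagger$ because $\mu$ is unramified and $\phi' = \mathbbm{1}_{\O_{E,n}}$ is right-$\GL_n(\O_E)$-invariant; and $|\det g|_E^{s-1/2}$ because $|\det k|_E = 1$. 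This reduces the integral to one over $\GL_n(E)/\GL_n(\O_E)$ and cancels the factor $\Vol(\GL_n(\O_E))^{-1}$. By Lemma \ref{lem:kottwitz}\eqref{item:GLnFGLnO}, the support of the integrand lies in the image of the natural injection $\GL_n(F)/\GL_n(\O_F) \hookrightarrow \GL_n(E)/\GL_n(\O_E)$; since both are discrete and this injection preserves the mass of the trivial coset, the induced quotient measure agrees with the one coming from $dg_F$ on $\GL_n(F)$.

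Substituting $g = g_F \in \GL_n(F)$, the condition $g_F^{-1}\gamma g_F \in \GL_n(\O_E)$ is equivalent, via the polynomial expression $\gamma = \sum_i c_i(\ol\gamma\gamma)^i$ with $c_i \in \O_E$ from Definition \ref{def:intcomb}, to $g_F^{-1}\ol\gamma\gamma g_F \in \GL_n(\O_F)$, provided $\det\ol\gamma\gamma \in \O_F^\times$ (otherwise both conditions fail, accounting for the leading indicator $\mathbbm{1}_{\O_F^\times}(\det\ol\gamma\gamma)$). For the partial Fourier transform I use $\O_{E_v} = \O_{F_v} \oplus j\O_{F_v}$, valid because $j \in \O_{E_v}^\times$ and the residue characteristic is odd, to get
\[
  \phi'((x+jw)g_F) = \mathbbm{1}_{\O_{F,n}}(xg_F)\,\mathbbm{1}_{\O_{F,n}}(wg_F).
\]
After the change of variables $w \mapsto wg_F$ — whose Jacobian $|\det g_F|_F^{-1}$ cancels the $|\det g|_E^{1/2} = |\det g_F|_F$ from the Weil representation — together with the identity $\widehat{\mathbbm{1}_{\O_{F,n}}} = \mathbbm{1}_{\O_{F,n}}$ for the self-dual measure relative to unramified $\psi'$, I obtain
\[
  (R_\mu(g_F)\phi')^\dagger(x,y) = \eta(\det g_F)\,|j|_v\,\mathbbm{1}_{\O_{F,n}}(xg_F)\,\mathbbm{1}_{\O_F^n}(jg_F^{-1}y),
\]
using $\mu|_{\AA_F^\times} = \eta$ and noting $|j|_v = 1$ since $j \in \O_{E_v}^\times$.

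To conclude, for any $d \in \O_F$ the conjugate $\diag(g_F^{-1},1)\,A\,\diag(g_F,1)$ has block entries $g_F^{-1}\ol\gamma\gamma g_F$, $jg_F^{-1}y$, $xg_F$, and $d$, so $\mathbbm{1}_{\gl_{n+1}(\O_F)}$ of this conjugate factors as precisely the product of indicators produced above. Converting the sum over $\GL_n(F)/\GL_n(\O_F)$ back to an integral against $dg_F$ introduces the desired $\Vol(\GL_n(\O_F))^{-1}$, yielding the claimed formula. The main technical obstacle is the careful bookkeeping of absolute-value and measure normalizations — reconciling the $E$-normalized quantities $|\det g|_E^{s-1/2}$ and $|\det g|_E^{1/2}$ with the Jacobian $|\det g_F|_F$ so that everything reduces cleanly to a single factor of $\ol{|\det g|^{s-1/2}}$ under the convention used on the right-hand side.
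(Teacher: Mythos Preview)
Your proposal is correct and follows essentially the same route as the paper: both arguments compute $\widetilde{f'}$, invoke Lemma~\ref{lem:kottwitz}\eqref{item:GLnFGLnO} to collapse the $\GL_n(E)$-integral to $\GL_n(F)$, use the Kottwitz polynomial expression (equivalently Lemma~\ref{lem:kottwitz}\eqref{item:glkott}) to replace $\mathbbm{1}_{\GL_n(\O_E)}(g^{-1}\gamma g)$ by $\mathbbm{1}_{\GL_n(\O_F)}(g^{-1}\ol\gamma\gamma g)$, and then identify the result with the Jacquet--Rallis integrand on $A$. Your explicit verification of the partial Fourier transform via $\O_E = \O_F \oplus j\O_F$ is what the paper packages as the identity \eqref{eq:pftGLF} together with $\phi'^\dagger = \mathbbm{1}_{\O_{F,n}} \otimes \mathbbm{1}_{\O_F^{-,n}}$.
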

\begin{proof}
Since $\gamma$ is Kottwitz, by Lemma \ref{lem:kottwitz},
we have
\begin{align*}
  &\Orb^{\psi, \mu}(s, [\gamma, x, y], f', \phi')\\
  &= \int_{\GL_n(E)} \widetilde f'(g^{-1}\gamma \ol g)
  \ol{(R_\mu(g)\phi')^\dagger(x, y)|\det g|^{s - \frac12}}\, dg \\
  &=\frac{1}{\Vol(\GL_n(\O_E))}\int\limits_{\GL_n(F)\GL_n(\O_E)} \mathbbm{1}_{\GL_n(\O_E)}(g^{-1}\gamma \ol g)
  \ol{(R_\mu(g)\phi')^\dagger(x, y)|\det g|^{s - \frac12}} \, dg \\
  &=\frac{1}{\Vol(\GL_n(\O_F))}\int_{\GL_n(F)} \mathbbm{1}_{\GL_n(\O_E)}(g^{-1}\gamma g) \times \\
  &\hspace{15em}
  \mathbbm{1}_{\O_{F,n}}(xg)\mathbbm{1}_{\O_F^{-,n}}(g^{-1}y) 
  \eta(\det g) \ol{|\det g|^{s - \frac12}}\,dg\\
  &= \frac{1}{\Vol(\GL_n(\O_F))}\int_{\GL_n(F)} \mathbbm{1}_{\GL_n(\O_F)}(g^{-1} \ol\gamma \gamma g) \times \\
  &\hspace{15em}
  \mathbbm{1}_{\O_{F,n}}(xg)\mathbbm{1}_{\O_F^{-,n}}(g^{-1}y) 
  \eta(\det g)\ol{|\det g|^{s - \frac12}}\,dg \\
  &= \frac{\mathbbm{1}_{\O_F^\times}(\det \ol\gamma\gamma)}{\Vol(\GL_n(\O_F))}
  \int_{\GL_n(F)} \mathbbm{1}_{\gl_{n+1}(\O_F)}(g^{-1}Ag)\eta(\det g)\ol{|\det g|^{s - \frac12}}\, dg. 
\end{align*}
In the above calculation, $\O_F^-$ denotes $\O_E \cap F^-$.
\end{proof}

\begin{lemma}
\label{lem:utoJR}
For $[\zeta, z]^{\beta^+} \in Y_\rss^{\beta^+}$, we have
\begin{align*}
  \Orb^{\psi, \mu}([\zeta, z]^{\beta^+}, f, \phi_1 \otimes
  \phi_2) = \frac{\mathbbm{1}_{\O_F^\times}(\det \zeta^\ast\zeta)}{\Vol(\U_n^+(\O_F))}
  \int_{\U_n^+(F)} \mathbbm{1}_{\fh_{n+1}^+(\O_F)}(h^{-1}A'h)\, dh,
\end{align*}
for any
\[
  A' = \begin{bmatrix}
  (\beta^+)^{-1}\zeta^\ast \beta^+ \zeta & z^\ast \\ z & d
  \end{bmatrix} \in \fh_{n+1}^+(F)
\]
with $d \in \O_F$.
\end{lemma}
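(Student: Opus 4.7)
The proof parallels that of Lemma \ref{lem:gltoJR}, with the key substitution adapted to the bi-action of $\U_n^+(F) \times \U_n^+(F)$ on $\GL_n(E)$. I would begin by plugging in the unramified test functions, so that
\[
\Orb^{\psi,\mu}([\zeta,z]^{\beta^+}, f, \phi_1 \otimes \phi_2) = \frac{1}{\Vol(\U_n^+(\O_F))^2} \iint_{(\U_n^+(F))^2} \mathbbm{1}_{\GL_n(\O_E)}(g^{-1}\zeta h)\bigl(\phi_1 \otimes \ol{\omega_{\psi',\mu}(h^{-1}g)\phi_1}\bigr)^\ddagger(zh)\,dg\,dh.
\]

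The main step is the unitary analogue of Lemma \ref{lem:kottwitz}\eqref{item:GLnFGLnO}: for $g, h \in \U_n^+(F)$, the condition $g^{-1}\zeta h \in \GL_n(\O_E)$ forces $g^{-1}h \in \U_n^+(\O_F)$. Setting $k = g^{-1}\zeta h$ and using $g^{-\ast}\beta^+g^{-1}=\beta^+$ together with $h^\ast = \beta^+h^{-1}(\beta^+)^{-1}$, a direct computation yields
\[
h^{-1}(\beta^+)^{-1}\zeta^\ast\beta^+\zeta h = (\beta^+)^{-1} k^\ast \beta^+ k.
\]
Since $\beta^+ \in \GL_n(\O_F)$ (from the self-dual lattice hypothesis) and $k \in \GL_n(\O_E)$, the right-hand side lies in $\GL_n(\O_E)$, with determinant $\Nm(\det k) \in \O_F^\times$. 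Invoking Lemma \ref{lem:kottwitz}\eqref{item:ukott}, I deduce $h^{-1}\zeta h \in \GL_n(\O_E)$, hence $g^{-1}h = k(h^{-1}\zeta h)^{-1} \in \GL_n(\O_E) \cap \U_n^+(F) = \U_n^+(\O_F)$.

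With this in hand, I parametrize $g = h k_0^{-1}$ with $k_0 \in \U_n^+(\O_F)$, collapsing one integration into a factor of $\Vol(\U_n^+(\O_F))$. Under this substitution $h^{-1}g = k_0^{-1}$, and by the unramifiedness of $\psi'$ and $\mu$ the Weil-representation action of the maximal compact fixes the unramified vector, so $\omega_{\psi',\mu}(k_0^{-1})\phi_1 = \phi_1$. Combined with the hypothesis $(\phi_1 \otimes \ol{\phi_1})^\ddagger = \mathbbm{1}_{\O_{E,n}}$, the orbital integral reduces to
\[
\Orb = \frac{1}{\Vol(\U_n^+(\O_F))} \int_{\U_n^+(F)} \mathbbm{1}_{\GL_n(\O_E)}(h^{-1}\zeta h)\,\mathbbm{1}_{\O_{E,n}}(zh)\,dh.
\]
Running the Kottwitz computation in reverse converts $\mathbbm{1}_{\GL_n(\O_E)}(h^{-1}\zeta h)$ into $\mathbbm{1}_{\O_F^\times}(\det \zeta^\ast \zeta) \cdot \mathbbm{1}_{\Mat_n(\O_E)}(h^{-1}(\beta^+)^{-1}\zeta^\ast\beta^+\zeta h)$.

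Finally, under the standard embedding $h \mapsto \diag(h, 1)$ of $\U_n^+$ into $\U_{n+1}^+$, the conjugate $h^{-1}A'h$ has upper-left block $h^{-1}(\beta^+)^{-1}\zeta^\ast\beta^+\zeta h$, upper-right block $h^{-1}z^\ast$, lower-left block $zh$, and lower-right entry $d \in \O_F$. The Hermitian relation together with $\beta^+ \in \GL_n(\O_F)$ makes the integrality of the two off-diagonal blocks equivalent, so the product of indicators above equals $\mathbbm{1}_{\fh_{n+1}^+(\O_F)}(h^{-1}A'h)$, giving the claimed formula. The main obstacle I anticipate is cleanly invoking the $\U_n^+(\O_F)$-invariance of $\mathbbm{1}_{\LL^+(\O_F)}$ under $\omega_{\psi',\mu}$: this is standard under our unramifiedness hypotheses, but must be handled carefully since the relevant splitting $\U(V^+) \hookrightarrow \mathrm{Mp}(\Res_{E/F}V^+)$ depends on the choice of $\mu$.
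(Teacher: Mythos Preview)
Your argument is correct and follows essentially the same route as the paper: both proofs use the Kottwitz property (Lemma~\ref{lem:kottwitz}\eqref{item:ukott}) to show that $g^{-1}\zeta h \in \GL_n(\O_E)$ forces $g^{-1}h \in \U_n^+(\O_F)$, collapse the $g$-integral using $\U_n^+(\O_F)$-invariance of the unramified Schwartz function, and then identify the resulting single integral with the Jacquet--Rallis orbital integral via the block decomposition of $h^{-1}A'h$. The paper works throughout with the explicit choice $\beta^+ = j$ (a scalar), which is what really makes $h^\ast = h^{-1}$ and hence the equivalence of integrality of the two off-diagonal blocks immediate; your appeal to ``$\beta^+ \in \GL_n(\O_F)$'' should be read in that light.
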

\begin{proof}
Note that we can take $\beta^+ = j$.
Since $\zeta$ is Kottwitz with respect to $\beta^+$, 
by Lemma \ref{lem:kottwitz}, we have
\begin{align*}
&\Orb^{\psi, \mu}([\zeta, z]^{\beta^+}, f, \phi_1 \otimes
  \phi_2) \\
  &=\iint\limits_{\U_n^+(F)\times \U_n^+(F)} f(g^{-1}\zeta h)
  (\phi_2 \otimes \ol{\omega_{\psi',\mu}(h^{-1}g)\phi_1})^{\ddagger}(zh)\,
  dg\, dh \\
  &= \frac{1}{\Vol(\U_n^+(\O_F))^2}\iint\limits_{\U_n^+(F)\times \U_n^+(F)}\mathbbm{1}_{\GL_n(\O_E)}(g^{-1}h
  (h^{-1}\zeta h)) \times \\
  &\hspace{19em}(\phi_2 \otimes \ol{\omega_{\psi,\mu}(h^{-1}g)\phi_1})^\ddagger(zh)\,dg\,dh \\
  &= \frac{1}{\Vol(\U_n^+(\O_F))}\int_{\U_n^+(F)}\mathbbm{1}_{\GL_n(\O_E)}(h^{-1}\zeta^\ast\zeta h)
  (\phi_2 \otimes \phi_1)^\ddagger (zh) \,dh \\
  &= \frac{1}{\Vol(\U_n^+(\O_F))}\int_{\U_n^+(F)} \mathbbm{1}_{\GL_n(\O_E)}(h^{-1}\zeta^\ast
  \zeta h) \mathbbm{1}_{\O_{E,n}}(zh)\, dh \\
  &= \frac{\mathbbm{1}_{\O_F^\times}(\det\zeta^\ast\zeta)}{\Vol(\U_n^+(\O_F))}
  \int_{\U_n^+(F)} \mathbbm{1}_{\fh_{n+1}^+(\O_F)}(h^{-1}A'h)\, dh.
\end{align*}
Note that the fourth line follows from the Lemma \ref{lem:kottwitz} because if $h^{-1}\zeta^\ast\zeta h \in \GL_n(\O_E)$, then $h^{-1}\zeta h \in \GL_n(\O_E)$,
so $g^{-1}h(h^{-1}\zeta h) \in \GL_n(\O_E)$ is equivalent to
$g^{-1}h \in \U_n^+(\O_F)$. 
\end{proof}

With these lemmas, we deduce the following results about the
unramified test functions from the Jacquet--Rallis fundamental
lemma for Lie algebras.

\begin{proposition}
\label{prop:betaminusFL}
Suppose $[\gamma, x, y]$ is regular semisimple and matches with 
$[\zeta, z]^{\beta^-}$. Then
\[
  \Orb^{\psi, \mu}([\gamma, x, y], f', \phi') = 0.
\]
\end{proposition}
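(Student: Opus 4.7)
The plan is to deduce the vanishing from the Jacquet--Rallis fundamental lemma (Theorem \ref{thm:J-R}) via the explicit unfolding in Lemma \ref{lem:gltoJR}. First I would apply Lemma \ref{lem:gltoJR} at $s = \tfrac{1}{2}$ (so the factor $\ol{|\det g|^{s-\frac12}}$ becomes $1$) to rewrite
\[
\Orb^{\psi, \mu}([\gamma, x, y], f', \phi') = \frac{\mathbbm{1}_{\O_F^\times}(\det \ol\gamma\gamma)}{\Vol(\GL_n(\O_F))} \int_{\GL_n(F)} \mathbbm{1}_{\gl_{n+1}(\O_F)}(g^{-1}A g)\,\eta(\det g)\,dg,
\]
for any $d \in \O_F$ and $A = \begin{bmatrix} \ol\gamma\gamma & jy \\ x & d \end{bmatrix} \in \gl_{n+1}(F)$. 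This is exactly the left-hand side appearing in Theorem \ref{thm:J-R}, whose ``Furthermore'' clause asserts vanishing whenever $A$ does not match any element of $\fh_{n+1}^+(F)$ under the Jacquet--Rallis matching (conjugacy by $\GL_n(E)$).

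The remaining task is to show that under our hypothesis -- namely that $[\gamma, x, y]$ matches $[\zeta, z]^{\beta^-}$ -- no such $A'$ exists. Using the embedding $A = \begin{pmatrix} \xi & y' \\ x & d \end{pmatrix} \mapsto [\xi, x, y']$ of the relevant subset of $\gl_{n+1}$ into $M_n$ (and the analogous embedding for $\fh_{n+1}^+$), JR matching becomes the Lie algebra matching of Lemma \ref{lem:liematch} with $\beta = \beta^+$. By Lemma \ref{lem:matching}, the hypothesis $[\gamma, x, y] \leftrightarrow [\zeta, z]^{\beta^-}$ forces $[\ol\gamma\gamma, x, jy]$ to lie in the $\GL_n(E)$-orbit of $[\beta^{-,-1}\zeta^*\beta^-\zeta, z, \beta^{-,-1}z^*]$, i.e.\ to correspond under Lemma \ref{lem:liematch} to an element of $\fy_\rss^{\beta^-}(F)/\U_n^{\beta^-}(F)$. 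Since Lemma \ref{lem:liematch} expresses the $\GL_n(F)$-orbits in $\fx_\rss$ as a \emph{disjoint} union over isomorphism classes of Hermitian spaces, this orbit cannot simultaneously match any element of $\fy_\rss^{\beta^+}(F)/\U_n^{\beta^+}(F)$. Hence $A$ has no JR-match in $\fh_{n+1}^+(F)$, and Theorem \ref{thm:J-R} gives the desired vanishing.

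The argument is essentially bookkeeping: everything analytic is absorbed by the existing Jacquet--Rallis fundamental lemma, and the only thing that needs care is verifying that the matching conventions in Lemmas \ref{lem:liematch}, \ref{lem:matching}, and Theorem \ref{thm:J-R} align, so that an element matching on the $\beta^-$ side of our correspondence is precisely one with no JR-match on the $\fh_{n+1}^+$ side. Once that alignment is stated cleanly, no further work is required; the ``main obstacle,'' such as it is, is simply keeping track of the shift by $j$ in the off-diagonal entry of $A$ and noting that neither this shift nor the choice of $d \in \O_F$ affects the matching class.
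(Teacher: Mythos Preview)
Your approach is the same as the paper's: reduce via Lemma \ref{lem:gltoJR} to the Jacquet--Rallis orbital integral and invoke the vanishing clause of Theorem \ref{thm:J-R}. The one point the paper treats more carefully is the hypothesis of Theorem \ref{thm:J-R} itself: that theorem (including its ``Furthermore'' clause) is stated only for \emph{relatively regular semisimple} $A \in \gl_{n+1}(F)$, and the paper secures this by choosing $d \in \O_F$ so that $A$ is relatively regular semisimple, citing \cite[Lemma~5.17]{MR3244725} for existence. You write ``for any $d \in \O_F$'' and never check this hypothesis, so as written there is a small gap: either choose $d$ as the paper does, or add a line observing that the spanning conditions (2) and (3) in the definition of $M_n(E)_\rss$ (which hold for $[\ol\gamma\gamma, x, y]$ by assumption) are equivalent to relative regular semisimplicity of $A$ independently of $d$. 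Your discussion of why $A$ cannot match on the $\fh_{n+1}^+$ side is more explicit than the paper's (which simply asserts it), and is correct once the bookkeeping with the $j$-shift is done.
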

\begin{proof}
Choose $d \in \O_F$ so that 
\[  A = \begin{bmatrix}
  \ol\gamma\gamma & jy \\ x & d
  \end{bmatrix} \in \GL_{n+1}(F)
\]
is relatively regular semisimple.
Such a $d$ exists by \cite[Lemma~5.17]{MR3244725},
after multiplying by a purely imaginary element.
The vanishing of the orbital integral then follows from
the second statement of Theorem \ref{thm:J-R}.
\end{proof}

\begin{theorem}[Fundamental lemma]
\label{thm:FL}
Suppose $[\gamma, x, y]$ is regular semisimple and
matches with $[\zeta, z]^{\beta^+}$. 
Then
\[
  \Orb^{\psi, \mu}([\gamma, x, y], f', \phi') =
  \tf([\ol\gamma\gamma, x, y]) 
  \Orb^{\psi, \mu}([\zeta, z]^{\beta^+}, f, \phi_1 \otimes
  \phi_2).
\]
\end{theorem}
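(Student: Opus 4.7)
The plan is to reduce directly to the Jacquet--Rallis fundamental lemma for Lie algebras (Theorem \ref{thm:J-R}) via the orbital integral identities of Lemmas \ref{lem:gltoJR} and \ref{lem:utoJR}. I would first fix $d \in \O_F$ so that both
\[
  A = \begin{bmatrix} \ol\gamma\gamma & jy \\ x & d \end{bmatrix} \in \gl_{n+1}(F)
  \quad \text{and} \quad
  A' = \begin{bmatrix} (\beta^+)^{-1}\zeta^\ast \beta^+ \zeta & z^\ast \\ z & d \end{bmatrix} \in \fh_{n+1}^+(F)
\]
are relatively regular semisimple, which is possible by \cite[Lemma~5.17]{MR3244725} as in the proof of Proposition \ref{prop:betaminusFL}. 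Applying the two lemmas at $s = \frac12$ rewrites both sides of the theorem as Jacquet--Rallis-style orbital integrals; the matching $[\gamma, x, y] \leftrightarrow [\zeta, z]^{\beta^+}$ implies $\det(\ol\gamma\gamma) = \det((\beta^+)^{-1}\zeta^\ast\beta^+\zeta) = \det(\zeta^\ast\zeta)$, so the indicator factors $\mathbbm{1}_{\O_F^\times}$ in the two formulas agree.

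I would then check that $A$ and $A'$ match in the Jacquet--Rallis sense, i.e.\ are $\GL_n(E)$-conjugate. By Lemma \ref{lem:matching}, the matching hypothesis furnishes $h \in \GL_n(E)$ with $h^{-1}\ol\gamma\gamma h = (\beta^+)^{-1}\zeta^\ast\beta^+\zeta$, $xh = z$, and $h^{-1}y = (\beta^+)^{-1}z^\ast$. Since $\beta^+ = j \in F^-$ is a scalar and hence commutes with $h^{-1}$, the last relation yields $h^{-1}(jy) = z^\ast$, and a direct computation shows $\diag(h,1)^{-1} A \diag(h,1) = A'$. Applying Theorem \ref{thm:J-R} then yields the desired identity, up to comparing the two transfer factors.

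The remaining technical point, which I expect to be the main obstacle, is to match the Jacquet--Rallis transfer factor
\[
  \mu\!\left( \det \begin{pmatrix} e_{n+1}^\ast \\ e_{n+1}^\ast A \\ \vdots \\ e_{n+1}^\ast A^n \end{pmatrix} \right)
\]
with $\tf([\ol\gamma\gamma, x, y]) = \mu(\mathbf{T}_{[\ol\gamma\gamma, x, y]})$. Writing $e_{n+1}^\ast A^k = (v_k, q_k)$ with $v_k \in F_n$ and $q_k \in F$, the recursion $v_{k+1} = v_k \ol\gamma\gamma + q_k x$ shows inductively that $v_k$ equals $x(\ol\gamma\gamma)^{k-1}$ plus an $F$-linear combination of $x(\ol\gamma\gamma)^j$ for $j < k-1$. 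Expanding the $(n+1) \times (n+1)$ determinant along its first row $(0, \ldots, 0, 1)$ and then performing row operations to clear the lower-order terms in $v_2, \ldots, v_n$ reduces the determinant to $(-1)^n \mathbf{T}_{[\ol\gamma\gamma, x, y]}$. Since $E/F$ is unramified, the norm map is surjective onto $\O_F^\times$, so $\mu(-1) = \eta(-1) = 1$, and the sign disappears under $\mu$, completing the proof.
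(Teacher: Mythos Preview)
Your proposal is correct and follows essentially the same route as the paper: reduce via Lemmas \ref{lem:gltoJR} and \ref{lem:utoJR} to the Jacquet--Rallis fundamental lemma (Theorem \ref{thm:J-R}), after choosing $d$ as in Proposition \ref{prop:betaminusFL} and observing that the matching hypothesis makes $A$ and $A'$ $\GL_n(E)$-conjugate and forces $\det(\ol\gamma\gamma)=\det(\zeta^\ast\zeta)$. The paper simply asserts $\tf(A)=\tf([\ol\gamma\gamma,x,y])$ and that $A$ matches $A'$, whereas you spell out both computations explicitly; your determinant argument and the use of $\mu(-1)=\eta(-1)=1$ at an unramified place are correct, so what you flagged as the ``main obstacle'' is in fact routine.
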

\begin{proof}
As before, let $\beta^+ = j$.
Let $d$ be as in the proof of Proposition \ref{prop:betaminusFL}, and let $A$ and $A'$ be as in Lemmas \ref{lem:gltoJR}
and \ref{lem:utoJR}, respectively.
Then $A$ is relatively regular semisimple and matches with
$A'$, so $A'$ is also relatively regular semisimple.
Furthermore, we have 
$\tf(A) = \tf([\ol\gamma\gamma, x, y])$.
Since $[\gamma, x, y]$ and $[\zeta, z]^{\beta^+}$ match,
we also have $\det(\ol\gamma\gamma) = 
\det(\zeta^\ast\zeta)$. Thus by Lemmas \ref{lem:gltoJR},
\ref{lem:utoJR}, and Theorem \ref{thm:J-R}, we have
\begin{align*}
  \Orb^{\psi, \mu}([\gamma, x, y], f', \phi')
  &= \frac{\mathbbm{1}_{\O_F^\times}(\det \ol\gamma\gamma)}{\Vol(\GL_n(\O_F))}
  \int_{\GL_n(F)} \mathbbm{1}_{\gl_{n+1}(\O_F)}(g^{-1}Ag)\eta(\det g)\,dg \\
  &= \tf(A)\frac{\mathbbm{1}_{\O_F^\times}(\det \zeta^\ast\zeta)}{\Vol(\U_n^+(\O_F))}
  \int_{\U_n^+(F)} \mathbbm{1}_{\fh_{n+1}^+(\O_F)}(h^{-1}A'h)\, dh \\
  &= \tf([\ol\gamma\gamma, x, y])\Orb^{\psi, \mu}([\zeta, z]^{\beta^+}, f, \phi_1 \otimes
  \phi_2),
\end{align*}
as desired.  
\end{proof}

Finally, we have the following proposition, which tells us
that the unramified test functions on unitary side satisfy
the third condition of Definition \ref{def:ugood}.

\begin{proposition}
\label{prop:unrgood}
The function $\mathbbm{1}_{\GL_n(\O_E)}$ is supported
on the set
\[
  \{\zeta \in \GL_n(E) : (\beta^+)^{-1}\zeta^\ast\beta^+\zeta \in \Nm(\GL_n(E))\}.
\]
\end{proposition}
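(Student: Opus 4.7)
My plan is to reduce the assertion to the classical characterization of the image of the norm map $\Nm$ on $\GL_n(E)$, and then verify its hypothesis directly from the skew-Hermitian symmetry of $\beta^+$. Let $\zeta \in \GL_n(\O_E)$ be arbitrary and set $T \coloneqq (\beta^+)^{-1}\zeta^\ast\beta^+\zeta$; the goal is to exhibit $\gamma \in \GL_n(E)$ with $\ol\gamma\gamma = T$.

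The first step is to check that $T$ is $\GL_n(E)$-conjugate to its Galois conjugate $\ol T$. Using $(\beta^+)^\ast = -\beta^+$ (since $\beta^+$ is skew-Hermitian), a direct computation yields
\[
    T^\ast = \beta^+ T (\beta^+)^{-1},
\]
so $\beta^+$ conjugates $T$ to $T^\ast = \transp{\ol T}$. Combined with the classical fact that every invertible matrix over a field is conjugate to its transpose in $\GL_n(E)$, this produces an element $g \in \GL_n(E)$ with $gTg^{-1} = \ol T$.

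The second step is to invoke the standard characterization: for $\GL_n$, an element $A \in \GL_n(E)$ lies in $\Nm(\GL_n(E))$ if and only if $A$ is $\GL_n(E)$-conjugate to $\ol A$. Applied to $T$, this immediately yields $T \in \Nm(\GL_n(E))$, which is what we want.

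I expect the main obstacle to be justifying the nontrivial direction of the characterization in step two: given $g$ with $gAg^{-1} = \ol A$, one must construct $\gamma$ with $\ol\gamma\gamma = A$. The standard argument observes that $\ol g g \in C_{\GL_n(E)}(A)$ and seeks $c \in C_{\GL_n(E)}(A)$ so that $\gamma \coloneqq gc$ satisfies $\ol\gamma\gamma = A$; this reduces to a twisted-norm equation on the centralizer $C_{\GL_n(E)}(A)$, which is solvable because the centralizer is a product of Weil restrictions of general linear groups and hence has vanishing degree-one Galois cohomology by Hilbert~90. In our integral situation there is also a more hands-on alternative: reduce $T$ modulo the maximal ideal, solve the resulting equation in $\GL_n(\kappa_E)$ using Lang's theorem applied to the connected group $\Res_{\kappa_E/\kappa_F}\GL_n$, and then lift back to $\GL_n(\O_E)$ by Hensel's lemma, using that the differential of $\Nm$ at an invertible point is surjective.
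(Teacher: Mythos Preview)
Your step one is fine and matches the paper: from the skew-Hermitian symmetry of $\beta^+$ one gets $T^\ast = \beta^+ T(\beta^+)^{-1}$, hence $T$ is $\GL_n(E)$-conjugate to $\ol T$ and therefore to some $\delta \in \GL_n(F)$ (via elementary divisors).

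The gap is in step two. The ``standard characterization'' you invoke --- that $A \in \Nm(\GL_n(E))$ if and only if $A$ is $\GL_n(E)$-conjugate to $\ol A$ --- is false. Already for $n=1$ it fails: a uniformizer $\pi_F \in F^\times$ satisfies $\ol{\pi_F} = \pi_F$ but is not a norm from $E^\times$ since $E/F$ is unramified. For $n=2$, the element $\diag(\pi_F,1)\in\GL_2(F)$ is likewise not a norm, because $\det(\ol\gamma\gamma)=\Nm_{E/F}(\det\gamma)$ would force $\pi_F\in\Nm(E^\times)$. Your Hilbert~90 sketch does not rescue this: the question of whether a given $\delta$ is a norm is governed by the Tate cohomology group $\hat H^0(\Gal(E/F), C(E)^\times)$ of the centralizer, not by $H^1$, and $\hat H^0$ is typically nonzero for $p$-adic fields.

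What actually makes the proposition true is the \emph{integrality} of $\zeta$, which you only bring in as an afterthought. The paper's argument uses this from the start: since $\zeta\in\GL_n(\O_E)$ (and $\beta^+$ can be taken scalar), one has $T=\zeta^\ast\zeta\in\GL_n(\O_E)$, so the $\delta\in\GL_n(F)$ conjugate to $T$ has a $\GL_n(E)$-conjugate in $\GL_n(\O_E)$. Then Kottwitz's lemma (Lemma~\ref{lem:kottwitz8-8}) applies: the commutative $\O_E$-algebra $C_E=\O_E[\delta]\subset\Mat_n(E)$ is free of finite rank, and for such algebras $\hat H^0(\Gal(E/F),C_E^\times)=0$ because $E/F$ is unramified (Lemma~\ref{lem:kotcoho}). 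This is exactly the vanishing you need, and it is specific to the integral situation. Your Lang--Hensel alternative is in the same spirit and can be made to work, but note that even over the residue field one must first pass to an element of $\GL_n(\kappa_F)$: not every element of $\GL_n(\kappa_E)$ is a norm, only those conjugate into $\GL_n(\kappa_F)$.
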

\begin{proof}
Again take $\beta^+ = j$.
Suppose that $\zeta \in \GL_n(\O_E)$, so 
$\zeta^\ast\zeta \in \GL_n(\O_E)$.
Let $p_1(X) | p_2(X) | \cdots
| p_r(X)$ be the elementary divisors of $\zeta^\ast\zeta$. 
Since $\zeta^\ast\zeta$ is conjugate to $\transp(\zeta^\ast\zeta)
= \ol{\zeta^\ast\zeta}$, we have $p_i(X) \in F[X]$, so $\zeta^\ast\zeta$
is conjugate to some element $\delta \in \GL_n(F)$.
Then by Lemma \ref{lem:kottwitz8-8}, $\delta =
\ol\gamma\gamma$ for some $\gamma$, which implies that
$\zeta^\ast\zeta$ is of this form as well.
\end{proof}

\section{Comparison of relative trace formulas}
\subsection{Choice of representatives}
\label{sec:rep}
In this subsection, we show that it is possible to choose
compatible sets of \emph{global} representatives $S$ and $R^\beta$, so that every element of $S$ is Kottwitz at
every inert place $v$ of $F$. 

\begin{lemma}
\label{lem:compatible}
For every set of representatives $S$, there exists a
set of representatives $R^\beta$ which is compatible with $S$.
\end{lemma}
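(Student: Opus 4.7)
The plan is to build $R^\beta$ one orbit at a time, transporting representatives from $S$ through the matching bijection of Lemma \ref{lem:matching}. Fix an orbit $\mathcal O \in [Y^\beta_\rss]$; by Lemma \ref{lem:matching} there is a unique $\gamma \in S$ corresponding to $\mathcal O$. Choose any $\zeta_0 \in \mathcal O$ and set $A' := \beta^{-1}\zeta_0^\ast\beta\zeta_0$. The matching provides $h \in \GL_n(E)$ with $h^{-1}\ol\gamma\gamma h = A'$, and I would define the representative of $\mathcal O$ in $R^\beta$ to be $\zeta := h^{-1}\gamma h$.

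Two routine checks are needed first. For well-definedness, if $h'$ is another element with $h'^{-1}\ol\gamma\gamma h' = A'$, then $h(h')^{-1} \in Z_{\GL_n(E)}(\ol\gamma\gamma)$, which is a torus (as $\ol\gamma\gamma$ is regular semisimple) containing $\gamma$ (by normality), so $h$ and $h'$ produce the same $h^{-1}\gamma h$. To see that $\zeta$ is normal with respect to $\beta$ and satisfies $\beta^{-1}\zeta^\ast\beta\zeta = A'$, write $\gamma = c_{n-1}(\ol\gamma\gamma)^{n-1} + \cdots + c_0$ (possible because $\ol\gamma\gamma$ is regular semisimple and $\gamma$ is normal); conjugating by $h$ gives $\zeta = c_{n-1}(A')^{n-1} + \cdots + c_0$, which manifestly commutes with $A'$, and the identity $\beta^{-1}\zeta^\ast\beta\zeta = A'$ is then exactly the computation in the proof of Lemma \ref{lem:hermitiannorms}.

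The key step — and the one I expect to be the main obstacle — is verifying that $\zeta$, which was defined via an abstract conjugation in $\GL_n(E)$, actually lies in the $\U_n^\beta(F) \times \U_n^\beta(F)$-orbit $\mathcal O$ from which we started. The saving observation is that $\zeta$ and $\zeta_0$ produce the same $A'$, so $\zeta_0^\ast\beta\zeta_0 = \zeta^\ast\beta\zeta$; setting $g := \zeta_0\zeta^{-1}$, a direct computation yields $g^\ast\beta g = \beta$, so $g \in \U_n^\beta(F)$ and $\zeta = g^{-1}\zeta_0$ lies in $\mathcal O$.

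Finally, compatibility is almost forced by injectivity of the norm map. Suppose $\widetilde\gamma \in S$ and $\widetilde h \in \GL_n(E)$ satisfy $\beta^{-1}\zeta^\ast\beta\zeta = \widetilde h^{-1}\ol{\widetilde\gamma}\widetilde\gamma \widetilde h$. Then $\ol{\widetilde\gamma}\widetilde\gamma$ and $\ol\gamma\gamma$ are regular semisimple elements of $\GL_n(F)$ that are $\GL_n(E)$-conjugate (via $h\widetilde h^{-1}$), hence also $\GL_n(F)$-conjugate; by Definition \ref{def:norm}, $\widetilde\gamma$ and $\gamma$ must then be $\sigma$-conjugate, and since both lie in $S$ we get $\widetilde\gamma = \gamma$. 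The well-definedness observation of the first check now applies with $\widetilde h$ in place of $h$ and yields $\zeta = \widetilde h^{-1}\gamma \widetilde h$, which is exactly the required compatibility.
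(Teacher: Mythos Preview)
Your argument is correct and follows essentially the same route as the paper: pick $\zeta_0$ in the orbit, conjugate the matching $\gamma \in S$ into the centralizer picture via some $h$ with $h^{-1}\ol\gamma\gamma h = \beta^{-1}\zeta_0^\ast\beta\zeta_0$, and set $\zeta = h^{-1}\gamma h$. You in fact supply more detail than the paper does, explicitly checking that $\zeta$ lands in the given $\U_n^\beta(F)\times\U_n^\beta(F)$-orbit and spelling out the compatibility verification, both of which the paper leaves implicit.

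Two small notational slips to clean up. First, the orbits you iterate over are the regular semisimple $\U_n^\beta(F)\times\U_n^\beta(F)$-orbits in $\GL_n(E)$ satisfying the norm condition of Definition~\ref{def:uorbits}, not elements of $[Y^\beta_\rss]$ (those are equivalence classes of pairs $[\zeta,z]$). Second, the existence and uniqueness of $\gamma \in S$ with $\ol\gamma\gamma$ conjugate to $A'$ does not come from Lemma~\ref{lem:matching} but directly from the norm condition $A' \in \Nm(\GL_n(E))$ together with the injectivity of the norm map in Definition~\ref{def:norm}. Neither point affects the substance of the proof.
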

\begin{proof}
We can choose the set $R^\beta$ as follows.
For every $\zeta' \in \GL_n(E)$ such that
$\beta^{-1}\zeta'^\ast\beta\zeta'$ is a regular semisimple
element of $\Nm(\GL_n(E))$, there exists unique
$\gamma \in S$ such that 
$\beta^{-1}\zeta'^\ast\beta\zeta' = h^{-1}\ol\gamma\gamma h$ for some $h \in \GL_n(E)$. 
Let $\zeta \coloneqq h^{-1}\gamma h$.
Then if \[ \gamma = c_{n-1}(\ol\gamma\gamma)^{n-1} + \cdots + c_0,\]
we have
\begin{align*}
    \beta^{-1}\zeta^\ast\beta  = \ol{c_{n-1}}(h^{-1}\ol\gamma\gamma h)^{n-1} + \cdots + \ol{c_0} = h^{-1}\ol\gamma h,
\end{align*}
so $\beta^{-1}\zeta^\ast\beta \zeta =
\beta^{-1}\zeta'^\ast\beta\zeta'$.
Taking $\zeta$ to 
be the normal representative of the orbit
$\U_n^\beta(F)\zeta'\U_n^\beta(F)$, we obtain a set
of representatives $R^\beta$ compatible with $S$.
\end{proof}

Thus the claim follows from the following proposition, which
is the main result of this subsection.

\begin{proposition}
\label{prop:globalkottwitz}
Let $k \ge 0$ be an integer, and let $v_2$ be
a nonarchimedean split place of $F$.
Suppose $\delta \in \GL_n(F)$ is a regular semisimple
element which is the norm of some element of $\GL_n(E)$. Then there exists
$\gamma \in \GL_n(E)$ with $\ol\gamma\gamma = \delta$,
such that $\gamma$ is Kottwitz at all inert places $v$
of $F$, and $k$-Kottwitz at $v_2$. 
\end{proposition}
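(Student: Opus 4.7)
The plan is to realize the set $\{\gamma \in \GL_n(E) : \ol\gamma\gamma = \delta\}$ as a torsor under an $F$-torus, identify the desired local elements, and glue them via approximation. Since $\delta$ is regular semisimple, $L \coloneqq F[\delta]$ is an étale $F$-algebra of dimension $n$, and by Lemma \ref{lem:glnormal} any normal $\gamma$ with $\ol\gamma\gamma = \delta$ must commute with $\delta$ and hence lies in $L_E^\times \coloneqq (L \otimes_F E)^\times$. Fix one such $\gamma_0$ globally, which exists by hypothesis; every other solution has the form $u\gamma_0$ with $u \in T^1(F)$, where $T^1 \coloneqq \ker(\Nm_{L_E/L})$ is an $F$-torus.

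Next, I would show that at each place the set of $u \in T^1(F_v)$ making $u\gamma_0$ satisfy the required local condition is non-empty and open. At the split place $v_2$, writing $L_{E,v_2} = L_{v_2} \times L_{v_2}$ and $\gamma = (\gamma_1, \gamma_2)$ with $\gamma_1\gamma_2 = \delta$, the $k$-Kottwitz condition translates to $\gamma_1 \in 1 + \varpi_{F_{v_2}}^k \cdot \O_{F_{v_2}}[\delta]$, a non-empty open subset of $L_{v_2}^\times \cong T^1(F_{v_2})$. At an inert place $v$, there are two cases: if no $\GL_n(E_v)$-conjugate of $\delta$ lies in $\GL_n(\O_{E_v})$, the second Kottwitz alternative holds automatically for every $u$; otherwise we may assume $\delta \in \GL_n(\O_{E_v})$ and we seek $u\gamma_0 \in \O_{E_v}[\delta]$. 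Since $E_v/F_v$ is unramified and $\delta$ is regular semisimple, the norm map $\O_{E_v}[\delta]^\times \to \O_{F_v}[\delta]^\times$ is surjective by Hensel's lemma combined with norm surjectivity in the (étale) residue-field extension, producing a non-empty open set of valid $u$. Moreover, for almost all inert $v$ the element $\gamma_0$ itself has integral coefficients as a polynomial in $\delta$, and so is automatically Kottwitz; let $S$ denote the finite set of bad inert places together with $v_2$.

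Finally, I would apply weak approximation for $T^1$ to produce a $u \in T^1(F)$ lying in the prescribed open set $U_v \subseteq T^1(F_v)$ at each $v \in S$, while simultaneously lying in $T^1(\O_{F_v})$ (for the natural integral model, well-defined outside a finite set) at every other inert $v$. The main obstacle is precisely this combination of open approximation at the finite set $S$ with integrality constraints at the infinitely many places outside $S$, since plain weak approximation only handles the former. The hypothesis that $E/F$ is unramified everywhere ensures $T^1$ has a clean integral structure and that weak approximation at $S$ is unobstructed; any residual obstruction (a finite class-group-type quotient) is handled by enlarging $S$ once and reapplying the argument. Setting $\gamma \coloneqq u\gamma_0$ then gives the desired global element.
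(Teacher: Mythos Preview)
Your framework is the same as the paper's: work in the centralizer algebra $L = F[\delta]$ and its extension $L_E = L \otimes_F E$, view the fiber $\{\gamma : \ol\gamma\gamma = \delta\}$ as a torsor under $T^1 = \ker(\Nm_{L_E/L})$, identify the correct local candidates, and glue. Your local analysis (the two cases at inert places, the explicit description at $v_2$, and the observation that $\gamma_0$ is automatically Kottwitz at almost all inert $v$) is fine and matches what the paper does.

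The genuine gap is the final gluing step. You correctly identify the obstacle: you need $u \in T^1(F)$ lying in prescribed open sets at a finite set $S$ while being \emph{integral at the infinitely many remaining inert places}. This is a strong-approximation statement for the torus $T^1$, not weak approximation, and the obstruction is indeed a class group. But your proposed fix, ``enlarging $S$ once and reapplying the argument'', is not a proof: enlarging $S$ by more inert places only adds constraints, and you give no mechanism for killing the class-group obstruction.

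The paper's resolution is concrete and does not invoke strong approximation for $T^1$ directly. The key input is a Chebotarev-type statement (Lemma~\ref{lem:replacesplit}): in any ray class of $\O_L$ one can find a prime lying over a \emph{split} prime of $F$, avoiding any prescribed finite set. This is used twice. First, starting from an arbitrary global $x_0'$ with $\ol{x_0'}x_0' = \alpha$, one multiplies by $\ol a/a$ for a suitable $a \in L^\times$ so that the resulting $x_0$ is a unit at \emph{every} place of $\Sigma \cup \{v_2\}$; the non-unit part of $x_0'$ is pushed to a single split prime, where no Kottwitz condition is imposed. Second, one finds $a \in L^\times$ which is a unit at all of $\Sigma \cup \{v_2\}$ and congruent to the prescribed local units $a_v$ modulo $\varpi^{m_v}$ at the finitely many places where $m_v > 0$; again the non-unit part is concentrated at a split prime. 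Setting $x = x_0\cdot(a/\ol a)$ then works. The essential idea you are missing is that the split places serve as a sink for the obstruction, and Chebotarev guarantees such split primes exist in every relevant class.
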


We first recall some results from \cite{MR564478}.

\begin{lemma}[{\cite[Lemma~8.6]{MR564478}}]
\label{lem:kotcoho}
Let $v$ be an inert place of $F$.
Let $C$ be a commutative 
$\O_{F_v}$-algebra which is free of finite rank as an $\O_{F_v}$-module.
Let $C_{E_v} = C \otimes_{\O_{F_v}} \O_{E_v}$. 
Then \[ \hat H^0(\Gal(E_v/F_v), C_{E_v}^\times) = 0. \] 
\end{lemma}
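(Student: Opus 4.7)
The plan is to compute the Tate cohomology $\hat H^0(G, C_{E_v}^\times) = (C_{E_v}^\times)^G/\Nm(C_{E_v}^\times)$ directly. First I identify the $G$-fixed points: since $E_v/F_v$ is unramified, $\O_{E_v}$ is free of rank two over $\O_{F_v}$ with $(\O_{E_v})^G = \O_{F_v}$, so $(C_{E_v})^G = C$, and hence $(C_{E_v}^\times)^G = C^\times$. Thus the lemma reduces to showing that every unit of $C$ is of the form $y\sigma(y)$ for some $y \in C_{E_v}^\times$.

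Because $C$ is finite over the complete local ring $\O_{F_v}$, it decomposes as a finite product $C = \prod_i C_i$ of local $\O_{F_v}$-algebras, and both tensoring with $\O_{E_v}$ and $\hat H^0$ commute with finite products. This reduces us to the case where $C$ is local, with maximal ideal $\mathfrak{m}$ and residue field $k$ (a finite extension of the residue field of $F_v$). Let $\bar f$ be the reduction of a minimal polynomial of a generator of $\O_{E_v}/\O_{F_v}$; it is an irreducible quadratic over the residue field of $F_v$. If $\bar f$ factors over $k$, then by Hensel's lemma $C_{E_v} \simeq C \times C$ with $\sigma$ acting by swap, and any fixed element $(a,a) \in C^\times \times C^\times$ equals the norm of $(a,1)$, so $\hat H^0$ vanishes trivially in this case.

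The interesting case is when $\bar f$ remains irreducible over $k$: then $C_{E_v}$ is again local, with maximal ideal $\mathfrak{m} C_{E_v}$ and residue field $k'$ the quadratic extension of $k$. I then filter $C_{E_v}^\times$ by the subgroups $1 + \mathfrak{m}^n C_{E_v}$ and analyze the norm on the associated graded pieces. On the degree zero piece $k'^\times$, the norm is $\Nm_{k'/k}$, which is surjective onto $k^\times$ by Hilbert 90 for finite fields. On each piece $(1 + \mathfrak{m}^n C_{E_v})/(1 + \mathfrak{m}^{n+1} C_{E_v}) \cong (\mathfrak{m}^n/\mathfrak{m}^{n+1}) \otimes_k k'$ with $n \geq 1$, the identity $(1+x)(1+\sigma(x)) \equiv 1 + \Tr(x) \pmod{\mathfrak{m}^{2n}}$ shows that the induced map is $\id \otimes \Tr_{k'/k}$, which is surjective because $k'/k$ is separable. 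Given any $c \in C^\times$, I lift successively: first find $y_0 \in C_{E_v}^\times$ with $c \cdot \Nm(y_0)^{-1} \in 1 + \mathfrak{m}$, then iteratively improve the approximation using surjectivity on each graded piece. Since $C$ (and hence $C_{E_v}$) is $\mathfrak{m}$-adically complete by finiteness over $\O_{F_v}$, the sequence of partial lifts converges to a genuine $y$ with $c = \Nm(y)$.

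The main subtlety is the successive approximation argument in the irreducible case; the decomposition into local factors and the split case are essentially formal. Everything else is a standard application of Hilbert 90 and surjectivity of trace for separable extensions of finite fields.
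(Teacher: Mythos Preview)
Your argument is correct. The paper does not give its own proof of this lemma; it simply cites \cite[Lemma~8.6]{MR564478} and uses the result as a black box in the proof of the next lemma (Lemma~\ref{lem:kottwitz8-8}). Your filtration-and-successive-approximation argument is the standard way to prove such a statement and is essentially what Kottwitz does: reduce to local $C$, handle the split case trivially via Hensel, and in the nonsplit case use surjectivity of $\Nm_{k'/k}$ on units of the residue field together with surjectivity of $\Tr_{k'/k}$ on the graded pieces $\mathfrak m^n/\mathfrak m^{n+1}\otimes_k k'$, then pass to the limit using $\mathfrak m$-adic completeness of $C$ (which follows from finiteness over the complete ring $\O_{F_v}$). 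One small remark: your appeal to ``Hilbert~90 for finite fields'' is really the vanishing of $\hat H^0(\Gal(k'/k),k'^\times)$, which follows from Hilbert~90 (vanishing of $H^1$) together with the triviality of the Herbrand quotient for finite modules; alternatively one just counts. Either way the step is fine.
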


\begin{lemma}[{\cite[Lemma~8.8]{MR564478}}]
\label{lem:kottwitz8-8}
Let $v$ be an inert place of $F$.
Let $\delta \in \GL_n(F_v)$ and suppose there
exists $g \in \GL_n(E_v)$ such that $g^{-1}\delta g
\in \GL_n(\O_{E_v})$. 
Then there exists a Kottwitz element
$\gamma \in \GL_n(E_v)$ such that $\ol\gamma\gamma = \delta$.
\end{lemma}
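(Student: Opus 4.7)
The plan is to construct $\gamma$ inside the commutative $\O_{F_v}$-subalgebra generated by $\delta$, using the vanishing of Tate cohomology supplied by Lemma \ref{lem:kotcoho} to produce a suitable norm.

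First I would extract the integrality properties of $\delta$. Since $g^{-1}\delta g \in \GL_n(\O_{E_v})$, the characteristic polynomial $p(X)$ of $\delta$ is conjugation-invariant and lies in $F_v[X] \cap \O_{E_v}[X] = \O_{F_v}[X]$, and $\det\delta \in F_v \cap \O_{E_v}^\times = \O_{F_v}^\times$. Set $R = \O_{F_v}[\delta] \subseteq \Mat_n(F_v)$. By Cayley--Hamilton, $R$ is spanned over $\O_{F_v}$ by $1, \delta, \ldots, \delta^{n-1}$, so $R$ is a finitely generated torsion-free, hence free, module of finite rank over the DVR $\O_{F_v}$. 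The relation $p(\delta) = 0$, combined with the fact that the constant coefficient $(-1)^n\det\delta$ is a unit in $\O_{F_v}$, expresses $\delta^{-1}$ as an $\O_{F_v}$-polynomial in $\delta$, so $\delta \in R^\times$.

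Next I would apply Lemma \ref{lem:kotcoho} to $R$: setting $R_{E_v} = R \otimes_{\O_{F_v}} \O_{E_v}$, we have $\hat H^0(\Gal(E_v/F_v), R_{E_v}^\times) = 0$. For the cyclic group of order $2$, this Tate cohomology group is precisely $R^\times/\Nm(R_{E_v}^\times)$, so every unit of $R$ is a norm from $R_{E_v}^\times$. Applied to $\delta \in R^\times$, this produces $\gamma \in R_{E_v}^\times$ with $\Nm(\gamma) = \ol\gamma\gamma = \delta$.

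Finally, because $R_{E_v} = \O_{E_v}[\delta]$ is generated as an $\O_{E_v}$-module by $1, \delta, \ldots, \delta^{n-1}$, the element $\gamma$ admits an expression $\gamma = c_{n-1}\delta^{n-1} + \cdots + c_0$ with $c_i \in \O_{E_v}$. Substituting $\delta = \ol\gamma\gamma$ yields exactly the Kottwitz condition from Definition \ref{def:intcomb}, and since $\gamma$ is invertible in $R_{E_v} \subseteq \Mat_n(E_v)$, we have $\gamma \in \GL_n(E_v)$. The substantive content of the argument is concentrated in Lemma \ref{lem:kotcoho}, which ultimately rests on local class field theory applied to the étale $\O_{F_v}$-algebra $R$; granting that input, the remaining steps are formal manipulations within the subalgebra generated by $\delta$.
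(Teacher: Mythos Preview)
Your proof is correct and takes essentially the same route as the paper's: both build the commutative $\O_{E_v}$-algebra generated by $\delta$, check that $\delta$ is a unit there, and apply Lemma~\ref{lem:kotcoho} to realize $\delta$ as a norm $\ol\gamma\gamma$ with $\gamma$ automatically of the required Kottwitz form. The only cosmetic difference is that the paper first defines $C_{E_v}$ over $\O_{E_v}$ and then descends to $C = C_{E_v}^{\Gal(E_v/F_v)}$, whereas you start with $R = \O_{F_v}[\delta]$ and tensor up (note incidentally that $R$ need not be \'etale over $\O_{F_v}$, but Lemma~\ref{lem:kotcoho} only requires freeness of finite rank, so your closing remark is slightly off without affecting the argument).
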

\begin{proof}
Let $\Mat_n(E_v)$ be the $E_v$-algebra of $n \times n$ matrices.
Let 
\[
  C_{E_v} = \{ c_{n-1}\delta^{n-1} + \cdots + c_0 \mid
  c_0, \dots, c_{n-1} \in \O_{E_v} \}.
\]
Since $g^{-1}\delta g \in \GL_n(\O_{E_v})$ for some
$g \in \GL_n(E_v)$, the characteristic polynomial of
$\delta$ has coefficients in $\O_{F_v}$ and
constant term in $\O_{F_v}^\times$. 
Thus, $C_{E_v}$ is a commutative $\O_{E_v}$-subalgebra of $\Mat_n(E_v)$, which is free of finite rank,
and $\delta \in C_{E_v}^\times$.

Since $\delta \in \GL_n(F)$, we see that $C_{E_v}$
is stable under the action of $\Gal(E_v/F_v)$ on $\Mat_n(E_v)$.
Let $C = C_{E_v}^{\Gal(E_v/F_v)}$ be the $\O_{F_v}$-subalgebra
of $C_{E_v}$ of elements fixed by the Galois group. 
Since $C_{E_v}$ is a $\Gal(E_v/F_v)$-invariant lattice
in the vector subspace of $\Mat_n(E_v)$ which it generates,
$C \otimes_{\O_{F_v}}\O_{E_v} \simrightarrow
C_{E_v}$ is an isomorphism of $\O_{E_v}$-algebras compatible
with the $\Gal(E_v/F_v)$ action.

Thus by Lemma \ref{lem:kotcoho}, $\hat H^0(\Gal(E_v/F_v),
C_{E_v}^\times) = 0$, which means that every element
of $(C_{E_v}^\times)^{\Gal(E_v/F_v)} = C^\times$
is the norm of some element of $C_{E_v}^\times$.
We have $\delta \in C^\times$, so there exists $\gamma \in
C_{E_v}^\times$ such that $\ol\gamma\gamma = \delta$,
so $\gamma$ is Kottwitz.
\end{proof}

\begin{corollary}
\label{cor:kottwitzexist}
Let $v$ be a nonarchimedean place of $F$.
Let $\delta \in
\GL_n(F_v)$ and suppose that $\delta$ is the norm of
some element of $\GL_n(E_v)$. 
\begin{enumerate}[\normalfont(1)]
\item If $v$ is inert, there exists a Kottwitz element 
$\gamma \in \GL_n(E_v)$ such that $\ol\gamma\gamma = \delta$.
\item If $v$ is split and $k \ge 0$ is an integer,
there exists a $k$-Kottwitz element $\gamma \in \GL_n(E_v)$ 
such that $\ol\gamma\gamma = \delta$.
\end{enumerate}
\end{corollary}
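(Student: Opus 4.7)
The plan is to handle each case separately, exploiting the disjunctive nature of the definition of ``Kottwitz'' in the inert case and the explicit form of the norm map in the split case.

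For part (1), when $v$ is inert, I would proceed by case analysis according to the two clauses in Definition \ref{def:intcomb}. If there is no $g \in \GL_n(E_v)$ with $g^{-1}\delta g \in \GL_n(\O_{E_v})$, then every $\gamma \in \GL_n(E_v)$ with $\ol\gamma\gamma = \delta$ trivially satisfies the second clause of the Kottwitz definition, so any such $\gamma$ (which exists by the hypothesis that $\delta$ is a norm) will do. If on the other hand $\delta$ is conjugate into $\GL_n(\O_{E_v})$ by some element of $\GL_n(E_v)$, then the hypothesis of Lemma \ref{lem:kottwitz8-8} is satisfied and that lemma directly yields a Kottwitz $\gamma$ with $\ol\gamma\gamma = \delta$.

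For part (2), when $v$ is split, I would give an explicit construction. Write $E_v = F_v \times F_v$ and $\GL_n(E_v) = \GL_n(F_v) \times \GL_n(F_v)$, with $\sigma$ acting by swapping the factors so that $\ol{(\gamma_1, \gamma_2)} = (\gamma_2, \gamma_1)$. Then $\delta \in \GL_n(F_v)$ embeds as $(\delta, \delta)$, and taking $\gamma = (1, \delta)$ gives $\ol\gamma\gamma = (\delta \cdot 1, 1 \cdot \delta) = (\delta, \delta)$ as required. To verify that $\gamma$ is $k$-Kottwitz, note that $\gamma_1 = 1$ admits the expression $c_{n-1}(\gamma_2\gamma_1)^{n-1} + \cdots + c_0$ with $c_0 = 1$ and $c_1 = \cdots = c_{n-1} = 0$, which lie in $1 + \varpi_{F_v}^k\O_{F_v}$ and $\varpi_{F_v}^k\O_{F_v}$ respectively for every $k \ge 0$. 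Note that at a split place the hypothesis that $\delta$ is a norm is automatic, since the norm map $\GL_n(E_v) \to \GL_n(F_v)$ given by $(\gamma_1, \gamma_2) \mapsto \gamma_2\gamma_1$ is surjective.

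There is no real obstacle to overcome: the essential point is simply that the Kottwitz condition is defined as a disjunction, which is what makes part (1) an immediate consequence of Lemma \ref{lem:kottwitz8-8}, while part (2) reduces to an explicit choice because one can solve $\ol\gamma\gamma = \delta$ at a split place by a pair with one trivial entry.
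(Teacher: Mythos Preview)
Your proposal is correct and follows the same approach as the paper. The paper's proof is terser---it simply says part (1) follows from Lemma~\ref{lem:kottwitz8-8} and that $\gamma = (1,\delta)$ works for part (2)---but you have made explicit precisely the case analysis (conjugate into $\GL_n(\O_{E_v})$ or not) that the paper leaves implicit, and your verification that $(1,\delta)$ is $k$-Kottwitz matches the paper's intended argument.
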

\begin{proof}
The first statement follows from Lemma 
\ref{lem:kottwitz8-8}.
For the second statement, it is easy to see that 
$\gamma = (1, \delta)$ is $k$-Kottwitz for any $k \ge 0$.
\end{proof}

We will also use the following lemma in the
proof of Proposition \ref{prop:globalkottwitz}.

\begin{lemma}
\label{lem:replacesplit}
Let $L/E/F$ be extensions of number fields, with $E/F$
a quadratic extension. Let $\fm$ be an ideal of $\O_L$,
and let $\fA_L$ be an ideal of
$\O_L$ coprime to $\fm$. 
Let $\Sigma$ be a set of places of $F$ containing
only finitely many places which are split in $E$.
Then there exists a prime $\fP_L$ of $\O_L$, coprime to $\fm$, in the same
ray class modulo $\fm$ as $\fA_L$, such that $\fP_F = \fP_L \cap \O_F$ is not contained in $\Sigma$.
\end{lemma}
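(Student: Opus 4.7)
The plan is to apply the Chebotarev density theorem to a single Galois extension of $F$ containing the ray class field of $L$ modulo $\fm$, set up so that one group-theoretic choice simultaneously controls both the ray class of the output and its splitting behavior in $E$.

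Concretely, I would let $H_{L, \fm}$ denote the ray class field of $L$ with conductor $\fm$, so that the Artin map identifies $\Gal(H_{L, \fm}/L)$ with the ray class group $\mathrm{Cl}_\fm(L)$, and choose a finite Galois extension $M/F$ containing $H_{L, \fm}$; since $E \subseteq L$, necessarily $E \subseteq M$. Setting $G = \Gal(M/F)$ and $\widetilde{H} = \Gal(M/L)$, the restriction $\widetilde{H} \twoheadrightarrow \Gal(H_{L, \fm}/L) \simeq \mathrm{Cl}_\fm(L)$ is surjective, so I can pick $\sigma \in \widetilde{H}$ whose image in $\mathrm{Cl}_\fm(L)$ represents the class of $\fA_L$ modulo $\fm$.

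Next, by Chebotarev applied to $M/F$, infinitely many primes $\fP_F$ of $F$, unramified in $M$, have Frobenius conjugacy class in $G$ equal to that of $\sigma$. For any such $\fP_F$, I would pick a prime $\fP_M$ of $M$ above it with $\mathrm{Frob}_{\fP_M} = \sigma$, and set $\fP_L := \fP_M \cap \O_L$. Since $\sigma \in \widetilde{H}$ fixes $L$ pointwise, $f_{L/F}(\fP_L) = 1$, and since $E \subseteq L$ this forces $\fP_F$ to split completely in $E$. The Artin symbol of $\fP_L$ in $\Gal(H_{L, \fm}/L)$ equals $\sigma|_{H_{L, \fm}}$, so $\fP_L$ lies in the ray class of $\fA_L$ modulo $\fm$ automatically. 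Partitioning $\Sigma = \Sigma_{\mathrm{sp}} \sqcup \Sigma_{\mathrm{ns}}$ according to whether a place splits in $E$, the hypothesis forces $\Sigma_{\mathrm{sp}}$ to be finite; the construction already avoids $\Sigma_{\mathrm{ns}}$, so I can further exclude the finitely many $\fP_L$ lying over $\Sigma_{\mathrm{sp}}$ or dividing $\fm$ while retaining infinitely many candidates.

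The hard part is the simultaneous control of the ray class of $\fP_L$ and the splitting type of $\fP_F$ in $E$, particularly because $\Sigma$ may contain infinitely many inert places and cannot simply be absorbed into the modulus. The key observation that resolves this is that choosing $\sigma$ inside $\Gal(M/L) \subseteq \Gal(M/E)$ handles both constraints at once: the restriction map onto $\mathrm{Cl}_\fm(L)$ remains surjective, so the ray class condition can still be realized, while the containment in $\Gal(M/E)$ automatically trivializes the Frobenius in $E$ and therefore forces $\fP_F$ to split there.
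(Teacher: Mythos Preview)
Your argument is correct and is essentially the same as the paper's: both pick $\sigma$ in $\Gal(M/L)$ (the paper takes $M$ to be the normal closure of the ray class field over $F$) restricting to the Artin symbol of $\fA_L$, apply Chebotarev over $F$, and then use that $\sigma$ fixes $L$ to deduce $f(\fP_L/\fP_F)=1$, forcing $\fP_F$ to split in $E$ while the ray class condition is satisfied by construction. The only cosmetic difference is that the paper phrases the residue-degree step via equality of residue fields $k(\fP_L)=k(\fP_F)$ rather than your decomposition-group argument.
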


\begin{proof}
It suffices to note that there exist infinitely many
primes $\fP_L$ of $\O_L$ in the same ray class
modulo $\fm$ as $\fA_L$, which have degree $1$ over $F$
(meaning $|\O_L/\fP_L| = |\O_F/\fP_F|$).

This follows from the fact that the set of prime ideals
of $\O_L$ in the same ray class modulo $\fm$ as $\fA_L$
has positive Dirichlet density.
However, the set $T$ of primes $\fP_L$ of $\O_L$ with 
$|\O_L/\fP_L| \ge |\O_F/\fP_F|^2$ has 
Dirichlet density $0$, due to the fact that
\[
	\sum_{\fP_L \in T} \frac{1}{|\O_L/\fP_L|}
	\le \sum_{\fP_L \in T} \frac{1}{|\O_F/\fP_F|^2}
	\le n\cdot \sum_{\fP_F \subseteq \O_F} \frac{1}{|\O_F/\fP_F|^2}
	< \infty.
\]

This implies the lemma because then there exists 
such a degree $1$ prime $\fP_L$ which does not lie over 
one of the finitely many split places in $\Sigma$, 
and hence not over any prime in $\Sigma$ 
(as a degree $1$ prime would only lie over split places of $F$).
\end{proof}

\begin{proof}[Proof of Proposition \ref{prop:globalkottwitz}]
Let $p(X) \in F[X]$ be the characteristic polynomial
of $\delta$, which is also its minimal polynomial,
and suppose $p(X)$ factors as 
$p(X) = p_1(X)\cdots p_r(X)$
for monic irreducible polynomials $p_i(X) \in F[X]$.
For $1 \le i \le r$, let $K_i = F[X]/(p_i(X))$, and 
let $K = K_1 \times \cdots \times K_r$.
Let $L_i = E\otimes_F K_i$, and let
$L = L_1 \times \cdots \times L_r$.
Let $\alpha_i$ be the image of $X$ in $K_i$, and
let $\alpha = (\alpha_1, \dots, \alpha_r) \in K$.

Let $\Sigma$ be the set of inert places $v$ of $F$
such that there exists $g \in \GL_n(E_v)$ such
that $g^{-1}\delta g \in \GL_n(\O_{E_v})$. 
Note that $\alpha \in \O_{K_v}^\times$ for all
$v \in \Sigma$.
For each inert place $v$ of $F$, say that an element $x_v \in L_v$ is \emph{Kottwitz} if $\ol{x_v}x_v = \alpha$ and either $v
\notin \Sigma$, or
\[ x_v = c_{n-1}\alpha^{n-1}
+ \cdots + c_0 \]
for some $c_0, \dots, c_{n-1} \in \O_{E_v}$.
Note that if $v \in \Sigma$ and $x_v$ is
Kottwitz, then $x_v \in \O_{L_v}^\times$.

In addition, for $v = v_2$, say that an element 
$x_{v} = (x_{v, 1}, x_{v, 2}) \in L_{v} 
\simeq K_{v} \times K_{v}$ is
$k$-\emph{Kottwitz} if $\ol{x_{v}}x_{v} = \alpha$, and 
\[
  x_{v, 1} = c_{n-1}\alpha^{n-1} + \cdots + c_0
\]
for some $c_0 \in 1 + \varpi_{F_v}^k\O_{F_v}$ and
$c_1, \dots, c_{n-1} \in \varpi_{F_v}^k\O_{F_v}$.

To prove the proposition, it suffices to show that there exists 
$x \in L$ which is Kottwitz at $v$ for all $v \in \Sigma$,
and $k$-Kottwitz at $v_2$.

\begin{claim}
\label{cl:compactsupp}
Let $v \in \Sigma \cup \{v_2\}$ be a place of $F$.
\begin{enumerate}[\normalfont(1)]
\item 
If $v \in \Sigma$, let $m_v$ be the smallest nonnegative integer satisfying
\[ 
  m_v \ge \tfrac12\ord_v(\disc p(X)). 
\]
If $x_v \in L_v$ is any Kottwitz
element, and $x_v' \in L_v$ satisfies 
$\ol{x_v'}x_v' = \alpha$ and
\[
  x_v' - x_v \in \varpi_{F_v}^{m_v}\O_{L_v},
\] 
then $x_v'$ is Kottwitz as well.

\item 
If $v = v_2$, let $m_v$ be the smallest nonnegative integer
satisfying 
\[ 
  m_v \ge \tfrac12\ord_v(\disc p(X)) + k. 
\]
If $x_v = (x_{v,1}, x_{v,2}) \in L_v$ is any $k$-Kottwitz element,
and $x_v' = (x_{v,1}', x_{v,2}') \in L_v$ satisfies
$\ol{x_v'}x_v' = \alpha$ and
\[
  x_{v,1}' - x_{v,1} \in \varpi_{F_v}^{m_v}\O_{K_v},
\]
then $x_v'$ is $k$-Kottwitz as well.
\end{enumerate}
\end{claim}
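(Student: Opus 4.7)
The plan is to reduce both parts to a single standard fact about the order $\O_{E_v}[\alpha] \subseteq L_v$ (resp.\ $\O_{F_v}[\alpha] \subseteq K_v$). Since $\alpha$ generates $L_v$ as an $E_v$-algebra with minimal polynomial $p(X)$ of degree $n$, the elements $1, \alpha, \ldots, \alpha^{n-1}$ form an $E_v$-basis, and
\[
  M := \O_{E_v}[\alpha] = \bigoplus_{i=0}^{n-1} \O_{E_v} \cdot \alpha^i
\]
is a full rank-$n$ $\O_{E_v}$-lattice inside $L_v$, whose lattice discriminant (in the basis $1, \alpha, \ldots, \alpha^{n-1}$) equals $\disc(p)$. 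For $v \in \Sigma$ the element $\alpha$ is integral at $v$, so $M \subseteq \O_{L_v}$. Comparing discriminants as fractional ideals gives
\[
  \ord_v \disc(M) = 2 \cdot \mathrm{length}_{\O_{E_v}}(\O_{L_v}/M) + \ord_v \disc(\O_{L_v}) \ge 2\cdot\mathrm{length}_{\O_{E_v}}(\O_{L_v}/M),
\]
so $\O_{L_v}/M$ is annihilated by $\varpi_{F_v}^{\lceil \frac12 \ord_v \disc p \rceil}$. In particular, $\varpi_{F_v}^{m_v}\O_{L_v} \subseteq M$. The same argument with $\O_{F_v}$ in place of $\O_{E_v}$, using that $\O_{F_v}[\alpha]$ is a full $\O_{F_v}$-lattice in $K_v$ commensurable with $\O_{K_v}$, yields $\varpi_{F_v}^{\lceil\frac12\ord_v\disc p\rceil}\O_{K_v} \subseteq \O_{F_v}[\alpha]$.

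For part (1), write $x_v = \sum_{i=0}^{n-1} c_i \alpha^i$ with $c_i \in \O_{E_v}$. By the previous paragraph,
\[
  x_v' - x_v \in \varpi_{F_v}^{m_v}\O_{L_v} \subseteq \O_{E_v}[\alpha],
\]
so $x_v' - x_v = \sum_{i=0}^{n-1} c_i'' \alpha^i$ for some $c_i'' \in \O_{E_v}$. Then $x_v' = \sum_{i=0}^{n-1}(c_i + c_i'')\alpha^i$ has all coefficients in $\O_{E_v}$. Combined with the hypothesis $\ol{x_v'}x_v' = \alpha$, this shows $x_v'$ is Kottwitz.

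For part (2), the choice $m_v \ge \tfrac12\ord_v\disc p + k$ upgrades the containment to
\[
  \varpi_{F_v}^{m_v}\O_{K_v} = \varpi_{F_v}^k \cdot \varpi_{F_v}^{m_v - k} \O_{K_v} \subseteq \varpi_{F_v}^k \O_{F_v}[\alpha].
\]
Writing $x_{v,1} = \sum_{i=0}^{n-1} c_i \alpha^i$ for the $k$-Kottwitz expression of $x_{v,1}$ and $x_{v,1}' - x_{v,1} = \sum_{i=0}^{n-1} d_i \alpha^i$ with $d_i \in \varpi_{F_v}^k \O_{F_v}$, one gets $x_{v,1}' = \sum_{i=0}^{n-1}(c_i + d_i)\alpha^i$. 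Since $c_0 \in 1 + \varpi_{F_v}^k\O_{F_v}$ and $d_0 \in \varpi_{F_v}^k\O_{F_v}$, we have $c_0 + d_0 \in 1 + \varpi_{F_v}^k\O_{F_v}$, and similarly $c_i + d_i \in \varpi_{F_v}^k\O_{F_v}$ for $i \ge 1$. Together with the hypothesis $\ol{x_v'}x_v' = \alpha$, this exhibits $x_v'$ as $k$-Kottwitz.

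The only nontrivial step is the discriminant inequality bounding $\O_{L_v}/\O_{E_v}[\alpha]$ (resp.\ $\O_{K_v}/\O_{F_v}[\alpha]$), which is standard commutative algebra once one has committed to the bases $\{1,\alpha,\ldots,\alpha^{n-1}\}$; everything else is bookkeeping in the polynomial expansion and preservation of the congruence conditions on the coefficients.
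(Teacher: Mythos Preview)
Your proposal is correct and takes essentially the same approach as the paper. The paper makes the linear algebra explicit: it writes down the Vandermonde-type matrix in the conjugates $\tau_{ij}(\alpha_i')$, inverts it to solve for the coefficients $c_0,\dots,c_{n-1}$, and uses that its determinant is a square root of $\disc p(X)$. Your argument packages exactly the same content as the index--discriminant relation for the order $\O_{E_v}[\alpha]\subseteq\O_{L_v}$, which yields $\varpi_{F_v}^{\lceil\frac12\ord_v\disc p\rceil}\O_{L_v}\subseteq\O_{E_v}[\alpha]$ and hence the integrality of the perturbed coefficients. Both versions rely on $\alpha$ being integral at $v$ (so that the Vandermonde cofactors are integral, respectively so that $\O_{E_v}[\alpha]\subseteq\O_{L_v}$ and the length bound applies), which is guaranteed for $v\in\Sigma$ and is what the paper is implicitly using as well; your phrasing in terms of the conductor is a clean way to say it.
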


\begin{proof}
Suppose $p(X)$ factors as $p(X) = q_1(X) \cdots q_s(X)$
for monic irreducible polynomials $q_i(X) \in F_v[X]$.
Let $K_{i,v}' = F_v[X]/(q_i(X))$, so
$K_v = K_{1,v}' \times \cdots \times K_{s, v}'$,
and let $L_{i,v}' = E_v \otimes_{F_v} K_{i,v}'$, so
$L_v = L_{i,v}' \times \cdots \times L_{s, v}'$.
Let $\alpha_{i}'$ denote the image of $X$ in
$K_{i,v}'$. Then 
$\alpha = (\alpha_1', \dots, \alpha_s') \in K_v$.
Fix an algebraic closure $\ol{F_v}$ of $F_v$.
Let $\tau_{i,1}, \dots, \tau_{i, \ell_i}$ denote
the different embeddings $K_{i,v}' \hookrightarrow \ol{F_v}$.
Each $\tau_{ij}$ extends to an embedding
$L_{i,v}' \hookrightarrow E_v \otimes_{F_v} \ol{F_v}$.

Elements $x_v \in L_v$ are of the form $(x_{1,v}, \dots, x_{s,v})$
for $x_{i,v} \in L_{i,v}'$.
If $x_{v} = c_{n-1}\alpha^{n-1} + \cdots + c_0$
for $c_0, \dots, c_{n-1} \in E_v$, then
\[
  c_{n-1}\tau_{ij}(\alpha_i')^{n-1}
  + \cdots + c_0 = \tau_{ij}(x_{i,v})
\]
for $1 \le i \le s$, $1 \le j \le \ell_i$, where
both sides are viewed as elements of $E_v \otimes_{F_v}
\ol{F_v}$.
Then
\[
  \begin{pmatrix}
  c_{n-1} \\ \vdots \\ c_{n-\ell_1}\\ \vdots \\ c_0
  \end{pmatrix} = \begin{pmatrix}
  \tau_{1,1}(\alpha_1')^{n-1} & \cdots & 1 \\
  \vdots & \ddots & \vdots \\
  \tau_{1,\ell_1}(\alpha_1')^{n-1} & \cdots & 1 \\
  \vdots & \ddots & \vdots \\
  \tau_{s, \ell_s}(\alpha_s')^{n-1} & \cdots & 1
  \end{pmatrix}^{-1}
  \begin{pmatrix}
  \tau_{1,1}(x_{1,v}) \\
  \vdots \\ \tau_{1, \ell_1}(x_{1,v}) \\\vdots \\ \tau_{s, \ell_s}(x_{\ell_s, v})
  \end{pmatrix}.
\]
Now, the matrix has determinant equal to a square root of 
$\disc(p(X))$, which is an element of $F$. Thus the claim
follows from the above formula.
\end{proof}

\begin{claim}
\label{cl:choosex0}
There exists $x_0 \in L$ such that $\ol{x_0}x_0 = \alpha$, and $x_0 \in \O_{L_v}^\times$ for all $v \in \Sigma$,
and $x_{0,1} \in \O_{K_v}^\times$ for $v = v_2$.
\end{claim}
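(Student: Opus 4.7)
The plan is to begin with a global $y \in L^\times$ satisfying $\bar y y = \alpha$, obtained directly from the hypothesis: since $\delta$ is the norm of some $\gamma \in \GL_n(E)$, and taking Galois conjugates of $\bar\gamma\gamma = \delta$ gives $\gamma\bar\gamma = \delta$ as well, $\gamma$ commutes with $\delta$, so by regularity of $\delta$ it lies in the centralizer torus $L^\times \subseteq \GL_n(E)$. I then seek a norm-one element $z \in T^1(F) := \ker(\Nm_{L/K} : L^\times \to K^\times)$ such that $x_0 := y z$ acquires the desired local unit properties.

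A key simplification reduces the unit conditions at the (infinite) set $\Sigma$ to only finitely many effective constraints. For an inert $v \in \Sigma$ and a place $w$ of $L$ over $v$ with $\bar w = w$, the equation $\bar y y = \alpha$ together with $\alpha \in \O_{K_v}^\times$ forces $2\,\mathrm{val}_w(y) = \mathrm{val}_{w \cap K}(\alpha) = 0$, so $y$ is automatically a unit at such $w$. Corrections are therefore needed only at the finite set $T_0$ consisting of (i) the finitely many $v \in \Sigma$ where $y$ fails the unit condition at some $w|v$ with $\bar w \neq w$, and (ii) $v_2$. At each $v \in T_0$ a local model is available: for inert $v$ one invokes Corollary \ref{cor:kottwitzexist}(1) to produce a Kottwitz element $y_v^0 \in \O_{L_v}^\times$ with $\bar{y_v^0}y_v^0 = \alpha$; at $v_2$ one takes $y_{v_2}^0 = (1, \alpha_{v_2})$ so that its first coordinate is a unit. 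Setting $z_v^0 := y_v^0 / y_v \in T^1(F_v)$ gives the prescribed local data.

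The main obstacle is the globalization step: producing a single $z \in T^1(F)$ whose components approximate $z_v^0$ at each $v \in T_0$, while ensuring that the (finite) residual support of $z$ at other places of $L$ does not lie over any $v \in \Sigma$. I would invoke Hilbert 90 for the étale $K$-algebra $L = E \otimes_F K$ to write $z = \bar w / w$ for some $w \in L^\times$, reducing the problem to choosing $w$ with controlled valuations. Weak approximation for $L^\times$ (valid since $L$ is a finite product of number fields) handles the prescriptions at $T_0$, and Lemma \ref{lem:replacesplit} is then applied to replace any residual ideal occurring in the divisor of $w$ by a prime of $\O_L$ in the same ray class whose underlying $F$-prime is split in $E$. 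Since split $F$-places are excluded from $\Sigma$, the unit conditions there are vacuous, and $x_0 := y z$ then satisfies all the requirements of the claim.
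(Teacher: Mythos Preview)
Your approach is correct and shares the core idea with the paper—correct an initial norm-$\alpha$ element by a factor $\bar a/a$ and use Lemma~\ref{lem:replacesplit} to control where the new valuations land—but the execution differs. You pass through local models (via Corollary~\ref{cor:kottwitzexist}) at a finite set $T_0$, then weak approximation, then a ray-class argument with modulus supported on $T_0$; this in fact mirrors what the paper does \emph{later}, in Claim~\ref{cl:choosea}. For Claim~\ref{cl:choosex0} itself the paper is more direct: working one factor $L_i$ at a time (the split case $L_i\simeq K_i\times K_i$ being trivial), it writes down explicitly an integral ideal $\fA$ built from the valuations $\max\{\mathrm{ord}_w(x_{0,i}'),0\}$, applies Lemma~\ref{lem:replacesplit} with \emph{trivial} modulus $\fm=1$ to move $\fA$ to a single prime $\fP$ over an $F$-place outside $\Sigma\cup\{v_2\}$, and checks by a short valuation computation that $x_{0,i}=x_{0,i}'\cdot\bar a/a$ (with $a\fP=\fA$) is a unit everywhere required. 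The paper's route thus avoids weak approximation and any nontrivial modulus at this stage; yours is more conceptual but front-loads machinery that the paper defers. Two minor points on your sketch: the $\bar w=w$ observation is not needed to see $T_0$ is finite, since any $y\in L^\times$ is a unit at almost all places anyway; and when you invoke Lemma~\ref{lem:replacesplit} you must apply it with $\Sigma\cup\{v_2\}$ rather than $\Sigma$, since the new prime must also avoid the split place $v_2$.
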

\begin{proof}
It suffices to show that for each $1 \le i \le r$,
there exists $x_{0,i} \in L_i$ such that
$\ol{x_{0,i}}x_{0,i} = \alpha_i$, and
$x_{0,i} \in \O_{L_{i,v}}^\times$ for all $v \in \Sigma$,
and $x_{0,i,1} \in \O_{K_i,v}^\times$ for $v = v_2$.
If $L_i \simeq K_i \times K_i$, we can just
take $x_{0,i} = (1, \alpha_i)$. Thus below we assume
that $L_i$ is a field.

Let $u_2$ and $\ol{u_2}$ denote the places of 
$E$ lying above $v_2$, with $u_2$ corresponding to the first
component of $E_{v_2}$
under the isomorphism $E_{v_2} \simeq F_{v_2} \times F_{v_2}$.

The fact that $\delta$ is a norm of an element of $\GL_n(E)$
implies that there exists
$x_{0,i}' \in L_i$ such that $\ol{x_{0,i}'}x_{0,i}' = \alpha_i$.
Let
\[
  \fA = \prod_{w \nmid \ol{u_2}} \mathfrak P_w^{\max\{\ord_{w}(x_{0,i}'), 0\}} 
  \prod_{w \mid \ol{u_2}} \mathfrak P_w^{\max\{-\ord_{\ol w}(x_{0,i}'), 0\}},
\]
where $w$ runs over places of $L_i$, and
$\fP_w$ denotes the prime ideal of $\O_{L_i}$ corresponding
to $w$.
By Lemma \ref{lem:replacesplit} with $\fm = 1$, there
exists a prime ideal $\fP$ of $\O_{L_i}$ which is in the same
ideal class as $\fA$, such that $\fp = \fP \cap \O_F$
is not in $\Sigma \cup \{v_2\}$.
Let $a \in L_i$ be such that $a\fP = \fA$, and
set 
\[ x_{0,i} = x_{0,i}'\cdot(\ol a/a).\]

We claim that $x_{0,i}$ has
the desired property. 
Let $v \in \Sigma \cup \{v_2\}$, 
and let $w$ be a place of $L_i$ lying above $v$, which
does not lie above $\ol{u_2}$. 
We want to show that $x_{0,i} \in \O_{L_{i,w}}^\times$.

Note that
\[
  \ord_{\ol w}(\fA) = \max\{-\ord_w(x_{0,i}'), 0\}.
\]
Indeed, this is true by construction for $v = v_2$, and
for $v \in \Sigma$, it follows from the fact that
$\ol{x_{0,i}'}x_{0,i}' \in \O_{K_{i,v}}^\times$, which implies
$\ord_{w}(x_{0,i}') + \ord_{\ol w}(x_{0,i}') = 0$.
Recall also that $a^{-1}\fA = \fP$, where
$\fP$ is a prime ideal of $\O_{L_i}$ which does not lie over
$v$, and thus is not equal to $\fP_{w}$ or
$\ol{\fP_{w}}$. Thus, $\ord_{w}(a) = \ord_{w}(\fA)$,
and $\ord_{w}(\ol a) = \ord_{\ol{w}}(a) = \ord_{\ol{w}}(\fA)$.
Thus, 
\begin{align*}
  \ord_{w}(x_{0,i})
  &= \ord_{w}(x_{0,i}') - \ord_{w}(\fA) + \ord_{\ol w}(\fA)
  \\
  &= \ord_{w}(x_{0,i}') - \max\{ \ord_{w}(x_{0,i}'), 0 \}
  + \max\{-\ord_{w}(x_{0,i}'), 0\} 
  \\&
  = 0.\qedhere
\end{align*}
\end{proof}

Let $x_0 \in L$ satisfy the conditions of Claim
\ref{cl:choosex0}.
For each $v \in \Sigma$, let $x_v$ be a 
Kottwitz element of $L_v$. This is possible by
Lemma \ref{cor:kottwitzexist}. 
Furthermore, let $x_{v_2} = (1, \alpha)$, 
which is a $k$-Kottwitz element of $L_{v_2}$.
For each $v \in \Sigma \cup \{v_2\}$, let 
$a_v \in \O_{L_v}^\times$ be such that $x_v/x_0 = a_v/\ol{a_v}$. This is possible
because $L_v/K_v$ is an unramified extension,
and $x_v/x_0 \in \O_{L_v}^\times$ for all 
$v \in \Sigma \cup \{v_2\}$.

Now we show that we can choose a suitable $a \in L^\times$ so that $x = x_0\cdot(a/\ol a)$ is 
$k_v$-Kottwitz at all $v \in \Sigma'$.

\begin{claim}
\label{cl:choosea}
Let $m_v$ be as in Claim \ref{cl:compactsupp}.
There exists $a \in L^\times$ such that 
$a \in \O_{L_v}^\times$ and
$a - a_v \in \varpi_{F_v}^{m_v}\O_{L_v}$
for all $v \in \Sigma \cup \{v_2\}$.
\end{claim}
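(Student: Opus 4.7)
The claim is an instance of weak approximation (equivalently, the Chinese Remainder Theorem) applied to the finite étale $F$-algebra $L$. The plan is to decompose $L$ into a product of number fields, apply weak approximation componentwise at the finite set of places $\Sigma \cup \{v_2\}$, and then assemble the pieces.

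First I would write $L = \prod_i L_i$ with $L_i = E \otimes_F K_i$. Each $L_i$ is either a number field (when $K_i \not\supseteq E$) or isomorphic to $K_i \times K_i$ (when $K_i \supseteq E$), so $L = \prod_j F_j$ for number fields $F_j$. Under this decomposition, for each $v \in \Sigma \cup \{v_2\}$ we have
\[
    L_v = \prod_j \prod_{w \mid v} F_{j,w},
\]
and the given element $a_v \in \O_{L_v}^\times$ corresponds to a tuple $(a_{v,j,w})$ of local units $a_{v,j,w} \in \O_{F_{j,w}}^\times$, where $w$ ranges over the finitely many places of $F_j$ lying above $v$.

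Next, for each fixed number field $F_j$, the collection of pairs $(v, w)$ with $v \in \Sigma \cup \{v_2\}$ and $w \mid v$ a place of $F_j$ is finite. By weak approximation (equivalently, by the Chinese Remainder Theorem applied to the product of local rings), there exists $a_j \in F_j^\times$ satisfying
\[
    a_j - a_{v,j,w} \in \varpi_w^{N_{v,j,w}}\O_{F_{j,w}}
\]
for every such pair, where I take $N_{v,j,w} = \max\{1,\, m_v \cdot e(w/v)\}$ with $e(w/v)$ the ramification index. Choosing this lower bound to be at least $1$ guarantees, together with $a_{v,j,w} \in \O_{F_{j,w}}^\times$, that $a_j \in \O_{F_{j,w}}^\times$, and the factor $e(w/v)$ ensures the required congruence $a_j - a_{v,j,w} \in \varpi_{F_v}^{m_v}\O_{F_{j,w}}$.

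Finally, I would set $a = (a_j)_j \in L^\times$. By construction $a \in \O_{L_v}^\times$ and $a - a_v \in \varpi_{F_v}^{m_v}\O_{L_v}$ for every $v \in \Sigma \cup \{v_2\}$, as desired. There is no real obstacle: the only point worth noting is the potential case $m_v = 0$, where the congruence alone does not force $a$ to be a unit, but this is handled by imposing the stronger bound $N_{v,j,w} \ge 1$ in the approximation step.
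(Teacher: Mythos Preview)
Your argument has a genuine gap: you assume that $\Sigma \cup \{v_2\}$ is a finite set of places of $F$, but it is not. In the proof of Proposition~\ref{prop:globalkottwitz}, $\Sigma$ is defined to be the set of inert places $v$ at which $\delta$ is $\GL_n(E_v)$-conjugate into $\GL_n(\O_{E_v})$; since $\delta \in \GL_n(F)$ is fixed, this holds for all but finitely many inert $v$, so $\Sigma$ is cofinite among the (infinitely many) inert places. Consequently your statement ``the collection of pairs $(v,w)$ with $v \in \Sigma \cup \{v_2\}$ and $w \mid v$ a place of $F_j$ is finite'' is false, and weak approximation by itself cannot produce an element that is a local unit at \emph{all} of these infinitely many places.

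The paper deals with this as follows. Since $m_v = 0$ for all but finitely many $v \in \Sigma \cup \{v_2\}$, weak approximation does give an $a' \in L$ satisfying the congruences $a' \equiv a_v \pmod{\varpi_{F_v}^{m_v}}$ at the finitely many places with $m_v > 0$. The remaining problem is that the fractional ideal $(a')$ may involve primes of $L$ lying over $\Sigma$. To fix this, the paper applies Lemma~\ref{lem:replacesplit} (a Chebotarev-type result) with modulus $\fm = \prod_{v,w} \fP_w^{\ord_w(\varpi_{F_v}^{m_v})}$: it finds a prime $\fQ$ of $\O_L$ in the same ray class as $(a')$ modulo $\fm$ whose restriction to $F$ is \emph{split} in $E$ and avoids $v_2$, hence does not lie in $\Sigma \cup \{v_2\}$. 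Writing $(ba') = \fQ$ with $b \equiv 1 \pmod{\fm}$ and setting $a = ba'$, one gets $(a) = \fQ$, so $a \in \O_{L_v}^\times$ for every $v \in \Sigma \cup \{v_2\}$, while the congruences at the places with $m_v > 0$ are preserved because $b \equiv 1 \pmod{\fm}$. This extra step --- replacing $(a')$ by a single prime over a split place --- is exactly the missing idea in your proposal.
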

\begin{proof}
As $L$ is a product of fields containing $E$, and
we can choose each component of $a$ independently, 
for the proof of this claim only, 
we assume $L$ is a field.

Note that $m_v = 0$ for all but finitely many places
$v \in \Sigma \cup \{v_2\}$.
By weak approximation, there exists $a' \in L$ such that
$a' - a_v \in \varpi_{F_v}^{m_v}\O_{L_v}$
for all $v \in \Sigma \cup \{v_2\}$ such that $m_v > 0$.
Let 
\[
  \fm = \prod_{v \in \Sigma \cup \{v_2\}} \prod_{w\mid v}\fP_w^{\ord_{w}(\varpi_{F_v}^{m_v})}
\]
where $w$ runs over all primes of $L$ lying over $v$.
Since $a_v \in \O_{L_v}^\times$ for $v \in \Sigma \cup \{v_2\}$, we have that $(a')$ and $\fm$ are coprime.
By Lemma \ref{lem:replacesplit} with the modulus
$\fm$ and $\fA = (a')$, there exists a
prime ideal $\fQ \subseteq \O_L$ such that $\fQ$
and $(a')$ are equivalent in the ray class group modulo
$\mathfrak m$, and $\fq = \fQ \cap F$ is not
contained in $\Sigma \cup \{v_2\}$.
Let $b \in L_{\fm, 1}$ be such that
$(ba') = \fQ$, and let $a = ba'$. 

We claim that $a$ satisfies the properties required
in the claim. 
Indeed, for all $v \in \Sigma \cup \{v_2\}$, we have
$a \in \O_{L_v}^\times$, since $(a) = \fQ$ lies over a prime of $F$ which is not in $\Sigma \cup \{v_2\}$.
Furthermore, if $m_v = 0$, then $a, a_v \in \O_{L_v}^\times$
implies $a - a_v \in \O_{L_v}$, and
if $m_v > 0$, then
\[
  a - a_v = a'(b - 1) + (a' - a_v) \in \varpi_{F_v}^{m_v}\O_{L_v},
\]
since by construction $b - 1 \in \varpi_{F_v}^{m_v}\O_{L_v}$
and $a' - a_v \in \varpi_{F_v}^{m_v}\O_{L_v}$.
\end{proof}

Finally, with $a$ as above, let $x = x_0\cdot(a/\ol a)$.
We claim that $x$ is Kottwitz at all inert places of $F$
and $k$-Kottwitz at $v_2$. 
If $v$ is inert and not in $\Sigma$, this is automatic, 
so assume $v \in \Sigma \cup \{v_2\}$.
Recall that this implies $a_v \in \O_{L_v}^\times$.
Since $a, a_v \in \O_{L_v}^\times$
and $a - a_v \in \varpi_{F_v}^{m_v}\O_{L_v}$,
we have
$a/\ol a - a_v/\ol{a_v} \in \varpi_{F_v}^{m_v}\O_{L_v}$.
If $v \in \Sigma$, recall that $x_0 \in \O_{L_v}^\times$ as well, so
\[
  x - x_v 
  = x_0(a/\ol a - a_v/\ol{a_v})
  \in \varpi_{F_v}^{m_v}\O_{L_v}.
\]
If $v = v_2$, recall that $x_{0,1} \in \O_{K_v}^\times$,
so
\[
  x_1 - x_{v, 1} = x_{0,1}(a_1/a_2 - a_{v,1}/a_{v,2}) 
  \in \varpi_{F_v}^{m_v}\O_{K_v}.
\]
In either case, by Claim \ref{cl:compactsupp},
since $x_v$ is Kottwitz (resp. $k$-Kottwitz), 
$x$ is Kottwitz (resp. $k$-Kottwitz) at $v$ as well.
\end{proof}

From now on, we require that $S$ and $R^\beta$ satisfy
the conditions stated at the beginning of this subsection,
i.e., they are compatible, and every element of $S$ is
Kottwitz at \emph{every} inert place $v$ of $F$.
This is possible by Lemma \ref{lem:compatible}
and Proposition \ref{prop:globalkottwitz}. 

\subsection{Spectral side of relative trace formulas}
In this subsection, let $Z$ denote the center of $G$,
let $Z'$ denote the center of $G'$, and let $Z_2$
denote the center of $H_2$.
For any function $f$ on $G$ and any element $a \in Z$,
define $f^a(g) = f(ga)$, and similarly for 
functions on $G'$.

We say that global test functions $(f', \phi')$ and
$\{(f^\beta, \phi_1^\beta \otimes \phi_2^\beta)\}_{\beta \in [\SHerm_n^\times(F)]}$
are smooth transfer of each other if they factorizable
and are smooth
transfer of each other at each place $v$, and
at each place $v$, the test function $(f_v^\beta,
\phi_{1,v}^\beta \otimes \phi_{2,v}^\beta)$ depends 
only on the similarity class of $\beta$ in
$[\SHerm_n^\times(F_v)]$.

For such a pair of \emph{good} matching test functions, 
let $\Sigma$ be a finite set of nonarchimedean places of 
$F$ containing $\{v_1, v_2\}$, 
such that for all $v \notin \Sigma$,
either $v$ is split, or we are in the unramified
situation and the test functions are unramified.
Here, $v_1$ and $v_2$ are nonarchimedean split places of 
$F$ such that conditions \eqref{item:glmatrixcoeff} and \eqref{item:glrsssupp}
of Definition \ref{def:glgood} are satisfied by $(f', \phi')$.

Let $\iota \colon Z' \to Z$ be the map
$(a_1, a_2) \mapsto a_1\ol{a_2}^{-1}$.
For each $v \in \Sigma$, let
$U_v$ be an open compact subgroup of $Z(F_v)$
such that $f_v^{a} = f_v$ for all $a$ such that $a \in U_v$,
and $f_v'^a = f_v'$ for all $\iota(a) \in U_v$.
Let
\[
  Z(\AA_F, \Sigma) = \{a \in Z(\AA_F) :
  a_v \in U_v \text{ for all } v \in \Sigma \}.
\]
Let $Z(F, \Sigma) = Z(\AA_F, \Sigma) \cap Z(F)$.
Let $Z'(\AA_F, \Sigma)$ (resp. $Z'(F, \Sigma)$) 
be the preimage of $Z(\AA_F, \Sigma)$ (resp.
$Z(F, \Sigma)$) under $\iota$.
Note that $\iota$ induces an isomorphism
\[
  Z'(F, \Sigma)Z_2(\AA_F)\bs Z'(\AA_F, \Sigma) \simrightarrow
  Z(F,\Sigma) \bs Z(\AA_F, \Sigma).
\]

\begin{lemma}
\label{lem:distrequal}
Suppose $(f', \phi')$ and $\{(f^\beta, \phi_1^\beta \otimes \phi_2^\beta)\}_{\beta \in [\SHerm_n^\times(F)]}$ are good matching
test functions.
Then for all $a \in Z'(\AA_F, \Sigma)$, we have
\[
  \sI(f'^a, \phi') = \sum_{\beta \in [\SHerm_n^\times(F)]}
  \sJ^\beta((f^\beta)^{\iota(a)}, \phi_1^\beta \otimes \phi_2^\beta).
\]
\end{lemma}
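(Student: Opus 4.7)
The plan is to deduce the identity by comparing the two sides term-by-term via their geometric expansions, using the orbit matching from Lemma \ref{lem:matching} and the local smooth transfer identities assumed by the matching of test functions.

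First I would apply Proposition \ref{prop:GeoGL} to expand
\[
  \sI(f'^a, \phi') = \sum_{[\gamma, x, y] \in [X_\rss]}
  \zeta_E^\ast(1)^{-1} \prod_v \Orb_v^{\psi, \mu}([\gamma, x, y], f_v'^{a_v}, \phi_v'),
\]
and likewise apply the unitary analogue (Proposition \ref{prop:GeoU}) to each $\sJ^\beta((f^\beta)^{\iota(a)}, \phi_1^\beta \otimes \phi_2^\beta)$. To justify this, I must first check that the twisted test functions $(f'^a, \phi')$ and $\{((f^\beta)^{\iota(a)}, \phi_1^\beta \otimes \phi_2^\beta)\}$ remain good in the sense of Definitions \ref{def:glgood} and \ref{def:ugood}: the supercuspidal matrix coefficient and regular semisimple support conditions at $v_1, v_2$ are preserved since $\iota(a_{v_i}) \in U_{v_i}$ and so the test functions at these places are unchanged.

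Next, by Lemma \ref{lem:matching}, the index set $[X_\rss]$ is in bijection with $\coprod_{\beta \in [\SHerm_n^\times(F)]} [Y_\rss^\beta]$. Under this bijection, matching pairs satisfy the local smooth transfer identity at each $v$:
\[
  \Orb_v^{\psi,\mu}([\gamma, x, y], f_v'^{a_v}, \phi_v')
  = \tf_v([\ol\gamma\gamma, x, y]) \cdot \Orb_v^{\psi,\mu}([\zeta, z]^\beta, (f_v^\beta)^{\iota(a_v)}, \phi_{1,v}^\beta \otimes \phi_{2,v}^\beta).
\]
Taking the product over all $v$, the product of transfer factors equals $\prod_v \mu_v(\mathbf T_{[\ol\gamma\gamma, x, y]})$. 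Since $\mathbf T_{[\ol\gamma\gamma, x, y]} \in F^\times$ and $\mu|_{\AA_F^\times} = \eta$ is trivial on $F^\times$ by the product formula, this global product equals $1$, matching the two sides cleanly (the $\zeta_E^\ast(1)^{-1}$ factor from the GL side being absorbed into the product of local split-place transfer constants, e.g.\ via Proposition \ref{prop:splitmatch}).

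The main obstacle is establishing the local matching of the \emph{twisted} test functions $(f_v'^{a_v}, \phi_v')$ and $((f_v^\beta)^{\iota(a_v)}, \phi_{1,v}^\beta \otimes \phi_{2,v}^\beta)$ at all places, not just the untwisted ones. At $v \in \Sigma$ this is trivial by construction of $U_v$, so the twists leave the test functions unchanged. At split $v \notin \Sigma$, one checks via the explicit formula of Proposition \ref{prop:splitmatch}: writing $a_v = (a_{1,v}, a_{2,v}) \in Z'(F_v)$, the central twist on the GL side translates by $(a_{1,v}, a_{2,v})$ while on the unitary side one twists by $\iota(a_v) = a_{1,v}\ol{a_{2,v}}^{-1}$, and a direct change of variable shows these are compatible with the matching of Schwartz functions via $^\dagger$ and $^\ddagger$. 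At unramified inert $v \notin \Sigma$, the matching follows from Theorem \ref{thm:FL} applied after the central twist, using that the norm relation $\iota(a) \ol{\iota(a)}^{-1}$ on the unitary side mirrors $\Nm(a_1 \ol{a_2}^{-1})$ on the general linear side, so that Kottwitz representatives and the transfer factor $\mu(\mathbf T)$ both transform consistently.
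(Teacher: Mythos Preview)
Your overall strategy --- expand both sides geometrically and match term by term --- is the right idea, and your treatment of the places $v\in\Sigma$ and of the global transfer factor product is fine. The genuine gap is at the inert places $v\notin\Sigma$.

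At such a place the component $\iota(a_v)\in E_v^\times$ is entirely unconstrained; in particular it need not lie in $\O_{E_v}^\times$. Hence $(f_v^\beta)^{\iota(a_v)}$ is the characteristic function of the coset $\iota(a_v)^{-1}\GL_n(\O_{E_v})$, not of $\GL_n(\O_{E_v})$, and likewise on the $\GL$ side. Theorem~\ref{thm:FL} is proved only for the specific unramified test functions; its proof (via Lemmas~\ref{lem:gltoJR} and~\ref{lem:utoJR}) relies on Lemma~\ref{lem:kottwitz}, which says that $g^{-1}\gamma\ol g\in\GL_n(\O_{E_v})$ forces $g\in\GL_n(F_v)\GL_n(\O_{E_v})$. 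This statement is false for a nontrivial translate of $\GL_n(\O_{E_v})$, so your sentence ``Theorem~\ref{thm:FL} applied after the central twist'' does not go through, and the vague appeal to ``Kottwitz representatives transforming consistently'' does not repair it: if $\gamma$ is Kottwitz, $\gamma\iota(a_v)$ generally is not.

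The paper resolves this not by proving a twisted fundamental lemma, but by \emph{changing the global sets of representatives}. Using Lemma~\ref{lem:replacesplit} it produces $\widetilde a\in E^\times$ with $\widetilde a\in U_v$ for $v\in\Sigma$ and $\widetilde a\in\iota(a_v)\O_{E_v}^\times$ at every inert $v\notin\Sigma$, and then passes to $S^a=\{\gamma\widetilde a^{-1}\}$, $R^{\beta,a}=\{\zeta\widetilde a^{-1}\}$. Because the orbital integrals in this RTF genuinely depend on the chosen representatives, one computes directly that
\[
\Orb_v^{\psi,\mu}([\gamma\widetilde a^{-1},x,y],f_v'^{a},\phi_v')=\Orb_v^{\psi,\mu}([\gamma,x,y],f_v',\phi_v')
\]
at all $v\in\Sigma$ and all inert $v\notin\Sigma$ (and similarly on the unitary side), so that the \emph{untwisted} fundamental lemma and Proposition~\ref{prop:splitmatch} apply unchanged. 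This sidesteps exactly the obstruction above. Your argument is missing this step, and without it the inert-place matching is not established.
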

\begin{proof}
Note that $\iota(a) \in Z(\AA_F) \simeq \AA_E^\times$.
By Lemma \ref{lem:replacesplit},
there exists some element 
$\widetilde{a} \in E^\times$ such that
$\widetilde{a} \in U_v$
for all $v \in \Sigma$, and
$\widetilde{a} \in \iota(a_v)\O_{E_v}^\times$
for all inert places $v$ of $F$ not in $\Sigma$.

Now, if $S$ is a set of normal representatives
of the regular semisimple
$\sigma$-conjugacy classes of $\GL_n(E)$,
then
\[
  S^{a} \coloneqq \{ \gamma \widetilde{a}^{-1} : \gamma \in S \}
\]
is also such a set. Let $X^a$ denote the corresponding
orbit space given by Definition \ref{def:glorbits}.
Similarly, 
\[
  R^{\beta, a} \coloneqq \{\zeta \widetilde{a}^{-1} : \zeta \in R^\beta\},
\]
has a corresponding orbit space $Y^{\beta, a}$
given by Definition \ref{def:uorbits}.
It is easy to see that $S^a$ and $R^{\beta, a}$ still
satisfy the first condition described in
Section \ref{sec:rep}.

Note that 
$\widetilde{f'^a} = (\widetilde{f'})^{\iota(a)}$.
Then
\begin{align*}
  \Orb_v^{\psi, \mu}([\gamma\widetilde{a}^{-1}, x, y],
  f_v'^a, \phi_v') &=
  \int_{\GL_n(E_v)} \widetilde{f_v'}(g^{-1}\gamma\ol g\widetilde{a}^{-1}\iota(a_v))\ol{(R_{\mu_v}(g)\phi_v')^\dagger(x, y)}\, dg \\
  &= \Orb_v^{\psi, \mu}([\gamma, x, y], f_v', \phi_v')
\end{align*}
for all $v \in \Sigma$ and all inert $v \notin \Sigma$.
Similarly,
\begin{align*}
  \Orb_v^{\psi, \mu}([\zeta\widetilde{a}^{-1}, z]^{\beta}, (f_v^{\beta})^{\iota(a)}, \phi_{1,v}^\beta \otimes \phi_{2,v}^\beta)
  = \Orb_v^{\psi, \mu}([\zeta, z]^\beta, f_v^\beta, \phi_{1,v}^\beta
  \otimes \phi_{2,v}^\beta)
\end{align*}
for all $v \in \Sigma$ and all inert $v \notin \Sigma$.
Furthermore, if $v$ is split, it is easy to check
that $f_v'^a$ and $(f_v^\beta)^{\iota(a)}$ are still related
as in Proposition \ref{prop:splitmatch}.
Thus, by Propositions \ref{prop:splitmatch},
\ref{prop:betaminusFL}, and Theorem \ref{thm:FL},
we have
\[
  \Orb_{v}^{\psi, \mu}([\gamma\widetilde{a}^{-1}, x, y], f_v'^a, \phi_v') =
  \Orb_{v}^{\psi, \mu}([\zeta\widetilde{a}^{-1}, z]^\beta, (f_v^\beta)^{\iota(a)}, \phi_{1,v}^\beta 
  \otimes \phi_{2,v}^\beta)
\]
for all $v$ and all matching regular semisimple
equivalence classes
$[\gamma\widetilde{a}^{-1}, x, y] \in X^a$
and $[\zeta\widetilde{a}^{-1}, z] \in Y^{\beta, a}$.

In addition, since $v_1, v_2 \in \Sigma$,
the test functions $(f'^a, \phi')$ and $((f^\beta)^{\iota(a)}, \phi_1^\beta \otimes \phi_2^\beta)$ are still good when
we use the new sets
of representatives $S^a$ and $R^{\beta, a}$.
The desired equality then follows from Propositions 
\ref{prop:GeoGL} and \ref{prop:GeoU}.
\end{proof}


\begin{definition}
For any irreducible cuspidal automorphic representation
$\pi$ of $G(\AA_F)$ and test function 
$(f, \phi_1 \otimes \phi_2)$, define
\[
  \sJ_\pi(f, \phi_1 \otimes \phi_2) =
  \sum_{\varphi}
  \frac{\P(\pi(f)\varphi, \phi_1)\ol{\P(\varphi, \phi_2)}}{\langle \varphi, \varphi \rangle_{\Pet}},
\]
where the sum runs over an orthogonal basis of $\pi$.
\end{definition}

\begin{definition}
Similarly, for any irreducible cuspidal automorphic
representation $\Pi$ of $G'(\AA_F)$ and test
function $(f', \phi')$, define
\[
  \sI_\Pi(s, f', \phi') = \sum_{\varphi}
  \frac{\lambda(s, \Pi(f')\varphi, \mu, \phi')\ol{\P'(\varphi)}}{\langle \varphi, \varphi \rangle_{\Pet}},
\]
where the sum runs over an orthogonal basis of $\Pi$.
Here,
\[
  \lambda(s, \varphi, \mu, \phi') \coloneqq \int\limits_{H_1(F) \bs H_1(\AA_F)} \varphi(g) \ol{\Theta_\mu(s, g, \phi')}\, dg
\]
is the Rankin--Selberg integral, and
\[
  \P'(\varphi) \coloneqq \int\limits_{H_2(F)Z_2(\AA_F)\bs H_2(\AA_F)} \varphi(h)\,dh.
\]
\end{definition}

\begin{proposition}
\label{prop:uspectral}
Let $\chi$ be a character of $Z(F)\bs Z(\AA_F)$.
If the test function $(f, \phi_1 \otimes \phi_2)$
is good, then
\[
  \int\limits_{Z(F, \Sigma)\bs Z(\AA_F, \Sigma)} \sJ(f^z, \phi_1 \otimes \phi_2)\chi(z)\,dz =
  \sum_{\pi} \sJ_\pi(f, \phi_1 \otimes \phi_2),
\]
where the sum runs over all irreducible
cuspidal automorphic representations of $G(\AA_F)$
with central character $\chi$.
Moreover, the right hand side is absolutely convergent.
\end{proposition}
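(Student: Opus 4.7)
The plan is to expand the kernel $K_f(g, h)$ spectrally on $(G(F)\bs G(\AA_F))^2$, use the supercuspidal matrix coefficient condition at $v_1$ to reduce the expansion to cuspidal contributions, and then integrate against $\chi(z)$ to project onto representations of central character $\chi$.

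First I will invoke the coarse spectral decomposition of $K_f$. The truncated-matrix-coefficient assumption of Definition~\ref{def:ugood}\eqref{item:umatrixcoeff} ensures that $\pi_{v_1}(f_{v_1})$ vanishes unless $\pi_{v_1}$ lies in a single supercuspidal Bernstein component. Since Eisenstein and residual contributions arise via parabolic induction from proper Levi subgroups, their local components at $v_1$ can never be supercuspidal, so convolution with $f_{v_1}$ annihilates them. Consequently
\[
	K_f(g, h) = \sum_{\pi}\sum_{\varphi \in \mathrm{ONB}(\pi)} (\pi(f)\varphi)(g)\,\ol{\varphi(h)},
\]
with $\pi$ ranging over cuspidal automorphic representations of $G(\AA_F)$, and for each fixed $\pi$ only finitely many $\varphi$ contribute (because $\pi_{v_1}(f_{v_1})$ has finite rank). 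Substituting into $\sJ(f, \phi_1 \otimes \phi_2)$ and interchanging the spectral sum with the double integral, one recognizes each summand as $\sJ_\pi(f, \phi_1 \otimes \phi_2)$, yielding $\sJ(f, \phi_1 \otimes \phi_2) = \sum_\pi \sJ_\pi(f, \phi_1 \otimes \phi_2)$.

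To incorporate the central character, observe that $\pi(f^z)\varphi = \omega_\pi(z)^{-1} \pi(f)\varphi$, which gives $\sJ_\pi(f^z, \phi_1 \otimes \phi_2) = \ol{\omega_\pi(z)} \sJ_\pi(f, \phi_1 \otimes \phi_2)$. Integrating over $z \in Z(F, \Sigma)\bs Z(\AA_F, \Sigma)$ against $\chi(z)$ therefore produces
\[
	\sum_{\pi}\sJ_\pi(f, \phi_1 \otimes \phi_2) \int_{Z(F, \Sigma)\bs Z(\AA_F, \Sigma)} \ol{\omega_\pi(z)}\chi(z)\,dz,
\]
and Fourier analysis on the compact group $Z(F, \Sigma)\bs Z(\AA_F, \Sigma)$, combined with the fact that the $U_v$ can be chosen small enough to separate the relevant central characters, selects exactly the cuspidal $\pi$ with $\omega_\pi = \chi$.

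The main obstacle is justifying the absolute convergence needed to interchange $\sum_{\pi, \varphi}$ with the double integral over $(H(F)\bs H(\AA_F))^2$. Convergence of each individual period $\P(\varphi, \phi_i)$ is standard, using rapid decay of cusp forms on $G(F)\bs G(\AA_F) = \GL_n(E)\bs \GL_n(\AA_E)$ along the closed subset $H(F)\bs H(\AA_F)$ together with the moderate growth of $\Theta_{\psi', \mu}$. For the outer sum, I would combine the Bernstein-center-type finiteness (the $v_1$-projection lands in a single cuspidal Bernstein block, which together with the fixed central character cuts the cuspidal spectrum to finitely many Harish-Chandra parameters at archimedean places) with uniform Sobolev estimates on $\pi(f)\varphi$, in the spirit of the convergence arguments in \cite{MR3228451}.
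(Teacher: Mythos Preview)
Your order of operations creates a genuine gap. The quotient $Z(F,\Sigma)\bs Z(\AA_F,\Sigma)$ is \emph{not} compact: by weak approximation it is isomorphic to $Z(F)\bs Z(\AA_F)=E^\times\bs\AA_E^\times$, which has the non-compact factor $\RR_{>0}$ via the idelic norm. Hence the integral $\int \omega_\pi(z)^{-1}\chi(z)\,dz$ you write down does not converge, and no character-orthogonality argument is available to isolate $\omega_\pi=\chi$. Relatedly, your first step---a spectral expansion $K_f(g,h)=\sum_\pi\sum_\varphi(\pi(f)\varphi)(g)\ol{\varphi(h)}$ over \emph{all} cuspidal $\pi$---is not a discrete sum: without fixing the central character, the cuspidal spectrum of $\GL_n$ contains the continuous family $\pi\otimes|\det|^{it}$, so what you have written is at best a direct integral, and your subsequent convergence and finiteness claims (``finitely many Harish-Chandra parameters'') do not hold.

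The paper avoids this by reversing the two steps: it first brings the $z$-integral inside, forming the averaged kernel $\int_{Z(F)\bs Z(\AA_F)}K_f(g,hz)\chi(z)\,dz$, which is precisely the kernel of $R(f)$ acting on $L^2(G(F)\bs G(\AA_F),\chi)$. Only then does it apply the spectral expansion; on this fixed-central-character space the supercuspidal condition at $v_1$ kills the non-cuspidal part (via \cite[Lemma~5.5.4]{MR3228451}), and the remaining cuspidal spectrum is genuinely discrete. The absolute convergence of the resulting sum is then quoted from \cite[Proposition~A.1.2]{MR4332778}. If you reorder your argument in this way, the rest of what you wrote (the role of $v_1$, the identification of each term with $\sJ_\pi$) goes through.
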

\begin{proof}
Note that by weak approximation, $Z(\AA_F) =
Z(F)\cdot Z(\AA_F, \Sigma)$, so in fact 
$Z(F, \Sigma)\bs Z(\AA_F, \Sigma) \simeq Z(F)\bs Z(\AA_F)$.
The left hand side is equal to
\begin{align*}
  &\int\limits_{Z(F)\bs Z(\AA_F)}\sJ(f^z, \phi_1
  \otimes \phi_2)\chi(z)\, dz \\
  &= \int\limits_{Z(F)\bs Z(\AA_F)}\iint\limits_{(H(F) \bs H(\AA_F))^2} K_{f^z}(g, h)
  \ol{\Theta_{\psi', \mu}(g, \phi_1)}\Theta_{\psi', \mu}(h, \phi_2)\, dg\, dh\, dz \\
  &= \iint\limits_{(H(F)\bs H(\AA_F))^2}
  \left(\int\limits_{Z(F)\bs Z(\AA_F)} K_f(g, hz)\chi(z)\, dz\right) \ol{\Theta_{\psi', \mu}(g, \phi_1)}\Theta_{\psi', \mu}(h, \phi_2)\, dg\, dh.
\end{align*}
Now, note that
\[
  (R(f)\varphi)(g) = \int_{G(\AA_F)} f(h)\varphi(gh)\, dh
\]
is an operator on $L^2(G(F)\bs G(\AA_F), \chi)$.
Furthermore, for $\varphi \in L^2(G(F)\bs G(\AA_F), \chi)$, we have
\begin{align*}
  (R(f)\varphi)(g) 
  &= \int\limits_{G(F)\bs G(\AA_F)} K_f(g, h)\varphi(h)\,dh \\
  &= \int\limits_{G(F)Z(\AA_F)\bs G(\AA_F)}
  \left( \int\limits_{Z(F)\bs Z(\AA_F)} K_f(g, hz)\chi(z)\,dz \right) \varphi(h)\, dh.
\end{align*}
By \cite[Lemma~5.5.4]{MR3228451}, if $\pi$ is
an irreducible admissible representation of $G(\AA_F)$ with
central character $\chi$ such
that $R(f)\varphi \neq 0$ for some $\varphi \in \pi$,
then $\pi_{v_1}$ is supercupsidal, and hence $\pi$
is cuspidal. (In fact, if $f_{v_1}$ is a truncated
matrix coefficient of a supercuspidal representation
$\pi_0$ of $G(F_v)$, we must have $\pi_{v_1} 
\simeq \pi_0 \otimes \chi_{v_1}\chi_0^{-1}$,
where $\chi_0$ is the central character of $\pi_0$.)
Thus, we have the spectral expansion
\[
  \int\limits_{Z(F)\bs Z(\AA_F)} K_f(g, hz)\chi(z)\,dz = \sum_{\pi} \sum_{\varphi}
  (\pi(f)\varphi)(g) \ol{\varphi(h)},
\]
where $\pi$ runs over all irreducible cuspidal
automorphic representations of $G(\AA_F)$ with
central character $\chi$, and $\varphi$ runs over an
orthonormal basis of $\pi$.

The spectral expansion in the proposition statement then
follows. The absolute convergence of the sum on the right hand
side follows from \cite[Proposition~A.1.2]{MR4332778}. 
\end{proof}

\begin{proposition}
\label{prop:glspectral}
Let $\chi'$ be a character of $Z'(F)\bs Z'(\AA_F)$
which is trivial on $Z_2(\AA_F)$. If the test
function $(f', \phi')$ is good, then
\[
  \int\limits_{Z'(F, \Sigma)Z_2(\AA_F)\bs Z'(\AA_F, \Sigma)}
  \sI(f'^{z}, \phi')\chi'(z)\,dz =
  \sum_{\Pi} \sI_\Pi(f', \phi'),
\]
where the sum runs over all irreducible cuspidal
automorphic representations of $G'(\AA_F)$ with
central character $\chi$. Moreover, the right hand side
is absolutely convergent.
\end{proposition}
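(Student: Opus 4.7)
The plan is to follow the proof of Proposition \ref{prop:uspectral} essentially verbatim, substituting $H_1$ for the copy of $H$ that carries the theta pairing and $H_2$ for the second copy of $H$. First, by weak approximation $Z'(\AA_F) = Z'(F) \cdot Z'(\AA_F, \Sigma)$ with $Z'(F, \Sigma) = Z'(F) \cap Z'(\AA_F, \Sigma)$, so the outer quotient on the left-hand side is isomorphic to $Z'(F) Z_2(\AA_F) \bs Z'(\AA_F)$; the integrand is $Z_2(\AA_F)$-invariant because $\chi'$ is trivial on $Z_2(\AA_F)$ and the right-translation of $f'$ by an element of $Z_2(\AA_F) \subseteq H_2(\AA_F)$ can be absorbed into the $H_2$-integration defining $\sI$.

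Next, I would unfold $\sI(f'^z, \phi')$ using the identity $K_{f'^z}(g,h) = K_{f'}(g, hz)$ and interchange the order of integration (justified by the absolute convergence established in Proposition \ref{prop:GeoGL}) to isolate the inner kernel integral
\[
  \int_{Z'(F)\bs Z'(\AA_F)} K_{f'}(g, hz)\chi'(z)\, dz,
\]
which is the integral kernel of $R(f')$ acting on $L^2(G'(F) Z'(\AA_F) \bs G'(\AA_F), \chi')$. The supercuspidality assumption \eqref{item:glmatrixcoeff} of Definition \ref{def:glgood} at the split place $v_1$, combined with a verbatim analog of \cite[Lemma~5.5.4]{MR3228451} on the general linear side, forces any representation contributing nontrivially to have $\Pi_{v_1}$ supercuspidal, hence $\Pi$ itself cuspidal. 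This yields the spectral expansion of the inner integral as $\sum_\Pi \sum_\varphi (\Pi(f')\varphi)(g)\ol{\varphi(h)}$, where $\Pi$ runs over irreducible cuspidal automorphic representations of $G'(\AA_F)$ with central character $\chi'$ and $\varphi$ runs over an orthonormal basis of $\Pi$.

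Reinserting this expansion and carrying out the remaining integrations, the $g$-integral over $H_1(F)\bs H_1(\AA_F)$ produces the Rankin--Selberg integral $\lambda(\tfrac12, \Pi(f')\varphi, \mu, \phi')$ by definition (recalling the convention that $s = \tfrac12$ is suppressed), and the $h$-integral over $H_2(F) Z_2(\AA_F)\bs H_2(\AA_F)$ produces $\ol{\P'(\varphi)}$; converting an orthonormal basis to an orthogonal one introduces the factor $\langle \varphi, \varphi\rangle_{\Pet}^{-1}$, giving the stated identity. The main technical obstacle is absolute convergence of the spectral sum on the right, which I expect to follow from a direct adaptation of \cite[Proposition~A.1.2]{MR4332778}: one controls $\lambda$ by unfolding against the Whittaker model of $\Pi$ and controls $\P'$ using Jacquet's results on unitary periods, while the truncated matrix coefficient structure of $f'_{v_1}$ at the supercuspidal split place supplies the decay needed to sum over $\Pi$.
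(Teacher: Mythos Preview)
Your proposal is correct and follows exactly the approach the paper intends: the paper's own proof consists of the single sentence ``The proof is the same as the proof of Proposition \ref{prop:uspectral},'' and you have faithfully carried out that adaptation, including the weak approximation step, the reduction to cuspidal $\Pi$ via the supercuspidal truncated matrix coefficient at $v_1$, and the identification of the two period integrals with $\lambda$ and $\ol{\P'}$. The one point worth making more explicit is how the $Z_2(\AA_F)$-part of the $H_2$-integration combines with the outer integral over $Z'(F,\Sigma)Z_2(\AA_F)\bs Z'(\AA_F,\Sigma)$ to produce the full integral over $Z'(F)\bs Z'(\AA_F)$, leaving the residual $h$-integral over $H_2(F)Z_2(\AA_F)\bs H_2(\AA_F)$ that defines $\P'$; you allude to this but do not spell it out.
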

\begin{proof}
The proof is the same as the proof of Proposition \ref{prop:uspectral}.
\end{proof}

\subsection{Relative trace identity}
We obtain the following relative trace
identity.

\begin{proposition}
\label{prop:identity}
Let $\pi$ be an irreducible cuspidal automorphic
representation of $\GL_n(\AA_E)$ such that $\pi_{v_1}$
is supercuspidal. 
Let $\Pi = \pi \boxtimes \prescript{\sigma}{}{\pi^\vee}$ be a representation of $G'(\AA_F)$.
For good matching test functions $(f', \phi')$ and
$\{(f^\beta, \phi_1^\beta \otimes \phi_2^\beta)\}_{\beta \in
[\SHerm_n^\times(F)]}$, 
we have
\[
  \sI_{\Pi}(f', \phi') =
  \zeta_E^\ast(1)^{-1}\sum_{\beta \in [\SHerm_n^\times(F)]}
  \sJ_{\pi}^\beta(f^\beta, \phi_1^\beta \otimes \phi_2^\beta).
\]
\end{proposition}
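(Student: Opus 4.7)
The plan is to pass from the pointwise geometric identity of Lemma~\ref{lem:distrequal} to the desired spectral identity by integrating against a central character, and then to isolate the contribution of the single representation $\pi$ using the supercuspidality hypotheses of Theorem~\ref{thm:maintheorem}.

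Let $\chi$ be the central character of $\pi$, viewed as a character of $Z(F)\bs Z(\AA_F)\simeq\AA_E^\times/E^\times$. Pulling back via $\iota$ and extending by weak approximation, I obtain a character $\chi'$ of $Z'(F)\bs Z'(\AA_F)$ which is trivial on $Z_2(\AA_F)$ and which agrees with the central character of $\Pi=\pi\boxtimes\sig{\pi^\vee}$ on $Z'(\AA_F,\Sigma)$; triviality on $Z_2(\AA_F)$ follows from a direct computation of the central character of $\Pi$. Integrating Lemma~\ref{lem:distrequal} against $\chi'(a)\,da$ over $Z'(F,\Sigma)Z_2(\AA_F)\bs Z'(\AA_F,\Sigma)$, substituting $z=\iota(a)$ on the right, and applying Propositions~\ref{prop:glspectral} and~\ref{prop:uspectral} yields the spectral identity
\[
  \sum_{\Pi'}\sI_{\Pi'}(f',\phi')\;=\;\zeta_E^\ast(1)^{-1}\sum_{\beta}\sum_{\pi'}\sJ_{\pi'}^\beta(f^\beta,\phi_1^\beta\otimes\phi_2^\beta),
\]
where $\Pi'$ runs over cuspidal automorphic representations of $G'(\AA_F)$ with central character $\chi'$ and $\pi'$ over those of $G(\AA_F)$ with central character $\chi$; the factor $\zeta_E^\ast(1)^{-1}$ originates from the geometric expansion in Proposition~\ref{prop:GeoGL}.

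By Jacquet's characterization of $H_2$-distinguished representations recalled in the introduction, $\sI_{\Pi'}$ vanishes unless $\Pi'\cong\pi'\boxtimes\sig{(\pi')^\vee}$ for some cuspidal $\pi'$, so both sides are indexed compatibly by cuspidal $\pi'$ of central character $\chi$. To single out the term for $\pi$, I invoke condition~\eqref{item:scusp} of Theorem~\ref{thm:maintheorem}: by the parenthetical remark in the proof of Proposition~\ref{prop:uspectral}, if $f_{v_1}$ is a truncated matrix coefficient of a supercuspidal $\pi_0$, any contributing $\pi'$ must satisfy $\pi'_{v_1}\cong\pi_0\otimes\chi_{v_1}\chi_0^{-1}$; choosing $\pi_0$ to be the appropriate twist of $\pi_{v_1}$ (and $f'_{v_1}$ correspondingly) forces $\pi'_{v_1}\cong\pi_{v_1}$ and $\Pi'_{v_1}\cong\Pi_{v_1}$. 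Combined with the parallel constraint at $v_2$ and a spectral separation argument using the remaining freedom in choosing test functions at the other split places together with strong multiplicity one for $\GL_n$, the spectral sums collapse to the single terms indexed by $\Pi$ and $\pi$, giving the proposition.

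The main obstacle is this final separation: fixing the local components at the two supercuspidal places $v_1$ and $v_2$ is not yet enough to determine $\pi'$ globally, so completing the argument requires a careful linear independence statement for the distributions $\pi'\mapsto\sI_{\pi'\boxtimes\sig{(\pi')^\vee}}(f',\phi')$ and $\pi'\mapsto\sum_\beta\sJ_{\pi'}^\beta(f^\beta,\phi_1^\beta\otimes\phi_2^\beta)$ as $(f',\phi')$ (together with the matching $(f^\beta,\phi_1^\beta\otimes\phi_2^\beta)$) varies over good test functions. This is precisely where hypothesis~\eqref{item:scusp} of supercuspidality at two distinct split places becomes essential.
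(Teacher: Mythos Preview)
Your first step---integrating Lemma~\ref{lem:distrequal} against the central character and applying Propositions~\ref{prop:uspectral} and~\ref{prop:glspectral} to obtain the spectral identity
\[
  \sum_{\Pi'}\sI_{\Pi'}(f',\phi')=\zeta_E^\ast(1)^{-1}\sum_{\beta}\sum_{\pi'}\sJ_{\pi'}^\beta(f^\beta,\phi_1^\beta\otimes\phi_2^\beta)
\]
---is exactly what the paper does. The gap, which you correctly identify in your last paragraph, is in the separation of the individual $\pi$-term. Your proposed mechanism does not work: the test function $f_{v_1}$ is \emph{given} in the statement, so you cannot ``choose $\pi_0$ to be the appropriate twist of $\pi_{v_1}$''; and even granting that the truncated matrix coefficient pins down $\pi'_{v_1}$ up to an unramified twist, knowing $\pi'$ at two places is far from determining it globally. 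You also misread the role of condition~\eqref{item:scusp}: the supercuspidal places $v_1,v_2$ are there to make the test functions \emph{good} (cuspidality of the spectral expansion and restriction to regular semisimple orbits), not to perform the spectral separation.

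The paper's separation argument is different and precise. Let $\Sigma$ be the (infinite) set of split places away from $v_1,v_2$ where the test functions are unramified. For any $\alpha$ in the split spherical Hecke algebra $\H(G'_\Sigma,K'_\Sigma)$, the convolutions $f'\ast\alpha$ and $f^\beta\ast\tau(\alpha)$ (where $\tau$ is the explicit transfer of Proposition~\ref{prop:splitmatch}) are again good and matching, so the spectral identity above holds for them as well. This yields
\[
  \sum_{\Pi'}\Lambda_{\Pi'_\Sigma}(\alpha)\,\sI_{\Pi'}(f',\phi')=\zeta_E^\ast(1)^{-1}\sum_{\beta}\sum_{\pi'}\Lambda_{(\pi'\boxtimes\sig{(\pi')^\vee})_\Sigma}(\alpha)\,\sJ_{\pi'}^\beta(f^\beta,\phi_1^\beta\otimes\phi_2^\beta)
\]
for all $\alpha$, since $\lambda_{\pi'_\Sigma}(\tau(\alpha))=\Lambda_{(\pi'\boxtimes\sig{(\pi')^\vee})_\Sigma}(\alpha)$. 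Linear independence of Hecke characters then lets you equate the coefficients of each character separately. The crucial input is not ordinary strong multiplicity one but Ramakrishnan's theorem \cite[Corollary~B]{MR3267122}: two cuspidal representations of $\GL_n(\AA_E)$ agreeing at almost all places of $E$ lying over \emph{split} places of $F$ are isomorphic. This is what guarantees that distinct $\pi'$ give distinct Hecke characters on $\H(G'_\Sigma,K'_\Sigma)$, and hence that the term for $\pi$ can be isolated.
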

\begin{proof}
Let $\chi$ be the central character of $\pi$.
By Lemma \ref{lem:distrequal} and Propositions
\ref{prop:uspectral} and \ref{prop:glspectral}, we 
have
\begin{equation}
\label{eq:spectral}
  \sum_{\Pi} \sI_\Pi(f', \phi') = 
  \zeta_E^\ast(1)^{-1}\sum_{\beta \in [\SHerm_n^\times(F)]} \sum_{\pi} 
  \sJ_\pi^\beta(f^\beta, \phi_1^\beta \otimes \phi_2^\beta),
\end{equation}
where the sum on the left runs over irreducible
cupsidal automorphic representations $\Pi$
of $G'(\AA_F)$ with central character
$\chi \otimes \prescript{\sigma}{}{\chi^{-1}}$
and the inner sum on the right runs over irreducible
cupsidal automorphic representations of $G(\AA_F)$
with central character $\chi$.

Now, let $\Sigma$ denote the set of nonarchimedean
split places $v$ of $F$ not equal to $v_1, v_2$,
such that
\begin{gather*}
  \quad (f_v',  \phi_v') = \left(\frac{1}{\Vol(K_v')}\mathbbm{1}_{K_v'}, \mathbbm{1}_{\O_{E_v, n}}\right), \\
  (f_v^\beta, \phi_{1,v}^\beta \otimes \phi_{2,v}^\beta) = \left(\frac{1}{\Vol(\U_n^\beta(\O_{F_v}))^2}\mathbbm{1}_{K_v}, \mathbbm{1}_{\LL(\O_{F_v})} \otimes \mathbbm{1}_{\LL(\O_{F_v})}\right)
\end{gather*}
for all $\beta \in [\SHerm_n^\times(F)]$,
where $K_v' = \GL_n(\O_{E_v}) \times \GL_n(\O_{E_v})$ 
and $K_v = \GL_n(\O_{E_v})$.
Note that if $\pi_v$ is not unramified for some $v \in \Sigma$, then both sides of the desired equality 
are zero. Thus we assume that $\pi$ is unramified
at all $v \in \Sigma$, and we let
\begin{align*}
  \Lambda_{\Pi_\Sigma} \colon \H(G_{\Sigma}', K_\Sigma') \coloneqq \bigotimes_{v \in \Sigma}'\H(G'(F_v), K_v') \to \CC \\
  \lambda_{\pi_\Sigma} \colon \H(G_\Sigma, K_\Sigma)
  \coloneqq \bigotimes_{v \in \Sigma}'\H(G(F_v), K_v) \to \CC
\end{align*}
be the characters of the split spherical Hecke
algebras of $\Pi_\Sigma = \otimes_{v \in \Sigma}' \Pi_v$ and 
$\pi_\Sigma = \otimes_{v \in \Sigma}' \pi_v$.

Recall that we have a smooth transfer map 
$\tau\colon \H(G_\Sigma', K_\Sigma') \to \H(G_\Sigma, K_\Sigma)$
given by Proposition \ref{prop:splitmatch}, such that
\[
  \tau(\alpha_1 \otimes \alpha_2) = 
  \alpha_1 \ast \prescript{\sigma}{}{\alpha_2^\vee},
\]
where $\alpha_1, \alpha_2 \in \H(G_\Sigma, K_\Sigma)$
and $\prescript{\sigma}{}{\alpha_2^\vee}(g)
\coloneqq \alpha_2(\ol{g}^{-1})$. Note that
\[
  \lambda_{\pi_\Sigma}(\tau(\alpha)) =
  \Lambda_{\pi_\Sigma \boxtimes \prescript{\sigma}{}{\pi_\Sigma^\vee}}(\alpha).
\]
Now, for each $\alpha \in
\H(G_\Sigma', K_\Sigma')$,
we have that 
$(f' \ast \alpha, \phi')$ and
$\{(f^{\beta} \ast \tau(\alpha), \phi_1^\beta \otimes \phi_2^\beta)\}_{\beta \in [\SHerm_n^\times(F)]}$ 
are also good matching test functions.
Note that
\[
  \sI_\Pi(f' \ast \alpha, \phi') =
\Lambda_{\Pi_{\Sigma}}(\alpha)\sI_\Pi(f', \phi'),
\]
and 
\[
  \sJ_\pi^\beta(f^{\beta} \ast \tau(\alpha), \phi_1^\beta \otimes \phi_2^\beta) = 
  \lambda_{\pi_{\Sigma}}(\tau(\alpha))
  \sJ_\pi^\beta(f^\beta, \phi_1^\beta \otimes \phi_2^\beta).
\]
Thus, by \eqref{eq:spectral}, we have
\[
  \sum_{\Pi} \Lambda_{\Pi_\Sigma}(\alpha)\sI_{\Pi}(f', \phi')
  = \zeta_E^\ast(1)^{-1}\sum_{\beta \in [\SHerm_n^\times(F)]}\sum_{\pi} \lambda_{\pi_\Sigma}(\tau(\alpha))
  \sJ_\pi^\beta(f^\beta, \phi_1^\beta \otimes \phi_2^\beta).
\]
Now, we use the fact that the characters of the
split spherical Hecke algebra for different 
representations are linearly independent.
In addition, by \cite[Corollary~B]{MR3267122}, if two irreducible cuspidal automorphic
representations of $\GL_n(\AA_E)$ are isomorphic
at almost all places of $E$ lying over split places
of $F$, then they are isomorphic.
These two facts together imply the proposition.
\end{proof}

\section{Local distributions}
\subsection{Periods on the general linear group}
In this subsection, we recall some facts about the 
decomposition of the global periods
$\lambda$ and $\P'$ into products of local invariant linear forms.

Let $\pi$ be a cuspidal automorphic representation of $\GL_n(\AA_E)$. We extend $\psi$ to a character of $N_n(\AA_E)$ by
\[
	\psi(u) = \psi\left(\sum_{i=1}^{n-1} u_{i,i+1} \right),
	\quad u = (u_{ij}).
\]
Similar conventions apply to other unipotent groups such
as $N_n(E_v)$, $N_n(\AA_F)$, $N_n(F_v)$.
Let $\C^\infty(N_n(\AA_E)\bs \GL_n(\AA_E), \psi)$ denote the
space of smooth functions $f$ on $\GL_n(\AA_E)$ such that
\[
	f(ug) = \psi(u)f(g), \quad u \in N_n(\AA_E), g \in \GL_n(\AA_E).
\]
Similarly, we define its local counterpart
$\C^\infty(N_n(E_v) \bs \GL_n(E_v), \psi_v)$ for any place $v$
of $F$.

For $\varphi \in \pi$, its Whittaker--Fourier coefficient is defined by
\[
	W_\varphi(g) = \int\limits_{N_n(E) \bs N_n(\AA_E)}
	\varphi(ug) \ol{\psi(u)} \, du.
\]
The Whittaker model of $\pi$, denoted $\W(\pi, \psi)$, is 
the image of the
equivariant embedding $\varphi \mapsto W_\varphi \colon \pi \hookrightarrow
\C_c^\infty(N_n(\AA_E) \bs \GL_n(\AA_E), \psi)$.

Similarly, we have the local Whittaker model
$\W(\pi_v, \psi_v)$, which is a subspace of $\C_c^\infty(N_n(E_v) \bs \GL_n(E_v), \psi_v)$.

\begin{definition}
For $W_{1,v}, W_{2,v} \in W(\pi_v, \psi_v)$, let
\[
	\langle W_{1,v}, W_{2,v} \rangle_v
	= \int\limits_{N_{n-1}(E_v) \bs \GL_{n-1}(E_v)}
	W_{1,v}\left( \begin{pmatrix}
	g & \\ & 1
	\end{pmatrix}\right)\ol{W_{2,v}\left( \begin{pmatrix}
	g & \\ & 1
	\end{pmatrix}\right)}\,dg.
\]
This integral converges absolutely since $\pi_v$ is generic unitary.
\end{definition}

If $\pi$ and $\psi$ are unramified at $v$, and 
$W_v$ is the $\GL_n(\O_{E_v})$-fixed vector normalized
so that $W_v(1) = 1$, we have
\[
	\langle W_v, W_v \rangle_v = \Vol(\GL_n(\O_{E_v}))
	L(1, \pi_v \times \pi_v^\vee).
\]
Thus, we define a normalized inner product
\[
	\langle W_{1,v}, W_{2,v}\rangle_v^\natural
	= \frac{\langle W_{1,v}, W_{2,v}\rangle_v}{L(1, \pi_v \times \pi_v^\vee)}.
\]

We have the following decomposition of the Petersson inner
product.

\begin{proposition}[{\cite[Proposition~3.1]{MR3164988}}]
\label{prop:petdecomp}
Let $\varphi_1, \varphi_2 \in \pi$. Assume 
$W_{\varphi_1} = \otimes_v W_{1,v}$ and
$W_{\varphi_2} = \otimes_v W_{2,v}$. Then
\[
	\langle \varphi_1, \varphi_2 \rangle_{\Pet}
	= \zeta_E^\ast(1)^{-1} \Res_{s=1} L(s, \pi \times \pi^\vee)
	\prod_v \langle W_{1,v}, W_{2,v} \rangle_v^\natural.
\]
\end{proposition}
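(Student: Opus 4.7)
The plan is to derive this decomposition from the Rankin--Selberg integral representation for $L(s, \pi \times \pi^\vee)$, following the now-standard Ichino--Ikeda strategy. For a factorizable Schwartz--Bruhat function $\Phi = \otimes_v \Phi_v \in \S(\AA_E^n)$, I would form the standard Eisenstein series $E(g, s, \Phi)$ on $\GL_n(\AA_E)$ attached to $\Phi$ (whose central character is trivial, matching that of $\varphi_1 \overline{\varphi_2}$), and consider the global zeta integral
\[
Z(s, \varphi_1, \overline{\varphi_2}, \Phi) = \int\limits_{\GL_n(E)\bs \GL_n(\AA_E)} \varphi_1(g)\overline{\varphi_2(g)}\, E(g, s, \Phi)\,dg,
\]
which is absolutely convergent for $\Re s$ large and admits meromorphic continuation with a simple pole at $s = 1$.

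The first step is to unfold using the Whittaker expansion of $\varphi_1$. Standard manipulations give the Euler product
\[
Z(s, \varphi_1, \overline{\varphi_2}, \Phi) = \prod_v \Psi_v(s, W_{1,v}, \overline{W_{2,v}}, \Phi_v),
\]
where
\[
\Psi_v(s, W_1, W_2, \Phi_v) = \int\limits_{N_n(E_v)\bs \GL_n(E_v)} W_1(g)\overline{W_2(g)}\, \Phi_v(e_n g)\,|\det g|^s\,dg
\]
is the local Jacquet--Piatetski-Shapiro--Shalika zeta integral. Each $\Psi_v$ decomposes as $\Psi_v(s) = L(s, \pi_v \times \pi_v^\vee)\widetilde\Psi_v(s)$ with $\widetilde\Psi_v$ entire, and at almost all places the unramified computation gives $\widetilde\Psi_v \equiv 1$, ensuring convergence of the product.

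The second step is to take residues at $s = 1$. By Tate's thesis, $E(g, s, \Phi)$ has a simple pole at $s = 1$ whose residue is the constant function $\zeta_E^\ast(1)\,\widehat\Phi(0)$ (Tamagawa-normalized), so that
\[
\Res_{s=1} Z(s, \varphi_1, \overline{\varphi_2}, \Phi) = \zeta_E^\ast(1)\,\widehat\Phi(0)\,\langle \varphi_1, \varphi_2 \rangle_{\Pet}.
\]
Comparing with the residue of the Euler product yields
\[
\zeta_E^\ast(1)\,\widehat\Phi(0)\,\langle \varphi_1, \varphi_2 \rangle_{\Pet} = \Res_{s=1} L(s, \pi \times \pi^\vee) \cdot \prod_v \widetilde\Psi_v(1).
\]

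The third step is to identify $\widetilde\Psi_v(1) = \widehat\Phi_v(0)\langle W_{1,v}, W_{2,v}\rangle_v^\natural$. Using the mirabolic $P_n$, the stabilizer of $e_n$ under right multiplication, one has $P_n\bs \GL_n \cong E_v^n \setminus 0$ via $g \mapsto e_n g$ and $N_n \bs P_n \cong N_{n-1}\bs \GL_{n-1}$, which lets the $s=1$ value of the local zeta integral factor as $\int_{E_v^n} \Phi_v(x)\,dx = \widehat\Phi_v(0)$ times the $N_{n-1}\bs \GL_{n-1}$ inner product $\langle W_{1,v}, W_{2,v}\rangle_v$; dividing by $L(1, \pi_v \times \pi_v^\vee)$ gives the normalized form. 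Substituting back and canceling $\widehat\Phi(0) = \prod_v \widehat\Phi_v(0)$ produces the formula in the proposition. The main subtlety is the careful bookkeeping of measure normalizations so that exactly the factor $\zeta_E^\ast(1)^{-1}$ emerges (this ultimately reflects the Tamagawa measure convention on $\GL_n(\AA_E)$), and the verification that the infinite product of normalized local inner products converges; the latter reduces to the unramified computation $\langle W_v, W_v\rangle_v = \Vol(\GL_n(\O_{E_v}))\,L(1, \pi_v \times \pi_v^\vee)$ recorded just before the proposition.
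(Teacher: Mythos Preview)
The paper does not give its own proof of this proposition; it simply cites \cite[Proposition~3.1]{MR3164988} and remarks that the constant differs due to the Petersson inner product being normalized over $\GL_n(E)\bs\GL_n(\AA_E)^1$ with the Tamagawa measure $dg = \zeta_E^\ast(1)^{-1}\prod_v dg_v$. Your sketch is the standard Rankin--Selberg argument underlying the cited result, and it is correct; the factor $\zeta_E^\ast(1)^{-1}$ indeed comes out of exactly the measure bookkeeping you flag at the end.
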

Note that the constant here is different from the constant in
\cite[Proposition~3.1]{MR3164988} due to different
normalizations of the Petersson inner product.


Now, let $\pi_1$, $\pi_2$ be cuspidal automorphic representations
of $\GL_n(\AA_E)$. Write $\Pi = \pi_1 \boxtimes \pi_2$.
For $\varphi = \varphi_1 \otimes \varphi_2 \in \Pi$ and
$\phi' \in \S(\AA_{E,n})$,
recall that we defined the global Rankin--Selberg integral
\[
	\lambda(s, \varphi, \mu, \phi') = \int\limits_{\GL_n(E) \bs \GL_n(\AA_E)} \varphi_1(g)\varphi_2(g) \ol{\Theta_\mu(s, g, \phi')}\, dg.
\]
When $s = \frac12$, we write
$\lambda(\varphi, \mu, \phi') = \lambda(\frac12, \varphi, \mu, \phi')$.

\begin{definition}
For $W_{1,v} \in \W(\pi_{1,v}, \psi_v)$,
$W_{2,v} \in \W(\pi_{2,v}, \ol\psi_v)$, and $\phi_v'
\in \S(E_{v,n})$,
define the local Rankin--Selberg period
\begin{align*}
	\lambda_v(s, &W_{1,v}, W_{2,v}, \mu_v, \phi_v')\\
	&= \int\limits_{N_n(E) \bs \GL_n(E)} W_{1,v}(g)W_{2,v}(g)
	\ol{\mu_{v}(\det g)\phi_v'(e_n g)} |\det g|^{s}\, dg,
\end{align*}
where $e_n = (0, \dots, 0, 1) \in E_n$.
This integral is absolutely convergent for $\Re(s)$ large enough
and has a meromorphic continuation to the whole complex plane.
\end{definition}

Also define the normalized version
\[
	\lambda_v^\natural(s, W_{1,v}, W_{2,v}, \mu_v, \phi_v')
	= \frac{\lambda_v(s, W_{1,v}, W_{2,v}, \mu_v, \phi_v')}{L(s, \pi_{1,v} \times \pi_{2,v} \otimes \mu_v^{-1})},
\]
which is homolorphic for all $s$.
Let 
\[
	\lambda_v^\natural(W_{1,v}, W_{2,v}, \mu_v, \phi_v')
	= \lambda_v^\natural(\tfrac12, W_{1,v}, W_{2,v}, \mu_v, \phi_v').
\]
If $\Pi$, $\psi$, and $\mu$ are all
unramified at $v$, and $W_{1,v}$ and $W_{2,v}$ are both $\GL_n(\O_{E_v})$-fixed
and normalized so that $W_{1,v}(1) = W_{2,v}(1) = 1$, 
we have
\[
	\lambda_v^\natural(W_{1,v}, W_{2,v}, \mu_v, \phi_v')
	= \Vol(\GL_n(\O_{E_v})).
\]
If $\Pi_v$ is tempered, $\lambda_v(s, W_{1,v}, W_{2,v},
\mu_v, \phi_v')$ is holomorphic at $s = \frac12$, and we
can define 
$\lambda_v(W_{1,v}, W_{2,v}, \mu_v, \phi_v')
= \lambda_v(\frac12, W_{1,v}, W_{2,v}, \mu_v, \phi_v')$
directly.

\begin{proposition}[\cite{MR701565}]
\label{prop:rs}
Let $\varphi = \varphi_1 \otimes \varphi_2 \in \Pi$ and
$\phi' \in \S(\AA_{E,n})$.
Then we have the decomposition of the Rankin--Selberg period
\[
	\lambda(\varphi, \mu, \phi')
	= \zeta_E^\ast(1)^{-1}L(\tfrac12, \pi_1 \times \pi_2 \otimes \mu^{-1})
	\prod_v \lambda_v^\natural(W_{1,v}, W_{2,v}, \mu_v, \phi_v'),
\]
where 
$W_{\varphi_1} = \otimes_v W_{1,v}
 \in \W(\pi_1, \psi)$ and $W_{\varphi_2} = \otimes_v W_{2,v} \in
\W(\pi_2, \ol{\psi})$.
\end{proposition}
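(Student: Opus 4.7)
The statement is Jacquet's classical Rankin--Selberg integral representation, and the plan is to carry out the standard unfolding argument. First, substitute
\[
  \ol{\Theta_\mu(s, g, \phi')} = \ol{\mu(\det g)}\,|\det g|^{s}\sum_{z \in E_n}\ol{\phi'(zg)}
\]
into the defining integral for $\lambda(s,\varphi,\mu,\phi')$. The sum over $z \in E_n$ splits as $z = 0$ plus $z \neq 0$. The first contribution vanishes by cuspidality of $\varphi_1$ (for $\Re(s)$ large enough for absolute convergence, the remaining integrand is the constant term of $\varphi_1\varphi_2$ along the mirabolic, which vanishes), while the second is the main term that produces the $L$-function.

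For the $z \neq 0$ term, use that $\GL_n(E)$ acts transitively on $E_n \setminus \{0\}$ with the stabilizer of $e_n = (0,\dots,0,1)$ being the mirabolic subgroup $P_n(E) \subseteq \GL_n(E)$, and rewrite
\[
  \sum_{z \neq 0}\ol{\phi'(zg)} = \sum_{\gamma \in P_n(E) \bs \GL_n(E)} \ol{\phi'(e_n \gamma g)}.
\]
Since $\mu$ and the adelic norm $|\det|$ are both trivial on $\GL_n(E)$, the integrand is left $P_n(E)$-invariant, and the standard unfolding replaces the integral over $\GL_n(E) \bs \GL_n(\AA_E)$ by one over $P_n(E) \bs \GL_n(\AA_E)$. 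Now apply the Fourier expansion
\[
  \varphi_1(g) = \sum_{\delta \in N_n(E) \bs P_n(E)} W_{\varphi_1}(\delta g)
\]
to unfold further to $N_n(E) \bs \GL_n(\AA_E)$, and then integrate $\varphi_2$ along $N_n(E) \bs N_n(\AA_E)$ against $\ol{\psi}$ to extract $W_{\varphi_2} \in \W(\pi_2, \ol{\psi})$. This yields
\[
  \lambda(s, \varphi, \mu, \phi') = \int\limits_{N_n(\AA_E)\bs\GL_n(\AA_E)} W_{\varphi_1}(g)\,W_{\varphi_2}(g)\,\ol{\phi'(e_n g)\,\mu(\det g)}\,|\det g|^{s}\,dg.
\]

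Assuming $W_{\varphi_1} = \otimes_v W_{1,v}$, $W_{\varphi_2} = \otimes_v W_{2,v}$, and $\phi' = \otimes_v \phi_v'$ are factorizable, the global integral breaks up as a product of the local integrals $\lambda_v(s, W_{1,v}, W_{2,v}, \mu_v, \phi_v')$; the Tamagawa normalization of the measure on $\GL_n(\AA_E)$ introduces the overall factor $\zeta_E^\ast(1)^{-1}$. At almost every $v$ the unramified calculation gives $\lambda_v(s, W_{1,v}, W_{2,v}, \mu_v, \phi_v') = L(s, \pi_{1,v} \times \pi_{2,v} \otimes \mu_v^{-1}) \cdot \Vol(\GL_n(\O_{E_v}))$, so the global $L$-function emerges as an Euler product; dividing through by the local $L$-factors converts each $\lambda_v$ into $\lambda_v^\natural$, and specializing at $s = \tfrac12$ yields the claimed identity.

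The proof is classical and contains no essential obstacle; the entire statement is due to Jacquet and Piatetski-Shapiro--Shalika. The technical items requiring care are the justification of interchanging sum and integral (via absolute convergence in a right half plane followed by meromorphic continuation through $s = \tfrac12$), the vanishing of the $z = 0$ contribution by cuspidality, and the bookkeeping of measure normalizations so that the global Tamagawa factor $\zeta_E^\ast(1)^{-1}$ appears with the correct power.
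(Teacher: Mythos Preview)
Your outline is the classical unfolding of Jacquet--Piatetski-Shapiro--Shalika, which is exactly what the paper invokes: the proposition is stated with a citation to \cite{MR701565} and no proof is given in the paper itself, so there is nothing further to compare.

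One small correction: your treatment of the $z=0$ term is not quite right. That term contributes $\ol{\phi'(0)}\int_{\GL_n(E)\bs\GL_n(\AA_E)}\varphi_1\varphi_2\,\ol{\mu(\det g)}\,|\det g|^s\,dg$, which has nothing to do with a mirabolic constant term; the integrand transforms under the center by $\chi_{\pi_1}\chi_{\pi_2}\mu^{-n}|\cdot|^{ns}$, so the integral is either identically zero or divergent. The clean way is to sum only over $z\neq 0$ from the outset (equivalently, to work with the mirabolic Eisenstein series), establish absolute convergence for $\Re(s)\gg 0$ there, and then continue meromorphically. Cuspidality enters later, in justifying the Fourier expansion $\varphi_1(g)=\sum_{\delta\in N_n(E)\bs P_n(E)}W_{\varphi_1}(\delta g)$, where the vanishing of the constant term along $N_n$ is what makes the expansion valid. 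Everything else in your sketch---the mirabolic unfolding, the Whittaker step, the Euler factorization, and the appearance of $\zeta_E^\ast(1)^{-1}$ from the Tamagawa normalization---is correct.
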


\subsection{Periods on the unitary group}
\label{sec:unitaryperiods}
For each place $v$ of $F$, the local conjecture
states that 
\[
	\dim\Hom_{\U(V)(F_v)}(\pi_v \otimes \ol{\omega_{\psi_v', \mu_v}}, \CC) \le 1.
\]
At least under the assumptions of Theorem \ref{thm:maintheorem}, the
local conjectue is true.
Indeed, at split places, it holds by \cite[Theorem~B]{MR2999291}
and \cite[Theorem~C]{MR2874638}.
At inert places, since $\pi_v$ is unramified, 
it holds by \cite[Corollary~4.8]{MR4622393}.
Thus, for the rest of this subsection, we assume we are in a
situation in which the local conjecture holds.

For each $v$, let $\ell_v$ be a generator of the space
$\Hom_{\U(V)(F_v)}(\pi_v \otimes \ol{\omega_{\psi_v', \mu_v}}, \CC)$. 
normalized so that when all data
is unramified, including $\varphi_v$ and $\phi_v$,
we have $\ell_v(\varphi_v, \phi_v) = 1$.
Then there exists a constant $C$ such that
\begin{equation}
\label{eq:factorPell}
	\P(\varphi, \phi) =
	C \times \prod_v \ell_v(\varphi_v, \phi_v)
\end{equation}
for all $\varphi = \otimes_v \varphi_v$,
$\phi = \otimes_v \phi_v$. 

\begin{proposition}
\label{prop:alphaconvergence}
Let $v$ be a place of $F$, and assume that $\pi_v$ is tempered.
The integral defining
\[
	\alpha_v(\varphi_v, \phi_v, \varphi_v', \phi_v') =
	\int\limits_{\U(V)(F_v)} \langle \pi_v(h)\varphi_v, \varphi_v'
	\rangle_v \langle \omega_{\psi'_v, \mu_v}(h_v) \phi_v,
	\phi_v'\rangle_v \, dh
\]
converges absolutely, for all $\varphi_v, \varphi_v' \in \pi_v$ and
$\phi_v, \phi_v' \in \S(\LL(F_v))$.
\end{proposition}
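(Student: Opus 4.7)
The plan is to mimic the approach used in the skew-Hermitian Fourier--Jacobi case by Liu \cite{MR3244725} and Xue \cite{MR3505397}. The two factors in the integrand behave very differently: the tempered factor has controlled polynomial decay, while the Weil-representation factor has rapid (Schwartz-type) decay. First I would bound the matrix coefficient of $\pi_v$ using temperedness. Concretely, by the standard Harish--Chandra--type estimate for tempered representations, there exist constants $C, N \ge 0$ such that
\[
    |\langle \pi_v(h)\varphi_v, \varphi_v'\rangle_v| \le C \cdot \Xi^{\U(V)}(h) \cdot (1 + \sigma(h))^N,
\]
where $\Xi^{\U(V)}$ is the Harish-Chandra $\Xi$-function on $\U(V)(F_v)$ and $\sigma$ is a standard log-norm. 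Since the center of $\U(V)(F_v)$ is anisotropic (and hence compact in the nonarchimedean case, and of finite volume modulo itself in the archimedean case), this bound places $\langle \pi_v(h)\varphi_v, \varphi_v'\rangle_v$ in $L^{2+\epsilon}(\U(V)(F_v))$ for every $\epsilon > 0$.

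Next, I would bound the Weil-representation factor using the explicit Schr\"odinger-model formulas for $\omega_{\psi_v', \mu_v}$ on $\S(\LL(F_v))$. Using the Cartan decomposition $\U(V)(F_v) = K A^+ K$, the action of $a \in A^+$ on $\phi_v \in \S(\LL(F_v))$ is given (up to the mixed model and a possible partial Fourier transform) by a combination of scaling on $\LL$ and its complement. Because $\phi_v, \phi_v'$ are Schwartz, pairing them against the transformed function produces a bound of the form
\[
    |\langle \omega_{\psi_v', \mu_v}(a)\phi_v, \phi_v'\rangle_v| \le C' \cdot \delta_P(a)^{1/2} \cdot \Phi(a),
\]
where $\delta_P$ is the modular character of the Siegel parabolic of $\U(V)$ and $\Phi$ is a Schwartz function on $A^+$ (i.e.\ decays faster than any polynomial in $\sigma(a)$).

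Finally, combining these two bounds with the Jacobian from the Cartan decomposition, the full integrand over $A^+$ is dominated by
\[
    \Xi^{\U(V)}(a) \cdot \delta_P(a)^{1/2} \cdot (1+\sigma(a))^N \cdot \Phi(a) \cdot J(a),
\]
where $J(a)$ is the Cartan Jacobian. Since $\Xi^{\U(V)}(a) \delta_P(a)^{1/2} J(a)$ grows at most polynomially in $\sigma(a)$ (in fact $\Xi$ cancels much of $\delta_P^{1/2}J$), the rapid decay of $\Phi$ guarantees absolute convergence.

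The main obstacle is the bound on the Weil-representation matrix coefficient in Cartan coordinates. Passing between the Schr\"odinger models for different Lagrangians (as in Lemma \ref{lem:pftlagrangian}) and keeping track of the partial Fourier transforms while the Cartan element varies requires care, but once the Schwartz-type estimate $\Phi(a)$ is in hand, the combination with the tempered bound is routine. An alternative, slicker approach would be to cite the analogous statement already established by Xue \cite{MR3505397} in the Fourier--Jacobi setting and observe that our local integral has the same shape, since only local data at $v$ enters and the group $\U(V)(F_v)$ and Weil representation $\omega_{\psi_v', \mu_v}$ are identical to those considered there.
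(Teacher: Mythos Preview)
Your overall architecture---Cartan decomposition, Harish--Chandra bound on the tempered factor, explicit bound on the Weil factor, then combine---is exactly the one the paper uses, and your final remark that one can borrow the estimates from Xue \cite{MR3505397} is precisely what the paper does (it cites Appendix~B there). But two of your quantitative claims are off, and together they constitute a genuine gap.

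\textbf{The Weil matrix coefficient is not of Schwartz type.} Your asserted bound
\[
  |\langle \omega_{\psi_v',\mu_v}(a)\phi_v,\phi_v'\rangle_v|
  \le C'\,\delta_P(a)^{1/2}\,\Phi(a)
\]
with $\Phi$ Schwartz on $A^+$ is false. Already for $\U(1,1)$ (so $r=1$, Lagrangian $\LL\simeq F$), with $\phi_v=\phi_v'=\mathbbm 1_{\O_F}$ and $a=\diag(t,\bar t^{-1})$ with $|t|\le 1$, one computes $\langle\omega(a)\phi_v,\phi_v'\rangle=|t|^{1/2}$ exactly: polynomial, not rapid, decay. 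The correct (and sharp) estimate, taken from Xue, is
\[
  |\langle \omega_{\psi_v',\mu_v}(k_1mk_2)\phi_v,\phi_v'\rangle_v|
  \le A\,|a_1\cdots a_r|_v^{1/2},
\]
for $m=\diag(a_1,\dots,a_r,1,\dots,1,\bar a_r^{-1},\dots,\bar a_1^{-1})\in A_0^+$.

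\textbf{You need $\Xi^{\GL_n(E)}$, not $\Xi^{\U(V)}$.} Since $\pi_v$ is tempered as a representation of $\GL_n(E_v)$, its matrix coefficients are controlled by the Harish--Chandra function of $\GL_n(E_v)$, and this is what the paper uses. On $A_0^+$ one has $\Xi^{\GL_n(E)}(m)\asymp\delta_{B_n}(m)^{1/2}$, and the key numerical identity is
\[
  \delta_{P_0}(m)=\delta_{B_n}(m)^{1/2},
\]
so $\Xi^{\GL_n(E)}(m)$ and the Cartan Jacobian $\nu(m)\asymp\delta_{P_0}(m)^{-1}$ cancel exactly, up to the log factors $(1+\sigma(m))^{d}$. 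If instead you use $\Xi^{\U(V)}(m)\asymp\delta_{P_0}(m)^{1/2}$, then $\Xi^{\U(V)}(m)\nu(m)\asymp\delta_{P_0}(m)^{-1/2}$, and multiplying by the Weil bound $|a_1\cdots a_r|^{1/2}$ gives, e.g.\ for $n=2$, the constant function $1$ on $A_0^+$: not integrable. So the weaker $\U(V)$-tempered bound is genuinely insufficient here.

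With the two correct inputs, the paper's computation collapses to showing
\[
  \int_{A_0^+}(1+\sigma(m))^{B+d}\,|a_1\cdots a_r|_v^{1/2}\,dm<\infty,
\]
which is an elementary product of one-variable integrals. Your sketch becomes correct once you replace the Schwartz claim by the polynomial bound $|a_1\cdots a_r|^{1/2}$ and replace $\Xi^{\U(V)}$ by $\Xi^{\GL_n(E)}$; the crucial identity $\delta_{P_0}=\delta_{B_n}^{1/2}$ is what makes the balance work.
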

\begin{proof}
We make use of the estimates from \cite[Appendix~B]{MR3505397}.

We view $\U(V)$ as an algebraic group over $F_v$,
and we write $|\phantom{x}|_v \coloneqq |\phantom{x}|_{E_v}$.
Let $r$ be the split rank of $\U(V)$, and suppose that
\[
	\beta = \begin{pmatrix}
	& & & & & & 1 \\
	& & & & & \iddots & \\
	& & & & 1 & & \\
	& & & \beta' & & & \\
	& & 1 & & & & \\
	& \iddots & & & & & \\
	1 & & & & & &
	\end{pmatrix}
\]
where there are $r$ ones on each half of the diagonal.
Let
\[
	A_0 \coloneqq \left\{ \diag(a_1,
	\dots, a_r, 1, \dots, 1, \sig{a_r}^{-1},
	\dots, \sig{a_1}^{-1}) \right\}
\]
be a maximal split torus of $\U(V)$, where $\diag(a_1, \dots, a_n)$
denotes the diagonal matrix with entries $a_1, \dots, a_n$.
Let
\[
	A_0^+ \coloneqq \left\{ \diag(a_1,
	\dots, a_r, 1, \dots, 1, \sig{a_r}^{-1},
	\dots, \sig{a_1}^{-1}) :
	|a_1|_v \le \cdots
	\le |a_r|_v \le 1\right\}.
\]
Let $P_0$ be the minimal parabolic subgroup of $\U(V)$ whose Levi
is the centralizer of $A_0$. Let $K = \U(V)(\O_{F_v})$.

Let $B_n$ be the upper triangular matrices in
$\GL_n(E)$.
For $g = (a_{ij}) \in \GL_n(E_v)$ with
$g^{-1} = (b_{ij})$, let
\[
	\|g\| = \max\{ |a_{ij}|_v, |b_{ij}|_v : 1 \le i, j \le n \},
	\quad \sigma(g) = \log\|g\|.
\]
Let $\Xi$ be the Harish-Chandra function
for $\GL_n(E)$.
The functions $\Xi$, $\sigma$, and $\|\phantom{x}\|$ are
$\GL_n(\O_{E_v})$-bi-invariant functions on $\GL_n(E_v)$.

The modulus characters of $P_0$ and $B_n$ are
given by
\[
	\delta_{P_0}(m) = \delta_{B_n}(m)^{1/2}
	= |a_1|_v^{n-1} \cdots |a_r|_v^{n-2r+1},
\]
for $m = \diag(a_1, \dots, a_r, 1, \dots,
1, a_r^{\tau, -1}, \dots, a_1^{\tau, -1})$.

Now, we have the following bounds from \cite[Appendix~B]{MR3505397}.
We have a Cartan decomposition $\U(V)(F_v) = KA_0^+K$.
For any $f \in L^1(\U(V)(F_v))$ we have
\[
	\int_{\U(V)(F_v)}f(h)\,dh = \int_{A_0^+}\nu(m)
	\int_{K \times K}f(k_1mk_2)\,dk_1 \,dk_2\, dm,
\]
where $\nu(m)$ is a positive function on $A_0^+$ such that
there is a constant $A > 0$ with
\[
	A^{-1}\delta_{P_0}(m)^{-1} \le \nu(m) \le A\delta_{P_0}(m)^{-1}.
\]
There exist constants $C_1$, $C_2$ and a positive real number
$d$ such that
\[
	C_1\delta_{B_n}(m)^{1/2}
	\le \Xi(m) \le C_2\delta_{B_n}(m)^{1/2}
	(1 + \sigma(m))^d.
\]
There exist constants $A$ and $B$ such that
\[
	|\langle \pi_v(h)\varphi_v, \varphi_v' \rangle_v|
	\le A\Xi(g)(1 + \sigma(g))^B
\]
for all $g \in \GL_n(E)$.
Finally, there is a constant $A$ such that
\[
	|\langle \omega_{\psi_v', \mu_v}(k_1ak_2) \phi, \phi' \rangle|
	\le A|a_1 \cdots a_r|_v^{1/2},
\]
for all $k_1, k_2 \in K$ and $m \in A_0^+$ as above.

Thus, we have, up to a positive constant,
\begin{align*}
&\int\limits_{\U(V)(F_v)} |\langle \pi_v(h)\varphi_v, \varphi_v'
	\rangle_v|
	|\langle\omega_{\psi'_v, \mu_v}(h_v) \phi_v,
	\phi_v'\rangle_v| \, dh \\
&= \int_{A_0^+}\nu(m)\int_{K \times K}
|\langle \pi_v(k_1mk_2)\varphi_v, \varphi_v'\rangle_v|
|\langle \omega_{\psi_v', \mu_v}(k_1mk_2)\phi_v, \phi_v'\rangle|
\,dk_1\,dk_2\,dm\\
&\le \int_{A_0^+} \delta_{P_0}(m)^{-1}
\int_{K \times K}\Xi(m)(1+\sigma(m))^B
|a_1\cdots a_0|_v^{1/2}
\,dk_1\,dk_2\,dm \\
&\le \int_{A_0^+} (1+\sigma(m))^{B+d}
|a_1 \cdots a_r|_v^{1/2}\, dm,
\end{align*}
where  $m = \diag(a_1, \dots, a_r, 1, \dots,
1, a_r^{\tau, -1}, \dots, a_1^{\tau, -1})$ as before.
A straightforward calculation gives $\sigma(m) = - \log |a_1|_v$,
so we have to show that
\[
	\int_{A_0^+} (1 - \log|a_1|_v)^{B+d}|a_1 \cdots a_r|_v^{1/2}\,dm
\]
converges.
In fact, this is bounded by
\begin{align*}
\int\limits_{|a_1|_v \le 1} \cdots
\int\limits_{|a_r|_v \le 1} (1 - \log|a_1|_v)^{B+d}|a_1 \cdots
a_r|_v^{1/2} \,da_1 \cdots \,da_r,
\end{align*}
where the $d a_i$ are the multiplicative Haar measure
on $E_v^\times$.
This is a product of the integrals
\[
	\int_{|a_1|_v \le 1} (1-\log|a_1|_v)^{B+d}|a_1|_v^{1/2}\,
	da_1, \quad
	\int_{|a_i|_v \le 1} |a_i|_v^{1/2} \,da_i,
\]
for $2 \le i \le r$,
which are easily seen to be convergent.
\end{proof}

We now recall one result from \cite{MR3505397}.
Let $v$ be a split place of $F$, so $E_v = F_v \times F_v$.
Suppose $\pi_v = \pi_1 \boxtimes \pi_2$ for representations
$\pi_1$ and $\pi_2$ of $\GL_n(F_v)$, and $\mu_v = (\mu_1, \mu_1^{-1})$.


\begin{definition}
\label{def:alphaWhittaker}
For $W_1, W_1' \in \W(\pi_1, \psi_v')$ and 
$\ol{W_2}, \ol{W_2'} \in \W(\ol{\pi_2}, \ol{\psi_v'})$, define
\begin{align*}
	&\alpha(W_1, \ol{W_2}, \phi_{1,v}, W_1', \ol{W_2'}, \phi_{2,v}) \\
	&= \int_{\GL_n(F_v)}
	\langle \pi_1(g)W_1, W_1' \rangle_v
	\langle \ol{\pi_2}(g)\ol{W_2}, \ol{W_2'}\rangle_v
	\langle R_{\mu_1}(g) \phi_{1,v}, \phi_{2,v}\rangle_v \, dg
\end{align*}
\end{definition}

\begin{proposition}[{\cite[Proposition~6.2.2]{MR3505397}}]
\label{prop:alphalambda}
For all $W_1, W_1' \in \W(\pi_1, \psi_v')$ and 
$\ol{W_2}, \ol{W_2'} \in \W(\ol{\pi_2}, \ol{\psi_v'})$,
we have
\[
	\alpha(W_1, \ol{W_2}, \phi_{1,v}, W_1', \ol{W_2'}, \phi_{2,v})
	= \zeta_{F_v}(1)\lambda_v(W_1, \ol{W_2}, \mu_1, \phi_{1,v})
	\ol{\lambda_v(W_1', \ol{W_2'}, \mu_1, \phi_{2,v})}.
\]
\end{proposition}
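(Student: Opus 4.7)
The identity to be established is of Ichino--Ikeda type, equating a $\GL_n(F_v)$-integral of a triple product of matrix coefficients on the left to a product of Rankin--Selberg periods on the right. My plan is to unfold the Whittaker-model descriptions of the two $\GL_n(F_v)$-matrix coefficients and then collapse the resulting multi-variable integral into the two decoupled Rankin--Selberg periods. Concretely, I would substitute the defining expression
\[
\langle \pi_1(g)W_1, W_1'\rangle_v = \int_{N_{n-1}(F_v)\bs\GL_{n-1}(F_v)} W_1(\iota(h_1)g)\ol{W_1'(\iota(h_1))}\,dh_1,
\]
where $\iota(h) = \diag(h,1)$, and the analogous expression for $\langle \ol{\pi_2}(g)\ol{W_2},\ol{W_2'}\rangle_v$, into the definition of $\alpha$. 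By Fubini, justified by the decay estimates underlying Proposition \ref{prop:alphaconvergence} together with the Schwartz nature of the Weil-representation matrix coefficient, this produces a triple integral over $\GL_n(F_v)\times(N_{n-1}(F_v)\bs\GL_{n-1}(F_v))^2$.

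Next, I would change variables $g \mapsto \iota(h_1)^{-1}g$, using left $\GL_n(F_v)$-invariance of $dg$, to move the $h_1$-dependence out of the argument of $W_1$, and then decompose $\GL_n(F_v) = N_n(F_v)\cdot(N_n(F_v)\bs\GL_n(F_v))$. The Whittaker equivariance of $W_1$ and $\ol{W_2}$ under $N_n(F_v)$, together with the explicit formula for $R_{\mu_1}(\iota(h))$ acting on $\S(F_{v,n})$, should collapse the two $\GL_{n-1}$-integrations with the $N_n$-integration and decouple the $(W_1,\ol{W_2},\phi_{1,v})$ variables from the $(W_1',\ol{W_2'},\phi_{2,v})$ variables. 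The two resulting integrals over $N_n(F_v)\bs\GL_n(F_v)$ are then identified with $\lambda_v(W_1,\ol{W_2},\mu_1,\phi_{1,v})$ and $\ol{\lambda_v(W_1',\ol{W_2'},\mu_1,\phi_{2,v})}$. The factor $\zeta_{F_v}(1)$ emerges from tracking the normalization of $dg$ on $\GL_n(F_v)$ (which carries one copy of $\zeta_{F_v}(1)$ from the paper's conventions) against the quotient measure on $N_n(F_v)\bs\GL_n(F_v)$ used in each $\lambda_v$.

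The main obstacle will be justifying convergence and Fubini throughout the unfolding: the intermediate integrals need not all be absolutely convergent, so some regularization (for instance, inserting an auxiliary $|\det g|^s$ factor and analytically continuing from the region of absolute convergence to $s=\tfrac{1}{2}$, as in the very definition of $\lambda_v(s,\cdot)$) will likely be needed to carry out the interchange of integrals rigorously. A secondary difficulty is bookkeeping the way in which the Weil-representation matrix coefficient $\langle R_{\mu_1}(g)\phi_{1,v},\phi_{2,v}\rangle_v$ splits between the two Rankin--Selberg integrands after the change of variables involving $\iota(h_1)$, since the resulting factors of $\mu_1(\det h_1)|\det h_1|^{1/2}$ must recombine correctly with the $\mu_1$ and $|\det|^{1/2}$ factors already present in the definition of $\lambda_v$.
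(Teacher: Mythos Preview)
The paper does not supply its own proof of this proposition: it is quoted verbatim from \cite[Proposition~6.2.2]{MR3505397} and followed only by the one-line remark that the extra factor $\zeta_{F_v}(1)$ arises from the paper's normalization of the Haar measure on $\GL_n(F_v)$. So there is no ``paper's own proof'' to compare against beyond that measure-normalization comment.

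Your unfolding sketch is the standard route and is essentially what underlies the cited result in Xue's paper: substitute the Whittaker-model expressions for the two $\GL_n$ matrix coefficients, change variables to decouple, and recognize the two Rankin--Selberg integrals. Your identification of the convergence/Fubini issue and the need to work with the $s$-variable and analytically continue is exactly the technical point that has to be handled; this is how Xue does it. The bookkeeping of the $\mu_1$ and $|\det|^{1/2}$ factors you flag is routine once the change of variables is set up. In short, your plan is correct and aligns with the source the paper cites; the only thing the present paper adds is the observation about where $\zeta_{F_v}(1)$ comes from.
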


Note that the extra factor of $\zeta_{F_v}(1)$ comes from the different
normalization of measures on $\GL_n(F_v)$.

As noted in the introduction, if we assume Conjecture \ref{conj:unrefined},
then there exists a constant $C$ such that
\begin{equation}
\label{eq:factorP}
	|\P(\varphi, \phi)|^2 = C \times
	\prod_v \alpha_v^\natural(\varphi_v, \phi_v, \varphi_v, \phi_v)
\end{equation}
for all $\varphi = \otimes_v \varphi_v$,
$\phi = \otimes_v \phi_v$. 
(At split places $v$, the nonvanishing of $\alpha_v^\natural$
follows from the nonvanishing of $\lambda_v$.)

\subsection{Local distributions}
Let $\pi$ be a cuspidal automorphic representation of 
$\GL_n(\AA_E)$. 
Let $v$ be a place of $F$.

\begin{definition}
Let $f_v \in \C_c^\infty(G(F_v))$, $\phi_{1,v} \otimes 
\phi_{2,v} \in \S(\LL(F_v))^{\otimes 2}$.
Define the local distribution
\[
	\sJ_{\pi_v}(f_v, \phi_{1,v} \otimes \phi_{2,v})
	= \sum_{\varphi_v} 
	\frac{\ell_v(\pi_v(f_v)\varphi_v, \phi_{1,v})
	\ol{\ell_v(\varphi_v, \phi_{2,v})}}
	{\langle \varphi_v, \varphi_v\rangle_v},
\]
where the sum runs over an orthogonal basis of $\pi_v$.
If $\pi_v$ is tempered, in fact define
\begin{align*}
	J_{\pi_v}(f_v, \phi_{1,v} \otimes \phi_{2,v})
	= \sum_{\varphi_v}
	\frac{\alpha_v(\pi_v(f_v)\varphi_v, \phi_{1,v}, \varphi_v, \phi_{2,v})}{\langle \varphi_v, \varphi_v \rangle_v},
\end{align*}
and also define the normalized version
\[
	J_{\pi_v}^\natural(f_v, \phi_{1,v} \otimes \phi_{2,v})
	= \sum_{\varphi_v}
	\frac{\alpha_v^\natural(\pi_v(f_v)\varphi_v, \phi_{1,v}, \varphi_v, \phi_{2,v})}{\langle \varphi_v, \varphi_v \rangle_v}.
\]
Again, the sums run over an orthogonal basis of $\pi_v$.
\end{definition}

Let $\Pi = \pi \boxtimes \sig{\pi^\vee}$ be a cuspidal 
automorphic representation of $G'(\AA_E)$.
Note that
\[
	\W(\sig{\pi_v^\vee}, \ol\psi_v) = 
	\{ \sig{\ol{W_v}}(g) \coloneqq \ol{W_v(\sig{g})} :
	W_v \in \W(\pi_v, \psi_v) \}.
\]

\begin{definition}
Let $f_v' \in \C_c^\infty(G'(F_v))$,
$\phi_v' \in \S(E_{v,n})$. Define the local distribution
\begin{align*}
	I_{\Pi_v}(s, f_v', \phi_v') 
	&= \sum_{W_v}
	\frac{\lambda_v(s, \pi_v(f_{1,v}')W_v,
	\sig{\pi_v^\vee}(f_{2,v}') \sig{\ol{W_v}}, \mu_v, \phi_v') }{\langle W_v, W_v \rangle_v},
\end{align*}
where
the sum runs over an orthogonal basis
of $\W(\pi_v, \psi_v)$.
If $\sI_{\Pi_v}(s, f_v', \phi_v')$ is holomorphic at $s = \frac12$,
we write $\sI_{\Pi_v}(f_v', \phi_v') = \sI_{\Pi_v}(\frac12,
f_v', \phi_v')$.

Also define the normalized version
\[
	I_{\Pi_v}^\natural(f_v', \phi_v')
	= \sum_{W_v} \frac{\lambda_v^\natural(\pi_v(f_{1,v}')W_{v}, \sig{\pi_v^\vee}(f_{2,v}')\sig{\ol{W_v}}, \mu_v, \phi_v')}
	{\langle W_v, W_v \rangle_v^\natural},
\]
where the sum runs over an orthogonal basis of
$\W(\pi_v, \psi_v)$.
\end{definition}

\begin{proposition}
\label{prop:Idecomp}
As before, let $\Pi = \pi \boxtimes \sig{\pi^\vee}$.
Suppose $f' = \otimes_v f_v'$ and $\phi' = \otimes_v \phi_v'$
are factorizable.
Then
\[
 	\sI_\Pi(f', \phi') = \frac{L(\tfrac12, \pi \times \sig{\pi^\vee} \otimes \mu^{-1})}{\Res_{s=1} L(s, \pi \times \pi^\vee)} 
	\prod_v \sI_{\Pi_v}^\natural(f_v', \phi_v').
\]
\end{proposition}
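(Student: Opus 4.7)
The strategy is to expand $\sI_\Pi$ spectrally and reduce the resulting expression to the known Euler product decompositions of the Rankin--Selberg period (Proposition \ref{prop:rs}) and the Petersson inner product (Proposition \ref{prop:petdecomp}). The key observation is that the distinguishing period $\P'$ plays the role of a Petersson inner product on $\pi$, collapsing an \emph{a priori} double spectral sum into a diagonal one.

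I would first choose an orthogonal basis $\{\varphi_\alpha\}$ of $\pi$ whose Whittaker functions $W_{\varphi_\alpha} = \otimes_v W_{v,\alpha}$ are pure tensors. Then $\{\sig{\ol{\varphi_\alpha}}\}$ is an orthogonal basis of $\sig{\pi^\vee}$ with Whittaker functions $\otimes_v \sig{\ol{W_{v,\alpha}}} \in \W(\sig{\pi^\vee}, \ol\psi)$, and $\{\varphi_\alpha \otimes \sig{\ol{\varphi_\beta}}\}$ is an orthogonal basis of $\Pi$. Substituting $f' = f_1' \otimes f_2'$, I would write $\sI_\Pi(f', \phi')$ as a double sum over $\alpha, \beta$ and then compute $\P'(\varphi_\alpha \otimes \sig{\ol{\varphi_\beta}})$. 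Identifying $H_2 \cong \Res_{E/F}\GL_n$ via $(g, \ol g) \mapsto g$ with $Z_2(\AA_F) \cong \AA_E^\times$, and using that $\sig{\ol{\varphi_\beta}}(\ol g) = \ol{\varphi_\beta(g)}$, this period equals $\int_{\AA_E^\times\GL_n(E)\bs\GL_n(\AA_E)}\varphi_\alpha(g)\ol{\varphi_\beta(g)}\,dg$. Because $\pi$ has unitary central character, this in turn equals $\langle \varphi_\alpha, \varphi_\beta\rangle_{\Pet}$ up to the Tamagawa volume of the compact norm-one idele class group of $E$, which is $1$. By the orthogonality of $\{\varphi_\alpha\}$, only the diagonal terms $\alpha = \beta$ survive, and one checks directly that $\langle \sig{\ol{\varphi_\alpha}}, \sig{\ol{\varphi_\alpha}}\rangle_{\Pet} = \langle \varphi_\alpha, \varphi_\alpha\rangle_{\Pet}$.

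Next I would apply Proposition \ref{prop:rs} to the surviving terms to obtain
\[
  \lambda(\pi(f_1')\varphi_\alpha \otimes \sig{\pi^\vee}(f_2')\sig{\ol{\varphi_\alpha}}, \mu, \phi') = \zeta_E^\ast(1)^{-1}L(\tfrac12, \pi \times \sig{\pi^\vee}\otimes\mu^{-1})\prod_v \lambda_v^\natural,
\]
where each local factor is $\lambda_v^\natural(\pi_v(f_{1,v}')W_{v,\alpha}, \sig{\pi_v^\vee}(f_{2,v}')\sig{\ol{W_{v,\alpha}}}, \mu_v, \phi_v')$. Applying Proposition \ref{prop:petdecomp} to $\langle \varphi_\alpha, \varphi_\alpha\rangle_{\Pet}$ yields $\zeta_E^\ast(1)^{-1}\Res_{s=1}L(s, \pi \times \pi^\vee)\prod_v \langle W_{v,\alpha}, W_{v,\alpha}\rangle_v^\natural$; the factors of $\zeta_E^\ast(1)^{-1}$ cancel, and the ratio of global $L$-factors is precisely the coefficient in the proposition statement.

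Finally, since the Whittaker functions factor as pure tensors, the multi-indexed sum over $\alpha$ interchanges with the product over $v$, and the resulting local sum at each place is exactly the defining expression for $\sI_{\Pi_v}^\natural(f_v', \phi_v')$. The main technical obstacle is the careful bookkeeping of the Tamagawa measure constant relating $\P'$ to $\langle \cdot,\cdot\rangle_{\Pet}$, together with justifying absolute convergence of the spectral sum so that the interchange of sum and product is legitimate; both should be handled by standard decay estimates for Whittaker functions of cuspidal representations.
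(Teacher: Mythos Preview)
Your approach is essentially identical to the paper's: both identify $\P'(\varphi_1 \otimes \sig{\varphi_2^\vee})$ with $\langle \varphi_1, \varphi_2\rangle_{\Pet}$ to collapse the double spectral sum to a diagonal one, then apply Propositions \ref{prop:rs} and \ref{prop:petdecomp} and interchange the sum with the Euler product. The paper states the identity $\P' = \langle\cdot,\cdot\rangle_{\Pet}$ without comment, whereas you are more explicit about the intervening volume constant; otherwise the arguments coincide.
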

\begin{proof}
Note that for $\varphi_1, \varphi_2 \in \pi$, we have
$\P'(\varphi_1 \otimes \sig{\varphi_2^\vee}) = \langle \varphi_1, \varphi_2 \rangle_{\Pet}$, where
$\sig{\varphi_2^\vee}(g) \coloneqq \ol{\varphi_2(\sig{g})}$.
Thus, we have
\begin{align*}
	\sI_\Pi(f', \phi') 
	&= \sum_{\varphi_1, \varphi_2} 
	\frac{\lambda(\Pi(f')(\varphi_1 \otimes \sig{\varphi_2^\vee}), \mu, \phi')\langle \varphi_1, \varphi_2 \rangle_{\Pet}}
	{\langle \varphi_1, \varphi_1 \rangle_{\Pet}
	\langle \varphi_2, \varphi_2 \rangle_{\Pet}}\\
	&= \sum_{\varphi} 
	\frac{\lambda(\Pi(f')(\varphi \otimes \sig{\varphi^\vee}), \mu, \phi')}
	{\langle \varphi, \varphi \rangle_{\Pet}},
\end{align*}
where $\varphi_1, \varphi_2$ and
$\varphi$ run over an orthogonal basis of $\pi$.
Then by Propositions \ref{prop:rs} and \ref{prop:petdecomp},
we have
\begin{align*}
	\sI_{\Pi}(f', \phi')
	&= \sum_{\varphi}
	\frac{\lambda(\Pi(f')(\varphi \otimes \sig{\varphi^\vee}), 
	\mu, \phi')}{\langle \varphi, \varphi \rangle_{\Pet}} \\
	&= \frac{L(\tfrac12, \pi \times \sig{\pi^\vee} \otimes \mu^{-1})}{\Res_{s=1} L(s, \pi \times \pi^\vee)} \sum_{W} 
	\prod_v \frac{\lambda_v^\natural(\pi_v(f_{1,v}')W_{v}, \sig{\pi_v^\vee}(f_{2,v}')\sig{\ol{W_v}}, \mu_v, \phi_v')}{\langle W_{v}, W_{v} \rangle_v^\natural}\\
	&= \frac{L(\tfrac12, \pi \times \sig{\pi^\vee} \otimes \mu^{-1})}{\Res_{s=1} L(s, \pi \times \pi^\vee)} 
	\prod_v \sI_{\Pi_v}^\natural(f_v', \phi_v'), 
\end{align*}
where in the second line,
$W$ runs over an orthogonal basis of  $\W(\pi, \psi)$.
\end{proof}

\subsection{Local distribution identity}
At split and unramified places of $F$, we have the following
identity between the local distributions 
$\sI_{\Pi_v}$ and $\sJ_{\pi_v}$.

\begin{proposition}
\label{prop:localdistrid}
Let $v$ be a place of $F$.
Suppose that $\pi_v$ is tempered, and that Conjecture
\ref{conj:unramified} holds.
Assume that one of the following conditions holds.
\begin{enumerate}[\normalfont(1)]
\item The place $v$ is inert,
and $(f_v', \phi_v')$ and $\{(f_v, \phi_{1,v} \otimes \phi_{2,v}), (0, 0)\}$ are the unramified test functions.
\item The place $v$ is split, and
$(f_v', \phi_v')$
and $\{(f_v, \phi_{1,v} \otimes \phi_{2,v})\}$ are
matching test functions. 
\end{enumerate}
Then
\[
	\sJ_{\pi_v}(f_v, \phi_{1,v} \otimes \phi_{2,v})
	= \zeta_{E_v}(1)\sI_{\Pi_v}(f_v', \phi_v').
\]
\end{proposition}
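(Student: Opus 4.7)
The plan is to handle the two cases separately. In each, the key step is to express both sides as explicit sums against the same orthogonal basis and then compare.

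In the split case $(2)$, write $E_v = F_v \times F_v$, $\pi_v = \pi_1 \boxtimes \pi_2$, and $\mu_v = (\mu_1, \mu_1^{-1})$. Under the identifications from Lemma \ref{lem:weilrepgln}, $\omega_{\psi_v', \mu_v}$ is realized as $R_{\mu_1}$ on $\S(F_v^n)$ and $\U(V)(F_v)$ as $\GL_n(F_v)$. Proposition \ref{prop:alphalambda} factors $\alpha_v$ as $\zeta_{F_v}(1)$ times $|\lambda_v(-, -, \mu_1, -)|^2$, so by the local multiplicity one statement recalled in Section \ref{sec:unitaryperiods}, the invariant form $\ell_v$ is a scalar multiple of $\lambda_v(-, -, \mu_1, -)$. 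Choosing an orthogonal basis of $\W(\pi_v, \psi_v) = \W(\pi_1, \psi_v') \otimes \W(\pi_2, \ol{\psi_v'})$ in tensor-product form and factoring $f_v'$ via Proposition \ref{prop:splitmatch}, both $\sJ_{\pi_v}$ and $\sI_{\Pi_v}$ expand into the same sum of local Rankin--Selberg products. Tracking the $\zeta_{F_v}(1)$ from Proposition \ref{prop:alphalambda} together with the $\zeta_{F_v}(1)$ coming from the Haar measure relation between $\U_n^\beta(F_v)$ and $\GL_n(F_v)$ noted in the proof of Proposition \ref{prop:splitmatch} produces the required factor $\zeta_{E_v}(1) = \zeta_{F_v}(1)^2$.

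In the inert unramified case $(1)$, only spherical vectors contribute. The unramified $f_v$ acts on the spherical $\varphi_v^0$ as a scalar determined by the measures of $\GL_n(\O_{E_v})$ and $\U(V)(\O_{F_v})$, so $\sJ_{\pi_v}$ reduces to a single term evaluated via the normalizations $\ell_v(\varphi_v^0, \phi_v^0) = 1$ and $\langle \varphi_v^0, \varphi_v^0 \rangle_v = 1$. Similarly $\sI_{\Pi_v}$ reduces to a single term, evaluated via the unramified Rankin--Selberg formula $\lambda_v^\natural(W_v^0, \sig{\ol{W_v^0}}, \mu_v, \phi_v^0) = \Vol(\GL_n(\O_{E_v}))$ together with the Whittaker inner product $\langle W_v^0, W_v^0 \rangle_v = \Vol(\GL_n(\O_{E_v})) L(1, \pi_v \times \pi_v^\vee)$. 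Conjecture \ref{conj:unramified}, which asserts $\alpha_v(\varphi_v^0, \phi_v^0, \varphi_v^0, \phi_v^0) = \L(\tfrac12, \pi_v) \Vol(\U(V)(\O_{F_v}))$, provides the missing link between the $\ell_v$-based side and the $\lambda_v$-based side; combined with the explicit formula for $\L(\tfrac12, \pi_v)$ and the unramified volume $\Vol(\U(V)(\O_{F_v})) = \prod_{i=1}^n L(i, \eta_v^i)^{-1}$, the $L$-factors depending on $\pi_v$ on the right-hand side cancel against their counterparts from the normalization and leave exactly the factor $\zeta_{E_v}(1)$.

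The main obstacle is purely bookkeeping: the various local normalizations (Haar measures on $\GL_n$ and $\U(V)$, Petersson and Whittaker inner products, the self-dual measure on $\LL(F_v)$, and the normalization of $\ell_v$) must be reconciled against the explicit $\zeta$- and $L$-factors in $\L(\tfrac12, \pi_v)$. The conceptual content is already present: Proposition \ref{prop:alphalambda} (due to Xue) handles the split case, while Conjecture \ref{conj:unramified} together with the classical unramified Rankin--Selberg computation handles the inert case, consistent with the assertion in the introduction that this local identity follows easily from these inputs.
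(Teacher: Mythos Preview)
Your overall strategy matches the paper's: case~(2) via Proposition~\ref{prop:alphalambda}, case~(1) via Conjecture~\ref{conj:unramified} and the unramified Rankin--Selberg formula. Two points need correcting.

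In case~(1) you write that $J_{\pi_v}$ ``reduces to a single term evaluated via the normalizations $\ell_v(\varphi_v^0,\phi_v^0)=1$''. But since $\pi_v$ is assumed tempered, the operative definition of $J_{\pi_v}$ is the one built from $\alpha_v$, and $\ell_v$ plays no role in the paper's argument. The paper simply observes that $\pi_v(f_v)\varphi_v^0=\tfrac{\Vol(\GL_n(\O_{E_v}))}{\Vol(\U_n^+(\O_{F_v}))^2}\,\varphi_v^0$, so Conjecture~\ref{conj:unramified} gives
\[
J_{\pi_v}(f_v,\phi_{1,v}\otimes\phi_{2,v})=\frac{\Vol(\GL_n(\O_{E_v}))}{\Vol(\U_n^+(\O_{F_v}))}\,\L(\tfrac12,\pi_v),
\]
and expanding the volumes against the explicit product defining $\L(\tfrac12,\pi_v)$ leaves exactly $\zeta_{E_v}(1)\,I_{\Pi_v}(f_v',\phi_v')$. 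Your detour through $\ell_v$ could be made to work (via $\alpha_v=c\,|\ell_v|^2$ with $c$ determined by Conjecture~\ref{conj:unramified}), but as written it conflates the two definitions.

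In case~(2) your $\zeta$-factor bookkeeping is off. The Haar measure relation from Proposition~\ref{prop:splitmatch} contributes $\zeta_{F_v}(1)^{-1}$, not $\zeta_{F_v}(1)$, when passing from the $\U(V)(F_v)$-integral defining $\alpha_v$ to the $\GL_n(F_v)$-integral of Definition~\ref{def:alphaWhittaker}. The factor you are missing is the $\zeta_{E_v}(1)$ hidden in the matching relation $\widetilde{f_v'}=\zeta_{E_v}(1)^{-1}(f_1\otimes\transp f_2)$: writing $f_v=\zeta_{E_v}(1)(f_1\otimes f_2)$, the net constant is $\zeta_{E_v}(1)\cdot\zeta_{F_v}(1)^{-1}\cdot\zeta_{F_v}(1)=\zeta_{E_v}(1)$.
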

\begin{proof}[Proof of Proposition~\ref{prop:localdistrid} (1)]
First suppose we are in case (1).
Then 
\begin{gather*}
	(f_v', \phi_v') = \left(
	\frac{1}{\Vol(\GL_n(\O_{E_v}))}{\mathbbm{1}_{\GL_n(\O_{E_v})}\otimes \frac{1}{\Vol(\GL_n(\O_{E_v}))}\mathbbm{1}_{\GL_n(\O_{E_v})}}, 
	\mathbbm{1}_{\O_{E_{v},n}}\right), \\
	(f_v, \phi_{1,v} \otimes \phi_{2,v}) = \left(
	\frac{1}{\Vol(\U_n^+(\O_{F_v}))^2}{\mathbbm{1}_{\GL_n(\O_{E_v})}}, 
	\mathbbm{1}_{\LL^+(\O_{F_v})} \otimes \mathbbm{1}_{\LL^+(\O_{F_v})}\right).
\end{gather*}

Let $W_v$ be the $\GL_n(\O_{E_v})$-fixed vector of
$\W(\pi_v, \psi_v)$ normalized by $W_v(1) = 1$. Then
\begin{align*}
\sI_{\Pi_v}(f_v', \phi_v') &=
\frac{\lambda_v(W_v, \sig{\ol{W_v}}, \mu_v, \phi_v')}{\langle W_v, W_v \rangle_v}
= \frac{L(\frac12, \pi_v \times \sig{\pi_v^\vee} \otimes \mu_v^{-1})}{L(1, \pi_v, \Ad)}. 
\end{align*}

Let $\varphi_v$ be the $\GL_n(\O_{E_v})$-fixed vector of
$\pi_v$ such that $\langle \varphi_v, \varphi_v \rangle_v = 1$.
By Conjecture \ref{conj:unramified},
we have
\begin{align*}
\sJ_{\pi_v}(f_v, \phi_{1,v} \otimes \phi_{2,v})
&= \alpha_v(\pi_v(f_v)\varphi_v, \phi_{1,v}, \varphi_v, \phi_{2,v})\\
&= \frac{\Vol(\GL_n(\O_{E_v}))}{\Vol(\U_n^+(\O_{F_v}))^2}
 \cdot \L(\tfrac12, \pi_v) \cdot \Vol(\U_n^+(\O_{F_v})) \\
&= \zeta_{E_v}(1) \cdot \frac{L(\frac12, \pi_v \times \sig{\pi_v^\vee} \otimes \mu_v^{-1})}{L(1, \pi_v, \Ad)},
\end{align*}
so the proposition follows.
\end{proof}

\begin{proof}[Proof of Proposition~\ref{prop:localdistrid} (2)]
Now suppose we are in case (2).
Suppose $\pi_v = \pi_{1} \boxtimes \pi_{2}$ for
$\pi_{1}, \pi_{2}$ representations of $\GL_n(F_v)$,
and 
\[ f_v' = f_{1,1}' \otimes f_{1,2}' \otimes f_{2,1}'
\otimes f_{2,2}'\]
for $f_{ij}' \in \C_c^\infty(\GL_n(F_v))$.
Then $\Pi_v = \pi_{1} \boxtimes \pi_{2} \boxtimes \ol{\pi_{2}} \boxtimes \ol{\pi_{1}}$, and $\zeta_{E_v}(1)^{-1}f_v = f_1 \otimes f_2$, with
\[ f_1 = f_{1,1}' \ast f_{2,2}'^\vee, \quad
\transp{f_2} = f_{1,2}' \ast f_{2,1}'^\vee,
\] where
$f_{ij}'^\vee(g) \coloneqq f_{ij}'(g^{-1})$.
Furthermore we have $\phi_v' = \phi_{1,v} \otimes \ol{\phi_{2,v}}$. Also, suppose $\mu_v = (\mu_1, \mu_1^{-1})$.

Note that the representation $\pi_2'$ given by
$\pi_2'(g) \coloneqq \pi_2(\transp{g^{-1}})$ is isomorphic
to $\ol{\pi_2}$. Then we have
\begin{align*}
&\zeta_{F_v}(1)^{-1}J_{\pi_v}(f_v, \phi_{1,v} \otimes \phi_{2,v}) \\
&= \sum_{\varphi_1, \varphi_2}
\int_{\GL_n(F_v)} 
\langle \pi_1(g)\pi_1(f_1)\varphi_1, \varphi_1\rangle_v
\langle \pi_2'(g) \pi_2'(\transp f_2^\vee) \varphi_2, \varphi_2\rangle_v
\langle R_{\mu_1}(g) \phi_{1,v}, \phi_{2,v} \rangle_v \, dg \\
&= \sum_{W_1, W_2}
\alpha(\pi_1(f_1) W_1, \ol{\pi_2}(\transp f_2^\vee) \ol{W_2},
\phi_{1,v}, W_1, \ol{W_2}, \phi_{2,v}),
\end{align*}
where $\varphi_1$ and $\varphi_2$ run over an orthonormal
basis of $\pi_1$ and $\pi_2$, respectively,
$W_1$ runs over an orthonormal basis of
$\W(\pi_1, \psi_v')$, $W_2$ runs over an orthonormal basis
of $\W(\pi_2, \psi'_v)$, and
$\alpha$ is as in Definition \ref{def:alphaWhittaker}.

Thus, by Proposition \ref{prop:alphalambda}, we have
\begin{align*}
&\zeta_{E_v}(1)^{-1}J_{\pi_v}(f_v, \phi_{1,v} \otimes \phi_{2,v}) \\
&= \sum_{W_1, W_2}
\lambda_v(\pi_1(f_{11}' \ast f_{22}'^\vee)W_1, 
\ol{\pi_2}(f_{21'} \ast f_{12}'^\vee)\ol{W_2}, \mu_1, \phi_{1,v}) \ol{\lambda_v(W_1, \ol{W_2}, \mu_1, \phi_{2,v})} \\
&= \sum_{W_2, W_2}
\lambda_v(\pi_1(f_{11}')W_1, \ol{\pi_2}(f_{21}')\ol{W_2}, \mu_1, \phi_{1,v})
\lambda_v(\ol{\pi_1}(f_{22}')\ol{W_1}, \pi_2(f_{12}')W_2, \mu_1^{-1}, \ol{\phi_{2,v}}).
\end{align*}

By directly expanding the definition of $I_{\Pi_v}(f_v', \phi_v')$,
we find that this is also equal to $I_{\Pi_v}(f_v', \phi_v')$.
\end{proof}

\subsection{Nonvanishing results}
In this subsection, we collect some lemmas which will be used
to choose good test
functions such that the local distributions are nonzero.

We first recall the following theorem from \cite{MR3228451}.
\begin{theorem}[{\cite[Theorem~7.1.1]{MR3228451}}]
\label{thm:good}
Let $v$ be a nonarchimedean split place of $F$
such that $\pi_v$ is supercuspidal. 
Assume that the distribution $\sJ_{\pi_v}$ is
nonzero. 
Let $U$ be a Zariski open dense subset of
$\GL_n(E_v) \times E_{v,n}$.
Then there exists a test function 
$(f_v, \phi_v)$, with
$\phi_v = \sum_{j = 1}^r \phi_{1,v}^{(j)} \otimes \phi_{2,v}^{(j)}$, such that $\sJ_{\pi_v}(f_v, \phi_v)
\neq 0$, and the function
\[
	[g, z] \mapsto f_v(g)\left(
	\sum_{j = 1}^r \phi_{2,v}^{(j)} \otimes 
	\ol{\omega_{\psi_v', \mu_v}(g_1, \transp{g_1^{-1}})\phi_{1,v}^{(j)}}
	\right)^\ddagger(z)
\]
is supported on $U$, where 
$g = (g_1, g_2) \in \GL_n(E_v)$.
\end{theorem}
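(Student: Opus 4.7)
The plan is to begin with any test function $(f_v^0, \phi_v^0)$ making $\sJ_{\pi_v}$ nonvanishing, which exists by hypothesis, and to modify it so that the associated kernel is supported inside $U$ without losing nonvanishing.

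First, I would exploit supercuspidality to obtain a compactly supported kernel. By the nonvanishing of $\sJ_{\pi_v}$, there exist $\varphi_v, \varphi_v' \in \pi_v$ and $\phi_{1,v}^0, \phi_{2,v}^0 \in \S(\LL(F_v))$ with both $\ell_v(\varphi_v', \phi_{1,v}^0)$ and $\ell_v(\varphi_v, \phi_{2,v}^0)$ nonzero. Using Schur orthogonality for the supercuspidal representation $\pi_v$, I would take $f_v$ to be a matrix coefficient of $\pi_v$ realizing $\pi_v(f_v)\varphi_v = \varphi_v'$; such an $f_v$ is compactly supported in $G(F_v)$. With this choice, the kernel
\[
    K(g, z) = f_v(g)\sum_{j=1}^r\left(\phi_{2,v}^{(j)} \otimes \ol{\omega_{\psi_v', \mu_v}(g_1, \transp{g_1^{-1}})\phi_{1,v}^{(j)}}\right)^\ddagger(z)
\]
has compact support in $\GL_n(E_v) \times E_{v,n}$, and with $\phi_v^0$ a pure tensor the full triple gives a nonzero value of $\sJ_{\pi_v}$.

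Second, I would localize the support into $U$. Since $U$ is Zariski open dense in an affine variety, its complement is nowhere dense in the analytic topology, so the compact support of $K$ can be covered by finitely many small analytic open sets entirely contained in $U$. Using the explicit split-place formula from Lemma \ref{lem:weilrepgln}, and writing $f_v = f_{1,v} \otimes f_{2,v}$ with $g = (g_1, g_2)$, the kernel factorizes as $f_{1,v}(g_1) f_{2,v}(g_2) \cdot \Phi(g_1, z)$ for an explicit $\Phi$ depending only on the data $\phi_{1,v}^{(j)}, \phi_{2,v}^{(j)}$. Localization proceeds in two directions: the $f_{2,v}$ factor can be freely shrunk to any small open subset of $\GL_n(F_v)$, while the combined $(g_1, z)$-dependence $f_{1,v}(g_1)\Phi(g_1, z)$ can be concentrated near any chosen point by shrinking the support of $f_{1,v}$ and adjusting the Schwartz functions $\phi_{1,v}^{(j)}, \phi_{2,v}^{(j)}$ so that $\Phi(g_1, \cdot)$ concentrates near a specified point of $E_{v,n}$.

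Third, I would combine linearity of $\sJ_{\pi_v}$ with a partition-of-unity argument. Choose a smooth partition of unity $\{\chi_\alpha\}$ subordinate to a cover of the compact support of $K$ by open sets contained in $U$, and decompose $K = \sum_\alpha \chi_\alpha K$. Each cutoff $\chi_\alpha K$ must be realized (at least up to an error invisible to $\sJ_{\pi_v}$) as the kernel of a test function $(f_v^\alpha, \phi_v^\alpha)$ of the required tensor form. By linearity, $\sJ_{\pi_v}(f_v, \phi_v) = \sum_\alpha \sJ_{\pi_v}(f_v^\alpha, \phi_v^\alpha)$, so at least one summand is nonzero and furnishes the desired test function. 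The main obstacle is this realization step, since the map from triples $(f_v, \phi_{1,v}^{(j)}, \phi_{2,v}^{(j)})$ to kernels is not surjective; the density of its image in $\C_c^\infty(\GL_n(E_v) \times E_{v,n})$ should follow from the explicit split-place Weil formula together with the density of matrix coefficients of $\pi_v$ in $\C_c^\infty(\GL_n(E_v))$ and the full flexibility in choosing the Schwartz functions.
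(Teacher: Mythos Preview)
The paper does not prove this statement; it is quoted from \cite[Theorem~7.1.1]{MR3228451} and used as a black box, so there is no in-paper argument to compare against.

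Your proposal has two genuine gaps. In step two you claim the compact support of $K$ can be covered by open sets lying entirely inside $U$; this is false, since the complement of $U$ is a nonempty closed subvariety and nothing in your construction prevents the support of $K$ from meeting it --- ``nowhere dense'' does not mean ``avoidable by open covers of compacta.'' In step three you propose to realize each cutoff $\chi_\alpha K$ as a kernel of the required form, appealing to ``the density of matrix coefficients of $\pi_v$ in $\C_c^\infty(\GL_n(E_v))$.'' This density is false: matrix coefficients of a fixed irreducible supercuspidal span only the $\pi_v$-isotypic component under the two-sided regular action, a proper closed subspace. Even allowing $f_v$ to range over all of $\C_c^\infty(G(F_v))$, the kernel map is not surjective onto $\C_c^\infty(\GL_n(E_v)\times E_{v,n})$, because the $z$-dependence is rigidly tied to $g_1$ through the Weil action and the partial Fourier transform.

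A workable route bypasses partitions of unity. One expresses $\sJ_{\pi_v}$ as the pairing of the kernel $K$ against a relative character (a smooth function on the regular semisimple locus built from matrix coefficients of $\pi_v$ and the invariant functionals $\ell_v$); since $\sJ_{\pi_v}\neq 0$ this function is nonzero at some regular point, which by density may be taken in $U$, and then a \emph{single} test datum whose kernel approximates a delta at that point suffices. Your step two already contains the localization mechanism needed for this last construction; the partition-of-unity layer is where the argument breaks and should be discarded.
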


\begin{lemma}
\label{lem:good}
With notation and assumptions as in 
Theorem \ref{thm:good}, there exists a test
function $(f_v, \phi_v)$, 
with $\phi_v = \sum_{j = 1}^r \phi_{1,v}^{(j)} \otimes \phi_{2,v}^{(j)}$,
and a choice of 
global representatives $R^\beta$, such that 
$\sJ_{\pi_v}(f_v, \phi_v)$ is nonzero,
$f_v$ is supported on the regular semisimple locus
of $\GL_n(E_v)$ under the $\U(V)(F_v) \times \U(V)(F_v)$-action, and if $[\zeta, z] \in Y^\beta$ and
\[
	f_{v}(g^{-1}\zeta h)
	\left(\sum_{j=1}^r \phi_{2, v}^{(j)} \otimes
	\ol{\omega_{\psi_{v}, \mu_{v}}(h^{-1}g)\phi_{1,v}^{(j)}}\right)^\ddagger(zh) \neq 0
\]
for some $g, h \in \U(V)(F_{v})$, then $[\zeta, z]
\in Y^\beta_\rss$.
\end{lemma}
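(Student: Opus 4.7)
The plan is to apply Theorem \ref{thm:good} with a carefully chosen Zariski open dense subset $U$, and then use the orbit matching recalled after Lemma \ref{lem:embedM} to translate the resulting support condition into the required regular semisimplicity of $[\zeta, z]$.

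First, fix any set of normal representatives $S$ of the regular semisimple $\sigma$-conjugacy classes of $\GL_n(E)$, and use Lemma \ref{lem:compatible} to choose $R^\beta$ compatible with $S$. Let
\[
  U = \bigl\{(\zeta_0, z_0) \in \GL_n(E_v) \times E_{v,n} :
  [\beta^{-1}\zeta_0^{\ast}\beta\zeta_0,\, z_0,\, \beta^{-1}z_0^{\ast}] \in M_n(E_v)_{\rss}\bigr\}.
\]
This is Zariski open (its complement is cut out by the vanishing of the invariants characterizing $M_n(E)_\rss$) and dense (a generic pair has regular semisimple image in $M_n(E_v)$); moreover $U$ is contained in $U_1 \times E_{v,n}$, where $U_1$ is the locus where $\zeta_0$ itself is regular semisimple for the $\U_n^\beta(F_v) \times \U_n^\beta(F_v)$-action. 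Apply Theorem \ref{thm:good} to this $U$ to produce a test function $(f_v, \phi_v = \sum_{j=1}^r \phi_{1,v}^{(j)} \otimes \phi_{2,v}^{(j)})$ with $\sJ_{\pi_v}(f_v, \phi_v) \neq 0$ and with the product function displayed there supported in $U$. After harmlessly restricting $f_v$ to $U_1$ if necessary (which preserves the product and hence, for a suitable choice of $f_v$ already concentrated there, the nonvanishing of $\sJ_{\pi_v}$), $f_v$ itself is supported on the regular semisimple locus, verifying the second condition.

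For the third condition, suppose $[\zeta, z] \in Y^\beta$ and $g, h \in \U(V)(F_v)$ are such that the expression displayed in the lemma is nonzero. Using the equivariance identity \eqref{eq:pftDiag} to move the factor $\omega_{\psi_v', \mu_v}(h^{-1}g)$ through the partial Fourier transform, one rewrites the expression as the function from Theorem \ref{thm:good} evaluated at $(\zeta_0, z_0) = (g^{-1}\zeta h,\, zh)$. The support hypothesis then gives $(\zeta_0, z_0) \in U$, so
\[
  [\beta^{-1}\zeta_0^{\ast}\beta\zeta_0,\, z_0,\, \beta^{-1}z_0^{\ast}] \in M_n(E_v)_{\rss}.
\]
A direct computation using $g^{\ast}\beta g = h^{\ast}\beta h = \beta$ yields
\[
  [\beta^{-1}\zeta_0^{\ast}\beta\zeta_0,\, z_0,\, \beta^{-1}z_0^{\ast}]
  = [\beta^{-1}\zeta^{\ast}\beta\zeta,\, z,\, \beta^{-1}z^{\ast}].h,
\]
so the two triples lie in a common $\GL_n(E_v)$-orbit and are simultaneously regular semisimple. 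By the characterization of $Y_\rss^\beta$ recalled immediately after Lemma \ref{lem:embedM}, it follows that $[\zeta,z] \in Y_\rss^\beta$, as required.

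The main technical point is the passage between the product function from Theorem \ref{thm:good} (in which the argument of $\omega_{\psi_v',\mu_v}$ depends only on the first factor of $g \in \GL_n(E_v)$) and the unitary-side orbital expression in the lemma (in which it depends on two unitary-group variables $g, h$); this is handled by the Weil representation equivariance \eqref{eq:pftDiag} of the partial Fourier transform, which absorbs the factor $\omega_{\psi_v',\mu_v}(h^{-1}g)$ into a right translation on the $z$-variable.
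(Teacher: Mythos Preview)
There is a genuine gap at exactly the place you flag as ``the main technical point.'' The equivariance \eqref{eq:pftDiag} is for the \emph{diagonal} action: it converts $\omega(k)\phi_2 \otimes \ol{\omega(k)\phi_1}$ into a right translation by $k$ on the $z$-variable. But the expression in the lemma has $\omega_{\psi_v',\mu_v}(h^{-1}g)$ acting on only one tensor factor, so \eqref{eq:pftDiag} does not give the rewriting you claim. Concretely, the function from Theorem~\ref{thm:good} evaluated at $(g^{-1}\zeta h,\, zh)$ involves $\omega_{\psi_v',\mu_v}\bigl((g^{-1}\zeta h)_1,\transp{(g^{-1}\zeta h)_1^{-1}}\bigr) = \omega_{\psi_v',\mu_v}(g_1^{-1}\zeta_1 h_1,\ldots)$, whereas the lemma's integrand has $\omega_{\psi_v',\mu_v}(h^{-1}g) = \omega_{\psi_v',\mu_v}(h_1^{-1}g_1,\ldots)$. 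These act differently on $\phi_{1,v}^{(j)}$ for general $\zeta$, so the two expressions are not equal and the support conclusion does not follow.

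This is precisely why the choice of $R^\beta$ cannot be arbitrary (your ``fix any $S$, then use Lemma~\ref{lem:compatible}'' is not enough). The paper first shrinks $f_v$ to a single coset $K_m(x_1,x_2)$ and picks $m$ so that each $\phi_{1,v}^{(j)}$ is fixed by $\U(V)(F_v)\cap K_m$; it then invokes Proposition~\ref{prop:globalkottwitz} to choose $S$ so that every $\gamma \in S$ is $k$-Kottwitz at $v$ for a large $k$ depending on $m$ and $(x_1,x_2)$. By compatibility, each $\zeta \in R^\beta$ is then $k$-Kottwitz with respect to $\beta$ at $v$, which forces $g_1^{-1}\zeta_1 g_1 \in K_m$ whenever $g^{-1}\zeta h$ lies in the support of $f_v$. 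Hence $h^{-1}g$ and $(h_1^{-1}\zeta_1 g_1,\ldots)$ differ by an element of $\U(V)(F_v)\cap K_m$ that fixes each $\phi_{1,v}^{(j)}$, and only then does the expression agree with the function of Theorem~\ref{thm:good} at $(g^{-1}\zeta h, zh)$. In short, the bridge between the two Weil-representation arguments is not \eqref{eq:pftDiag} but the $k$-Kottwitz condition on the global representatives.
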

\begin{proof}
Let $U$ be the regular semisimple locus of 
$\GL_n(E_v) \times E_{v,n}$, i.e., the set of elements
$[g, z] \in \GL_n(E_v) \times E_{v,n}$ such that
\[
	[\beta^{-1}g^\ast\beta g, z, \beta^{-1}z^\ast]
	\in M_n(E_v)_\rss.
\]
Let $(f_v, \phi_v)$ be the test function given by
Theorem \ref{thm:good} with this $U$, and suppose that 
$\phi_v = \sum_{j = 1}^r \phi_{1,v}^{(j)} \otimes \phi_{2,v}^{(j)}$.
We will find a suitable set of representatives $R^\beta$
so that this test function has the desired properties.

Since $v$ is split, we can assume $\beta = (1, -1)$.
Let $m \ge 0$ be an integer such that
$\omega_{\psi_{v}', \mu_{v}}(h)\phi_{1,v}^{(j)} = \phi_{1,v}^{(j)}$ for all $1 \le j \le r$ and
$h \in \U(V)(F_{v}) \cap K_m$, where 
\[
	K_m = 1 + \varpi_{F_v}^m\Mat_n(\O_{E_v}).
\]
We can also assume that $f_{v}$ has support
in $K_m (x_1, x_2)$ for some $(x_1, x_2) \in \GL_n(E_v)$.

Let $k \ge m$ be an integer such that
\[
	\varpi_{F_v}^{\lfloor (k-m)/(n-1)\rfloor}(x_1\transp{x_2}) \in \Mat_n(\O_{F_v}).
\] 
By Proposition \ref{prop:globalkottwitz}, we can
choose the representatives $S$ so that every
$\gamma \in S$ is $k$-Kottwitz at $v$.
Then if $S$ and $R^\beta$ are compatible, 
the representatives $\zeta \in R^\beta$
will be $k$-Kottwitz with respect to $\beta$ at $v$.

Then, we claim that for all $1 \le j \le r$, $g, h \in \U(V)(F_v)$, and $\zeta \in R^\beta$ such that $g^{-1}\zeta h
\in K_m(x_1, x_2)$, we have
\[
	\omega_{\psi_{v}', \mu_{v}}(h^{-1} g)\phi_{1,v}^{(j)}
	= 
	\omega_{\psi_{v}', \mu_{v}}(h_1^{-1}\zeta_1g_1, \transp h_1\transp \zeta_1^{-1} \transp g_1^{-1})\phi_{1,v}^{(j)}.
\]
Indeed, $g^{-1}\zeta h \in K_m(x_1, x_2)$ implies that
\[
	g^{-1}\zeta^\ast\zeta g
	\in K_m(x_1\transp{x_2}, x_2\transp{x_1})K_m.
\]
Since $\zeta$ is $k$-Kottwitz at $v$, we have
\[
	\zeta_1 = c_{n-1}(\transp\zeta_2\zeta_1)^{n-1} + \cdots + c_0,
\]
for some $c_0 \in 1 + \varpi_{F_v}^{k}\O_{F_v}$ and
$c_1, \dots, c_{n-1} \in \varpi_{F_v}^{k}\O_{F_v}$. This,
along with the assumptions on $k$, imply that 
\[
	g_1^{-1}\zeta_1 g_1 \in 1 + \varpi_{F_v}^m\Mat_n(\O_{F_v}),
\]
so
\[
	h^{-1} g
	\in (h_1^{-1}\zeta_1 g_1, \transp h_1 \transp \zeta_1^{-1}\transp g_1^{-1})K_m,
\]
from which the claim follows.

Thus, we see that if $[\zeta, z] \in Y^\beta$ and
\[
	f_v(g^{-1}\zeta h)
	\left(\sum_{j=1}^r \phi_{2,v}^{(j)} \otimes
	\ol{\omega_{\psi_v', \mu_v}(h^{-1}g)\phi_{1,v}^{(j)}}\right)^\ddagger(zh) 
	\neq 0
\]
for some $g, h \in \U(V)(F_v)$, then the above expression is
equal to
\[
	f_v(g^{-1}\zeta h)
	\left(\sum_{j=1}^r \phi_{2,v}^{(j)} \otimes
	\ol{\omega_{\psi_v', \mu_v}(h_1^{-1}\zeta_1 g_1, \transp h_1\transp \zeta_1^{-1} \transp g_1^{-1})\phi_{1,v}^{(j)}}\right)^\ddagger(zh),
\]
so by assumption on $(f_v, \phi_v)$, we have
$[g^{-1}\zeta h, zh]$ is a regular semisimple element of
$\GL_n(E_v) \times E_{v, n}$, so $[\zeta, z]$ is a regular
semisimple element of $Y^\beta$.
\end{proof}

\begin{lemma}
\label{lem:goodv1}
Let $v$ be a nonarchimedean 
place of $F$.
Assume that $\ell_v \neq 0$ for some $\varphi_{v} \in \pi_v$. 
Then there exists a test function $(f_v, \phi_v)$
such that $\sJ_{\pi_v}(f_v, \phi_v) \neq 0$, and
$f_v$ is a truncated matrix coefficient of $\pi_v$.
\end{lemma}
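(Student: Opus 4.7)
The plan is to construct $f_v$ as an explicit truncated matrix coefficient of $\pi_v$ attached to a well-chosen $K$-fixed vector, and to evaluate $\sJ_{\pi_v}$ by a direct computation. First, since $\ell_v \neq 0$, I pick $\varphi_v^0 \in \pi_v$ and $\phi_v^0 \in \S(\LL(F_v))$ with $\ell_v(\varphi_v^0, \phi_v^0) \neq 0$, and set $\phi_v \coloneqq \phi_v^0 \otimes \phi_v^0$. Choose a compact open subgroup $K \subseteq G(F_v) = \GL_n(E_v)$ that fixes $\varphi_v^0$ and such that $K \cap \U(V)(F_v)$ fixes $\phi_v^0$ under $\omega_{\psi_v', \mu_v}$ (shrink $K$ if necessary to arrange both). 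By admissibility of $\pi_v$, the space $V_K \coloneqq \pi_v^K$ is finite-dimensional.

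Next, I would observe that for any bi-$K$-invariant $f_v \in \C_c^\infty(G(F_v))$, right $K$-invariance gives $\pi_v(f_v) = \pi_v(f_v) \circ P_K$, where $P_K$ denotes orthogonal projection onto $V_K$, while left $K$-invariance forces the image of $\pi_v(f_v)$ to lie in $V_K$. Consequently the sum defining $\sJ_{\pi_v}(f_v, \phi_v)$ collapses to a finite sum over an orthonormal basis $(e_i)$ of $V_K$. The linear functional $L_v \coloneqq \ell_v(\cdot, \phi_v^0)$ restricts to a nonzero linear form on $V_K$ (since $L_v(\varphi_v^0) \neq 0$); let $\xi \in V_K$ be its Riesz representative, so that $L_v(v) = \langle v, \xi \rangle_v$ for all $v \in V_K$. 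Writing $\xi = \sum_i \langle \xi, e_i \rangle_v\, e_i$ and applying Parseval yields
\[
  \sJ_{\pi_v}(f_v, \phi_v) = \sum_i \langle \pi_v(f_v)e_i, \xi\rangle_v\, \langle \xi, e_i \rangle_v = \langle \pi_v(f_v)\xi, \xi\rangle_v.
\]

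Finally, I take $f_v(g) \coloneqq \langle \pi_v(g)\xi, \xi\rangle_v \cdot \mathbbm{1}_K(g)$, which is bi-$K$-invariant (as $\xi$ is $K$-fixed) and is manifestly a truncation of the matrix coefficient $c_{\xi,\xi}$ of $\pi_v$. Since $\pi_v(g)\xi = \xi$ for $g \in K$, direct evaluation gives $\pi_v(f_v)\xi = \Vol(K)\, \|\xi\|_v^2\, \xi$, and therefore
\[
  \sJ_{\pi_v}(f_v, \phi_v) = \Vol(K)\, \|\xi\|_v^4 > 0,
\]
which is the desired nonvanishing.

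I do not foresee a serious obstacle in carrying this out. The entire argument rests on admissibility of $\pi_v$ (to reduce to the finite-dimensional $V_K$), the Riesz representation theorem on $V_K$, and the standard factorization of bi-$K$-invariant convolutions through $V_K$. The only minor technical point is ensuring the compatibility of the fixing group $K$ between $\varphi_v^0$ (with respect to $G(F_v)$) and $\phi_v^0$ (with respect to $\U(V)(F_v)$ via the Weil representation), which is handled simply by shrinking $K$.
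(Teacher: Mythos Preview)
Your computation is clean, but there is a genuine gap: your $f_v$ is not a truncated matrix coefficient in the sense the paper uses. In Definitions~\ref{def:glgood}(\ref{item:glmatrixcoeff}) and~\ref{def:ugood}(\ref{item:umatrixcoeff}), ``truncated'' means multiplication by $\mathbbm{1}_{G^\ast(F_v)}$ where $G^\ast(F_v)=\{g:\det g\in\O_{E_v}^\times\}$, not by the indicator of a small compact open $K$. Since $\xi$ is $K$-fixed, your $f_v(g)=\langle\pi_v(g)\xi,\xi\rangle_v\,\mathbbm{1}_K(g)$ is in fact just the constant function $\|\xi\|_v^2$ on $K$ and zero outside; i.e.\ $f_v=\|\xi\|_v^2\cdot\mathbbm{1}_K$. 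This is not of the form $(\text{matrix coefficient})\cdot\mathbbm{1}_{G^\ast}$, and more importantly it defeats the purpose of the condition: in Proposition~\ref{prop:uspectral} the specific shape $\widehat f_{v_1}\cdot\mathbbm{1}_{G^\ast}$ with $\widehat f_{v_1}$ a genuine supercuspidal matrix coefficient is what forces $R(f)$ to annihilate all non-cuspidal (indeed non-supercuspidal-at-$v_1$) constituents. A multiple of $\mathbbm{1}_K$ does no such thing.

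The paper's argument avoids this by taking $f_0(g)=\langle\pi_v(g^{-1})\varphi_v,\varphi_v\rangle_v\,\mathbbm{1}_{G^\ast(F_v)}(g)$ for a suitably chosen $\varphi_v$, and then setting $f_v=f_0\ast f_0^\star$ (still a truncated matrix coefficient since $G^\ast$ is a subgroup). The point is that with $f_0^\star=f_0$ one gets
\[
\sJ_{\pi_v}(f_v,\phi_v)=\sum_e\bigl|\ell_v(\pi_v(f_0)e,\phi_{0,v})\bigr|^2\ge 0,
\]
and it remains only to exhibit one $e$ with $\ell_v(\pi_v(f_0)e,\phi_{0,v})\neq 0$; this is arranged by choosing $\varphi_v$ orthogonal to the common kernel of the $\ell_v(\,\cdot\,,\phi)$, so that $\langle\pi_v(f_0)\varphi_v,\varphi_v\rangle_v\neq 0$ forces $\pi_v(f_0)\varphi_v$ outside that kernel. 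Your Riesz-representative device and the reduction to $V_K$ are perfectly correct as linear algebra, but to make them yield the right kind of $f_v$ you would have to truncate by $G^\ast$ rather than $K$ and then evaluate $\langle\pi_v(f_v)\xi,\xi\rangle_v$ via Schur orthogonality for supercuspidals on $G^\ast$; at that point you are essentially reproducing the paper's $f_0\ast f_0^\star$ argument.
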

\begin{proof}
Let $\varphi_{v} \in \pi_v$ be orthogonal to the subspace
of $\pi_v$ consisting of vectors $e$ such
that $\ell_v(e, \phi_{0,v}) = 0$ for all
$\phi_{0, v} \in \S(\LL(F_v))$.
Let
\[
	f_{0}(g) = \langle \pi_v(g^{-1})\varphi_v, \varphi_v \rangle_v \mathbbm{1}_{G^\ast(F_v)}(g).
\]
Note that $\langle \pi_v(f_0)\varphi_v, \varphi_v \rangle \neq 0$, so there exists $\phi_{0,v} \in \S(\LL(F_v))$ such
that 
\[ 
	\ell_v(\pi_v(f_0)\varphi_v, \phi_{0,v}) \neq 0. 
\]
Also note that $f_0^\star = f_0$, 
where $f_0^\star(g) \coloneqq \ol{f_0(g^{-1})}$.
Let $f_v = f_0 \ast f_0^\star$, and let
$\phi_v = \phi_{0,v} \otimes \phi_{0, v}$.
Then $f_v$ is also a truncated matrix coefficient, and
\begin{align*}
	\sJ_{\pi_v}(f_v, \phi_v)
	&= \sum_{e}
	\ell_v(\pi_v(f_0 \ast f_0^\star) e, \phi_{0,v})\ol{\ell_v(e, \phi_{0,v})}\\
	&= \sum_{e}
	|\ell_v(\pi_v(f_0)e, \phi_{0,v})|^2
	\neq 0,
\end{align*}
where the sums are over an orthonormal basis of $\pi_v$.
\end{proof}

\begin{lemma}
\label{lem:goodarch}
Let $v$ be an archimedean place of $F$.
Suppose that 
\[\lambda_v(W_{1,v}, W_{2,v}, \mu_v,
\widetilde\phi_v') \neq 0\]
for some 
$W_{1,v} \in \W(\pi_v, \psi_v)$, $W_{2,v}
\in \W(\sig{\pi_v^\vee}, \ol{\psi_v'})$,
and $\widetilde \phi_v' \in
\S(E_{v,n})$. Then there
exists $\phi_v' \in \S(E_{v,n})$ which
is a finite linear combination of functions
of the form $\phi_{1,v}' \otimes \phi_{2,v}'$
where $\phi_{1,v}', \phi_{2,v}' \in \S(F_{v,n})$,
such that $\lambda_v(W_{1,v}, W_{2,v}, \mu_v, \phi_v') \neq 0$.
\end{lemma}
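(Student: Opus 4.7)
The plan is a standard density and continuity argument, exploiting condition (4) of Theorem~\ref{thm:maintheorem}: since every archimedean place $v$ splits in $E$, we may identify $E_v = F_v \times F_v$ and hence $\S(E_{v,n}) = \S(F_{v,n} \times F_{v,n})$, where $F_{v,n}$ is a finite-dimensional real vector space.

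First I would use the classical fact that for a nuclear Fr\'echet space such as $\S(F_{v,n})$, the algebraic tensor product
\[
  \S(F_{v,n}) \otimes \S(F_{v,n}) \subseteq \S(F_{v,n} \times F_{v,n})
\]
is dense in the usual Schwartz topology. Next I would check that, with $W_{1,v}$ and $W_{2,v}$ fixed, the linear form
\[
  \phi_v' \mapsto \lambda_v(W_{1,v}, W_{2,v}, \mu_v, \phi_v')
\]
is continuous on $\S(E_{v,n})$: for $\Re(s)$ large, absolute convergence of the defining integral together with standard Schwartz seminorm estimates give continuity directly; the value at $s = \tfrac12$ is then obtained by meromorphic continuation, transferring continuity from the entire normalized functional $\lambda_v^\natural$ via multiplication by the local Rankin--Selberg $L$-factor, which at an archimedean place is a product of Gamma factors and hence finite at $s = \tfrac12$ under the assumption that $\lambda_v$ is nonzero on some $\widetilde\phi_v'$.

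Once continuity is in hand, a nonzero continuous linear functional cannot vanish on a dense subspace, so there must exist some finite linear combination $\phi_v' = \sum_j \phi_{1,v}'^{(j)} \otimes \phi_{2,v}'^{(j)}$ for which $\lambda_v(W_{1,v}, W_{2,v}, \mu_v, \phi_v') \neq 0$. The main point requiring care is the continuity of $\lambda_v$ in the Schwartz variable at $s = \tfrac12$, which is well known in the archimedean Rankin--Selberg theory but must be invoked rather than verified from scratch; the rest is pure soft analysis.
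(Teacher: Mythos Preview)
Your proposal is correct and follows the same strategy as the paper: continuity of $\phi_v' \mapsto \lambda_v(W_{1,v}, W_{2,v}, \mu_v, \phi_v')$ in the Fr\'echet topology on $\S(E_{v,n})$, combined with density of the algebraic tensor product $\S(F_{v,n}) \otimes \S(F_{v,n})$ in $\S(F_{v,n} \times F_{v,n})$. The paper's own proof is a single sentence that invokes \cite[Theorem~2.3]{MR2533003} (Jacquet's archimedean Rankin--Selberg theory) for the continuity statement rather than sketching it via $\lambda_v^\natural$ and the Gamma factors as you do, but the underlying argument is identical.
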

\begin{proof}
This follows from the fact that $\lambda_v(W_{1,v}, W_{2,v}, \mu_v, \widetilde\phi_v')$
is a continuous functional of $\widetilde \phi_v'$
with respect to the Fr\'echet space topology on
$\S(E_{v, n})$,
by \cite[Theorem~2.3]{MR2533003}.
\end{proof}

\section{Proof of main results}
\subsection{Proof of Theorem \ref{thm:maintheorem}}
In this subsection, we let
\[
	\sJ_{\pi_v}(f_v, \phi_{1,v} \otimes \phi_{2,v})
	= \sum_{\varphi_v} 
	\frac{\ell_v(\pi_v(f_v)\varphi_v, \phi_{1,v})
	\ol{\ell_v(\varphi_v, \phi_{2,v})}}
	{\langle \varphi_v, \varphi_v \rangle_v}.
\]
Note that
\[
	\sJ_{\pi}(f, \phi_1 \otimes \phi_2) = |C|^2
	\prod_v \sJ_{\pi_v}(f_v, \phi_{1,v}\otimes \phi_{2,v})
\]
where $C$ is the constant in \eqref{eq:factorPell}.

\mainthm*
\begin{proof}
First we fix the Lagrangian $\LL$.
Since $V$ is split, it has a basis
$e_1, \dots, e_n$ such that the corresponding 
skew-Hermitian matrix is 
\[
j\begin{pmatrix}
& & 1 \\ & \iddots & \\ 1 & &
\end{pmatrix},
\] 
and we can take $\LL$ to be
the direct sum of the $E$-vector space generated by $e_1,
\dots, e_{\lfloor n/2 \rfloor}$ and the $F$-vector space
generated by $e_{\lceil n/2 \rceil}$.
This makes the
assumptions at the beginning of Section \ref{sec:FL} true
at all inert places.

Now, suppose that $\P(\varphi, \widetilde\phi_0) \neq 0$
for some factorizable $\varphi \in \pi$
and $\widetilde\phi_0 \in \S(\LL(\AA_F))$. 
By a theorem of Jacquet--Piatetski-Shapiro--Shalika
\cite{MR701565}, 
$L(\frac12, \pi \times \sig{\pi^\vee} \otimes \mu^{-1}) \neq 0$ if and only if there exists $\varphi \in \Pi$ and $\phi' \in \S(\AA_{E,n})$ such that
$\lambda(\varphi, \mu, \phi') \neq 0$.
Thus, in order to show that 
$L(\frac12, \pi \times \sig{\pi^\vee} \otimes \mu^{-1}) \neq 0$, it suffices to show there exists a
test function $(f', \phi')$ such that 
$\sI_{\Pi}(f', \phi') \neq 0$.


Since $\P(\varphi, \widetilde\phi_0) \neq 0$, there exists
a factorizable test function 
$(\widetilde f, \widetilde\phi)$ of the form 
$(f_0 \ast f_0^\star, \widetilde\phi_0 \otimes \widetilde\phi_0)$ 
such that $\sJ_\pi(\widetilde f, \widetilde \phi) \neq 0$.
We modify the test function $(\widetilde f, \widetilde \phi)$ in the
same way as in \cite[Section~6.2]{MR3228451}.

Since $\pi_v$ is unramified at all inert $v$,
we can assume that $(\widetilde f_v, \widetilde \phi_v)$ 
is the unramified test function such that
$\sJ_{\pi_v}(\widetilde f_v, \widetilde \phi_v) \neq 0$,
at all inert places $v$.

At the place $v_1$, let $(f_{v_1}, \phi_{v_1})$ be the 
test function given by Lemma \ref{lem:goodv1}.
At the place $v_2$, let $(f_{v_2}, \phi_{v_2})$ be
the test function given by Theorem \ref{thm:good},
and let $S$ and $R^\beta$ be the sets of representatives given by Lemma \ref{lem:good}.
We define
\[
	f = \prod_{v \neq v_1, v_2} \widetilde f_v \times
	f_{v_1} \times f_{v_2},
\]
and 
\[
	\phi = \prod_{v \neq v_1, v_2} \widetilde \phi_{v} \times \phi_{v_1} \times \phi_{v_2}.
\]

Then $(f, \phi)$ is a good test function, and
$\sJ_{\pi}(f, \phi) \neq 0$. 
Consider the test functions $\{(f^\beta, \phi^\beta)\}_{\beta \in \SHerm_n^\times(F)}$, where 
$(f^{\beta'}, \phi^{\beta'}) = 0$ if $\beta'
\not\sim \beta$ and $(f^\beta, \phi^\beta) = (f, \phi)$.
Since $V$ is unramified at all inert places
and $(f_v, \phi_v)$ is the unramified test function
at all inert places, by Propositions \ref{prop:splitmatch},
\ref{prop:betaminusFL}, and Theorem \ref{thm:FL},
there exists a smooth transfer $(f', \phi')$ of
these test functions.
Then $(f', \phi')$ is also good, so by Proposition \ref{prop:identity},
we have
\[
	\sI_{\Pi}(f', \phi') = \sJ_{\pi}(f, \phi) \neq 0,
\]
as desired.

In the other direction, suppose $L(\frac12, \pi
\times \sig{\pi^\vee} \otimes \mu^{-1}) \neq 0$. 
We will show that the period integral $\P$ is nonzero.

Since $L(\frac12, \pi \times \sig{\pi^\vee} \otimes \mu^{-1})
\neq 0$, there exists
$\varphi' \in \Pi$, $\widetilde\phi' \in \S(\AA_{E,n})$
such that
$
	\lambda(\varphi', \mu, \widetilde\phi') \neq 0
$.
Furthermore, because $\Pi = \pi \boxtimes \sig{\pi^\vee}$, there exists $\varphi \in \Pi$ such that 
$\P'(\varphi) \neq 0$.
Thus, there exists factorizable $\widetilde f'$ such that
$\sI_{\Pi}(\widetilde f', \widetilde \phi') 
	\neq 0$.

For inert places $v$, we can take $\varphi_v$, $\varphi_v'$, and 
$\widetilde\phi_v'$ to be the spherical elements, and thus we can assume the test function 
$(\widetilde f_v', \widetilde\phi_v')$ is the unramified test function for all inert $v$.

At each archimedean place $v$ of $F$,
let $\phi_v'$ be the test function given by
Lemma \ref{lem:goodarch}.

By Proposition \ref{prop:localdistrid},
at split places $v$ of $F$, we have
\[
	\sI_{\Pi_v}(f'_v, \phi_v') = c\sJ_{\pi_v}(f_v, \phi_v),
\]
for some nonzero constant $c$ and $(f_v', \phi_v')$ and $\{(f_v, \phi_v)\}$ which
are smooth transfer of each other.
In particular, this implies that $\sJ_{\pi_v}$
is nonzero for each $v$.
Let $(f_{v_1}, \phi_{v_1})$ be the test function
given by Lemma \ref{lem:goodv1} such
that $\sJ_{\pi_v}(f_{v_1}, \phi_{v_1}) \neq 0$ and
$f_{v_1}$ is a truncated matrix coefficient.
Let $(f_{v_1}', \phi_{v_1}')$ be the smooth
transfer of $\{(f_{v_1}, \phi_{v_1})\}$.
Similarly, let $(f_{v_2}, \phi_{v_2})$ be the
test function given by Theorem \ref{thm:good}, and
let $S$ and $R^\beta$ be the sets of representatives given
by Lemma \ref{lem:good}.
Let $(f_{v_2}', \phi_{v_2}')$ be the smooth
transfer of $\{(f_{v_2}, \phi_{v_2})\}$.

We modify $(\widetilde f', \widetilde \phi')$
at the archimedean places and $v_1$, $v_2$
in the same way as before, to obtain a good
test function $(f', \phi')$ which is unramified
at all inert places $v$ and satisfies
\[
	\sI_{\Pi}(f', \phi') \neq 0.
\]
Then if $\{(f^\beta, \phi^\beta)\}_{\beta \in
[\SHerm_n^\times(F)]}$ is the smooth transfer
of $(f', \phi')$, by Proposition \ref{prop:identity},
we have 
\[
	\sI_{\Pi}(f', \phi') = \sJ_{\pi}(f^{\beta},\phi^\beta) \neq 0,
\]
for $\beta$ corresponding to the split Hermitian
space $V$. This shows that the period integral
$\P$ for $V$ is not identically zero.
\end{proof}

\subsection{Proof of Theorem \ref{thm:refined}}
In this subsection, we assume that $\pi$ is tempered everywhere,
and let
\begin{align*}
	J_{\pi_v}(f_v, \phi_{1,v} \otimes \phi_{2,v})
	= \sum_{\varphi_v}
	\frac{\alpha_v(\pi_v(f_v)\varphi_v, \phi_{1,v}, \varphi_v, \phi_{2,v})}{\langle \varphi_v, \varphi_v \rangle_v}.
\end{align*}

\refined*
\begin{proof}
Fix the Lagrangian $\LL$ as in the proof of Theorem \ref{thm:maintheorem}.

Choose matching test functions $(f', \phi')$ and
$\{(f^\beta, \phi_1^\beta \otimes \phi_2^\beta)\}_{\beta \in [\SHerm_n^\times(F)]}$ and sets of representatives $S$ and $R^\beta$
which satisfy the following properties.
\begin{enumerate}[(1)]
\item At all inert places $v$ of $F$ and almost all split
places, $(f_v', \phi_v')$ and $\{(f_v^\beta, \phi_{1,v}^\beta \otimes \phi_{2,v}^\beta) \}_{\beta \in [\SHerm_n^\times(F_v)]}$
are as in the fundamental lemma.

\item At all other split places $v$, we have $\sJ_{\pi_v}(f_v, \phi_{1,v} \otimes \phi_{2,v}) \neq 0$, and $f_{v_1}$ is a truncated matrix coefficient. 
This is possible by Lemma \ref{lem:goodv1}.

\item The test function
$(f_{v_2}, \phi_{1,v_2} \otimes \phi_{2, v_2})$ and the
sets of global representatives $S$ and $R^\beta$ are given
by Lemma \ref{lem:good}.

\item At all archimedean places $v$ of $F$, the test function
$\phi_v'$ is a finite linear combination of functions of
the form $\phi_{1,v}' \otimes \phi_{2,v}'$, and
$\sI_{\Pi_v}(f_v', \phi_v') \neq 0$. This is possible by
Lemma \ref{lem:goodarch}.
\end{enumerate}
Then $(f', \phi')$ and
$\{(f^\beta, \phi_1^\beta \otimes \phi_2^\beta)\}_{\beta \in [\SHerm_n^\times(F)]}$ are good matching global test funtions.

Note that Proposition \ref{prop:localdistrid} is equivalent
to
\begin{align*}
	\sJ_{\pi_v}^\natural(f_v, \phi_{1,v} \otimes \phi_{2,v})
	&=\zeta_{E_v}(1) \cdot \left( 
	\prod_{i=1}^n \frac{\zeta_{E_v}(i)}{L(i, \eta_v^i)}
	\right)^{-1} 
	\sI_{\Pi_v}^\natural(f_v', \phi_v').
\end{align*} 
Then by Propositions \ref{prop:identity}, \ref{prop:Idecomp},
and \ref{prop:localdistrid}, we have
\begin{align*}
	\sJ_{\pi}(f, \phi_1 \otimes \phi_2)
	= \zeta_E^\ast(1)\sI_{\Pi}(f', \phi')
	&= \zeta_E^\ast(1)\cdot \frac{L(\tfrac12, \pi \times \sig{\pi^\vee} \otimes \mu^{-1})}{\Res_{s=1} L(s, \pi \times \pi^\vee)} 
	\prod_v \sI_{\Pi_v}^\natural(f_v', \phi_v') \\
	&= \L(\tfrac12, \pi)
	\prod_v \sJ_{\pi_v}^\natural(f_v, \phi_{1,v} \otimes \phi_{2,v}).
\end{align*}
Now, we also have
\[
	\sJ_\pi(f, \phi_1 \otimes \phi_2) =
	C \times \prod_v \sJ_{\pi_v}^\natural(f_v, \phi_{1,v} \otimes \phi_{2,v}),
\]
where $C$ is the constant in \eqref{eq:factorP}.
Since we chose the test function
$(f, \phi_1 \otimes \phi_2)$ so that
$\prod_v \sJ_{\pi_v}^\natural(f_v, \phi_{1,v} \otimes \phi_{2,v}) \neq 0$, we must have $C = \L(\frac12, \pi)$.
\end{proof}

\bibliography{ggp}{}

\begin{thebibliography}{BPLZZ21}

\bibitem[AC89]{MR1007299}
James Arthur and Laurent Clozel.
\newblock {\em Simple algebras, base change, and the advanced theory of the
  trace formula}, volume 120 of {\em Annals of Mathematics Studies}.
\newblock Princeton University Press, Princeton, NJ, 1989.

\bibitem[BP21a]{MR4332778}
Rapha\"{e}l Beuzart-Plessis.
\newblock Comparison of local relative characters and the {I}chino-{I}keda
  conjecture for unitary groups.
\newblock {\em J. Inst. Math. Jussieu}, 20(6):1803--1854, 2021.

\bibitem[BP21b]{MR4305382}
Rapha\"{e}l Beuzart-Plessis.
\newblock A new proof of the {J}acquet-{R}allis fundamental lemma.
\newblock {\em Duke Math. J.}, 170(12):2805--2814, 2021.

\bibitem[BPCZ22]{MR4426741}
Rapha\"{e}l Beuzart-Plessis, Pierre-Henri Chaudouard, and Micha\l Zydor.
\newblock The global {G}an-{G}ross-{P}rasad conjecture for unitary groups: the
  endoscopic case.
\newblock {\em Publ. Math. Inst. Hautes \'{E}tudes Sci.}, 135:183--336, 2022.

\bibitem[BPLZZ21]{MR4298750}
Rapha\"{e}l Beuzart-Plessis, Yifeng Liu, Wei Zhang, and Xinwen Zhu.
\newblock Isolation of cuspidal spectrum, with application to the
  {G}an-{G}ross-{P}rasad conjecture.
\newblock {\em Ann. of Math. (2)}, 194(2):519--584, 2021.

\bibitem[GGP12]{MR3202556}
Wee~Teck Gan, Benedict~H. Gross, and Dipendra Prasad.
\newblock Symplectic local root numbers, central critical {$L$} values, and
  restriction problems in the representation theory of classical groups.
\newblock Number 346, pages 1--109. 2012.
\newblock Sur les conjectures de Gross et Prasad. I.

\bibitem[GGP23]{MR4622393}
Wee~Teck Gan, Benedict~H. Gross, and Dipendra Prasad.
\newblock Twisted {GGP} problems and conjectures.
\newblock {\em Compos. Math.}, 159(9):1916--1973, 2023.

\bibitem[Har14]{MR3159075}
R.~Neal Harris.
\newblock The refined {G}ross-{P}rasad conjecture for unitary groups.
\newblock {\em Int. Math. Res. Not. IMRN}, (2):303--389, 2014.

\bibitem[HKS96]{MR1327161}
Michael Harris, Stephen~S. Kudla, and William~J. Sweet.
\newblock Theta dichotomy for unitary groups.
\newblock {\em J. Amer. Math. Soc.}, 9(4):941--1004, 1996.

\bibitem[II10]{MR2585578}
Atsushi Ichino and Tamutsu Ikeda.
\newblock On the periods of automorphic forms on special orthogonal groups and
  the {G}ross-{P}rasad conjecture.
\newblock {\em Geom. Funct. Anal.}, 19(5):1378--1425, 2010.

\bibitem[Jac05]{MR2172953}
Herv\'{e} Jacquet.
\newblock Kloosterman identities over a quadratic extension. {II}.
\newblock {\em Ann. Sci. \'{E}cole Norm. Sup. (4)}, 38(4):609--669, 2005.

\bibitem[Jac09]{MR2533003}
Herv\'{e} Jacquet.
\newblock Archimedean {R}ankin-{S}elberg integrals.
\newblock In {\em Automorphic forms and {$L$}-functions {II}. {L}ocal aspects},
  volume 489 of {\em Contemp. Math.}, pages 57--172. Amer. Math. Soc.,
  Providence, RI, 2009.

\bibitem[Jac10]{MR2733072}
Herv\'{e} Jacquet.
\newblock Distinction by the quasi-split unitary group.
\newblock {\em Israel J. Math.}, 178:269--324, 2010.

\bibitem[JPSS83]{MR701565}
H.~Jacquet, I.~I. Piatetskii-Shapiro, and J.~A. Shalika.
\newblock Rankin-{S}elberg convolutions.
\newblock {\em Amer. J. Math.}, 105(2):367--464, 1983.

\bibitem[JR11]{MR2767518}
Herv\'{e} Jacquet and Stephen Rallis.
\newblock On the {G}ross-{P}rasad conjecture for unitary groups.
\newblock In {\em On certain {$L$}-functions}, volume~13 of {\em Clay Math.
  Proc.}, pages 205--264. Amer. Math. Soc., Providence, RI, 2011.

\bibitem[Kot80]{MR564478}
Robert~Edward Kottwitz.
\newblock Orbital integrals on {${\rm GL}_{3}$}.
\newblock {\em Amer. J. Math.}, 102(2):327--384, 1980.

\bibitem[Liu14]{MR3244725}
Yifeng Liu.
\newblock Relative trace formulae toward {B}essel and {F}ourier-{J}acobi
  periods on unitary groups.
\newblock {\em Manuscripta Math.}, 145(1-2):1--69, 2014.

\bibitem[MVW87]{MR1041060}
Colette M{\oe}glin, Marie-France Vign\'{e}ras, and Jean-Loup Waldspurger.
\newblock {\em Correspondances de {H}owe sur un corps {$p$}-adique}, volume
  1291 of {\em Lecture Notes in Mathematics}.
\newblock Springer-Verlag, Berlin, 1987.

\bibitem[Ram15]{MR3267122}
Dinakar Ramakrishnan.
\newblock A mild {T}chebotarev theorem for {${\rm GL}(n)$}.
\newblock {\em J. Number Theory}, 146:519--533, 2015.

\bibitem[RS07]{rallis2007multiplicity}
Steve Rallis and G{\'e}rard Schiffmann.
\newblock Multiplicity one conjectures.
\newblock {\em arXiv preprint arXiv:0705.2168}, 2007.

\bibitem[Sun12]{MR2999291}
Binyong Sun.
\newblock Multiplicity one theorems for {F}ourier-{J}acobi models.
\newblock {\em Amer. J. Math.}, 134(6):1655--1678, 2012.

\bibitem[SZ12]{MR2874638}
Binyong Sun and Chen-Bo Zhu.
\newblock Multiplicity one theorems: the {A}rchimedean case.
\newblock {\em Ann. of Math. (2)}, 175(1):23--44, 2012.

\bibitem[Xue14]{MR3228451}
Hang Xue.
\newblock The {G}an-{G}ross-{P}rasad conjecture for {${\rm U}(n)\times{\rm
  U}(n)$}.
\newblock {\em Adv. Math.}, 262:1130--1191, 2014.

\bibitem[Xue16]{MR3505397}
Hang Xue.
\newblock Fourier-{J}acobi periods and the central value of {R}ankin-{S}elberg
  {$L$}-functions.
\newblock {\em Israel J. Math.}, 212(2):547--633, 2016.

\bibitem[Yun11]{MR2769216}
Zhiwei Yun.
\newblock The fundamental lemma of {J}acquet and {R}allis.
\newblock {\em Duke Math. J.}, 156(2):167--227, 2011.
\newblock With an appendix by Julia Gordon.

\bibitem[Zha14a]{MR3164988}
Wei Zhang.
\newblock Automorphic period and the central value of {R}ankin-{S}elberg
  {L}-function.
\newblock {\em J. Amer. Math. Soc.}, 27(2):541--612, 2014.

\bibitem[Zha14b]{MR3245011}
Wei Zhang.
\newblock Fourier transform and the global {G}an-{G}ross-{P}rasad conjecture
  for unitary groups.
\newblock {\em Ann. of Math. (2)}, 180(3):971--1049, 2014.

\bibitem[Zha21a]{MR4250392}
Wei Zhang.
\newblock Weil representation and arithmetic fundamental lemma.
\newblock {\em Ann. of Math. (2)}, 193(3):863--978, 2021.

\bibitem[Zha21b]{zhang2021maximal}
Zhiyu Zhang.
\newblock Maximal parahoric arithmetic transfers, resolutions and modularity.
\newblock {\em Duke Math. J. (to appear)}, 2021.

\end{thebibliography}
\bibliographystyle{alpha}
\end{document}